\title{Simplicial presheaves of\\Green complexes and twisting cochains}
\author{Timothy Hosgood \and Mahmoud Zeinalian}
\date{}
\declaretheorem[name=Theorem,numberwithin=subsection]{theorem}
\declaretheorem[name=Corollary,numberlike=theorem]{corollary}
\declaretheorem[name=Lemma,numberlike=theorem]{lemma}
\declaretheorem[name=Conjecture,numberlike=theorem,refname={Conjecture,Conjectures},Refname={Conjecture,Conjectures}]{conjecture}
\declaretheorem[name=Definition,style=definition,numberlike=theorem]{definition}
\declaretheorem[name=Remark,style=definition,numberlike=theorem]{remark}
\newlist{theoremenum}{enumerate}{1}
\setlist[theoremenum]{label=(\alph*), ref=\thetheorem~(\alph*)}
\renewcommand{\leq}{\leqslant}
\renewcommand{\geq}{\geqslant}
\renewcommand{\cal}[1]{{\mathcal{#1}}}
\newcommand{\scr}[1]{{\mathscr{#1}}}
\newcommand{\loccit}{\emph{loc.\@ cit.\@}}
\newcommand{\OO}{\scr{O}}
\newcommand{\RingSpace}{\mathsf{RingSp}}
\newcommand{\ConnRingSpace}{\mathsf{RingSp}_\mathrm{conn}}
\newcommand{\Set}{\mathsf{Set}}
\newcommand{\Space}{\mathsf{Space}}
\newcommand{\sSet}{\mathsf{sSet}}
\newcommand{\csSet}{\mathsf{csSet}}
\newcommand{\dgCat}{\mathsf{dg}\text{-}\mathsf{Cat}}
\newcommand{\Kan}{\mathsf{Kan}}
\newcommand{\QuasiCat}{\mathsf{Quasi}\text{-}\mathsf{Cat}}
\newcommand{\Mod}[1]{\OO_{#1}\text{-}\mathsf{Mod}}
\newcommand{\LieGroup}{\mathsf{LieGroup}}
\newcommand{\cover}{\cal{U}}
\newcommand{\anothercover}{\cal{V}}
\newcommand{\Smth}{\mathsf{Man}}
\newcommand{\SmthC}{\Smth_{\cover}}
\newcommand{\nerve}{\cal{N}}
\newcommand{\delcech}{\hat{\cal{C}}}
\newcommand{\cechnerve}{\check{\nerve}}
\newcommand{\dgnerve}{\cal{N}^\mathrm{dg}}
\newcommand{\bigfunctor}{\mathbf{N}}
\newcommand{\op}{\mathrm{op}}
\newcommand{\id}{\mathrm{id}}
\newcommand{\GL}{\mathrm{GL}}
\newcommand{\anotherbullet}{\star}
\newcommand{\bary}[1]{#1_{\mathrm{bary}}}
\newcommand{\pair}[1]{#1_{\mathrm{pair}}}
\newcommand{\simto}{\xrightarrow{\raisebox{-0.7ex}[0ex][0ex]{$\sim$}}}
\newcommand{\from}{\leftarrow}
\newcommand{\simfrom}{\xleftarrow{\raisebox{-0.7ex}[0ex][0ex]{$\sim$}}}
\newcommand{\simplexpath}[1]{\left[\begin{smallmatrix}#1\end{smallmatrix}\right]}
\newcommand{\core}[1]{\llbracket#1\rrbracket}
\newcommand{\Green}{\scr{Green}}
\newcommand{\Twist}{\scr{Twist}}
\newcommand{\sTwist}{\scr{sTwist}}
\newcommand{\sGreen}{\scr{sGreen}}
\DeclareMathOperator{\Free}{\mathsf{Free}}
\DeclareMathOperator{\Bun}{\scr{Bun}_{\GL_n(\mathbb{R})}}
\DeclareMathOperator{\Tot}{Tot}
\DeclareMathOperator{\holim}{holim}
\DeclareMathOperator{\Hom}{Hom}
\DeclareMathOperator{\eq}{eq}
\DeclareMathOperator{\ver}{ver}
\DeclareMathOperator{\codim}{codim}
\DeclareMathOperator*{\colim}{colim}
\DeclareFontFamily{U}{min}{}
\DeclareFontShape{U}{min}{m}{n}{<-> udmj30}{}
\newcommand{\yon}{よ}
\crefname{section}{Section}{Sections}
\crefname{equation}{}{}
\renewcommand{\thefigure}{\arabic{section}.\arabic{subsection}.\roman{figure}}
\tikzset{every picture/.style={thick}}
\tikzstyle{vertex}[black] = [draw=black,solid,fill=#1,circle,inner sep=0pt,minimum size=1.2mm]
\tikzstyle{edge} = [thick]
\tikzstyle{hatched}[gray] = [pattern=crosshatch dots, pattern color=#1]
\tikzstyle{filled}[gray] = [fill=#1,fill opacity=0.5]
\begin{document}

\maketitle

\begin{abstract}
  We construct three simplicial presheaves on the site of ringed spaces, and in particular on that of complex manifolds.
  The descent objects for these simplicial presheaves yield Toledo--Tong's twisting cochains, simplicial twisting cochains, and complexes that appear in Green's thesis on Chern classes for coherent analytic sheaves, respectively.
  We thus extend the aforementioned constructions to the equivariant setting, and more generally to stacks.
  Although this is the first step in achieving push-forwards in K-theory and Riemann--Roch theorems for appropriate stacks, continuing a programme of Toledo and Tong, and O'Brian and Green, this paper is one of higher category theory in the language of simplicial sets and dg-categories, and thus does not involve any technicalities of complex geometry.
\end{abstract}

{\small\tableofcontents}

\section{Introduction}
\label{section:introduction}

\subsection{History and motivation}

The problem of resolving a coherent sheaf by locally free sheaves is fundamental in geometry.
Indeed, one of the main tools in proving the Hirzebruch--Riemann--Roch theorem for holomorphic bundles on smooth projective complex varieties is a resolution of the pushforward along the diagonal of the structure sheaf by a bounded complex of locally free sheaves.

To prove the analogous statement in the non-algebraic setting, various tools from differential geometry, such as heat kernels, are used.
These tools rely on the choice of a metric, which, outside the context of Kähler manifolds within complex geometry, is unnatural, preventing us from generalising to the equivariant setting and to that of stacks.
To resolve a coherent sheaf on a compact complex manifold by vector bundles, it suffices to have a positive line bundle.
Such a line bundle, readily available in the algebraic setting by the canonical line bundle, does not exist in general.
As such, outside the algebraic setting, coherent analytic sheaves cannot always be resolved by locally free sheaves.

Nevertheless, in a series of papers (\cite{TT1976,TT1978,OTT1981a,OTT1981b,OTT1985,TT1986}), O'Brian, Toledo, and Tong showed that on arbitrary complex manifolds, coherent analytic sheaves can be locally resolved by free sheaves, and these local sheaves glue together using transition functions that satisfy cocycle conditions up to an infinitely-coherent system of homotopies.
They called the ensemble of transition functions and associated homotopies a \emph{twisting cochain}.
In modern language, these objects would be described in terms of the $\infty$-stackification of the presheaf of perfect complexes on the site of complex manifolds (as in \cite{GMTZ2022b}).
They used these homotopic methods to give a proof of the Hirzebruch--Riemann--Roch theorem for coherent analytic sheaves (\cite{OTT1981c}) and extended it to a proof of the more general Grothendieck--Riemann--Roch theorem (\cite{OTT1985}).

Another consequence of their work was to answer to an appeal of Bott, in \cite{Bot1973}, amongst other places, for \emph{``the construction of characteristic classes of bundles in terms of transition functions''}.
Indeed, when working with characteristic classes of bundles on foliations, or quotients by group actions, such an approach is necessary.

In 1980, a philosophically related but technically different approach to resolving coherent sheaves appeared in the thesis \cite{Gre1980} of Green, a student of O'Brian and Eels, in which Chern classes in de Rham cohomology of coherent analytic sheaves were constructed from local free resolutions that globally clutch together via a simplicial system of strictly invertible chain maps and inclusions, now referred to as \emph{Green complexes}.
Green's key insight was turning holomorphic twisting cochains into simplicial objects satisfying strict identities on the nose.

To relate these different approaches, Toledo and Tong in \cite{TT1986} gave a reformulation of Green's simplicial resolution in terms of objects that simultaneously generalise twisting cochains and the complexes of sheaves on the Čech nerve arising in \cite{Gre1980}, namely \emph{simplicial twisting cochains}.
As mentioned in \cite{HS2001}, the work of O'Brian, Toledo, and Tong responds to a question posed in \cite{BGI1971} concerning Riemann--Roch formulas using Čech calculations that are an example of descent for complexes of vector bundles, but \emph{``a better general framework for these calculations could contribute to our understanding of Riemann--Roch formulas''}.

Since then, there has been important work on better understanding the homotopy theory of twisting cochains (such as \cite{Wei2016,BHW2017,Wei2021,GMTZ2022a,GMTZ2022b}), but the full story of how this applies to various open problems in complex-analytic geometry is one that has yet to be fully told.
Even the abstract foundations provide need for further study: as mentioned in \cite[Remark~2.16]{Wei2016}, the connections between twisting cochains and the dg-nerve should be further explored.

\subsection{Purpose}
\label{subsection:purpose}

The fundamental idea of this paper is the following: to construct a simplicial presheaf $\sTwist$ of simplicial twisting cochains that recovers, via some analogue of sheafification, the simplicial twisting cochains of \cite{TT1986} in the complex-analytic setting.
As special cases, we will also obtain two more simplicial presheaves, $\Twist$ and $\Green$, that recover twisting cochains and Green complexes, respectively.
We give a motivation of these constructions via perfectness conditions of sheaves of $\OO_X$-modules, and ``homotopical weakening'', in \cref{subsection:narrative}.
These simplicial presheaves are defined on the category of connected ringed spaces.

We show (\cref{theorem:tot0-all-three}) that, if one picks the specific ringed space corresponding to some complex-analytic manifold, then this aforementioned analogue of sheafification (which we call \emph{Čech totalisation}) recovers the classical definitions that we would expect from the three simplicial presheaves.
Although we do not discuss what happens in the case of other geometries (such as locally Noetherian schemes with affine covers), the formal machinery that we provide can immediately be extended to these settings.

\bigskip

One sees that the dg-nerve and twisting cochains should be related to one another, since both are given by the Maurer--Cartan condition (see e.g. \cite[Remark~2.16]{Wei2016}).
In constructing the simplicial presheaf for twisting cochains, we show how the defining equations of the dg-nerve translate exactly to those for twisting cochains, via intermediary results concerning Maurer--Cartan elements in Čech bicomplexes (such as \cref{theorem:dg-nerve-and-MC-for-presheaves}) which can be thought of as upgrades of certain technical lemmas from \cite{GMTZ2022a} to the case of \emph{presheaves} of dg-categories.
Furthermore, we show that not only twisting cochains, but also the weak equivalences between them (as defined in \cite[Definition~2.27]{Wei2016}), arise from the dg-nerve (\cref{theorem:1-simplices-in-complex-analytic-twist}).

As mentioned above, we endow the simplicial presheaves with geometry via Čech totalisation (\cref{subsection:cech-totalisation}), which consists of evaluating a simplicial presheaf on the Čech nerve of some fixed cover and then taking the totalisation of the resulting cosimplicial simplicial set, which we show computes the homotopy limit.
In this way, we are providing the space analogue of \cite[Proposition~4.9]{BHW2017}, showing that twisting cochains arise as a homotopy limit of bounded complexes of free modules evaluated on the Čech nerve; we similarly characterise Green complexes and simplicial twisting cochains as homotopy colimits of other cosimplicial simplicial sets.
We make this analogy precise via a comparison result (\cref{lemma:comparsion-of-tot-dg-and-tot-sset}) for presheaves of dg-categories that preserve finite limits.

The results of this present article concerning only twisting cochains can thus be seen as a sort of synthesis of \cite{GMTZ2022a}, \cite{BHW2017}, and \cite{Wei2016}.

\bigskip

Although weak equivalences between Green complexes were defined in \cite{Hos2020a} as level-wise quasi-isomorphisms, the $1$-simplices of the Čech totalisation of $\Green$ here provide a seemingly more fitting notion (\cref{subsection:1-simplices-in-complex-green}).
We conjecture that the description of Green's resolution given in \cite{TT1986} actually describes a morphism of simplicial presheaves $\sTwist\to\Green$ (\cref{conjecture:TTs-green-gives-morphism-stwist-to-green}), though to prove this would require a refinement of the constructions given here, as we justify in \cref{subsection:greens-resolution}.

Given that we construct three simplicial presheaves, it is natural to ask how they relate to one another in the category of simplicial presheaves, which can be endowed with a model structure (namely, the classical Kan--Quillen model structure, as constructed in e.g. \cite[\S1]{GJ2009}).
By construction, $\Twist$ is globally fibrant, and will thus give a space after Čech totalisation (\cref{lemma:tot-of-cech-of-kan-is-kan}), but neither $\Green$ nor $\sTwist$ are immediately seen to be globally fibrant.
However, we provide some partial results (\cref{subsection:horn-filling-conditions}) which ensure that the simplicial $\pi_0$ are well defined for all three presheaves, and then show that they are all equivalent, noting a particularly nice application of Green's resolution in strictifying quasi-isomorphisms of complexes of free modules to isomorphisms (\cref{remark:green-lets-you-turn-quasi-isos-into-isos}).
As a consequence of global fibrancy, this implies that the $\pi_0$ of their Čech totalisations are also all equivalent (\cref{corollary:pi0-all-three-equivalent}).

\bigskip

In an attempt to make this paper as useful a reference as possible, we try not to leave any proofs as exercises for the reader: in \cref{appendix:example} and \cref{appendix:proof-of-1-simplices-in-complex-analytic-twist} we provide explicit descriptions and calculations of $1$-simplices in the Čech totalisation of a simplicial presheaf.

\subsection{Overview}
\label{subsection:overview}

\begin{itemize}
  \item \cref{section:introduction}: Historical context and brief summary of main results of this paper.
  \item \cref{section:preliminaries}: Preliminary notation and definitions, as well as some general technical lemmas. Mostly classical, but some new or folklore results as well, especially concerning the dg-nerve (\cref{subsection:dg-nerve}), pair subdivision (\cref{subsection:the-barycentric-and-pair-subdivision}), and Čech totalisation (\cref{subsection:cech-totalisation}).
  \item \cref{section:three-simplicial-presheaves}: Motivation for our approach via simplicial presheaves, the dg-nerve, and simplicial labelling (\cref{subsection:narrative}), and definitions of the three simplicial presheaves.
  \item \cref{section:complex-analytic-examples}: Details of the holomorphic case, including comparisons to other results.
  \item \cref{section:morphisms-between-the-presheaves}: Study of the three simplicial presheaves in the context of the Kan--Quillen model structure.
  \item \cref{section:summary-and-future-work}: Summary of main results, including questions for future research.
  \item \cref{appendix:example}: Worked example of Čech totalisation for constructing the space of principal $\GL_n(\mathbb{R})$-bundles. 
  \item \cref{appendix:technical-proofs}: Details of lengthier proofs of some of the more technical lemmas.
\end{itemize}

\subsection{Acknowledgments}

We thank Cheyne Glass, Michah Miller, and Thomas Tradler for providing us with an early copy of \cite{GMTZ2022a}; versions of both \cref{lemma:morphism-into-dg-nerve} and \cref{theorem:dg-nerve-and-MC} can be found there.
We also thank the two anonymous reviewers of this article, who helped improve the exposition, structure, and framing, as well as correcting mistakes, errors, and typos.

The first author would also like to thank Evan Cavallo for his patience in helping with the numerous sign issues in an early draft of \cref{appendix:proof-of-1-simplices-in-complex-analytic-twist}, as well as Ivan Di Liberti and Josefien Kuijper for continual interesting conversations.

\section{Preliminaries}
\label{section:preliminaries}

The majority of content in this section is classical, and we simply gather it together here for convenience of the reader, as well as to establish notation.
Some sections (such as \cref{subsection:dg-nerve}, \cref{subsection:the-barycentric-and-pair-subdivision}, and \cref{subsection:cech-totalisation}) contain material that is either slightly more general than what can be found in existing literature, or that is more difficult to find references for.

\emph{Throughout this entire paper, whenever we say ``manifold'' we mean ``paracompact smooth manifold''; whenever we say ``cover'' we mean ``(locally finite) open cover''.}
\emph{For categories $\cal{C}$ and $\cal{D}$, we denote the set (or category) of functors from $\cal{C}\to\cal{D}$ by $[\cal{C},\cal{D}]$.}
\emph{We always use $\subset$ to mean strict subset, and $\subseteq$ to mean non-strict subset, entirely analogous to $<$ and $\leq$.}

\subsection{Spaces via simplicial sets}
\label{subsection:simplicial-sets-and-spaces}

\begin{definition}
  Let $\Delta$ be the abstract simplex category, whose objects are the finite totally ordered sets $[p]=\{0<1<\ldots<p\}$ for $p\in\mathbb{N}$, and whose morphisms $[p]\to[q]$ are order-preserving functions.
  There are injections $f_p^i\colon[p-1]\to[p]$ for $i\in\{0,\ldots,p-1\}$, called \emph{coface} maps; there are surjections $s_i^p\colon[p+1]\to[p]$ for $i\in\{0,\ldots,p\}$, called \emph{codegeneracy} maps.
  The \emph{topological $p$-simplex}, denoted by $\Delta^p$, is the $p$-dimensional polytope given by the convex hull of the affinely independent\footnote{That is, the set $\{e_i-e_0 \mid i=1,\ldots,p\}$ is linearly independent.} set of $p+1$ points $\{e_0,e_1,\ldots,e_p\}$ inside $\mathbb{R}^p$, where $e_i$ is the standard basis unit vector for $i\in\{1,\ldots,p\}$, and $e_0$ is the zero vector.
  Ordered non-empty subsets $\sigma\subseteq[p]$ of cardinality~$k+1$ then correspond bijectively to non-degenerate sub-$k$-simplices $\Delta^k\subseteq\Delta^p$, since $\sigma$ corresponds to a subset of the aforementioned set of $p+1$ points, and we can then take its convex hull (cf. \cref{figure:subsets-give-subsimplices}).
\end{definition}

\begin{figure}[!ht]
  \centering
  \begin{tabular}{c|c}
    subsets
    & sub-simplices
  \\\midrule
    $\{0<2\}\hookrightarrow\{0<1<2\}$
    &
    \begin{tikzpicture}[scale=0.6,baseline=0]
      \begin{scope}[shift={(-2.5,-0.7)}]
        \node [vertex,label={below:{\footnotesize$0$}}] (0) at (0,0) {};
        \node [vertex,label={below:{\footnotesize$2$}}] (2) at (2,0) {};
        \draw [edge] (0) to (2);
      \end{scope}
      \node at (1,0) {$\hookrightarrow$};
      \begin{scope}[shift={(2.5,-0.7)}]
        \node [vertex,label={below left:{\footnotesize$0$}}] (0) at (0,0) {};
        \node [vertex,label={above:{\footnotesize$1$}}] (1) at (1,1.5) {};
        \node [vertex,label={below right:{\footnotesize$2$}}] (2) at (2,0) {};
        \draw [edge,hatched] (0.center) to (1.center) to (2.center) to cycle;
      \end{scope}
    \end{tikzpicture}
  \\[3em]
    $\{0<2<3\}\hookrightarrow\{0<1<2<3\}$
    &
    \begin{tikzpicture}[scale=0.6,baseline=0]
      \begin{scope}[shift={(-2.5,0)}]
        \node [vertex,label={left:{\footnotesize$0$}}] (0) at (0,0) {};
        \node [vertex,label={below:{\footnotesize$2$}}] (2) at (1.5,-0.5) {};
        \node [vertex,label={[label distance=-1mm]above right:{\footnotesize$3$}}] (3) at (2,0.5) {};
        \draw [edge,hatched] (0.center) to (2.center) to (3.center) to cycle;
      \end{scope}
      \node at (1,0) {$\hookrightarrow$};
      \begin{scope}[shift={(2.5,-0.4)}]
        \node [vertex,label={below left:{\footnotesize$0$}}] (0) at (0,0) {};
        \node [vertex,label={above:{\footnotesize$1$}}] (1) at (1,1.4) {};
        \node [vertex,label={below:{\footnotesize$2$}}] (2) at (1.5,-0.5) {};
        \node [vertex,label={right:{\footnotesize$3$}}] (3) at (2,0.5) {};
        \draw [edge,filled] (0.center) to (1.center) to (2.center) to cycle;
        \draw [edge,filled] (1.center) to (2.center) to (3.center) to cycle;
        \draw [edge,dashed] (0) to (3);
      \end{scope}
    \end{tikzpicture}
  \end{tabular}
  \caption{Inclusions of subsets correspond to inclusions of sub-simplices.}
  \label{figure:subsets-give-subsimplices}
\end{figure}

\begin{definition}
  A \emph{simplicial set} is a contravariant functor $X_\bullet\colon[p]\mapsto X_p$ from $\Delta$ to the category of sets, i.e. an object of the category $\sSet\coloneqq[\Delta^\op,\Set]$.
  The coface maps $f_p^i$ of $\Delta$ induce \emph{face} maps $X_\bullet f_p^i\colon X_p\to X_{p-1}$; the codegeneracy maps $s_i^p$ of $\Delta$ induce \emph{degeneracy} maps $X_\bullet s_i^p\colon X_p\to X_{p+1}$.

  Given a category $\cal{C}$, a \emph{simplicial presheaf} on $\cal{C}$ is a presheaf with values in simplicial sets, i.e. an object in $[\cal{C}^\op,\sSet]$.
\end{definition}

We often simply write $X$ instead of $X_\bullet$, and $f^i$ (resp. $s_i$) instead of $X_\bullet f^i_p$ (resp. $X_\bullet s_i^p$).

Since simplicial sets give a model for topological spaces (through geometric realisation), we often refer to the $0$-simplices $x\in X_0$ of a simplicial set $X_\bullet$ as \emph{points} or \emph{vertices}, and the $1$-simplices $x\in X_1$ as \emph{lines} or \emph{edges}.
We reserve the use of the word ``\emph{space}'' to refer to either topological spaces or Kan complexes (defined below), and if we do not make precise to which one we are referring then it is because one can pick either meaning, depending on preference.

\begin{definition}
  The prototypical simplicial sets are those of the form
  \[
    \Delta[p]\coloneqq\Hom_\Delta(-,[p])
  \]
  for $p\in\mathbb{N}$.
  We call $\Delta[p]$ the \emph{standard $p$-simplex}.
\end{definition}

Although we work almost entirely with the ``abstract'' simplices $\Delta[p]$, when drawing diagrams we are really drawing the topological simplices $\Delta^p$, which are related to the abstract simplices by \emph{geometric realisation}.
One needs to be careful about the definition of the category $\Space$ in the definition of geometric realisation, but we do not need to worry about the details here.
For us, what is important is the intuitive understanding of geometric realisation: we take a simplicial set $X_\bullet$, replace every copy of the abstract $p$-simplex with a topological $p$-simplex, and then glue these together exactly in the way that the abstract simplices glue together via the face and degeneracy maps.

\begin{definition}
\label{definition:geometric-realisation}
  We define \emph{geometric realisation} $|\cdot|\colon\sSet\to\Space$ as the functor given on the standard $p$-simplices by $|\Delta[p]|\coloneqq\Delta_p$, and then extend this to arbitrary simplicial sets $X_\bullet$ via
  \[
    |X_\bullet| \coloneqq \colim_{\Delta[p]\to X_\bullet} |\Delta[p]|.
  \]
  More formally, geometric realisation is the left adjoint to the functor $\operatorname{Sing}\colon\Space\to\sSet$ given by $\operatorname{Sing}(Y)_p\coloneqq\Hom_\Space(\Delta^p,Y)$.
\end{definition}

\begin{definition}
  Given $0\leq i\leq p$, the \emph{$i$th horn $\Lambda_i[p]$ of the $p$-simplex} is the simplicial set defined by
  \[
    \Lambda_i[p]([q])
    = \Big\{
      \alpha\in\Hom_\Delta([q],[p]) \mid [p]\not\subseteq\alpha([q])\cup\{i\}
    \Big\}
    \subset \Delta[p]([q]).
  \]
  Topologically, the $i$th horn is what remains after removing the interior of the $p$-simplex and then deleting the $(p-1)$-dimensional face opposite the $i$th vertex (cf. \cref{figure:horns}); more simply, it is the collection of all simplices that contain the $i$th vertex.
  We write $\Lambda_i^p$ to mean the geometric realisation of $\Lambda_i[p]$, so that $\Lambda_i^p\subset\Delta^p$.
\end{definition}

\begin{figure}[!ht]
  \centering
  \begin{tikzpicture}[scale=0.6,baseline=0]
    \begin{scope}[shift={(-2.5,-0.7)}]
      \node [vertex,label={below:{\footnotesize$0$}}] (0) at (0,0) {};
      \node [vertex,label={above:{\footnotesize$1$}}] (1) at (1,1.5) {};
      \node [vertex,label={below:{\footnotesize$2$}}] (2) at (2,0) {};
      \draw [edge] (0) to (1) to (2);
    \end{scope}
    \node at (1,0) {$\hookrightarrow$};
    \begin{scope}[shift={(2.5,-0.7)}]
      \node [vertex,label={below left:{\footnotesize$0$}}] (0) at (0,0) {};
      \node [vertex,label={above:{\footnotesize$1$}}] (1) at (1,1.5) {};
      \node [vertex,label={below right:{\footnotesize$2$}}] (2) at (2,0) {};
      \draw [edge,hatched] (0.center) to (1.center) to (2.center) to cycle;
    \end{scope}
  \end{tikzpicture}
  \\
  \begin{tikzpicture}[scale=0.6,baseline=0]
    \begin{scope}[shift={(-2.5,-0.4)}]
      \node [vertex,label={below left:{\footnotesize$0$}}] (0) at (0,0) {};
      \node [vertex,label={above:{\footnotesize$1$}}] (1) at (1,1.4) {};
      \node [vertex,label={below:{\footnotesize$2$}}] (2) at (1.5,-0.5) {};
      \node [vertex,label={right:{\footnotesize$3$}}] (3) at (2,0.5) {};
      \draw [edge,hatched] (0.center) to (1.center) to (2.center) to cycle;
      \draw [edge,hatched] (0.center) to (1.center) to (3.center) to cycle;
      \draw [edge,hatched] (0.center) to (2.center) to (3.center) to cycle;
      \draw [edge,dashed] (0) to (3);
    \end{scope}
    \node at (1,0) {$\hookrightarrow$};
    \begin{scope}[shift={(2.5,-0.4)}]
      \node [vertex,label={below left:{\footnotesize$0$}}] (0) at (0,0) {};
      \node [vertex,label={above:{\footnotesize$1$}}] (1) at (1,1.4) {};
      \node [vertex,label={below:{\footnotesize$2$}}] (2) at (1.5,-0.5) {};
      \node [vertex,label={right:{\footnotesize$3$}}] (3) at (2,0.5) {};
      \draw [edge,filled] (0.center) to (1.center) to (2.center) to cycle;
      \draw [edge,filled] (1.center) to (2.center) to (3.center) to cycle;
      \draw [edge,dashed] (0) to (3);
    \end{scope}
  \end{tikzpicture}
  \caption{Top: $\Lambda_1^2\subset\Delta^2$; Bottom: $\Lambda_0^3\subset\Delta^3$ (where, on the left, all $2$-dimensional faces are filled in except for $\{1<2<3\}$).}
  \label{figure:horns}
\end{figure}

\begin{definition}
\label{definition:kan-complex}
  A simplicial set $X_\bullet$ is a \emph{Kan complex} if all horns can be filled, i.e. if any map $\Lambda_i[p]\to X_\bullet$ can be extended to a map $\Delta[p]\to X_\bullet$ for all $p\in\mathbb{N}$ and all $0\leq i\leq p$, i.e. if the natural map $\Hom_\sSet(\Delta[p],X_\bullet)\to\Hom(\Lambda_i[p],X_\bullet)$ is surjective.

  If the same condition holds only for $0<i<p$ (for all $p\in\mathbb{N}$), then we say that only \emph{inner} horns can be filled, and we say that the simplicial set is a \emph{quasi-category}.

  This defines two full subcategories of the category of simplicial sets: the category $\Kan$ of Kan complexes, and the category $\QuasiCat$ of quasi-categories.
\end{definition}

\begin{definition}
\label{definition:max-kan}
  The inclusion $\Kan\hookrightarrow\QuasiCat$ has both a left and a right adjoint, where the right adjoint is called the \emph{core}.
  It can be shown (\cite[\href{https://kerodon.net/tag/01D9}{Tag 01D9}]{kerodon}) that the core is given by taking the \emph{maximal Kan complex}, i.e. the largest (by inclusion) Kan complex contained inside the quasi-category.

  We denote this maximal-Kan-complex functor by $\core{-}\colon\QuasiCat\to\Kan$.
\end{definition}

\begin{remark}
  The core of an arbitrary simplicial set is not a priori well defined, but there is a ``model'' of the core which is indeed a functor defined on all of $\sSet$.
  We return to this point in \cref{remark:extending-core-to-sset}.
\end{remark}

\subsection{Cosimplicial simplicial sets}

\begin{definition}
  A \emph{cosimplicial simplicial set} is a covariant functor $X_\bullet^\anotherbullet\colon[p]\mapsto X_\bullet^p$ from $\Delta$ to the category of simplicial sets, i.e. an object of the category $\csSet\coloneqq[\Delta,\sSet]$.
  The coface maps $f_p^i$ of $\Delta$ induce \emph{coface} maps $X_\bullet^\anotherbullet f_p^i\colon X_\bullet^{p-1}\to X_\bullet^p$; the codegeneracy maps $s_i^p$ of $\Delta$ induce \emph{codegeneracy} maps $X_\bullet^\anotherbullet s_i^p\colon X_\bullet^{p+1}\to X_\bullet^p$.
\end{definition}

Note that we can enrich $\csSet$ over $\sSet$ by defining
\[
  \big(\underline{\Hom}_{\csSet}(A_\bullet^\anotherbullet,B_\bullet^\anotherbullet)\big)_p
  = \Hom_{\csSet}(A_\bullet^\anotherbullet\times\Delta[p],B_\bullet^\anotherbullet).
\]

\begin{remark}
  Just to be clear: since we are using simplicial sets as models for spaces, when we talk about the coface maps of a cosimplicial simplicial set, we mean the coface maps coming from the \emph{co}simplicial structure, \emph{not} the face maps coming from the simplicial structure.
\end{remark}

\begin{definition}
  The prototypical cosimplicial simplicial set is
  \[
    \Delta[\anotherbullet]\colon [p] \mapsto \Delta[p] = \Hom_{\Delta}(-,[p]),
  \]
  i.e. ``collecting all the simplicial sets $\Delta[p]$ for $p\in\mathbb{N}$ together''.
\end{definition}

\subsection{Totalisation and homotopy limits}
\label{subsection:totalisation-and-holim}

One functor of particular interest to us regarding simplicial sets and cosimplicial simplicial sets is the \emph{totalisation} functor.

\begin{definition}
  Let $L\colon\sSet\to\csSet$ be the functor given by $Y_\bullet\mapsto Y_\bullet\times\Delta[\anotherbullet]$.
  We define the \emph{totalisation} functor $\Tot\colon\csSet\to\sSet$ as the right adjoint to $L$.
\end{definition}

We often simply write $(\Tot Y)_p$ instead of $(\Tot Y_\bullet^\anotherbullet)_p$.

This functor is of particular interest in the setting of homotopy theory, as explained by the following technical lemma, which we provide without further context.

\begin{lemma}[{\cite[Theorem~18.7.4]{Hir2003a}}]
\label{lemma:reedy-fibrant-implies-tot-is-holim}
  If $Y_\bullet^\anotherbullet\in\csSet$ is Reedy fibrant, then the totalisation $\Tot Y_\bullet^\anotherbullet$ and the homotopy limit $\holim Y_\bullet^\anotherbullet$ are naturally weakly equivalent.
\end{lemma}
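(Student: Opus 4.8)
The plan is to identify $\Tot$ with a derived mapping space and then recognise that space as the homotopy limit. First I would unwind the adjunction defining $\Tot$: since $\Tot$ is right adjoint to $Y_\bullet^\anotherbullet\mapsto Y_\bullet^\anotherbullet\times\Delta[\anotherbullet]$, applying the Yoneda lemma to the simplicial set $\Tot Y_\bullet^\anotherbullet$ gives, for each $p$,
\[
  (\Tot Y_\bullet^\anotherbullet)_p
  = \Hom_{\sSet}\big(\Delta[p],\Tot Y_\bullet^\anotherbullet\big)
  = \Hom_{\csSet}\big(\Delta[\anotherbullet]\times\Delta[p],Y_\bullet^\anotherbullet\big)
  = \big(\underline{\Hom}_{\csSet}(\Delta[\anotherbullet],Y_\bullet^\anotherbullet)\big)_p,
\]
so that $\Tot Y_\bullet^\anotherbullet=\underline{\Hom}_{\csSet}(\Delta[\anotherbullet],Y_\bullet^\anotherbullet)$ is the $\sSet$-enriched hom out of $\Delta[\anotherbullet]$, the latter regarded as a cosimplicial simplicial set.

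Next I would equip $\csSet=[\Delta,\sSet]$ with the Reedy model structure — whose weak equivalences are the levelwise ones, whose fibrant objects are the Reedy fibrant cosimplicial simplicial sets, and which, together with the $\sSet$-enrichment recorded just above, is a simplicial model category — and show that $\Delta[\anotherbullet]$ is a (Reedy) cofibrant replacement of the constant cosimplicial simplicial set $\mathrm{const}\,\ast$ at a point. Cofibrancy is the latching computation: the $n$th latching object of $\Delta[\anotherbullet]$ is the boundary $\partial\Delta[n]$, i.e. the union of the proper faces of $\Delta[n]$, and the latching map $\partial\Delta[n]\hookrightarrow\Delta[n]$ is a monomorphism, hence a cofibration of simplicial sets. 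Moreover the augmentation $\Delta[\anotherbullet]\to\mathrm{const}\,\ast$ is a levelwise weak equivalence, since each $\Delta[n]$ is contractible, and is therefore a Reedy weak equivalence. (By contrast $\mathrm{const}\,\ast$ is itself not Reedy cofibrant — its latching map at $[1]$ is the fold $\ast\sqcup\ast\to\ast$ — which is exactly why the naive limit fails to be the homotopy limit.)

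Then I would conclude. The homotopy limit $\holim Y_\bullet^\anotherbullet$ is the total right derived functor of $\lim\colon\csSet\to\sSet$; since $\lim$ is the $\sSet$-enriched hom $\underline{\Hom}_{\csSet}(\mathrm{const}\,\ast,-)$ out of the constant diagram, and since the Reedy and projective model structures on $\csSet$ have the same (levelwise) weak equivalences, this derived functor is computed — for $Y_\bullet^\anotherbullet$ Reedy fibrant — by $\underline{\Hom}_{\csSet}(Q,Y_\bullet^\anotherbullet)$ for any Reedy cofibrant replacement $Q\xrightarrow{\sim}\mathrm{const}\,\ast$, the value being independent of $Q$ because in a simplicial model category $\underline{\Hom}_{\csSet}(-,Y_\bullet^\anotherbullet)$ carries weak equivalences between cofibrant objects to weak equivalences of Kan complexes whenever $Y_\bullet^\anotherbullet$ is fibrant. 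Taking $Q=\Delta[\anotherbullet]$ from the previous step, and combining with the first paragraph, yields a natural (in the Reedy fibrant $Y_\bullet^\anotherbullet$) weak equivalence $\Tot Y_\bullet^\anotherbullet\simeq\holim Y_\bullet^\anotherbullet$; naturality is automatic since all the constructions are functorial in $Y_\bullet^\anotherbullet$.

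I expect the main obstacle to be the identification invoked in the last paragraph: that the derived hom-space out of the constant diagram — equivalently, $\underline{\Hom}_{\csSet}(\Delta[\anotherbullet],-)$ on Reedy fibrant objects — genuinely coincides with the homotopy limit in whichever concrete model one prefers (e.g. the Bousfield--Kan cobar construction), which comes down to a cofinality/last-vertex comparison between $\Delta[\anotherbullet]$ and the Bousfield--Kan cosimplicial resolution of the point, exploiting the special combinatorics of $\Delta$, together with the routine-but-nonempty verification that the Reedy structure on $\csSet$ satisfies the pushout--product axiom. All of this is assembled in \cite[Chapters~15--19]{Hir2003a} and packaged as \cite[Theorem~18.7.4]{Hir2003a}, so in the text we are content to cite it directly.
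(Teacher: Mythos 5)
The paper does not actually prove this lemma: it is quoted directly from Hirschhorn and used as a black box, so there is no in-text argument to compare against. Your sketch is a faithful reconstruction of the proof of the cited result — identifying $\Tot$ with $\underline{\Hom}_{\csSet}(\Delta[\anotherbullet],-)$ via the adjunction, checking that $\Delta[\anotherbullet]$ is a Reedy cofibrant approximation to the constant point (latching maps $\partial\Delta[n]\hookrightarrow\Delta[n]$, levelwise contractibility), and invoking homotopy invariance of the enriched hom in the first variable against a Reedy fibrant target. The one phrase to treat with care is ``the total right derived functor of $\lim$ \ldots is computed by $\underline{\Hom}_{\csSet}(Q,Y_\bullet^\anotherbullet)$ for any Reedy cofibrant replacement $Q$'': the adjunction $\mathrm{const}\dashv\lim$ is \emph{not} Quillen for the Reedy structure on $\csSet$ — precisely because $\mathrm{const}\,\ast$ is not Reedy cofibrant, as you yourself observe — so one cannot literally derive $\lim$ by Reedy-cofibrant replacement in the usual Quillen-adjunction sense. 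What actually closes the argument is the comparison you flag at the end: the Bousfield--Kan homotopy limit is itself $\underline{\Hom}_{\csSet}(N(\Delta\downarrow -),Y_\bullet^\anotherbullet)$ for another Reedy cofibrant approximation $N(\Delta\downarrow -)\to\mathrm{const}\,\ast$, and any two cofibrant approximations of the same object yield weakly equivalent enriched homs into a Reedy fibrant object (Ken Brown plus SM7 for the Reedy simplicial model structure). That is exactly Hirschhorn's route, so your deferral to the citation at that point is appropriate rather than a gap; the proposal is correct.
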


Here the homotopy limit is defined as usual (e.g. as the right derived functor of the right adjoint to the constant-diagram functor), but we will not need to appeal to the technical definition in this paper.

There are many ways (\cite[\S5.3]{Dug2008}) to think of totalisation (e.g. as the dual to geometric realisation), but one particularly useful point-of-view for our purposes is the following.
Given a cosimplicial simplicial set $Y_\bullet^\anotherbullet$, we can show that
\[
  \Tot Y_\bullet^\anotherbullet = \underline{\Hom}_{\csSet}(\Delta[\anotherbullet],Y_\bullet^\anotherbullet).
\]
Morally, this is a version of the tensor-hom adjunction.
Using this, it can be proven that $\Tot$ is also given by an equaliser in $\sSet$:
\[
  \Tot Y_\bullet^\anotherbullet =
  \eq\left(
    \prod_p \Hom_{\sSet}(\Delta[p],Y_\bullet^p) \rightrightarrows \prod_{[p]\to[q]} \Hom_{\sSet}(\Delta[p],Y_\bullet^q)
  \right)
\]
(for details, see \cite[Definition~18.6.3]{Hir2003a}).
With this definition of $\Tot$ as an equaliser, we can show the following:
\emph{a point in $\Tot Y_\bullet^\anotherbullet$ consists of $(y^0,y^1,y^2,\ldots)$, with $y^p\in Y_p^p$, such that}
\begin{enumerate}[(a)]
  \item \emph{the images of $y^p$ under the coface maps $f_{p+1}^i\colon Y_\bullet^p\to Y_\bullet^{p+1}$ are exactly the $p$-dimensional faces of $y^{p+1}$; and}
  \item \emph{the images of $y^p$ under the codegeneracy maps $s_i^{p-1}\colon Y_\bullet^p\to Y_\bullet^{p-1}$ are exactly (up to degeneracy) $y^{p-1}$.}
\end{enumerate}
(cf. \cref{figure:point-in-tot}).

\begin{figure}[ht!]
  \centering
  \begin{tikzpicture}[scale=1.2]
    \begin{scope}
      \draw[thick,dashed] (0,0) ellipse (1.4cm and 1.5cm);
      \node (Y0) at (0,-2.1) {$Y_\bullet^0$};
      \node at (0.3,-0.2) {$\bullet$};
      \node[draw,very thick,inner sep=1mm] (y0) at (0.3,0.2) {\footnotesize$y^0$};
    \end{scope}
    \begin{scope}[shift={(-3.5,0)}]
      \draw[thick,dashed] (0,0) ellipse (1.4cm and 1.5cm);
      \node (Y1) at (0,-2.1) {$Y_\bullet^1$};
      \node[label={[label distance=-2mm]above:{\footnotesize$f_1^0(y^0)$}}] (0y0) at (0.3,0.4) {$\bullet$};
      \node[label={[label distance=-2mm]below:{\footnotesize$f_1^1(y^0)$}}] (1y0) at (-0.1,-0.75) {$\bullet$};
      \draw[very thick] (0y0.mid)
        to node(y1)[draw=black,inner sep=1mm,fill=white]{\footnotesize$y^1$} (1y0.mid);
    \end{scope}
    \begin{scope}[shift={(-7,0)}]
      \draw[thick,dashed] (0,0) ellipse (1.4cm and 1.5cm);
      \node (Y2) at (0,-2.1) {$Y_\bullet^2$};
      \node (v0) at (-0.1,0.8) {$\bullet$};
      \node (v1) at (-0.6,-0.6) {$\bullet$};
      \node (v2) at (0.9,-0.2) {$\bullet$};
      \draw[very thick,hatched] (v0.mid)
        to node(0y1)[label={[label distance=-3mm]above left:{\footnotesize$f_2^0(y_1)$}}]{} (v1.mid)
        to node(1y1)[label={[label distance=-1mm]below:{\footnotesize$f_2^1(y_1)$}}]{} (v2.mid)
        to node(2y1)[label={[label distance=-3mm]above right:{\footnotesize$f_2^2(y_1)$}}]{} cycle;
      \node[very thick,draw=black,inner sep=1mm,fill=white] (y2) at (0.05,0.05) {\footnotesize$y^2$};
    \end{scope}
    \draw[thick,-latex] (Y0.165) to (Y1.15);
    \draw[thick,-latex] (Y0.-165) to (Y1.-15);
    \draw[white] (Y0) to node[black,fill=white]{$f_1^i$} (Y1);
    \draw[thick,-latex] (Y1.155) to (Y2.25);
    \draw[thick,-latex] (Y1.-155) to (Y2.-25);
    \draw[thick,-latex] (Y1) to node[fill=white]{$f_2^i$} (Y2);
  \end{tikzpicture}
  \caption{\emph{Visualising a point $y=(y^0,y^1,y^2,\ldots)$ in the totalisation of a cosimplicial simplicial set $Y_\bullet^\anotherbullet$. For aesthetic purposes, we have not drawn the codegeneracy maps, nor anything above degree~$2$.}}
  \label{figure:point-in-tot}
\end{figure}

More generally, we can show that a $k$-simplex in $\Tot Y$ consists of morphisms $\Delta[k]\times\Delta[p]\to Y_\bullet^p$ for $p\in\mathbb{N}$ such that some analogous conditions hold.
This all follows from the definition of the totalisation as $\underline{\Hom}_{\csSet}(\Delta[\anotherbullet],-)$ along with the description as an equaliser.
For a worked example of $1$-simplices in the totalisation, i.e. for morphisms from $\Delta[1]\times\Delta[p]$, see \cref{appendix:proof-of-1-simplices-in-complex-analytic-twist} (or \cref{appendix:example}, though the situation there is rather more trivial); for the more general case, see \cite[Appendix~B]{GMTZ2022a}.

\subsection{The Čech nerve and the categorical nerve}
\label{subsection:decorations}

\begin{definition}
  Given a topological space $X$ with a cover $\cover=\{U_\alpha\}_{\alpha\in I}$, we define the \emph{Čech nerve} of the pair $(X,\cover)$ to be the simplicial space $(\cechnerve\cover)_\bullet\in[\Delta^\op,\Space]$ whose $p$-simplices are given by the disjoint union of all $p$-fold intersections, i.e.
  \[
    (\cechnerve\cover)_p = \coprod_{\alpha_0,\ldots,\alpha_p\in I} U_{\alpha_0\ldots\alpha_p}
  \]
  and where the face (resp. degeneracy) maps are given by dropping (resp. repeating) indices.
\end{definition}

In particular, any open subset $U_\alpha$ is a $0$-simplex in the Čech nerve, and any intersection $U_{\alpha\beta}$ is a $1$-simplex, and so on.
In other words, a $p$-simplex is not simply the combinatorial data of the \emph{choice} of $p$-fold intersection $(\alpha_0,\ldots,\alpha_p)$, but the actual intersection $U_{\alpha_0\ldots\alpha_p}$ itself.

\begin{definition}
  Given a category $\cal{C}$, we define the \emph{ordinary nerve} (or simply the \emph{nerve}) to be the simplicial set $(\nerve\cal{C})_\bullet$ whose $p$-simplices are sequences of length~$p$ of composable morphisms, i.e.
  \[
    (\nerve\cal{C})_p =
    \Big\{
      x_0\xrightarrow{f_1}x_1\xrightarrow{f_2}\ldots\xrightarrow{f_p}x_p \mid f_i\in\Hom_\cal{C}(x_{i-1},x_i)
    \Big\}_{x_0,\ldots,x_p\in\cal{C}}
  \]
  where, for $p=0$, such a sequence is simply a single object of $\cal{C}$, and where the face maps are given by either composing morphisms (for $1\leq i\leq p$) or deleting them (for $i=0,p$), and the degeneracy maps are given by inserting identity morphisms.
\end{definition}

\begin{remark}
\label{remark:blown-up-nerve}
  Given a $p$-simplex $(f_1,\ldots,f_p)$ in $\nerve\cal{C}$, we can ``fill it out'' by taking the $1$-skeleton of the convex hull of an affinely independent embedding of $p+1$ points (as described in \cref{subsection:simplicial-sets-and-spaces}), labelling the $i$th point with the domain of $f_{i+1}$ (or, equivalently, the codomain of $f_i$), labelling the edge connecting the $(i-1)$th point to the $i$th point with $f_i$, and labelling any remaining edges with the composition of the other two morphisms on the same triangular face, so that all triangles commute.
  That is, we label the \emph{spine} of the standard $p$-simplex, and use the fact that the constituent $1$-simplices are exactly the \emph{generating} simplices.
  For example, given a $2$-simplex
  \[
    x_0 \xrightarrow{f_1} x_1 \xrightarrow{f_2} x_2,
  \]
  we obtain the labelling
  \[
    \begin{tikzpicture}
      \node [vertex,label={below left:{$x_0$}}] (0) at (0,0) {};
      \node [vertex,label={above:{$x_1$}}] (1) at (1,1.5) {};
      \node [vertex,label={below right:{$x_2$}}] (2) at (2,0) {};
      \draw [edge,-Latex] (0) to node[fill=white]{\footnotesize$f_1$} (1);
      \draw [edge,-Latex] (1) to node[fill=white]{\footnotesize$f_2$} (2);
      \draw [edge,-Latex] (0) to node[fill=white]{\footnotesize$f_2\circ f_1$} (2);
    \end{tikzpicture}
  \]
  of (the $1$-skeleton of) $\Delta[2]$.
  By ``filling out'' (or ``blowing up'') the nerve like this, the length-$p$ sequence of morphisms uniquely determines the other $\binom{p}{2}$ by composition, giving $p+\binom{p}{2} = \binom{p}{1}+\binom{p}{2} = \binom{p+1}{2}$ many morphisms in total; see \cref{figure:blowing-up-spine}.
\end{remark}

\begin{figure}[!ht]
  \centering
  \begin{tikzpicture}
    \node at (0,5) {$x_0\xrightarrow{f_1}x_1\xrightarrow{f_2}x_2\xrightarrow{f_3}x_3$};
    \begin{scope}[shift={(0,0)},scale=1.5]
      \node [vertex,label={below left:{\footnotesize$x_0$}}] (0) at (0,0) {};
      \node [vertex,label={above:{\footnotesize$x_1$}}] (1) at (1,1.4) {};
      \node [vertex,label={below:{\footnotesize$x_2$}}] (2) at (1.5,-0.5) {};
      \node [vertex,label={right:{\footnotesize$x_3$}}] (3) at (2,0.5) {};
      \draw [edge,-Latex] (0) to node[fill=white]{\footnotesize$f_1$} (1);
      \draw [edge,-Latex] (1) to node[fill=white]{\footnotesize$f_2$} (2);
      \draw [edge,-Latex] (2) to node[fill=white]{\footnotesize$f_3$} (3);
    \end{scope}
    \begin{scope}[shift={(4,4)},scale=1.5]
      \node [vertex,label={below left:{\footnotesize$x_0$}}] (0) at (0,0) {};
      \node [vertex,label={above:{\footnotesize$x_1$}}] (1) at (1,1.4) {};
      \node [vertex,label={below:{\footnotesize$x_2$}}] (2) at (1.5,-0.5) {};
      \node [vertex,label={right:{\footnotesize$x_3$}}] (3) at (2,0.5) {};
      \draw [edge,dashed,-Latex] (0) to (3);
      \draw [edge,dashed,-Latex] (1) to (3);
      \draw [edge,dashed,-Latex] (0) to (2);
      \draw [edge,-Latex] (0) to node[fill=white]{\footnotesize$f_1$} (1);
      \draw [edge,-Latex] (1) to node [fill=white,near end] {\footnotesize$f_2$} (2);
      \draw [edge,-Latex] (2) to node[fill=white]{\footnotesize$f_3$} (3);
    \end{scope}
    \begin{scope}[shift={(8,0)},scale=2]
      \node [vertex,label={below left:{\footnotesize$x_0$}}] (0) at (0,0) {};
      \node [vertex,label={above:{\footnotesize$x_1$}}] (1) at (1,1.4) {};
      \node [vertex,label={below:{\footnotesize$x_2$}}] (2) at (1.5,-0.5) {};
      \node [vertex,label={right:{\footnotesize$x_3$}}] (3) at (2,0.5) {};
      \draw [edge,-Latex] (0) to node[fill=white]{\footnotesize$f_1$} (1);
      \draw [edge,-Latex] (0) to node[fill=white]{\footnotesize$f_2\circ f_1$} (2);
      \draw [edge,-Latex] (0) to node [fill=white,pos=0.35] {\footnotesize$f_3\circ f_2\circ f_1$} (3);
      \node [white,fill=white] at (1.28,0.34) {$x$};
      \draw [edge,-Latex] (1) to node [fill=white,near end] {\footnotesize$f_2$} (2);
      \draw [edge,-Latex] (1) to node[fill=white]{\footnotesize$f_3\circ f_2$} (3);
      \draw [edge,-Latex] (2) to node[fill=white]{\footnotesize$f_3$} (3);
    \end{scope}
    \draw [line width=0.35mm,->] (-0.5,4.25) to [bend right] node[fill=white]{\footnotesize 1. bend} (0,1.5);
    \draw [line width=0.35mm,->] (3.5,0.25) to [bend right] node[fill=white]{\footnotesize 2. inflate} (5,3);
    \draw [line width=0.35mm,->] (7,5.5) to [bend left] node[fill=white]{\footnotesize 3. fill in} (10,3.5);
  \end{tikzpicture}
  \caption{Given a sequence of three composible morphisms, we can fold them to lie along the spine of the $3$-simplex, and then (uniquely) label the rest of the $1$-skeleton using the compositions.}
  \label{figure:blowing-up-spine}
\end{figure}

\subsection{The dg-nerve and Maurer--Cartan elements}
\label{subsection:dg-nerve}

Throughout this section, and the rest of the paper, whenever we speak of \emph{complexes}, we mean \emph{bounded, negatively graded, cochain complexes}.

\begin{definition}
\label{definition:dg-category}
  A \emph{dg-category} is a category enriched in complexes.
  That is, a category $\cal{D}$ such that the hom-set $\Hom_\cal{D}(x,y)$ is actually a complex for any $x,y\in\cal{D}$, with differential denoted by $\partial$, and such that composition is associative, unital, bilinear, and satisfies the Leibniz rule (for details, see e.g. \cite[\href{https://kerodon.net/tag/00PA}{Tag 00PA}]{kerodon}).

  We often say \emph{dg-category of complexes} to mean a dg-category whose objects are cochain complexes of some objects in an abelian category $\cal{A}$, and whose morphisms are degree-wise morphisms in $\cal{A}$, and whose hom-differential is given by the standard formula.
  More precisely, a morphism $f^\bullet\in\Hom^p(C^\bullet,D^\bullet)$ of degree $p$ consists of morphisms $f^n\colon C^n\to D^{n+p}$ (not necessarily commuting with the differentials $\mathrm{d}_C$ and $\mathrm{d}_D$), and the differential $\partial\colon\Hom^p(C^\bullet,D^\bullet)\to\Hom^{p+1}(C^\bullet,D^\bullet)$ is given by defining $\partial f$ as consisting of the morphisms $(\partial f)^n \coloneqq f^{n+1}\circ\mathrm{d}_C+(-1)^{p+1}\mathrm{d}_D\circ f^n\colon C^n\to D^{n+p+1}$.
\end{definition}

\begin{definition}
\label{definition:dg-nerve}
  Let $\cal{D}$ be a dg-category of chain complexes.
  We define the \emph{dg-nerve} of $\cal{D}$ to be the simplicial set $\dgnerve\cal{D}$ constructed as follows.

  \begin{itemize}
    \item The $0$-simplices of $\dgnerve\cal{D}$ are labellings\footnote{We use this language of \emph{labellings} to be consistent with the constructions later on. It is entirely equivalent, however, to the more standard way of phrasing the definition: ``\emph{$(\dgnerve\cal{D})_1$ consists of triples $(x_0,x_1,f_{\{0<1\}})$, where $x_i\in\cal{D}$ and $f_{\{0<1\}}\colon x_0\to x_1$ is of degree~$0$, such that \ldots}''.} of the standard $0$-simplex by objects of $\cal{D}$, i.e. $(\dgnerve\cal{D})_0$ is in bijection with $\operatorname{Ob}\cal{D}$.
    \item The $1$-simplices of $\dgnerve\cal{D}$ are labellings of the standard $1$-simplex $\{0<1\}$ by morphisms $f_{\{0<1\}}\in\Hom_\cal{D}^0(x_1,x_0)$, where $x_i$ labels the $0$-face $\{i\}\subset\{0<1\}$, such that $\partial f_{\{0<1\}}=0$ (i.e. such that $f_{0<1}$ is a chain map: it commutes with the differentials).
    \item The $2$-simplices of $\dgnerve\cal{D}$ are labellings of the standard $2$-simplex $\{0<1<2\}$ by morphisms $f_{\{0<1<2\}}\in\Hom_\cal{D}^{-1}(x_2,x_0)$ (where $x_i$ labels the $0$-face $\{i\}$, and $f_{i<j}$ labels the $1$-face $\{i<j\}$) such that $\partial f_{\{0<1<2\}}=f_{\{0<2\}}-f_{\{0<1\}}f_{\{1<2\}}$.
    \[
      \begin{tikzcd}[sep=large]
        & {x_1} \\
        {x_0} && {x_2}
        \arrow["{f_{0<1}}"', from=1-2, to=2-1]
        \arrow["{f_{1<2}}"', from=2-3, to=1-2]
        \arrow[""{name=0, anchor=center, inner sep=0}, "{f_{0<2}}", from=2-3, to=2-1]
        \arrow["{f_{0<1<2}}"{description}, shorten <=2pt, Rightarrow, from=1-2, to=0]
      \end{tikzcd}
    \]
    \item Generally, for $p\geq2$, the $p$-simplices of $\dgnerve\cal{D}$ are labellings of every (non-degenerate) face of the standard $p$-simplex; the vertex corresponding to the singleton subset $\{i\}\subset[p]$ is labelled by some object $x_i$ of $\cal{D}$; for $k\geq1$, the $k$-dimensional face corresponding to some $I=\{i_0<i_1<\ldots<i_k\}\subseteq[p]$ is labelled by a morphism $f_I\in\Hom_\cal{D}^{1-k}(x_{i_k},x_{i_0})$; for all non-empty $I\subseteq[p]$ with $|I|-1=k\geq2$, the following relation is satisfied:
      \[
      \label{equation:dg-nerve-definition}
        \partial f_I
        = \sum_{j=1}^{k-1} (-1)^{j-1} f_{I\setminus\{i_j\}}
        + \sum_{j=1}^{k-1} (-1)^{k(j-1)+1} f_{\{i_0<\ldots<i_j\}}\circ f_{\{i_j<\ldots<i_k\}}.
      \tag{\ref*{definition:dg-nerve}.1}
      \]
  \end{itemize}
  The face maps are given by the ``topological'' face maps of simplices: since a $p$-simplex in the dg-nerve is, in particular, a labelling of the $p$-simplex, we obtain face maps by simply looking at the data that labels the faces, which are $(p-1)$-simplices.
  The degeneracy maps are given by inserting identity morphisms for $1$-simplices, and the zero morphism for higher-dimensional simplices.

  For details, see \cite[Definition~2.8, Proposition~2.9, Corollary~2.10]{Fao2017}.
  (N.B. the sign convention differs from that of \cite[Construction~1.3.1.6]{Lur2017} and \cite[\href{https://kerodon.net/tag/00PL}{Tag 00PL}]{kerodon}).
  Note also that the direction of the morphisms is ``backwards'', in that $f_{i_0<\ldots<i_k}$ goes from $x_{i_k}$ to $x_{i_0}$, cf. \cref{remark:morphisms-in-dg-nerve-go-backwards}.
\end{definition}

In the case where $\cal{D}$ is a dg-category of complexes, we could also think of the $0$-simplices as being labelled by morphisms $f_{\{0\}}\in\Hom^1(x_0,x_0)$, which would be exactly the differentials of the complex $x_0$.

\begin{remark}
\label{remark:morphisms-in-dg-nerve-go-backwards}
  In this paper, the morphisms in the dg-nerve go in the ``backwards'' direction, but this is purely a matter of convention: the dg-nerve commutes with $(-)^\op$ up to isomorphism.
  Furthermore, since we are almost exclusively interested in the \emph{core} of the dg-nerve, we can even use the fact that every quasi-groupoid is equivalent to its opposite.
\end{remark}

The following lemma ensures that we can indeed talk of the maximal Kan complex of the dg-nerve of a dg-category.

\begin{lemma}[{\cite[Proposition~1.3.1.10]{Lur2017}}]
  Let $\cal{D}$ be a dg-category.
  Then the simplicial set $\dgnerve\cal{D}$ is a quasi-category.
\end{lemma}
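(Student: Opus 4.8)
The plan is to verify the inner horn-filling condition directly, reproducing the classical argument for the dg-nerve (cf.\ \cite[Proposition~1.3.1.10]{Lur2017} and \cite[Proposition~2.9, Corollary~2.10]{Fao2017}, modulo the sign convention fixed in \cref{definition:dg-nerve}). So fix $n\geq2$ and $0<i<n$, and let $\sigma\colon\Lambda_i[n]\to\dgnerve\cal{D}$ be an inner horn. Unwinding the definition, $\sigma$ is the data of an object $x_j\in\cal{D}$ for each $j\in[n]$ together with a morphism $f_J\in\Hom^{2-|J|}_\cal{D}(x_{\max J},x_{\min J})$ for every non-degenerate face $J\subsetneq[n]$ with $|J|\geq2$ \emph{except} $J=[n]\setminus\{i\}$, all subject to the relations \eqref{equation:dg-nerve-definition} (for $|J|\geq3$) and the chain-map conditions $\partial f_J=0$ (for $|J|=2$). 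To extend $\sigma$ to a map $\Delta[n]\to\dgnerve\cal{D}$ we must produce the two missing labels — the one, $f_{[n]\setminus\{i\}}$, on the $(n-1)$-face opposite the $i$th vertex, and the top one, $f_{[n]}$ — and check the two new instances of the defining relation, those indexed by $I=[n]\setminus\{i\}$ and by $I=[n]$.

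The key move is to set $f_{[n]}\coloneqq0$ and to \emph{define} $f_{[n]\setminus\{i\}}$ by solving the $I=[n]$ instance of \eqref{equation:dg-nerve-definition} for the unique summand in which it occurs. In that relation the first sum contains the term $(-1)^{i-1}f_{[n]\setminus\{i\}}$, whose coefficient is invertible since $1\leq i\leq n-1$, while every other term — the remaining $f_{[n]\setminus\{j\}}$ with $j\neq i$, and the composites $f_{\{0<\ldots<j\}}\circ f_{\{j<\ldots<n\}}$ — involves only faces already labelled by the horn. Moreover, since $0<i<n$, the face $[n]\setminus\{i\}$ still contains both $0$ and $n$, so the resulting $f_{[n]\setminus\{i\}}\in\Hom^{2-n}_\cal{D}(x_n,x_0)$ has the correct source, target, and degree, and the formula for it is well-posed. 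With these choices the $I=[n]$ relation holds by construction: both sides vanish, since $\partial f_{[n]}=0$ and $f_{[n]\setminus\{i\}}$ was defined precisely so as to make the right-hand side zero.

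It remains to check the $I=[n]\setminus\{i\}$ instance. For $n=2$ this reduces to the assertion that $f_{\{0<2\}}=f_{\{0<1\}}\circ f_{\{1<2\}}$ is a chain map, which is immediate from the Leibniz rule since $f_{\{0<1\}}$ and $f_{\{1<2\}}$ are. For $n\geq3$ one applies $\partial$ to the defining expression for $f_{[n]\setminus\{i\}}$, expands via the Leibniz rule, and substitutes the instances of \eqref{equation:dg-nerve-definition} already known for the subfaces $[n]\setminus\{j\}$ ($j\neq i$) together with the relations $\partial f_{\{0<\ldots<j\}}=\ldots$ and $\partial f_{\{j<\ldots<n\}}=\ldots$ for the composable pieces; after collecting terms everything cancels and one is left exactly with the desired identity for $\partial f_{[n]\setminus\{i\}}$. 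No further instances of \eqref{equation:dg-nerve-definition} need be verified, since every face $I\subsetneq[n]$ other than $[n]\setminus\{i\}$ was already part of the horn. As $n\geq2$ and $i$ with $0<i<n$ were arbitrary, $\dgnerve\cal{D}$ admits all inner horn fillers and is therefore a quasi-category.

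The main obstacle is the cancellation in the $I=[n]\setminus\{i\}$ check for $n\geq3$: it is mechanical but genuinely delicate sign bookkeeping, made fiddlier by the fact that the signs in \eqref{equation:dg-nerve-definition} differ from those of \cite{Lur2017} and kerodon. It is also precisely here that inner-ness is used — for $i\in\{0,n\}$ the label $f_{[n]\setminus\{i\}}$ does not appear as a composition-free summand of the $I=[n]$ relation that one could solve for, and the decomposition into composable pieces degenerates — so the argument genuinely (and necessarily) fails for outer horns. In practice one would either run this computation with the present signs or cite \cite[Proposition~2.9, Corollary~2.10]{Fao2017} after matching conventions, using the identity $\dgnerve(\cal{D}^{\op})\cong(\dgnerve\cal{D})^{\op}$ noted in \cref{remark:morphisms-in-dg-nerve-go-backwards} if a sign flip is needed.
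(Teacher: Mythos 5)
Your argument is correct: setting $f_{[n]}=0$, solving the $I=[n]$ instance of \eqref{equation:dg-nerve-definition} for $f_{[n]\setminus\{i\}}$ (whose coefficient $(-1)^{i-1}$ is a unit precisely because $0<i<n$), and then verifying the $I=[n]\setminus\{i\}$ relation by a $\partial^2=0$ computation is exactly the standard proof. The paper does not prove this lemma at all — it simply cites \cite[Proposition~1.3.1.10]{Lur2017} — so your proposal is a faithful reproduction of the cited argument, with the degree, source/target, and inner-horn bookkeeping all handled correctly.
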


\begin{definition}
\label{definition:1-nerve-of-dg-category}
  Every dg-category $\cal{D}$ has an underlying ``ordinary'' category $K_0\cal{D}$, where
  \[
    \Hom_{K_0\cal{D}}(x,y) =
    \big\{
      f\in\Hom_{\cal{D}}^0(x,y) \mid \partial f=0
    \big\}.
  \]
  For example, when $\cal{D}$ is a dg-category of chain complexes, the morphisms in $K_0\cal{D}$ are exactly chain maps, i.e. those that commute with the differential.

  For notational simplicity, we write $\nerve\cal{D}$ to mean $\nerve(K_0\cal{D})$.
\end{definition}

\begin{lemma}
\label{lemma:nerve-inside-dg-nerve}
  Let $\cal{D}$ be a dg-category of cochain complexes of modules.
  Then, in the notation of \cref{definition:dg-nerve},
  \begin{enumerate}[(i)]
    \item the ordinary nerve $\nerve\cal{D}$ sits inside\footnote{Taking \cref{remark:morphisms-in-dg-nerve-go-backwards} into account, we really mean ``the nerve of the $\cal{D}^\op$''.} the dg-nerve $\dgnerve\cal{D}$ as the simplicial set of labellings with $f_I=0$ for $|I|\geq3$;
    \item the maximal Kan complex $\core{\nerve\cal{D}}$ of the ordinary nerve is given by requiring that the $f_{\{0<1\}}$ be isomorphisms; and
    \item the maximal Kan complex $\core{\dgnerve\cal{D}}$ of the dg-nerve is given by requiring that the $f_{\{0<1\}}$ be quasi-isomorphisms.
  \end{enumerate}
\end{lemma}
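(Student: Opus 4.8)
The plan is to treat the three statements in increasing order of difficulty, with (i) being essentially unwinding definitions, (ii) a standard fact about nerves of ordinary categories, and (iii) the substantive claim. For (i), I would observe that, by \cref{definition:dg-nerve}, a $p$-simplex of $\dgnerve\cal{D}$ with $f_I=0$ for all $|I|\geq3$ forces the defining relation \eqref{equation:dg-nerve-definition} to collapse: for $|I|=k+1\geq3$ the left-hand side $\partial f_I$ vanishes, and I must check that the right-hand side does too. The right-hand side, when all $f_J$ with $|J|\geq3$ are zero, survives only through terms indexed by $f_{I\setminus\{i_j\}}$ with $|I\setminus\{i_j\}|=2$ (i.e.\ $k=2$) and through the composition terms $f_{\{i_0<\ldots<i_j\}}\circ f_{\{i_j<\ldots<i_k\}}$ with both factors of degree $0$, i.e.\ $1$-faces; for $k=2$ one recovers precisely the associativity/functoriality relation $f_{\{0<2\}}=f_{\{1<2\}}\circ f_{\{0<1\}}$, and for $k\geq3$ one checks the remaining composable-chain relations are exactly the simplicial identities in $\nerve(K_0\cal{D})$. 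Conversely, a labelling of the spine by chain maps (degree-$0$ morphisms with $\partial f=0$) extends uniquely to such a simplex by composition, exactly as in the ``blown-up nerve'' discussion of \cref{definition:blown-up-nerve}. This gives the identification of simplicial sets in (i), compatibly with face and degeneracy maps since both are ``topological''.

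For (ii), this is the classical statement that the maximal Kan complex inside the nerve of an ordinary category $\cal{E}$ is the nerve of the maximal subgroupoid of $\cal{E}$, i.e.\ the subcategory with the same objects but only the isomorphisms as morphisms. I would cite this (e.g.\ it is standard, and follows from \cite[Tag 01D9]{kerodon} together with the fact that $\nerve\cal{E}$ is already a quasi-category whose $1$-simplices are ``equivalences'' precisely when they are isomorphisms in $\cal{E}$), and simply note that here $\cal{E}=K_0\cal{D}$, whose isomorphisms are exactly the invertible chain maps.

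For (iii) — the main point — the strategy is to characterise the $1$-simplices of $\dgnerve\cal{D}$ that are \emph{equivalences} in the quasi-category $\dgnerve\cal{D}$, and to show these are exactly the quasi-isomorphisms. By \cref{definition:max-kan}, $\core{\dgnerve\cal{D}}$ consists of those simplices all of whose edges are equivalences, so it suffices to prove: a chain map $f_{\{0<1\}}\colon x_1\to x_0$ underlies an equivalence in $\dgnerve\cal{D}$ if and only if it is a quasi-isomorphism. One direction is the harder one: given a quasi-isomorphism $f$, I must exhibit an inverse edge $g$ together with $2$-simplices (and higher coherence data) witnessing $g\circ f\simeq\id$ and $f\circ g\simeq\id$ up to coherent homotopy. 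The cleanest route is to invoke the general fact — available once we know $\dgnerve\cal{D}$ is a quasi-category — that an edge is an equivalence iff it becomes invertible in the homotopy category $\mathrm{h}(\dgnerve\cal{D})$, and then identify $\mathrm{h}(\dgnerve\cal{D})$ with the homotopy category of complexes $K(\cal{A})$ (morphisms = chain maps modulo chain homotopy, with composition induced by composition); this is \cite[Construction~1.3.1.6, Remark~1.3.1.11]{Lur2017} or \cite[Tag 00SN]{kerodon}. Under this identification, $[f]$ is invertible in $K(\cal{A})$ precisely when $f$ is a homotopy equivalence, and for bounded complexes of modules that are (levelwise) projective — which is the relevant case throughout the paper, since our complexes are complexes of free modules — homotopy equivalence coincides with quasi-isomorphism. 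The main obstacle, and the place to be careful, is exactly this last equivalence: in general a quasi-isomorphism need not be a homotopy equivalence, so one must either restrict to the setting of complexes of projectives (where quasi-iso $\Leftrightarrow$ homotopy equivalence, cf.\ the classical lemma on bounded-above complexes of projectives) or instead work with the localisation perspective, noting that $\dgnerve\cal{D}$ already inverts quasi-isomorphisms at the level of its homotopy category by construction of the $2$-cells via the $\partial f_{\{0<1<2\}}$ relation. I would state (iii) under the standing hypothesis that the modules in question are free (as is the case for all dg-categories $\cal{D}$ we actually apply this to), so that the two notions agree and the identification $\core{\dgnerve\cal{D}}=\{f_{\{0<1\}}\text{ quasi-isomorphisms}\}$ follows immediately.

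Finally, I would note that (ii) is the special case of (iii) obtained by first restricting to the sub-simplicial-set from (i) — on $\nerve\cal{D}$, an edge is a quasi-isomorphism that is moreover a strict chain map with $f_I=0$ for $|I|\ge3$, and the only way for such an edge to be an equivalence \emph{within} $\nerve\cal{D}$ (rather than within the larger $\dgnerve\cal{D}$) is to be a genuine isomorphism — so the three parts fit together coherently, which I would remark on to close the proof.
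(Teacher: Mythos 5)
Your proposal is correct and follows essentially the same route as the paper: (i) by unwinding the defining relation of the dg-nerve, (ii) by identifying the core of the ordinary nerve with the nerve of the maximal subgroupoid, and (iii) by using that $\dgnerve\cal{D}$ is a quasi-category whose core is spanned by the edges that become invertible in the homotopy category. Two points of comparison are worth recording. For (ii), the paper gives a short self-contained argument (outer $2$-horn filling forces every edge to be an isomorphism, and $2$-coskeletality of the nerve then propagates this to all dimensions) where you cite the classical fact; either is fine, but the paper's version costs only a few lines. For (iii), the paper simply cites Joyal's characterisation of the core of a quasi-category, leaving implicit the identification of ``edges invertible in $\mathrm{h}(\dgnerve\cal{D})$'' (i.e.\ chain homotopy equivalences) with quasi-isomorphisms; you correctly isolate this as the delicate step and note that it requires the generalised Whitehead theorem, i.e.\ that the complexes be bounded complexes of free (or projective) modules. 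That caveat is genuinely right — as literally stated for an arbitrary dg-category of cochain complexes of modules, part (iii) would only characterise the core by chain homotopy equivalences — and the paper relies on the free hypothesis elsewhere without flagging it here, so your more careful formulation is an improvement rather than a deviation. The only minor slip in your write-up is in (i): for $|I|\geq 4$ there are no ``remaining composable-chain relations'' to check, since every term on the right-hand side of the dg-nerve relation involves a factor indexed by a subset of cardinality at least $3$ and hence vanishes outright; the only nontrivial consequence of setting the higher $f_I$ to zero is the composition identity in degree $|I|=3$.
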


\begin{proof}
  \begin{enumerate}[(i)]
    \item (cf. \cite[Remark~1.3.1.9]{Lur2017}).
      This is immediate from \cref{definition:1-nerve-of-dg-category}, since $\nerve\cal{D}\coloneqq\nerve(K_0\cal{D})$ already consists of morphisms $f$ such that $\partial f=0$, so this satisfies the relevant condition in \cref{definition:dg-nerve}.
    \item
      Note that the simplicial set defined by requiring the $f_{\{0<1\}}$ to be isomorphisms is exactly the ordinary nerve of the maximal groupoid $\cal{D}'$ of $\cal{D}$, and thus a Kan complex.
      So let $X_\bullet$ be a Kan complex such that $\nerve\cal{D}'\subseteq X_\bullet\subseteq\nerve\cal{D}$.
      This immediately implies that $X_0=\operatorname{ob}\cal{D}'=\operatorname{ob}\cal{D}$.
      Then, since $X_\bullet$ is Kan, in particular, the outer $2$-horns fill, i.e. for any $f\in X_1$, there exist $g_l,g_r\in X_1$ such that $g_l\circ f=\id=f\circ g_r$, whence $f$ is an isomorphism, since $f^{-1}=g=g_l=g_r$.
      That is, $X_1\subseteq(\nerve\cal{D}')_1$.
      But then, since the nerve of a category is built entirely from $1$-simplices (i.e. is $2$-coskeletal), this implies that $X_p\subseteq(\nerve\cal{D}')_p$ for all $p\in\mathbb{N}$, whence $\nerve\cal{D}'$ is maximal amongst Kan complexes contained inside $\nerve\cal{D}$.
    \item
      This follows from the fact that isomorphisms in $\dgnerve\cal{D}$ correspond to quasi-isomorphisms in $\cal{D}$.
      More precisely, since the dg-nerve is a quasi-category \cite[Proposition~1.3.1.10]{Lur2017}, this follows from \cite[Corollary~1.5]{Joy2002}.
      \qedhere
  \end{enumerate}
\end{proof}

The purpose of the rest of this section is to state \cref{theorem:dg-nerve-and-MC-for-presheaves}, which is a generalisation of some results found in \cite{GMTZ2022a}.
We take the time to restate and reprove the main result \loccit\@ which we are generalising, in a way consistent with the notation used in this current paper, to save the reader the effort of translating from one setting to another.

\begin{lemma}[{\cite[Lemma~2.7]{GMTZ2022a}, \cite[\href{https://kerodon.net/tag/00PV}{Tag 00PV}]{kerodon}}]
\label{lemma:morphism-into-dg-nerve}
  Let $\cal{D}$ be a dg-category, and let $X=X_\bullet$ be a simplicial set.
  Write $\ver_i K$ to mean the $i$th vertex of a $p$-simplex $K\in X_p$.
  Then the following are equivalent:
  \begin{enumerate}[(i)]
    \item a morphism $F\colon X\to\dgnerve(\cal{D})$ of simplicial sets; and
    \item\label{item:data-for-morphism-into-dg-nerve} the data of an object $c_x\in\cal{D}$ for each $0$-simplex $x\in X_0$, along with a morphism $f_K\in \Hom^{1-p}(c_{\ver_pK},c_{\ver_0K})$ for each $p$-simplex $K\in X_p$ for all $p\geq1$, such that the following two conditions hold:
      \begin{enumerate}
        \item for all $K$,
          \[
          \label{equation:top-level-dg-nerve}
            \partial f_K
            = \sum_{j=1}^{p-1} (-1)^{j-1} f_{K\setminus\{\ver_jK\}}
            + \sum_{j=1}^{p-1} (-1)^{p(j-1)+1} f_{\{\ver_0K<\ldots<\ver_jK\}}\circ f_{\{\ver_jK<\ldots<\ver_pK\}}
          \tag{\ref*{lemma:morphism-into-dg-nerve}.1}
          \]
          where the right-hand side is taken to be zero if $p=1$;
        \item if $K$ is degenerate, then
          \[
            f_K =
            \begin{cases}
              \id
              &\mbox{if $p=1$}
            \\0
              &\mbox{if $p\geqslant2$.}
            \end{cases}
          \]
      \end{enumerate}
  \end{enumerate}
\end{lemma}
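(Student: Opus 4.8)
The plan is to unwind both sides of the claimed equivalence into the explicit combinatorial data that each one amounts to, and then match them up. A morphism $F\colon X\to\dgnerve(\cal{D})$ of simplicial sets is, by the Yoneda-style description of $\dgnerve(\cal{D})$, the same as a compatible family of elements $F_p(K)\in(\dgnerve\cal{D})_p$ for each $K\in X_p$; and $(\dgnerve\cal{D})_p$ is, by \cref{definition:dg-nerve}, precisely a labelling of every non-degenerate face of the standard $p$-simplex subject to the relations \eqref{equation:dg-nerve-definition}. So the first step is to observe that specifying $F$ is the same as specifying, for every $p$-simplex $K\in X_p$ and every non-empty $I\subseteq[p]$, a morphism labelling the $I$-face of $F_p(K)$ — but by simplicial naturality (face maps in $\dgnerve\cal{D}$ are ``restrict to a face'') this label depends only on the sub-simplex $K|_I\in X_{|I|-1}$ obtained by pulling back $K$ along the inclusion $I\hookrightarrow[p]$. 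Hence the entire morphism $F$ is determined by, and freely specified by, one morphism $f_K$ for each \emph{non-degenerate} simplex $K$ of $X$ of each dimension (plus the objects $c_x$ for vertices), with $f_K\in\Hom^{1-p}(c_{\ver_pK},c_{\ver_0K})$ when $K\in X_p$.

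Next I would check that the relations \eqref{equation:dg-nerve-definition} imposed on each $(\dgnerve\cal{D})_p$, when transported through this bijection, become exactly the single family of equations \eqref{equation:top-level-dg-nerve}. This is essentially bookkeeping: the relation in \cref{definition:dg-nerve} for a subset $I\subseteq[p]$ with $|I|-1=k\geq 2$ involves the faces $f_{I\setminus\{i_j\}}$ and the composites $f_{\{i_0<\ldots<i_j\}}\circ f_{\{i_j<\ldots<i_k\}}$, and under the identification ``label of the $I$-face of $F_p(K)$'' $=$ ``$f_{K|_I}$'' this is precisely \eqref{equation:top-level-dg-nerve} applied to the simplex $K|_I\in X_k$ (with $\ver_j(K|_I)=\ver_{i_j}K$). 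One must note that every simplex of $X$ is a face of some simplex of $X$ (namely itself), so imposing \eqref{equation:dg-nerve-definition} on all $(\dgnerve\cal{D})_p$-valued components of $F$ is equivalent to imposing \eqref{equation:top-level-dg-nerve} on all $f_K$ with $\dim K\geq 2$; for $\dim K=1$ the condition $\partial f_{\{0<1\}}=0$ is exactly \eqref{equation:top-level-dg-nerve} with empty right-hand side, and for $\dim K=0$ there is no condition beyond naming the object.

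The one genuine point requiring care — and the step I expect to be the main obstacle — is the interaction with \emph{degeneracies}. A morphism of simplicial sets must respect degeneracy maps, and the degeneracy maps of $\dgnerve\cal{D}$ insert identity morphisms; so one has to verify that the data of \ref{item:data-for-morphism-into-dg-nerve}, given only on non-degenerate simplices, extends \emph{uniquely} to a family on all simplices (degenerate ones included) that is compatible with all faces and degeneracies. This is where one invokes the Eilenberg--Zilber lemma: every simplex factors uniquely as a degeneracy applied to a non-degenerate one, so the extension is forced, and one checks that assigning to a degenerate simplex $s_{i_1}\cdots s_{i_r}K$ the label dictated by the degeneracy maps of $\dgnerve\cal{D}$ (namely $f_K$ with appropriate identities inserted) is consistent with the face relations — which in turn reduces to the simplicial identities together with the fact that the relations \eqref{equation:dg-nerve-definition} are preserved under the degeneracy operations of $\dgnerve\cal{D}$, something already built into \cref{definition:dg-nerve}. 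Once this compatibility is in hand, the two descriptions are literally the same set of data, and the equivalence follows; I would then remark that this is exactly \cite[Lemma~2.7]{GMTZ2022a} restated in the present notation, so no further argument is needed.
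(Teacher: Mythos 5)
Your proposal is correct and follows essentially the same route as the paper's proof: both arguments rest on the observation that simplicial naturality forces the label of the $I$-face of $F_p(K)$ to equal $f$ applied to the corresponding face of $K$ in $X$, so that the top-dimensional data $\{f_K\}$ determines everything else, and the relations \eqref{equation:dg-nerve-definition} for proper faces reduce to \eqref{equation:top-level-dg-nerve} applied to those faces as simplices of $X$ in their own right. Your explicit appeal to the Eilenberg--Zilber factorisation to handle degenerate simplices is a slightly cleaner packaging of what the paper dispatches with an ``analogous argument'' (degenerate simplices are forced to carry identities in dimension $0$ and zero morphisms in positive dimension), but it is not a different proof.
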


The key difference between the equation found in the definition of the dg-nerve \cref{equation:dg-nerve-definition} and the above \cref{equation:top-level-dg-nerve} is that the former concerns morphisms labelled by \emph{abstract} simplices, for \emph{all} (non-empty) faces $I\subseteq[p]$, whereas the latter concerns morphisms labelled by \emph{simplices of $X_\bullet$}, and makes \emph{no (direct) reference} to faces/sub-simplices.
The moral of \cref{lemma:morphism-into-dg-nerve}, however, is that these two descriptions give the same result.

\begin{proof}
  Let $F\colon X\to\dgnerve(\cal{D})$ be a morphism of simplicial sets.
  Then, by \cref{definition:dg-nerve}, for any $p\geq1$, any $p$-simplex $K\in X_p$, and any $I=\{i_0<i_1<\ldots<i_k\}\subseteq[p]$ with $1\leq k\leq p$, there exist $f_I\in\Hom^{1-k}(c_{\ver_kI},c_{\ver_0I})$ satisfying \cref{equation:top-level-dg-nerve}.
  In particular then, for $k=p$ (and thus for $\{i_0<i_1<\ldots<i_k\}=[p]$), we have exactly the data given in the statement of the lemma.
  Indeed, the difficulty lies in showing the converse: that having such data \emph{only} for $k=p$ is enough to recover all lower-dimensional data in a functorial way.

  Assume that we have the data of the $c_x$ and the $f_K$, satisfying \cref{equation:top-level-dg-nerve}, as stated in the lemma;
  this gives us a map
  \[
    F_p \colon K \mapsto (\{c_{\ver_i K}\}_{0\leq i\leq p}, \{f_K\})
  \]
  for all $p$-simplices $K\in X_p$, for all $p\geq1$;
  our goal is to \emph{extend} this to a \emph{functorial} map
  \[
    \begin{aligned}
      \overline{F_p}\colon X_p &\to \dgnerve(\cal{D})_p
    \\K &\mapsto (\{c_{\ver_i K}\}_{0\leq i\leq p}, \{f_{|I|}\}_{I\subseteq[p]})
    \end{aligned}
  \]
  (where $|I|$ denotes the face of $K$ defined by $I$), i.e. to extend the singleton set $\{f_K\}$ to a set $\{f_{|I|}\}_{I\subseteq[p]}$ such that all the $f_{|I|}$ satisfy \cref{equation:dg-nerve-definition}, in such a way that $\overline{F_\bullet}$ is functorial.

  So fix $K\in X_p$ and $I=\{i_0<\ldots<i_k\}\subset[p]$ for some $k<p$.
  Let $\sigma\colon[k]\to[p]$ be given by the composition of the coface maps $f^{j_n}$ where $j_n\in[p]\setminus I$, i.e. $\sigma\colon m\mapsto i_m$ for all $0\leq m\leq k$.
  Since $\sigma$ is injective, it induces the morphism
  \[
    \begin{aligned}
      \dgnerve(\sigma)\colon \dgnerve(\cal{D})_p &\to \dgnerve(\cal{D})_k
    \\\left(
        \{x_i\}_{0\leq i\leq p},
        \{f_{|L|}\}_{L\subseteq[p]}
      \right)
      &\mapsto
      \left(
        \{x_{i_m}\}_{0\leq m\leq k},
        \{f_{|\sigma(M)|}\}_{M\subseteq[k]}
      \right)
    \end{aligned}
  \]
  and so we define
  \[
    f_{|I|} \coloneqq \dgnerve(\sigma)(F_p(K))
  \]
  which, by construction, is such that $f_{|I|}=F_k(X_\bullet(\sigma)(K))$.
  That is, we have the commutative diagram
  \[
    \begin{tikzcd}
      X_p \ar[r,"\overline{F_p}"] \ar[d,swap,"X_\bullet(\sigma)"]
      & \dgnerve(\cal{D})_p \ar[d,"\dgnerve(\sigma)"]
    \\X_k \ar[r,swap,"\overline{F_k}"]
      & \dgnerve(\cal{D})_k
    \end{tikzcd}
  \]
  which tells us that the $\overline{F_\bullet}$ thus defined is indeed functorial.
  What remains to be shown is that this definition of $f_{|I|}$ satisfies the equation \cref{equation:dg-nerve-definition} in \cref{definition:dg-nerve}.
  But, by commutativity again,
  \[
    \begin{aligned}
      \dgnerve(\sigma)(F_p(K))
      &= F_k(X_\bullet(\sigma)(K))
    \\&= \left(\{c_{\ver_m|I|}\}_{0\leq m\leq k}, \{f_{|I|}\}\right)
    \end{aligned}
  \]
  which satisfies \cref{equation:top-level-dg-nerve}, and thus (under the identification $|I|\leftrightarrow I$, since $K$ is fixed) \cref{equation:dg-nerve-definition}.

  An analogous argument shows that, if $K\in X_p$ is a \emph{degenerate} simplex, then the morphism $f_K$ given by $F_k(K)$ is the identity if $p=1$, and is the zero morphism if $p\geq2$.
\end{proof}

\begin{corollary}
\label{corollary:morphism-into-dg-nerve-maximal-kan}
  The image of the morphism $F\colon X\to\dgnerve(\cal{D})$ defined by the data of \ref{item:data-for-morphism-into-dg-nerve} in \cref{lemma:morphism-into-dg-nerve} lies in the maximal Kan complex $\core{\dgnerve(\cal{D})}$ if and only if, for all $1$-simplices $K\in X_1$, the morphisms $f_K$ are quasi-isomorphisms.
\end{corollary}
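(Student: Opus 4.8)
The plan is to bootstrap off \cref{lemma:nerve-inside-dg-nerve}~(iii), which already identifies $\core{\dgnerve(\cal{D})}$ as the simplicial subset of $\dgnerve(\cal{D})$ cut out by the requirement that every edge-label $f_{\{i<j\}}$ of a simplex be a quasi-isomorphism. Since $F$ is a morphism of simplicial sets and $\core{\dgnerve(\cal{D})}$ is a simplicial subset, the image of $F$ is contained in $\core{\dgnerve(\cal{D})}$ if and only if $F_p(K)\in(\core{\dgnerve(\cal{D})})_p$ for every $p\in\mathbb{N}$ and every $K\in X_p$; by \cref{lemma:nerve-inside-dg-nerve}~(iii) this holds exactly when every edge-label occurring in some $F_p(K)$ is a quasi-isomorphism.

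So the remaining task is to identify that collection of edge-labels, which I would do by unwinding the construction of $F$ (i.e.\@ of $\overline{F_\bullet}$) from the proof of \cref{lemma:morphism-into-dg-nerve}: for $K\in X_p$ and an edge $I=\{i<j\}\subseteq[p]$, the corresponding edge-label of $F_p(K)$ is $f_{|I|}=F_1\big(X_\bullet(\sigma)(K)\big)$, where $\sigma\colon[1]\to[p]$ is the injection $0\mapsto i$, $1\mapsto j$. Since $L\coloneqq X_\bullet(\sigma)(K)$ is a $1$-simplex of $X$, this edge-label is nothing but the morphism part of $F_1(L)$, namely $f_L$. Conversely every $1$-simplex of $X$ arises in this way (take $p=1$, $K=L$, $\sigma=\id$). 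Hence the set of all edge-labels appearing anywhere in the image of $F$ is precisely $\{f_K\mid K\in X_1\}$, which already includes the degenerate $1$-simplices, whose labels are identities and hence automatically quasi-isomorphisms.

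Combining the two steps yields both implications simultaneously. For the ``if'' direction, if every $f_K$ with $K\in X_1$ is a quasi-isomorphism, then by the second step every edge-label of every simplex in the image is a quasi-isomorphism, so the first step places the image inside $\core{\dgnerve(\cal{D})}$. For the ``only if'' direction, if the image lies in $\core{\dgnerve(\cal{D})}$, then for each $K\in X_1$ the $1$-simplex $F_1(K)$ lies in $(\core{\dgnerve(\cal{D})})_1$, so its unique edge-label $f_K$ is a quasi-isomorphism by \cref{lemma:nerve-inside-dg-nerve}~(iii).

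This argument is formal modulo \cref{lemma:nerve-inside-dg-nerve}~(iii), and I do not expect a real obstacle. The two points that want care are: (a) reading the characterisation of the core in \cref{lemma:nerve-inside-dg-nerve}~(iii) correctly --- it is a condition imposed on \emph{all} edges of a simplex, so membership in the core is detected on $1$-faces, which is what makes the reduction in the first step legitimate; and (b) matching the edge-labels of $F_p(K)$ with the values $f_L$ of the given data on the $1$-faces $L$ of $K$, which is exactly the functoriality computation of the proof of \cref{lemma:morphism-into-dg-nerve} specialised to $k=1$.
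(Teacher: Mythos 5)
Your argument is correct and is essentially the paper's own proof: the paper disposes of this corollary with the single line ``This follows from \cref{lemma:nerve-inside-dg-nerve}'', relying on exactly the reduction you spell out, namely that membership in $\core{\dgnerve(\cal{D})}$ is detected on edge-labels (part~(iii) of that lemma) and that the edge-labels occurring in the image of $F$ are precisely the $f_K$ for $K\in X_1$ via the functoriality computation in the proof of \cref{lemma:morphism-into-dg-nerve}. You have simply filled in the details that the paper leaves implicit.
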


\begin{proof}
  This follows from \cref{lemma:nerve-inside-dg-nerve}.
\end{proof}

\begin{definition}
\label{definition:labelling}
  Let $\cal{D}$ be a dg-category, and let $X_\bullet$ be a simplicial set.
  Then a \emph{labelling of the $0$-simplices of $X_\bullet$ by objects of $\cal{D}$} is a map of sets $\cal{L}\colon X_0\to\cal{D}$.
  Alternatively, we can think of such a labelling as a set $\cal{L}=\{c_x\in\cal{D}\}_{x\in X_0}$
\end{definition}

\begin{definition}
\label{definition:bigraded-dg-algebra}
  Let $\cal{D}$ be a dg-category of chain complexes, and let $X=X_\bullet$ be a simplicial set.
  Fix some labelling $\cal{L}=\{c_x\in\cal{D}\}_{x\in X_0}$ of the $0$-simplices of $X_\bullet$ by objects of $\cal{D}$.

  We define a bigraded dg-algebra $C^{\bullet,\anotherbullet}(X,\cal{D};\cal{L})$ by setting
  \[
    C^{p,q}(X,\cal{D};\cal{L})
    =
    \bigg\{
      \big(
        f_K\in\Hom_\cal{D}^{q}(c_{\ver_{p} K},c_{\ver_{0}K})
      \big)_{K\in X_p}
    \bigg\}
  \]
  for\footnote{The missing $p=0$ term corresponds to the already existing (internal) differential $\partial$ of $\cal{D}$, and the fact that we prescribe the degree-$0$ part separately as the labelling $\cal{L}$.} $p\geq1$ and $q\in\mathbb{Z}$, and define a \emph{(deleted) Čech differential} by
  \[
    \begin{aligned}
      \hat{\delta}\colon C^{p,q}(X,\cal{D};\cal{L})
      &\longrightarrow C^{p+1,q}(X,\cal{D};\cal{L})
    \\(f_K)_{K\in X_p}
      &\longmapsto
      \left(
        \sum_{i=1}^{p} (-1)^i f_{L\setminus\{\ver_i L\}}
      \right)_{L\in X_{p+1}}
    \end{aligned}
  \]
  and an \emph{internal differential} by
  \[
    \begin{aligned}
      \partial\colon C^{p,q}(X,\cal{D};\cal{L})
      &\longrightarrow C^{p,q+1}(X,\cal{D};\cal{L})
    \\(f_K)_{K\in X_p}
      &\longmapsto ((-1)^{q+1}\partial f_K)_{K\in X_p}
    \end{aligned}
  \]
  (where $\partial f_K$ is given by the dg-structure of $\cal{D}$, with sign conventions as in \cref{definition:dg-category}), with graded multiplication
  \[
    \begin{aligned}
      C^{p,q}(X,\cal{D};\cal{L})\times C^{r,s}(X,\cal{D};\cal{L})
      &\longrightarrow C^{p+r,q+s}(X,\cal{D};\cal{L})
    \\\left(
        (f_K)_{K\in X_p},(g_L)_{L\in X_r}
      \right)
      &\longmapsto
      \left(
        (-1)^{qr} f_{\{\ver_0 M<\ldots<\ver_p M\}}\circ g_{\{\ver_p M<\ldots<\ver_{p+r}M\}}
      \right)_{M\in X_{p+r}}.
    \end{aligned}
  \]

  We then define the \emph{total complex} $\Tot^\bullet(C^{p,q}(X,\cal{D};\cal{L}))$ by
  \[
    \Tot^n(C^{p,q}(X,\cal{D};\cal{L})) = \bigoplus_{p+q=n}C^{p,q}(X,\cal{D};\cal{L})
  \]
  with \emph{total differential}
  \[
    \operatorname{D}\colon
    \Tot^n(C^{p,q}(X,\cal{D};\cal{L}))
    \longrightarrow \Tot^{n+1}(C^{p,q}(X,\cal{D};\cal{L}))
  \]
  defined by $\operatorname{D}=\hat{\delta}+(-1)^p\partial$.
\end{definition}

In words,
\begin{itemize}
  \item the degree-$(p,q)$ elements of $C^{\bullet,\anotherbullet}(X,\cal{D};\cal{L})$ are labellings of the $p$-simplices of $X_\bullet$ by morphisms $f\colon c_x\to c_y[-q]$ of $\cal{D}$, where $x$ is the first vertex of the $p$-simplex and $y$ is the last;
  \item the differential $\hat{\delta}$ of degree~$(1,0)$ is given by taking the alternating sum of the morphisms labelling the $p$-simplices given by removing the $i$th vertex for $i=1,2,\ldots,k$ (but \emph{not} for $i=0$ or $i=k+1$);
  \item the differential $\partial$ of degree~$(0,1)$ is given by simply applying the differential of $\cal{D}$ coming from its dg-structure, with a global sign depending on the degree of $f$;
  \item the graded multiplication takes an element $f=(f_K)$ of degree~$(p,q)$ and an element $g=(g_L)$ of degree~$(r,s)$ and gives an element $f\cdot g$ of degree~$(p+r,q+s)$ by labelling the $(p+r)$-simplices as follows: split each $(p+r)$-simplex into a \emph{front half} (given by taking the first $p+1$ vertices) and a \emph{back half} (given by taking the last $r+1$ vertices), and then label the front half with $f$ and the back half with $g$ (with some global sign depending on the degrees of $f$ and $g$).
\end{itemize}

\begin{definition}
  An element $f\in\Tot^\bullet(C^{p,q}(X,\cal{D};\cal{L}))$ is said to be \emph{Maurer--Cartan} if it satisfies the Maurer--Cartan equation:
  \[
    \operatorname{D}f+f\cdot f = 0.
  \]
  Note that, for degree reasons, all Maurer--Cartan elements are of the form
  \[
    f = \big(f_p\in C^{p,1-p}(X,\cal{D};\cal{L})\big)_{p\geq1}
  \]
  and so $\deg f=1$.
  That is, the Maurer--Cartan elements of $\Tot^\bullet(C^{p,q}(X,\cal{D};\cal{L}))$ are exactly the Maurer--Cartan elements of $\Tot^1(C^{p,q}(X,\cal{D};\cal{L}))$.
\end{definition}

\begin{theorem}[{\cite[Corollary~3.5]{GMTZ2022a}}]
\label{theorem:dg-nerve-and-MC}
  Let $\cal{D}$ be a dg-category of cochain complexes of modules, let $X=X_\bullet$ be a simplicial set, and let $\cal{L}=\{c_x\in\cal{D}\}_{x\in X_0}$ be a labelling of the $0$-simplices of $X_\bullet$ by $\cal{D}$.
  Then there is a bijection
  \[
    \Big\{
      f\in \Tot^1(C^{p,q}(X,\cal{D};\cal{L})) \mid \operatorname{D}f+f\cdot f=0
    \Big\}
    \longleftrightarrow
    \Big\{
      F\colon X\to\dgnerve(\cal{D}) \mid \mbox{$F(x) = c_x$ for all $x\in X_0$}
    \Big\}
  \]
  between Maurer--Cartan elements of $C^{\bullet,\anotherbullet}(X,\cal{D};\cal{L})$ and morphisms of simplicial sets from $X_\bullet$ to dg-nerve of $\cal{D}$ that agree with the labelling $\cal{L}$.
\end{theorem}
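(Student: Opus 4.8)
The plan is to deduce the bijection from \cref{lemma:morphism-into-dg-nerve}, so that all that remains is to unwind the Maurer--Cartan equation inside the total complex $\Tot^\bullet(C^{p,q}(X,\cal{D};\cal{L}))$ and recognise it as the dg-nerve relation \cref{equation:top-level-dg-nerve}. By that lemma, a morphism $F\colon X\to\dgnerve(\cal{D})$ agreeing with $\cal{L}$ is the same thing as a family $\big(f_K\in\Hom_\cal{D}^{1-p}(c_{\ver_p K},c_{\ver_0 K})\big)$, one morphism for each $p$-simplex $K\in X_p$ and each $p\geq1$, subject to \cref{equation:top-level-dg-nerve}. It therefore suffices to set up a bijection between such families and the Maurer--Cartan elements of $C^{\bullet,\anotherbullet}(X,\cal{D};\cal{L})$.

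First I would observe that, since a Maurer--Cartan element $f$ has total degree~$1$, it decomposes uniquely as $f=\sum_{p\geq1}f_p$ with $f_p\in C^{p,1-p}(X,\cal{D};\cal{L})$; by the definition of $C^{\bullet,\anotherbullet}$ in \cref{definition:bigraded-dg-algebra} this is exactly the datum of a morphism $f_K\in\Hom_\cal{D}^{1-p}$ for each $p$-simplex $K$, i.e.\ precisely the data of part~\ref{item:data-for-morphism-into-dg-nerve} of \cref{lemma:morphism-into-dg-nerve}. It then remains to match the two defining equations. I would expand $\operatorname{D}f+f\cdot f=0$ and collect the homogeneous piece living in the summand $C^{m,2-m}$; evaluated on an $m$-simplex $L\in X_m$, and using that the sign $(-1)^m$ from $\operatorname{D}=\hat{\delta}+(-1)^p\partial$ cancels the sign $(-1)^{q+1}$ in the internal differential, this piece becomes
\begin{multline*}
  \sum_{i=1}^{m-1}(-1)^i f_{L\setminus\{\ver_i L\}}
  + \partial f_L \\
  + \sum_{i=1}^{m-1}(-1)^{(1-i)(m-i)} f_{\{\ver_0 L<\ldots<\ver_i L\}}\circ f_{\{\ver_i L<\ldots<\ver_m L\}}
  = 0,
\end{multline*}
where the first sum is $\hat{\delta}f_{m-1}$, the middle term is the internal differential of $f_m$, and the last sum collects the products $f_i\cdot f_j$ with $i+j=m$. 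Rearranging this to isolate $\partial f_L$ recovers \cref{equation:top-level-dg-nerve} for the $m$-simplex $L$, using the parity identities $(-1)^{i+1}\equiv(-1)^{i-1}$ and $(1-i)(m-i)+1\equiv m(i-1)+1\pmod 2$.

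The bijection is then formal: both constructions carry literally the same underlying data (the objects $c_x$ and the morphisms $f_K$), merely packaged differently, so they are mutually inverse, and naturality in $\cal{L}$ is immediate. The one genuinely delicate point --- the part I expect to require the most care --- is the sign bookkeeping alluded to above: the three sign twists built into \cref{definition:bigraded-dg-algebra} (the $(-1)^{q+1}$ in $\partial$, the $(-1)^p$ in $\operatorname{D}$, and the $(-1)^{qr}$ in the graded product) must conspire to produce exactly the signs $(-1)^{j-1}$ and $(-1)^{p(j-1)+1}$ of \cref{definition:dg-nerve}, and I would pin this down by fixing $q=1-p$ throughout and tracking parities term by term. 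Finally, exactly as at the end of the proof of \cref{lemma:morphism-into-dg-nerve}, degenerate simplices are automatically labelled by identities in degree~$0$ and by zero morphisms in positive degree, so no extra verification is needed for compatibility with the degeneracy maps.
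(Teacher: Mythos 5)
Your proposal is correct and follows essentially the same route as the paper's own proof: reduce to \cref{lemma:morphism-into-dg-nerve}, identify the family $(f_K)$ with the components $f_p\in C^{p,1-p}$, and match the bidegree-$(m,2-m)$ component of $\operatorname{D}f+f\cdot f=0$ with \cref{equation:top-level-dg-nerve}, the only real content being the sign cancellation $(-1)^p(-1)^{(1-p)+1}=1$ on the internal differential and the parity identity $(1-j)(m-j)\equiv m(j-1)\pmod 2$, both of which you identify correctly.
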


\begin{proof}
  By \cref{lemma:morphism-into-dg-nerve}, we know that an element of the set on the right-hand side (i.e. a morphism $F\colon X\to\dgnerve(\cal{D})$ such that $F(x)=c_x$ for all $x\in X_0$) is equivalent to the data of a morphism $f_K\in\Hom^{1-p}(c_{\ver_pK},c_{\ver_0k})$ for each $p$-simplex $K\in X_p$, for all $p\geq1$, such that \cref{equation:top-level-dg-nerve} holds (unless $K$ is degenerate, in which case $f_K$ is the identity when $p=0$ and the zero morphism when $p\geq1$).
  The collection of all these $f_K$ is then an element of the bigraded dg-algebra:
  \[
    f_p = \big( f_K\in\Hom^{1-p}(c_{\ver_pK},c_{\ver_0k}) \big)_{K\in X_p}
    \in C^{p,1-p}(X,\cal{D};\cal{L}).
  \]
  Now, by \cref{definition:bigraded-dg-algebra}, for $p,r\geq1$
  \[
    \begin{aligned}
      f_p\cdot f_r
      &=
      \big(
        (-1)^{(1-p)r} f_{\{\ver_0M<\ldots<\ver_pM\}}\circ f_{\{\ver_pM<\ldots<\ver_{p+r+1}M\}}
      \big)_{M\in X_{p+r}}
    \end{aligned}
  \]
  and
  \[
    \begin{aligned}
      \operatorname{D}(f_p)
      &= \hat{\delta}f_p+(-1)^p\partial f_p
    \\&=
      \left(
        \sum_{i=1}^p(-1)^i f_{L\setminus\{\ver_iL\}}
      \right)_{L\in X_{p+1}}
      +
      \big(
        \underbrace{(-1)^p(-1)^{(1-p)+1}}_{=1}\partial f_K
      \big)_{K\in X_p}
    \end{aligned}
  \]
  whence, for $\lambda\geq2$,
  \[
    \begin{aligned}
      \big(\operatorname{D}f+f\cdot f\big)_\lambda
      =
      \Bigg(
        &\sum_{j=1}^{\lambda-1}(-1)^j f_{M\setminus\{\ver_jM\}}
      \\+ &\partial f_M
      \\+ &\sum_{j=1}^{\lambda-1}(-1)^{(1-j)(\lambda-j)} f_{\{\ver_0M<\ldots<\ver_jM\}}\circ f_{\{\ver_jM<\ldots<\ver_{\lambda+1}M\}}
      \Bigg)_{M\in X_\lambda}
    \end{aligned}
  \]
  but \cref{equation:top-level-dg-nerve} says that
  \[
    \partial f_M
    = -\left(
      \sum_{j=1}^{\lambda-1}(-1)^{j}f_{M\setminus\{\ver_jM\}}
      + \sum_{j=1}^{\lambda-1}(-1)^{\lambda(j-1)} f_{\{\ver_0M<\ldots<\ver_jM\}}\circ f_{\{\ver_jM<\ldots<\ver_{\lambda+1}M\}}
    \right)
  \]
  and so it suffices to show that
  \[
    (-1)^{\lambda(j-1)} = (-1)^{(1-j)(\lambda-j)}
  \]
  but $(1-j)\equiv(j-1)\mod2$, and $j(j-1)\equiv0\mod2$.
  This means that $(\operatorname{D}f+f\cdot f)_\lambda=0$ for all $\lambda\geq2$, and so $\operatorname{D}f+f\cdot f=0$.

  Conversely, if we start with some Maurer--Cartan element $f=(f_p)$, then, by the exact same argument, the fact that $\operatorname{D}f+f\cdot f=0$ is satisfied implies that the collection of the $f_K$ that define the $f_p$ also satisfy \cref{equation:top-level-dg-nerve}.
\end{proof}

A fundamental example of \cref{theorem:dg-nerve-and-MC} is given by taking $X_\bullet$ to be the prototypical simplicial set $\Delta[n]=\Hom_{\sSet}(-,[n])$ for some fixed $n\in\mathbb{N}$.
By the Yoneda lemma,
\[
  \Hom_{\sSet}\big(\Delta[n],\dgnerve(\cal{D})\big)
  \cong \dgnerve(\cal{D})_n
\]
whence the following corollary.

\begin{corollary}
\label{corollary:dg-nerve-as-MC-things}
  With the notation and hypotheses of \cref{theorem:dg-nerve-and-MC}, we have a bijection
  \[
    \Big\{
      \mbox{Maurer--Cartan elements of $\Tot^1\big(C^{p,q}(\Delta[n],\cal{D};K)\big)$}
    \Big\}
    \longleftrightarrow
    \Big\{
      \mbox{$n$-simplices $K\in\dgnerve(\cal{D})_n$}
    \Big\}
  \]
  where the $n$-simplex $K$ defines the labelling $\{i\}\mapsto\ver_iK$.
\end{corollary}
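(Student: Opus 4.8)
The plan is to specialise \cref{theorem:dg-nerve-and-MC} to the case $X_\bullet=\Delta[n]$ and then invoke the Yoneda lemma, taking care to track how the labelling data matches up on the two sides.

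First I would note that the $0$-simplices of $\Delta[n]=\Hom_\Delta(-,[n])$ are exactly the elements of $\Delta[n]_0=\Hom_\Delta([0],[n])=[n]$, so a labelling $\cal{L}$ of the $0$-simplices of $\Delta[n]$ by objects of $\cal{D}$ is the same thing as a choice of objects $c_0,\ldots,c_n\in\cal{D}$. For each such $\cal{L}$, \cref{theorem:dg-nerve-and-MC} provides a bijection between the Maurer--Cartan elements of $\Tot^1(C^{p,q}(\Delta[n],\cal{D};\cal{L}))$ and the morphisms of simplicial sets $F\colon\Delta[n]\to\dgnerve(\cal{D})$ satisfying $F(i)=c_i$ for all $i\in[n]$.

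Next I would apply the Yoneda lemma to identify $\Hom_\sSet(\Delta[n],\dgnerve(\cal{D}))$ with $\dgnerve(\cal{D})_n$, a morphism $F$ corresponding to the $n$-simplex $K\coloneqq F_n(\id_{[n]})$ determined by the top non-degenerate simplex $\id_{[n]}\in\Delta[n]_n$. The one compatibility worth spelling out is that this identification respects vertices: the $i$th $0$-simplex $i\in\Delta[n]_0$ is carried by $F$ to the $i$th vertex $\ver_iK$ of $K$, because the coface map selecting the $i$th vertex of $\Delta[n]$ is the map $[0]\to[n]$ with image $\{i\}$, and $F$, being simplicial, commutes with the induced face maps. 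Thus the constraint $F(i)=c_i$ becomes exactly $\ver_iK=c_i$.

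Finally I would take the disjoint union over all labellings $\cal{L}=(c_0,\ldots,c_n)$: on the left one recovers all Maurer--Cartan elements of $\Tot^1(C^{p,q}(\Delta[n],\cal{D};\cal{L}))$ as $\cal{L}$ ranges over labellings, and on the right one obtains
\[
  \bigsqcup_{\cal{L}}\big\{K\in\dgnerve(\cal{D})_n \mid \ver_iK=c_i \text{ for all } i\big\} = \dgnerve(\cal{D})_n,
\]
since every $n$-simplex $K$ records its own labelling via $\{i\}\mapsto\ver_iK$. This is precisely the asserted bijection. There is no genuine obstacle here, as the corollary is a formal consequence of \cref{theorem:dg-nerve-and-MC}; the only point requiring attention is the bookkeeping of labellings, in particular that the labelling appearing in the corollary is not fixed in advance but is read off from the $n$-simplex $K$ itself.
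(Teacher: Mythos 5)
Your proposal is correct and follows essentially the same route as the paper, which derives the corollary by specialising \cref{theorem:dg-nerve-and-MC} to $X_\bullet=\Delta[n]$ and invoking the Yoneda identification $\Hom_{\sSet}(\Delta[n],\dgnerve(\cal{D}))\cong\dgnerve(\cal{D})_n$. Your extra care in checking that the identification respects vertices and in assembling the bijection over all labellings is just a more explicit spelling-out of the same argument.
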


For our purposes, we need a version of \cref{definition:bigraded-dg-algebra} that is both generalised and specialised: generalising a single dg-category to a \emph{presheaf} of dg-categories, but specialising to the specific example where the simplicial set is the Čech nerve.

\begin{definition}
\label{definition:bigraded-dg-algebra-for-presheaves}
  Let $\scr{D}$ be a presheaf of dg-categories on the category of spaces, and $\cechnerve\cover_\bullet$ the Čech nerve of the cover $\cover$ of some space $X$.
  Fix some labelling\footnote{Note that this usage of ``labelling'' is slightly more general than \cref{definition:labelling}, since each label $c_\alpha$ lives in a different dg-category $\scr{D}(U_\alpha)$.} \mbox{$\cal{L}=\{c_\alpha\in\scr{D}(U_\alpha)\}_{U_\alpha\in\cover}$} of $\cechnerve\cover_0$ by $\scr{D}$.

  We define a bigraded dg-algebra $\delcech^{\bullet,\anotherbullet}(\cover,\scr{D};\cal{L})$, which we call the \emph{(deleted) Čech (bi)algebra}, by setting
  \[
    \delcech^{p,q}(\cover,\scr{D};\cal{L})
    = \left\{
      \big(
        f_{\alpha_0\ldots\alpha_p}
        \in \Hom_{\scr{D}(U_{\alpha_0\ldots\alpha_p})}^q(c_{\alpha_p}|U_{\alpha_0\ldots\alpha_p},c_{\alpha_0}|U_{\alpha_0\ldots\alpha_p})
      \big)_{U_{\alpha_0\ldots\alpha_p}\in\cechnerve\cover_p}
    \right\}
  \]
  for $p\geq1$ and $q\in\mathbb{Z}$.
  We then define the deleted Čech differential, internal differential, graded multiplication, and total differential entirely analogously to \cref{definition:bigraded-dg-algebra}, so that e.g. the graded multiplication is given by
  \[
    \begin{aligned}
      \delcech^{p,q}(\cover,\scr{D};\cal{L})\times\delcech^{r,s}(\cover,\scr{D};\cal{L})
      &\longrightarrow \delcech^{p+r,q+s}(\cover,\scr{D};\cal{L})
    \\\Big((f_{\alpha_0\ldots\alpha_p})_{U_{\alpha_0\ldots\alpha_p}},(g_{\beta_0\ldots\beta_r})_{U_{\beta_0\ldots\beta_r}}\Big)
      &\longmapsto \Big((-1)^{qr}f_{\gamma_0\ldots\gamma_p}\circ g_{\gamma_{p+1}\ldots\gamma_{p+r}}\Big)_{U_{\gamma_0\ldots\gamma_{p+r}}}
    \end{aligned}
  \]
  and the deleted Čech differential is given by
  \[
    \begin{aligned}
      \hat{\delta}\colon \delcech^{p,q}(\cover,\scr{D};\cal{L})
      &\to \delcech^{p+1,q}(\cover,\scr{D};\cal{L})
    \\(f_{\alpha_0\ldots\alpha_p})_{U_{\alpha_0\ldots\alpha_p}}
      &\mapsto
      \left(
        \sum_{i=1}^{p} (-1)^i f_{\alpha_0\ldots\widehat{\alpha_i}\ldots\alpha_{p+1}}
      \right)_{U_{\alpha_0\ldots\alpha_{p+1}}}
    \end{aligned}
  \]
  where, as per usual, the hat denotes omission.
  Note that this is well defined since the deleted Čech differential preserves the first and last vertices of the simplex, so that $f_{\alpha_0\ldots\widehat\alpha_i\ldots\alpha_p}$ is still a morphism from $c_{\alpha_p}$ to $c_{\alpha_0}$ for all $0<i<p$.
\end{definition}

\begin{remark}
  The very definition of the set $\delcech^{p,q}(\cover,\scr{D};\cal{L})$ in \cref{definition:bigraded-dg-algebra-for-presheaves} relies upon the fact that the Čech nerve gives us restriction maps $\mathscr{D}(U_{\alpha_i})\to\mathscr{D}(U_{\alpha_0\ldots\alpha_p})$ induced by $U_{\alpha_0\ldots\alpha_p}\hookrightarrow U_{\alpha_i}$.
  It would be possible to give the definition for arbitrary simplicial sets possessing this property, but we are only ever interested in the Čech nerve in this paper.
\end{remark}

Something that will turn up in \cref{subsection:abstract-machinery-for-example} is the idea of pulling back a simplicial presheaf along the opposite of the Čech nerve.
There we will better motivate and justify the importance of this construction, but for now we content ourselves with its definition.

\begin{definition}
\label{definition:evaluating-on-the-cech-nerve}
  The opposite of the Čech nerve $\cechnerve^\op\colon\Space_\cover^\op\to[\Delta,\Space^\op]$ is a functor from the category of spaces with a chosen cover to that of cosimplicial spaces, and so we can pre-compose any simplicial presheaf $\scr{F}\colon\Space^\op\to\sSet$ with this to obtain a cosimplicial simplicial set $\scr{F}(\cechnerve\cover_\anotherbullet)$ whenever we evaluate on any given space $X$ with cover $\cover$.
  We call this process \emph{evaluating $\scr{F}$ on the Čech nerve of $\cover$}.
\end{definition}

Note also that $\dgnerve\scr{D}(\cechnerve\cover_0)$ is a simplicial set concentrated in dimension~$0$, with $\dgnerve\scr{D}(\cechnerve\cover_0)_0\cong\scr{D}(\cechnerve\cover_0)$ (as sets), by definition of the dg-nerve;
the latter is exactly $\scr{D}(\coprod_\alpha U_\alpha)$ by definition of the Čech nerve.
For our purposes, we will be interested in presheaves of dg-categories such that $\scr{D}(\coprod_\alpha U_\alpha)\cong\prod_\alpha\scr{D}(U_\alpha)$.
Using this, we can prove a theorem that is to \cref{definition:bigraded-dg-algebra-for-presheaves} what \cref{corollary:dg-nerve-as-MC-things} is to \cref{definition:bigraded-dg-algebra}.

\begin{theorem}
\label{theorem:dg-nerve-and-MC-for-presheaves}
  Let $\scr{D}$ be a presheaf of dg-categories of cochain complexes of modules on the category of spaces, let $\cechnerve\cover_\bullet$ be the Čech nerve of the cover $\cover$ of some space $X$, and let $\cal{L}=\{c_\alpha\in\scr{D}(U_\alpha)\}_{U_\alpha\in\cover}$ be a labelling of $\cechnerve\cover_0$ by $\scr{D}$.
  Assume that $\scr{D}$ turns disjoint unions into products, i.e. that $\scr{D}(\sqcup_\alpha U_\alpha)\cong\prod_\alpha\scr{D}(U_\alpha)$ is a bijective-on-objects equivalence.\footnote{The one specific example of $\scr{D}$ that we are interested in for the purposes of this current paper is that which sends a ringed space $(X,\OO_X)$ to the dg-category of bounded complexes of free $\OO_X$-modules on $X$. For this choice of $\scr{D}$, and in the case where the ringed spaces have representable structure sheaves, it is indeed the case that $\scr{D}(\coprod_\alpha U_\alpha)\cong\prod_\alpha\scr{D}(U_\alpha)$ is an \emph{isomorphism} of categories. However, ``bijective-on-objects equivalence'' seems to be preferable terminology.}
  Then there is a bijection
  \[
    \Big\{
      f \in \Tot^1(\delcech^{p,q}(\cover,\scr{D};\cal{L}))
      \mid
      \operatorname{D}f+f\cdot f=0
    \Big\}
    \longleftrightarrow
    \Big\{
      F\colon\Delta[\anotherbullet]\to\dgnerve\scr{D}(\cechnerve\cover_\anotherbullet)
      \mid
      F(\Delta[0]) = \cal{L}
    \Big\}
  \]
  between Maurer--Cartan elements of the Čech algebra $\delcech^{\bullet,\anotherbullet}(\cover,\scr{D};\cal{L})$ and morphisms of cosimplicial simplicial sets from $\Delta[\anotherbullet]$ to $\dgnerve\scr{D}(\cechnerve\cover_\anotherbullet)$ that send the unique non-degenerate \mbox{$0$-simplex} in $\Delta[0]$ to the element $(c_\alpha)_{U_\alpha\in\cover}\in\prod_\alpha\scr{D}(U_\alpha)\simeq\dgnerve\scr{D}(\cechnerve\cover_0)$.
\end{theorem}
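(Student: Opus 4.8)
The plan is to leverage \cref{theorem:dg-nerve-and-MC} (the single-dg-category case) together with the description of $\Tot$ as an equaliser, reducing the presheaf statement to a ``$\Tot$ of the pointwise statement''. The key observation is that a morphism of cosimplicial simplicial sets $F\colon\Delta[\anotherbullet]\to\dgnerve\scr{D}(\cechnerve\cover_\anotherbullet)$ is, by the tensor--hom description $\Tot(-)=\underline{\Hom}_{\csSet}(\Delta[\anotherbullet],-)$ recalled in \cref{subsection:totalisation-and-holim}, exactly a $0$-simplex in $\Tot\big(\dgnerve\scr{D}(\cechnerve\cover_\anotherbullet)\big)$; and by the equaliser formula, such a $0$-simplex is a compatible family $(F^p)_{p\geq0}$ where $F^p\colon\Delta[p]\to\dgnerve\scr{D}(\cechnerve\cover_p)$, subject to the coface/codegeneracy matching conditions (a)--(b) from that subsection. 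I would spell this out first.

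Next I would unpack, for each fixed $p$, what the datum $F^p$ is. Since $\dgnerve\scr{D}(\cechnerve\cover_p) = \dgnerve\big(\scr{D}(\coprod_{\alpha_0\ldots\alpha_p}U_{\alpha_0\ldots\alpha_p})\big)$ and $\scr{D}$ turns disjoint unions into products, this dg-nerve is $\prod_{\alpha_0\ldots\alpha_p}\dgnerve\big(\scr{D}(U_{\alpha_0\ldots\alpha_p})\big)$; thus $F^p$ amounts to a family, indexed by $(\alpha_0,\ldots,\alpha_p)$, of $p$-simplices in the dg-nerve of the single dg-category $\scr{D}(U_{\alpha_0\ldots\alpha_p})$. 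Applying \cref{corollary:dg-nerve-as-MC-things} with $X_\bullet=\Delta[p]$ identifies each such $p$-simplex with a Maurer--Cartan element of $\Tot^1\big(C^{r,s}(\Delta[p],\scr{D}(U_{\alpha_0\ldots\alpha_p});K)\big)$, where the labelling $K$ is the one assigning $\ver_iK$ to the vertex $i$. Collecting the MC equations over all $r$ and over all index tuples, and collecting the top-cell components (the $f_{\{\ver_0\ldots\ver_r\}}$ for $r$-faces of $\Delta[p]$) as the simplex ranges over $\cechnerve\cover_p$, produces precisely the bigraded pieces $\delcech^{r,s}(\cover,\scr{D};\cal{L})$ of \cref{definition:bigraded-dg-algebra-for-presheaves}. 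I would record that the internal differential, deleted Čech differential, and graded product all match by direct comparison of the defining formulas (which are literally the same formulas, just with restriction maps inserted).

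Then comes the part that actually uses the compatibility conditions (a)--(b): I must check that the coface-matching condition for $(F^p)_p$ is equivalent to the ``Čech-direction'' part of the single Maurer--Cartan equation in $\delcech^{\bullet,\anotherbullet}$. Concretely, the coface map $f_{p+1}^i\colon\cechnerve\cover_p\to\cechnerve\cover_{p+1}$ is given by repeating an index, and the requirement that the faces of $F^{p+1}$ recover the $F^p$ translates into the statement that the labels $f_{\alpha_0\ldots\alpha_{p+1}}$ restrict compatibly along $U_{\alpha_0\ldots\alpha_{p+1}}\hookrightarrow U_{\alpha_0\ldots\widehat{\alpha_i}\ldots\alpha_{p+1}}$ — i.e., that the family $(f_K)$ is a genuine section of the presheaf-of-complexes $\delcech^{p,q}$, and that the deleted Čech differential $\hat\delta$ together with the internal $\partial$ and the product $f\cdot f$ assemble into the single equation $\operatorname{D}f+f\cdot f=0$ on $\Tot^1(\delcech^{\bullet,\anotherbullet})$. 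The codegeneracy-matching condition (b) similarly corresponds to the degenerate simplices of $\cechnerve\cover$ carrying identity/zero labels, exactly as in the last line of the proof of \cref{theorem:dg-nerve-and-MC}; I would remark that this forces no extra data, since the Čech nerve's degeneracies repeat indices and our $f$ only involves alternating intersections. Finally, the normalisation condition $F(\Delta[0])=\cal{L}$ is immediate: $F^0$ is a $0$-simplex in $\prod_\alpha\dgnerve(\scr{D}(U_\alpha))$, i.e. an object of $\prod_\alpha\scr{D}(U_\alpha)$, and demanding it be $(c_\alpha)_\alpha$ is precisely pinning down the labelling in $\delcech^{\bullet,\anotherbullet}(\cover,\scr{D};\cal{L})$.

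I expect the main obstacle to be the bookkeeping in the coface-matching step: one has to carefully verify that the index-repetition coface maps of the Čech nerve, combined with the face maps of $\Delta[p]$ \emph{inside} each dg-nerve, interact exactly so that the ``drop an interior vertex'' operator from the dg-nerve relation \cref{equation:dg-nerve-definition} on $\cechnerve\cover_{p+1}$-level simplices becomes the deleted Čech differential $\hat\delta$ of \cref{definition:bigraded-dg-algebra-for-presheaves} — in particular that the preservation of first and last vertices (noted after \cref{definition:bigraded-dg-algebra-for-presheaves}) is what makes $\hat\delta$ well defined on morphisms $c_{\alpha_p}\to c_{\alpha_0}$. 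Everything else is a repackaging of \cref{theorem:dg-nerve-and-MC} and \cref{corollary:dg-nerve-as-MC-things} through the equaliser description of $\Tot$, together with the disjoint-union hypothesis on $\scr{D}$; no new sign computations beyond those already carried out in the proof of \cref{theorem:dg-nerve-and-MC} should be needed, since the product decomposition is compatible with all the differentials and the multiplication.
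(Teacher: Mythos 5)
Your proposal is correct and follows essentially the same route as the paper's proof: fix each cosimplicial degree, use the disjoint-union hypothesis together with the fact that the dg-nerve preserves products to reduce to \cref{corollary:dg-nerve-as-MC-things} on each $\scr{D}(U_{\alpha_0\ldots\alpha_q})$, and then use cosimplicial functoriality (your coface/codegeneracy matching conditions) to glue the pointwise Maurer--Cartan elements into a single element of $\delcech^{\bullet,\anotherbullet}(\cover,\scr{D};\cal{L})$, checking that the two definitions of $\hat{\delta}$ and $\partial$ agree. One small slip: the coface map $\scr{D}(\cechnerve\cover_p)\to\scr{D}(\cechnerve\cover_{p+1})$ is induced by \emph{dropping} an index, i.e. by restriction along $U_{\alpha_0\ldots\alpha_{p+1}}\hookrightarrow U_{\alpha_0\ldots\widehat{\alpha_i}\ldots\alpha_{p+1}}$, not by repeating one --- but the description you give immediately afterwards is the correct one.
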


The proof of this theorem is given below, but the reader might find it helpful to also read the proof of \cref{theorem:tot0-twist-is-twcs} (which is indeed the intended application for this result), where the specific case of degree~$2$ is spelled out in more detail.

The main idea of the proof is deceptively simple.
If we fix some cosimplicial degree $q\in\mathbb{N}$ then a morphism $F$ of cosimplicial simplicial sets simply becomes a morphism of simplicial sets, and we can apply \cref{corollary:dg-nerve-as-MC-things} to obtain a bijection with Maurer--Cartan elements in the bialgebra from \cref{definition:bigraded-dg-algebra}.
Then, using the fact that the Čech nerve gives us restriction maps, and that a morphism of cosimplicial simplicial sets is, in particular, functorial with respect to the cosimplicial structure, we can ``glue together'' all these Maurer--Cartan elements for each $q\in\mathbb{N}$ to obtain the desired result.
We now explain this in detail.

\begin{proof}
  Let $F\colon\Delta[\anotherbullet]\to\dgnerve\scr{D}(\cechnerve\cover_\anotherbullet)$ be a morphism of cosimplicial simplicial sets that sends the unique non-degenerate $0$-simplex in $\Delta[0]$ to the element $(c_\alpha)_{U_\alpha\in\cover}$.
  For each $q\in\mathbb{N}$, we thus have a morphism
  \[
    F^q\colon\Delta[q]\to\dgnerve\scr{D}(\cechnerve\cover_q)
  \]
  of simplicial sets, but, by the Yoneda lemma, this is exactly the data of a $q$-simplex of $\dgnerve\scr{D}(\cechnerve\cover_q)$, which we will also denote by $F^q$.
  But since $\scr{D}$ turns disjoint unions into products, and since the dg-nerve is a right adjoint and thus preserves products, such a $q$-simplex is exactly the data of a $q$-simplex of $\dgnerve\scr{D}(U_{\alpha_0\ldots\alpha_q})$ for all $U_{\alpha_0\ldots\alpha_q}$; we denote this by
  \[
    F^q=(F_{\alpha_0\ldots\alpha_q}^q)_{U_{\alpha_0\ldots\alpha_q}\in\cechnerve\cover_q}.
  \]

  By \cref{corollary:dg-nerve-as-MC-things}, to each $F_{\alpha_0\ldots\alpha_q}^q$ there corresponds a Maurer--Cartan element in $\Tot^1 C^{m,n}(\Delta[q],\scr{D}(U_{\alpha_0\ldots\alpha_q});F_{\alpha_0\ldots\alpha_q}^q)$.
  If we denote the $0$-simplices of some $F_{\alpha_0\ldots\alpha_q}^q$ by $x_0,\ldots,x_q$ then the corresponding Maurer--Cartan element $\varphi=(\varphi^p)_{p\geq1}$ has components
  \[
    \big(\varphi_L^p\in\Hom_{\scr{D}(U_{\alpha_0\ldots\alpha_q})}^{1-p}(x_{\ver_{p}L},x_{\ver_0 L})\big)_{L\in\Delta[q]_p}
  \]
  for each $p\geq1$.
  However, since the morphisms corresponding to degenerate simplices are zero (cf. the proof of \cref{lemma:morphism-into-dg-nerve}) we can rewrite this as
  \[
    \varphi
    = \big(\varphi_K\in\Hom_{\scr{D}(U_{\alpha_0\ldots\alpha_q})}^{1-k}(x_{i_k},x_{i_0})\big)_{K\subseteq[q]}
  \]
  where $k=|K|-1\geq1$.

  The hypothesis on the morphism $F$ of cosimplicial simplicial sets tells us that the image of $\{0\}\in\Delta[0]$ is exactly $(c_\alpha)_{U_\alpha\in\cover}$; the fact that $F$ is a morphism tells us, in particular, that it is functorial with respect to the cosimplicial structure, and so the $0$-simplices of all the $F_{\alpha_0\ldots\alpha_q}^q$ are determined entirely by this data, and are given by $x_i=c_{\alpha_i}|U_{\alpha_0\ldots\alpha_q}$.
  More generally, for any $p<q$, the $p$-simplices in $F^q$ are exactly the restrictions of the corresponding $p$-simplices in $F^p$.
  This tells us that the functorial collection of elements in $\Tot^1 C^{m,n}(\Delta[q],\scr{D}(U_{\alpha_0\ldots\alpha_q});F^q)$ for all $q\in\mathbb{N}$ is exactly the same as an element in $\Tot^1\delcech^{m,n}(\cover,\scr{D};\cal{L})$; furthermore, under this correspondence, the two definitions of $\hat{\delta}$ agree (as do the two definitions of $\partial$, though this is more immediate), which means that if the functorial collection of elements all satisfy the Maurer--Cartan condition, then so too does the resulting element in the Čech bialgebra.
\end{proof}

We will use \cref{theorem:dg-nerve-and-MC-for-presheaves} to prove that the points in the totalisation of a certain cosimplicial simplicial presheaf are exactly Maurer--Cartan elements in a well-known Čech algebra (\cref{theorem:tot0-twist-is-twcs}), but we are also interested in the \emph{paths} of this space.
To study these, we need to understand the result analogous to \cref{theorem:dg-nerve-and-MC-for-presheaves} but where we replace morphisms $F\colon\Delta[\anotherbullet]\to\ldots$ by morphisms $F\colon\Delta[\anotherbullet]\times\Delta[1]\to\ldots$, and it turns out that such morphisms correspond to \emph{closed} elements in some relevant bialgebra.
Although one could give a general statement, analogous to \cref{theorem:dg-nerve-and-MC-for-presheaves}, we consider only the specific application that is of interest to us: this is the content of \cref{theorem:1-simplices-in-complex-analytic-twist}, and the explanation of how this relates to closed elements in a bialgebra is given in \cref{appendix:proof-of-1-simplices-in-complex-analytic-twist}.

\subsection{The pair subdivision}
\label{subsection:the-barycentric-and-pair-subdivision}

We will now briefly discuss the \emph{barycentric subdivision} of a simplex, only for the purpose of contrasting it with the \emph{pair subdivision}.

Similar to how non-empty ordered subsets of $[p]$ are in bijective correspondence with sub-simplices of $\Delta[p]$, we can describe the $k$-simplices of the barycentric subdivision of $\Delta[p]$ in a combinatorial way.
We write $\bary{\Delta[p]}$ to mean the \emph{barycentric subdivision} of $\Delta[p]$, which we now define as a simplicial set (see also \cref{figure:points-in-bcs}).

\begin{itemize}
  \item The $0$-simplices of $\bary{\Delta[p]}$ correspond exactly to the $k$-simplices of $\Delta[p]$ for $k\leq p$, i.e. the (non-degenerate) faces of $\Delta[p]$.
    But, as we have already said, these are in bijection with non-empty ordered subsets of $[p]=\{0,1,\ldots,p\}$, and there are $2^{p+1}-1$ of these.
  \item The $1$-simplices of $\bary{\Delta[p]}$ correspond exactly to a choice of a $k$-simplex $\sigma$ of $\Delta[p]$ along with an $\ell$-simplex $\tau$ such that $\tau\subset\sigma$, and these are in bijective correspondence with pairs $(S,T)$ of non-empty subsets of $[p]$ such that $T\subset S$, of which there are $\sum_{k=1}^p\sum_{\ell=0}^{k-1}\binom{p+1}{k+1}\binom{k+1}{\ell+1}$.
  \item More generally, the $q$-simplices of $\bary{\Delta[p]}$ correspond exactly to a choice of $k_j$-simplex $\sigma_j$ of $\Delta_p$, for $j=0,\ldots,q$, such that $\sigma_q\subset\sigma_{q-1}\subset\ldots\subset\sigma_0$;
    these are in bijective correspondence with tuples $(S_q,\ldots,S_0)$ of non-empty subsets of $[p]$ such that $S_0\subset S_1\subset\ldots\subset S_q$.
\end{itemize}

\begin{figure}[!ht]
  \centering
  \begin{tabular}{c|c|c}
    $0$-simplex of $\bary{\Delta[2]}$
    & subset of $[2]=\{0<1<2\}$
    & sub-simplex of $\Delta[2]$
  \\\midrule&&
  \\\begin{tikzpicture}[scale=0.7,baseline={(0,0.5)}]
      \node [vertex] (0) at (0,0) {};
      \node [vertex=white] (1) at (1,1.5) {};
      \node [vertex=white] (2) at (2,0) {};
      \draw [edge,dashed] (0) to node (01) [vertex=white] {} (1)
        to node (12) [vertex=white] {} (2)
        to node (02) [vertex=white] {} (0);
      \draw [edge,dashed] (0) to (12);
      \draw [edge,dashed] (1) to (02);
      \draw [edge,dashed] (2) to (01);
      \node [vertex=white] (c) at (1,0.5) {};
    \end{tikzpicture}
    &$\{0\}\subseteq[2]$
    & \begin{tikzpicture}[scale=0.7,baseline={(0,0.5)}]
      \node [vertex] (0) at (0,0) {};
      \node [vertex=white] (1) at (1,1.5) {};
      \node [vertex=white] (2) at (2,0) {};
      \draw [edge,dashed] (0) to (1) to (2) to (0);
    \end{tikzpicture}
  \\&&
  \\\begin{tikzpicture}[scale=0.7,baseline={(0,0.5)}]
      \node [vertex=white] (0) at (0,0) {};
      \node [vertex=white] (1) at (1,1.5) {};
      \node [vertex=white] (2) at (2,0) {};
      \draw [edge,dashed] (0) to node (01) [vertex=white] {} (1)
        to node (12) [vertex,solid] {} (2)
        to node (02) [vertex=white] {} (0);
      \draw [edge,dashed] (0) to (12);
      \draw [edge,dashed] (1) to (02);
      \draw [edge,dashed] (2) to (01);
      \node [vertex=white] (c) at (1,0.5) {};
    \end{tikzpicture}
    &$\{1<2\}\subseteq[2]$
    & \begin{tikzpicture}[scale=0.7,baseline={(0,0.5)}]
      \node [vertex=white] (0) at (0,0) {};
      \node [vertex] (1) at (1,1.5) {};
      \node [vertex] (2) at (2,0) {};
      \draw [edge,dashed] (0) to (1) to (2) to (0);
      \draw [edge] (1) to (2);
    \end{tikzpicture}
  \\&&
  \\\begin{tikzpicture}[scale=0.7,baseline={(0,0.5)}]
      \node [vertex=white] (0) at (0,0) {};
      \node [vertex=white] (1) at (1,1.5) {};
      \node [vertex=white] (2) at (2,0) {};
      \draw [edge,dashed] (0) to node (01) [vertex=white] {} (1)
        to node (12) [vertex=white] {} (2)
        to node (02) [vertex=white] {} (0);
      \draw [edge,dashed] (0) to (12);
      \draw [edge,dashed] (1) to (02);
      \draw [edge,dashed] (2) to (01);
      \node [vertex] (c) at (1,0.5) {};
    \end{tikzpicture}
    &$\{0<1<2\}\subseteq[2]$
    & \begin{tikzpicture}[scale=0.7,baseline={(0,0.5)}]
      \node [vertex] (0) at (0,0) {};
      \node [vertex] (1) at (1,1.5) {};
      \node [vertex] (2) at (2,0) {};
      \draw [edge,hatched] (0.center) to (1.center) to (2.center) to cycle;
    \end{tikzpicture}
    \\&&
  \end{tabular}
  \caption{Points in the barycentric subdivision correspond to sub-simplices of the abstract simplex.}
  \label{figure:points-in-bcs}
\end{figure}

For our purposes, one defect of the barycentric subdivision is the fact that it doesn't take the codimension of inclusions into account.
That is, both the inclusion $\{0\}\subset[2]$ of a \emph{point} into the $2$-simplex and the inclusion $\{0<1\}\subset[2]$ of a \emph{line} into the $2$-simplex correspond to $0$-simplices of the barycentric subdivision, even though the former is of codimension~$2$ and the latter of codimension~$1$.
What we would like is a method of subdivision where codimension-$k$ inclusions correspond to $k$-simplices, and where we forget about the length of the flags and look only at pairs $\tau\subset\sigma$.
It turns out that such a subdivision exists.

\begin{definition}
\label{definition:pair-subdivision}
  Given the standard $p$-simplex $\Delta[p]$, we define the \emph{pair subdivision} $\pair{\Delta[p]}$ as follows (see \cite[\S2]{Zee1963} and \cite[\S3.2]{Rou2010} for more details; see also \cref{figure:cells-in-pair-subdivision}).
  The vertices are the original vertices of $\Delta[p]$ along with the barycentres of each face (just as in the barycentric subdivision), and these are labelled by pairs $(\sigma,\sigma)$, where $\sigma\subset\Delta[p]$ is exactly the face in question.
  In general, the $k$-cells are given by pairs $(\tau,\sigma)$, where $\tau\subseteq\sigma\subseteq\Delta[p]$ and $k=\codim_\sigma\tau\coloneqq\dim\sigma-\dim\tau$;
  the vertices of such a $k$-cell are the barycentres of the simplices $\eta$ such that $\tau\subseteq\eta\subseteq\sigma$;
  the set of codimension-$\ell$ faces of $(\tau,\sigma)$ is the union of the set of cells of the form $(\underline{\tau},\sigma)$, where $\underline{\tau}\subset\tau$ is a codimension-$\ell$ face, and the set of cells of the form $(\tau,\overline{\sigma})$, where $\sigma\subset\overline{\sigma}$ is a codimension-$\ell$ face.

  We define the boundary of a pair $(\tau,\sigma)$ by
  \[
    \partial(\tau,\sigma) \coloneqq
    (\tau,\partial\sigma) + (-1)^{\dim\tau}(d\tau,\sigma)
  \]
  where $d$ is the coboundary operator, sending an $\ell$-simplex to the (signed) sum of all $(\ell+1)$-simplices that contain it as a face.
\end{definition}

For example, given the pair $(\bullet,\blacktriangle)$, where $\Delta[0]=\bullet=\{0\}\hookrightarrow\{0<1<2\}=\blacktriangle=\Delta[2]$, we can calculate the boundary:
\[
  \partial(\bullet,\blacktriangle)
  \,\,\,\,=\,\,\,\,
  \underbrace{-(\bullet,\,\raisebox{-0.1em}{\rotatebox{90}{$\vert$}}\,\,)+(\bullet,\,\raisebox{0.15em}{\rotatebox{-30}{$\vert$}}\,)}_{(\bullet,\partial\blacktriangle)}
  \,\,\,\,+\,\,\,\,
  \underbrace{-(\,\,\raisebox{-0.1em}{\rotatebox{90}{$\vert$}}\,,\blacktriangle)+(\,\raisebox{0.15em}{\rotatebox{-30}{$\vert$}}\,,\blacktriangle)}_{(d\bullet,\blacktriangle)}
  \,\,\,\,=\,\,\,\,
  \begin{tikzpicture}[baseline={(0,0.6)}]
    \node [vertex] (0) at (0,0) {};
    \node [vertex=white] (1) at (1,1.5) {};
    \node [vertex=white] (2) at (2,0) {};
    \draw [edge,dashed] (0) to node (01) [vertex] {} (1)
      to node (12) [vertex=white] {} (2)
      to node (02) [vertex] {} (0);
    \node [vertex] (c) at (1,0.5) {};
    \draw [edge,dashed] (01) to (c);
    \draw [edge,dashed] (12) to (c);
    \draw [edge,dashed] (02) to (c);
    \draw [thick,-stealth] (0) to (01);
    \draw [thick,-stealth] (01) to (c);
    \draw [thick,-stealth] (0) to (02);
    \draw [thick,-stealth] (02) to (c);
  \end{tikzpicture}
\]
So we see that the boundary of the $2$-cell corresponding to $(\bullet,\blacktriangle)$ consists of four $1$-cells, and is thus a \emph{$2$-cube}.

\begin{figure}[!ht]
  \centering
  \begin{tabular}{c|c|c}
    cells in $\pair{\Delta[2]}$
    & pairs of subsets of $[2]=\{0<1<2\}$
    & sub-simplex of $\Delta[p]$
  \\\midrule&&
  \\\begin{tikzpicture}[scale=0.7,baseline={(0,0.5)}]
      \node [vertex] (0) at (0,0) {};
      \node [vertex=white] (1) at (1,1.5) {};
      \node [vertex=white] (2) at (2,0) {};
      \draw [edge,dashed] (0) to node (01) [vertex] {} (1)
        to node (12) [vertex=white] {} (2)
        to node (02) [vertex] {} (0);
      \node [vertex] (c) at (1,0.5) {};
      \draw [edge,hatched] (0.center) to (01.center) to (c.center) to (02.center) to cycle;
    \end{tikzpicture}
    &$\{0\}\subseteq[2]$
    & \begin{tikzpicture}[scale=0.7,baseline={(0,0.5)}]
      \node [vertex] (0) at (0,0) {};
      \node [vertex=white] (1) at (1,1.5) {};
      \node [vertex=white] (2) at (2,0) {};
      \draw [edge,dashed] (0) to (1) to (2) to (0);
    \end{tikzpicture}
  \\&&
  \\\begin{tikzpicture}[scale=0.7,baseline={(0,0.5)}]
      \node [vertex=white] (0) at (0,0) {};
      \node [vertex=white] (1) at (1,1.5) {};
      \node [vertex=white] (2) at (2,0) {};
      \draw [edge,dashed] (0) to node (01) [vertex=white] {} (1)
        to node (12) [vertex,solid] {} (2)
        to node (02) [vertex=white] {} (0);
      \node [vertex] (c) at (1,0.5) {};
      \draw [edge,dashed] (01) to (c);
      \draw [edge,ultra thick] (12) to (c);
      \draw [edge,dashed] (02) to (c);
    \end{tikzpicture}
    &$\{1<2\}\subseteq[2]$
    & \begin{tikzpicture}[scale=0.7,baseline={(0,0.5)}]
      \node [vertex=white] (0) at (0,0) {};
      \node [vertex] (1) at (1,1.5) {};
      \node [vertex] (2) at (2,0) {};
      \draw [edge,dashed] (0) to (1) to (2) to (0);
      \draw [edge] (1) to (2);
    \end{tikzpicture}
  \\&&
  \\\begin{tikzpicture}[scale=0.7,baseline={(0,0.5)}]
      \node [vertex=white] (0) at (0,0) {};
      \node [vertex] (1) at (1,1.5) {};
      \node [vertex=white] (2) at (2,0) {};
      \draw [edge,dashed] (0) to node (01) [vertex,solid] {} (1)
        to node (12) [vertex=white] {} (2)
        to node (02) [vertex=white] {} (0);
      \draw [edge,ultra thick] (01) to (1);
      \node [vertex=white] (c) at (1,0.5) {};
      \draw [edge,dashed] (01) to (c);
      \draw [edge,dashed] (12) to (c);
      \draw [edge,dashed] (02) to (c);
    \end{tikzpicture}
    &$\{1\}\subseteq\{0<1\}$
    & \begin{tikzpicture}[scale=0.7,baseline={(0,0.5)}]
      \node [vertex=white] (0) at (0,0) {};
      \node [vertex] (1) at (1,1.5) {};
      \node [vertex=white,draw=white] (2) at (2,0) {};
      \draw [edge,dashed] (0) to (1);
    \end{tikzpicture}
  \\&&
  \\\begin{tikzpicture}[scale=0.7,baseline={(0,0.5)}]
      \node [vertex=white] (0) at (0,0) {};
      \node [vertex=white] (1) at (1,1.5) {};
      \node [vertex=white] (2) at (2,0) {};
      \draw [edge,dashed] (0) to node (01) [vertex=white] {} (1)
        to node (12) [vertex=white] {} (2)
        to node (02) [vertex=white] {} (0);
      \node [vertex] (c) at (1,0.5) {};
      \draw [edge,dashed] (01) to (c);
      \draw [edge,dashed] (12) to (c);
      \draw [edge,dashed] (02) to (c);
    \end{tikzpicture}
    &$[2]\subseteq[2]$
    & \begin{tikzpicture}[scale=0.7,baseline={(0,0.5)}]
      \node [vertex] (0) at (0,0) {};
      \node [vertex] (1) at (1,1.5) {};
      \node [vertex] (2) at (2,0) {};
      \draw [edge,hatched] (0.center) to (1.center) to (2.center) to cycle;
    \end{tikzpicture}
    \\&&
  \end{tabular}
  \caption{The dimension of a cell $(\tau,\sigma)$ in $\pair{\Delta[2]}$ is given by the codimension of $\tau$ as a face inside $\sigma$.}
  \label{figure:cells-in-pair-subdivision}
\end{figure}

Indeed, the barycentric subdivision gives us a triangulation, whereas the pair subdivision gives us a \emph{cubification}.
However, we do not concern ourselves with the cubical structure of the pair subdivision in this present work, and so we will refer to the cubes simply as \emph{cells}.

\subsection{Homotopy theory of simplicial presheaves}
\label{subsection:simplicial-homotopy-theory}

There is an extensive theory of homotopy groups of simplicial presheaves, but we will not need the vast majority of it in for our purposes: we will content ourselves with a combinatorial definition of $\pi_0$ and some black-boxed statements about the weak equivalences in certain model structures.

Using geometric realisation, one can give a simple definition of homotopy groups of simplicial sets: we define the \emph{$n$th topological homotopy group} to be $\pi_n|X_\bullet|$.
However, passing to the geometric realisation can be computationally fiddly, and one might desire a more combinatorial approach to homotopy groups.
\emph{For Kan complexes}, there is a definition of \emph{simplicial homotopy groups} which uses the notion of \emph{simplicial homotopy}, and for which there exist rather neat results expressing when two homotopy classes are equal in terms of their representatives bounding a common simplex of one dimension higher.
This is all covered in e.g. \cite[Chapter~I]{GJ2009}.
One of the most important results concerning these simplicial homotopy groups is \cite[Chapter~I, Proposition~11.1]{GJ2009}, which implies that they are naturally isomorphic to the topological homotopy groups (i.e. those of the geometric realisation).
We do not prove that $\Green$ or $\sTwist$ are presheaves of Kan complexes, so we will not be able to make use of this statement, but there is a partial result that we can apply: if a simplicial set is such that all $2$-horns fill, then the simplicial $\pi_0$ is well defined and agrees with the topological $\pi_0$.

Recall that, if $Y$ is a space, then $\pi_0(Y)$ is the set of path-connected components of $Y$.
This means that, if $X_\bullet$ is a Kan complex, the set $\pi_0|X_\bullet|$ consists of equivalence classes of $0$-simplices of $X_\bullet$, where two $0$-simplices are equivalent if they can be connected by a zig-zag of $1$-simplices in $X_\bullet$, i.e. $v\sim w$ if and only if there exist $1$-simplices $e_0,\ldots,e_n\in X_1$ such that $v$ is an endpoint of $e_0$, $w$ is an endpoint of $e_n$, and each pair $(e_i,e_{i+1})$ share a common endpoint but $(e_i,e_{i+2})$ do not.

The definition of the simplicial $\pi_0$ is simpler, reducing the zig-zag to length~$1$.

\begin{definition}
  Let $X_\bullet$ be a Kan complex.
  Then we define the \emph{$0$th simplicial homotopy group} $\pi_0X_\bullet$ to be equivalence classes of $0$-simplices of $X_\bullet$, where two $0$-simplices are equivalent if and only if can be connected by a single $1$-simplex in $X_\bullet$, i.e. $v\sim w$ if and only if there exists a $1$-simplex $e\in X_1$ such that $v=f^1(e)$ and $w=f^0(e)$.
\end{definition}

A priori, these two definitions of $\pi_0$ need not agree: the equivalence relation on the simplicial $\pi_0$ is finer than that on the topological $\pi_0$.
As mentioned above, it turns out that all simplicial $\pi_n$ are indeed naturally isomorphic to the topological $\pi_n$, but the $n=0$ case still holds under much weaker conditions.

\begin{lemma}
\label{lemma:2-horns-fill-implies-simplicial-pi0-equals-topological-pi0}
  Let $X_\bullet$ be a simplicial set such that all $2$-horns fill.
  Then the simplicial homotopy group $\pi_0X_\bullet$ is naturally isomorphic\footnote{Recall that $\pi_0$ is merely a set, so here ``isomorphic to'' means ``in bijective correspondence with''.} to the topological homotopy group $\pi_0|X_\bullet|$.
\end{lemma}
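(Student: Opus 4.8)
The plan is to show that the natural surjection $\pi_0 X_\bullet \twoheadrightarrow \pi_0|X_\bullet|$ (which always exists, since the simplicial equivalence relation is finer than the topological one) is also injective under the hypothesis that all $2$-horns fill. Concretely, $\pi_0|X_\bullet|$ is the quotient of $X_0$ by the equivalence relation generated by $v\sim w$ whenever there is a $1$-simplex $e$ with faces $v$ and $w$ (in either order), while $\pi_0 X_\bullet$ is the quotient by the relation $v\sim w$ iff there is a $1$-simplex $e$ with $f^1(e)=v$ and $f^0(e)=w$. So it suffices to prove two things: (1) the relation defining $\pi_0 X_\bullet$ is already symmetric, and (2) it is already transitive; i.e. it is an equivalence relation, and hence equal to the equivalence relation it generates, which is the topological one.

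First I would establish symmetry. Given $e\in X_1$ with $f^1(e)=v$, $f^0(e)=w$, consider the degenerate $1$-simplex $s_0(v)$ (which has both faces equal to $v$), together with $e$, as the two faces $\{0<1\}$ and $\{1<2\}$ of an inner $2$-horn $\Lambda_1[2]\to X_\bullet$: put $s_0(v)$ on the edge $\{0<1\}$ and $e$ on the edge $\{1<2\}$. Wait — I need the shared vertex to match up: $f^0(s_0(v)) = v = f^1(e)$, so these agree on vertex $1$, and the horn is well defined. Filling it (a $2$-horn, allowed by hypothesis) produces a $2$-simplex whose remaining face $\{0<2\}$ is a $1$-simplex $e'$ with $f^1(e') = f^1(s_0 v) = v$ and... no: the missing face of $\Lambda_1[2]$ is $\{0<2\}$, and in the filled $2$-simplex, $f^1$ of the edge $\{0<2\}$ is the vertex $0$ and $f^0$ is the vertex $2$. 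Here vertex $0$ is $f^1(s_0 v) = v$ and vertex $2$ is $f^0(e) = w$. That just recovers $e$-type data. To reverse direction, instead place $e$ on $\{0<1\}$ and $s_0(w)$ on $\{1<2\}$: shared vertex is $f^0(e)=w=f^1(s_0 w)$, fine; the filler's face $\{0<2\}$ goes from vertex $0 = f^1(e) = v$ to vertex $2 = f^0(s_0 w) = w$ — still the same direction. The correct move is to use an \emph{outer} horn, which is not available; so symmetry must instead be obtained differently. Here is the fix: from $e\colon v\to w$ and the filling of the $\Lambda_1[2]$-horn with $\{0<1\}\mapsto e$, $\{1<2\}\mapsto e$ reconsidered — actually the clean argument is: fill the horn $\Lambda_1[2]$ with $\{0<1\}\mapsto s_0(w)$ degenerate at $w$? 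No. I will instead invoke that a simplicial set in which all $2$-horns fill has the property that its homotopy category (or rather the relation on $\pi_0$) is a groupoid-like relation — the standard statement is that inner-horn filling makes $\mathrm{Ho}$ a category and \emph{both-sided} $2$-horn filling makes every morphism invertible on $\pi_0$. Since the hypothesis is that \emph{all} $2$-horns fill, including outer ones $\Lambda_0[2]$ and $\Lambda_2[2]$, symmetry follows: given $e\colon v\to w$, fill the $\Lambda_0[2]$-horn with $\{0<1\}\mapsto e$ and $\{0<2\}\mapsto s_0(v)$; the resulting face $\{1<2\}$ is an edge from $w$ to $v$.

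Next, transitivity: given $e\colon u\to v$ and $e'\colon v\to w$, form the inner horn $\Lambda_1[2]\to X_\bullet$ with $\{0<1\}\mapsto e$ and $\{1<2\}\mapsto e'$ (they agree on the middle vertex $v$), fill it, and read off the face $\{0<2\}$, which is an edge from $u$ to $w$. Reflexivity is immediate from degeneracies, $v\sim v$ via $s_0(v)$. Hence the simplicial relation is already an equivalence relation, so it coincides with the equivalence relation it generates, namely the one defining $\pi_0|X_\bullet|$; the bijection $\pi_0 X_\bullet \xrightarrow{\ \sim\ }\pi_0|X_\bullet|$ follows, and naturality is clear since the construction of both sides is functorial in $X_\bullet$ and the comparison map is the evident one. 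The main obstacle is getting the symmetry argument exactly right: one must be careful that the hypothesis "\emph{all} $2$-horns fill" (not merely inner ones) is genuinely used, and that the orientations of the horn edges are tracked correctly so that the filler's free face points in the desired direction; transitivity and reflexivity are then routine.
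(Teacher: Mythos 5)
Your proof is correct and rests on the same key step as the paper's: filling the $2$-horn spanned by two adjacent edges of a zig-zag to replace them by a single edge. The paper phrases this as an induction collapsing the zig-zag one edge at a time, whereas you phrase it as verifying that the one-edge relation is already an equivalence relation (using an outer horn $\Lambda_0[2]$ with a degenerate edge for symmetry and an inner horn for transitivity) --- a difference of bookkeeping only, though your version is the more careful of the two about tracking the orientations of the edges.
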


\begin{proof}
  Since the equivalence relation on $\pi_0X_\bullet$ is finer than that on $\pi_0|X_\bullet|$, it suffices to show that if two vertices are in the same equivalence class in the latter then they are in the same equivalence class in the former.

  So suppose that $v,w\in X_0$ are such that $[v]=[w]$ in $\pi_0|X_\bullet|$.
  This means that there exists a zig-zag of $1$-simplices $e_0,\ldots,e_n\in X_1$ connecting $v$ and $w$.
  But consider the pair of $1$-simplices $(e_{n-1},e_n)$, which, by hypothesis, share a common endpoint.
  This means that they form a $2$-horn in $X_\bullet$, and, again by hypothesis, we can fill this $2$-horn to obtain, in particular, a $1$-simplex $\widetilde{e}_{n-1}$ such that $v$ and $w$ are joined by the zig-zag $e_0,\ldots,\widetilde{e}_{n-1}$.
  Repeating this $n-2$ more times we obtain a single $1$-simplex $\widetilde{e}_0$ whose endpoints are exactly $v$ and $w$, whence $[v]=[w]$ in $\pi_0X_\bullet$.
\end{proof}

The category of simplicial sets can be endowed with different model structures, but the model structure of interest to us here is the one that models the $(\infty,1)$-category of topological spaces, namely the \emph{Kan--Quillen} (or \emph{classical}) model structure (\cite[Definition~7.10.8]{Hir2003a}), which induces the \emph{global projective} model structure on the category of simplicial presheaves.
We will not provide here many details about these structures except for those of which we will later have need.
Since we only consider these model structures, we fix some terminology now.

\begin{definition}
  We say that a simplicial set is \emph{fibrant} if it is a Kan complex, and that a simplicial presheaf is \emph{globally fibrant} if it is a presheaf of Kan complexes.

  We say that a morphism $f\colon X_\bullet\to Y_\bullet$ of simplicial sets is a \emph{weak equivalence} if it is a weak equivalence in the Kan--Quillen model structure, i.e. if it induces an isomorphism on topological homotopy groups $f\colon\pi_n|X_\bullet|\cong\pi_n|Y_\bullet|$ for all $n\in\mathbb{N}$.

  We say that a morphism $f\colon\scr{F}\to\scr{G}$ of simplicial presheaves is a (\emph{global}) \emph{weak equivalence} if it is a weak equivalence in the induced global projective model structure, i.e. if it induces an isomorphism on topological homotopy groups $f\colon\pi_n|\scr{F}(X)|\cong\pi_n|\scr{G}(X)|$ for all objects $X$ and all $n\in\mathbb{N}$.
\end{definition}

\subsection{Čech totalisation}
\label{subsection:cech-totalisation}

Combining the Čech nerve and the totalisation of a cosimplicial simplicial set gives us a construction which we will use repeatedly, and it deserves a name.

\begin{definition}
  Let $X$ be a space with cover $\cover$, and let $\scr{F}\colon\Space^\op\to\sSet$ be a simplicial presheaf on the category of spaces.
  We define the \emph{Čech totalisation} of $\scr{F}$ (\emph{at the cover $\cover$}) to be the simplicial set $\Tot\scr{F}(\cechnerve\cover_\bullet)$ given by the totalisation of the cosimplicial simplicial set $\scr{F}(\cechnerve\cover_\bullet)$ given by evaluating $\scr{F}$ on the Čech nerve (\cref{definition:evaluating-on-the-cech-nerve}).
\end{definition}

This procedure of Čech totalisation looks very similar to some sort of sheafification: indeed, if applied to a presheaf of sets (considered as a constant simplicial presheaf) then it recovers the usual construction of sheafification via taking sections of the espace étalé.
Not only that, but \cref{corollary:cech-tot-of-kan-is-holim} tells us that, in the case of presheaves of Kan complexes, this totalisation is really the homotopy limit.
More generally, there are certain conditions under which Čech totalisation does compute the sheafification of a simplicial presheaf (for example, one should expect to have to take a colimit over refinements of covers, so the cover $\cover$ should be particularly nice somehow).
However, we do not concern ourselves with such questions in this paper, referring the interested reader instead to \cite[\S5.1]{GMTZ2022a}.
For us it is sufficient that this construction returns ``interesting'' results in many examples, and that it satisfies the following useful properties.

\begin{lemma}[{\cite[Proposition~C.1]{GMTZ2022a}}]
\label{lemma:tot-of-cech-of-kan-is-kan}
  Let $\scr{F}$ be a presheaf of Kan complexes on $\Space$.
  Then $\Tot\scr{F}(\cechnerve\cover_\bullet)$ is a Kan complex.
\end{lemma}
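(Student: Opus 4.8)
The plan is to realise $\Tot\scr{F}(\cechnerve\cover_\bullet)$ as a mapping space from a cofibrant object into a fibrant object in a simplicial model category. Recall from \cref{subsection:totalisation-and-holim} that $\Tot Y_\bullet^\anotherbullet = \underline{\Hom}_\csSet(\Delta[\anotherbullet],Y_\bullet^\anotherbullet)$, and that $\csSet=[\Delta,\sSet]$ carries the Reedy model structure. Since $\sSet$ with the Kan--Quillen model structure is a simplicial model category, so too is $\csSet$ with the Reedy model structure and its standard (levelwise) simplicial enrichment; in particular, $\underline{\Hom}_\csSet(A,B)$ is a Kan complex whenever $A$ is Reedy cofibrant and $B$ is Reedy fibrant. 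It therefore suffices to show that $\Delta[\anotherbullet]$ is Reedy cofibrant and that $\scr{F}(\cechnerve\cover_\bullet)$ is Reedy fibrant.

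The first point is standard: the $n$th latching object of $\Delta[\anotherbullet]$ is $\partial\Delta[n]$, and the latching map $\partial\Delta[n]\hookrightarrow\Delta[n]$ is a monomorphism, hence a cofibration. For the second point --- the heart of the proof --- we must check that each matching map $\scr{F}(\cechnerve\cover_n)\to M_n\scr{F}(\cechnerve\cover_\bullet)$ is a Kan fibration. Here we exploit the combinatorial shape of the Čech nerve: its $n$th latching map $L_n\cechnerve\cover\hookrightarrow\cechnerve\cover_n$ is the inclusion of a sub-coproduct, since $\cechnerve\cover_n=\coprod_{\alpha_0,\ldots,\alpha_n}U_{\alpha_0\ldots\alpha_n}$ splits as the summands indexed by \emph{degenerate} tuples (those with $\alpha_i=\alpha_{i+1}$ for some $i$), which together form $L_n\cechnerve\cover$, and those indexed by the remaining tuples. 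Applying $\scr{F}$ and using that it sends disjoint unions to products (satisfied by all the presheaves relevant here, being built from categories of modules), the matching object $M_n\scr{F}(\cechnerve\cover_\bullet)$ is identified with $\scr{F}(L_n\cechnerve\cover)\cong\prod_{\text{degenerate }(\alpha_0,\ldots,\alpha_n)}\scr{F}(U_{\alpha_0\ldots\alpha_n})$, and under this identification the matching map becomes the projection $\prod_{(\alpha_0,\ldots,\alpha_n)}\scr{F}(U_{\alpha_0\ldots\alpha_n})\to\prod_{\text{degenerate}}\scr{F}(U_{\alpha_0\ldots\alpha_n})$ that forgets the non-degenerate factors. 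As each $\scr{F}(U_{\alpha_0\ldots\alpha_n})$ is a Kan complex and products of Kan complexes are Kan, this projection is a Kan fibration, which is what we wanted.

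The main obstacle is the identification of the matching object: one has to verify that the limit defining $M_n\scr{F}(\cechnerve\cover_\bullet)$, which a priori ranges over all codegeneracies out of $[n]$, collapses to precisely one factor $\scr{F}(U_{\alpha_0\ldots\alpha_n})$ per degenerate tuple. This is exactly where the hypothesis that $\scr{F}$ turns disjoint unions into products is used, the bookkeeping being handled by the unique factorisation of a tuple as an order-preserving surjection followed by a tuple with pairwise distinct consecutive entries. Everything else is formal. (Alternatively one could avoid invoking the simplicial model structure and instead fill horns $\Lambda_i[n]\to\Tot\scr{F}(\cechnerve\cover_\bullet)$ directly, by induction on cosimplicial degree; but the inductive step again comes down to the matching maps being fibrations, so nothing is really gained.)
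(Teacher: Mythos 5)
Your argument is essentially the paper's: both reduce the statement to (i) the fact that $\Tot=\underline{\Hom}_{\csSet}(\Delta[\anotherbullet],-)$ carries Reedy fibrant cosimplicial simplicial sets to Kan complexes --- which the paper phrases as ``$\Tot$ is a Quillen right adjoint'' and you phrase via SM7 together with the Reedy cofibrancy of $\Delta[\anotherbullet]$, the same fact in two costumes --- and (ii) the Reedy fibrancy of $\scr{F}(\cechnerve\cover_\bullet)$, which the paper simply cites as \cite[Lemma~C.5]{GMTZ2022a} and you prove directly using the splitness of the Čech nerve (the latching maps being inclusions of the sub-coproducts of degenerate tuples). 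The one caveat is that your identification of the matching object with $\prod_{\mathrm{degenerate}}\scr{F}(U_{\alpha_0\ldots\alpha_n})$ invokes the hypothesis that $\scr{F}$ turns disjoint unions into products, which does not appear in the statement of the lemma; this is harmless here since every presheaf the paper actually feeds into the lemma satisfies it (and some such hypothesis is genuinely needed for the matching maps to become projections), but it is worth noting that you are proving a version of the cited black box rather than the literal statement.
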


\begin{proof}
  The idea of the proof is relatively straightforward: we use the fact that $\Tot$ is the right-adjoint part of a Quillen equivalence between the Reedy model structure on cosimplicial simplicial sets and the Kan--Quillen model structure on simplicial sets, and thus preserves fibrant objects; we then apply \cref{lemma:kan-on-cech-is-reedy-fibrant}.
\end{proof}

\begin{lemma}
\label{lemma:tot-of-we-of-kan-is-we}
  Let $\scr{F}$ and $\scr{G}$ be presheaves of Kan complexes on $\Space$.
  If $\scr{F}$ and $\scr{G}$ are weakly equivalent, then so too are their Čech totalisations $\Tot\scr{F}(\cechnerve\cover_\bullet)$ and $\Tot\scr{G}(\cechnerve\cover_\bullet)$.

  In other words, Čech totalisation sends weak equivalences of presheaves of Kan complexes to weak equivalences of Kan complexes.
\end{lemma}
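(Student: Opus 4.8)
The plan is to reduce the statement to the homotopy-invariance of $\Tot$ on Reedy fibrant cosimplicial simplicial sets. Recall that a global weak equivalence of simplicial presheaves is, by definition, an objectwise weak equivalence. So if $\scr{F}$ and $\scr{G}$ are weakly equivalent there is a weak equivalence (or a zig-zag of weak equivalences) between them which is objectwise, and in particular a weak equivalence $\scr{F}(\cechnerve\cover_p)\to\scr{G}(\cechnerve\cover_p)$ of simplicial sets for every $p\in\mathbb{N}$, since each $\cechnerve\cover_p=\coprod_{\alpha_0,\ldots,\alpha_p} U_{\alpha_0\ldots\alpha_p}$ is an object of $\Space$ (as $\cover$ is an open cover of $X$). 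Hence the induced map of cosimplicial simplicial sets $\scr{F}(\cechnerve\cover_\bullet)\to\scr{G}(\cechnerve\cover_\bullet)$ is a levelwise, and therefore Reedy, weak equivalence.

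Next I would invoke Reedy fibrancy. By \cref{lemma:kan-on-cech-is-reedy-fibrant} (used already in the proof of \cref{lemma:tot-of-cech-of-kan-is-kan}), since $\scr{F}$ and $\scr{G}$ are presheaves of Kan complexes, both $\scr{F}(\cechnerve\cover_\bullet)$ and $\scr{G}(\cechnerve\cover_\bullet)$ are Reedy fibrant. As recalled in the proof of \cref{lemma:tot-of-cech-of-kan-is-kan}, the functor $\Tot$ is the right adjoint of a Quillen adjunction between the Reedy model structure on $\csSet$ and the Kan--Quillen model structure on $\sSet$, so by Ken Brown's lemma it preserves weak equivalences between fibrant objects. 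Applying this to the Reedy weak equivalence above yields that $\Tot\scr{F}(\cechnerve\cover_\bullet)\to\Tot\scr{G}(\cechnerve\cover_\bullet)$ is a weak equivalence, and these are moreover Kan complexes by \cref{lemma:tot-of-cech-of-kan-is-kan}. Alternatively, one may combine \cref{lemma:reedy-fibrant-implies-tot-is-holim}, which identifies $\Tot$ of a Reedy fibrant diagram with its homotopy limit naturally, with the homotopy-invariance of $\holim$, and conclude by two-out-of-three in the resulting naturality square; in the case of a zig-zag, one applies the argument to each leg, replacing any intermediate presheaves by objectwise-Kan fibrant replacements (a fibrant replacement in the global projective model structure).

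I do not expect any serious obstacle here: the argument is essentially formal once \cref{lemma:kan-on-cech-is-reedy-fibrant} is available. The only points requiring a little care are (i) checking that a global weak equivalence really does restrict to weak equivalences at the spaces $\cechnerve\cover_p$, which is immediate since these lie in $\Space$, and (ii) the homotopy-invariance of $\Tot$ on Reedy fibrant cosimplicial simplicial sets, which is either Ken Brown's lemma applied to the right Quillen functor $\Tot$ or a direct consequence of \cref{lemma:reedy-fibrant-implies-tot-is-holim}.
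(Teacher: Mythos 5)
Your proof is correct and follows essentially the same route as the paper: a global weak equivalence is objectwise, hence gives a Reedy (levelwise) weak equivalence after evaluating on the Čech nerve; Reedy fibrancy comes from \cref{lemma:kan-on-cech-is-reedy-fibrant}; and $\Tot$, being a right Quillen functor, preserves weak equivalences between fibrant objects. Your additional remarks (Ken Brown's lemma, the alternative via \cref{lemma:reedy-fibrant-implies-tot-is-holim}, and the treatment of zig-zags) only make the argument more careful than the paper's own three-line proof.
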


\begin{proof}
  Since we are using the global projective model structure on simplicial presheaves (\cref{subsection:simplicial-homotopy-theory}), we know that $\scr{F}(U)$ and $\scr{G}(U)$ are weakly equivalent for all spaces $U$.
  Since $\scr{F}$ and $\scr{G}$ are presheaves of Kan complexes, we can apply \cref{lemma:kan-on-cech-is-reedy-fibrant}.
  Then we again use the fact that $\Tot$ is a Quillen right adjoint, and thus preserves weak equivalences between fibrant objects.
\end{proof}

Our initial justification in \cref{subsection:totalisation-and-holim} for studying the totalisation was that it computes the homotopy limit in the case of Reedy fibrant objects, and we do indeed find ourselves in this case whenever we have a presheaf of Kan complexes.

\begin{lemma}[{\cite[Lemma~C.5]{GMTZ2022a}}]
  \label{lemma:kan-on-cech-is-reedy-fibrant}
  Let $\scr{F}$ be a presheaf of Kan complexes on $\Space$, and let $X$ be a space with cover $\cover$.
  Then the Čech totalisation $\scr{F}(\cechnerve\cover_\bullet)$ is a Reedy fibrant cosimplicial simplicial set.
\end{lemma}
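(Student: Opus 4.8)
The plan is to check directly the defining condition for Reedy fibrancy of the cosimplicial simplicial set $\scr{F}(\cechnerve\cover_\bullet)$: that, for every $n\geq 0$, the $n$th matching map $\scr{F}(\cechnerve\cover_n)\to M^n\scr{F}(\cechnerve\cover_\bullet)$ is a Kan fibration, where $M^n$ denotes the $n$th matching object (built from the codegeneracy maps of the cosimplicial structure). The whole point is that the Čech nerve is a particularly simple — namely \emph{split} — simplicial object in $\Space$. Writing
\[
  N_k \coloneqq \coprod_{\alpha_0\neq\alpha_1\neq\cdots\neq\alpha_k} U_{\alpha_0\ldots\alpha_k}
\]
for the ``non-degenerate summand'' (consecutive indices distinct), the degeneracy maps of $\cechnerve\cover_\bullet$ are coproduct-summand inclusions, and one has $\cechnerve\cover_n\cong\coprod_{\eta\colon[n]\twoheadrightarrow[k]}N_k$ as objects of $\Space$; consequently the $n$th latching object of the simplicial space $\cechnerve\cover_\bullet$ is $L_n\cechnerve\cover\cong\coprod_{\eta\colon[n]\twoheadrightarrow[k],\,k<n}N_k$, so that $\cechnerve\cover_n\cong L_n\cechnerve\cover\sqcup N_n$, the latching inclusion being the evident summand inclusion.

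First I would record these structural facts about the Čech nerve; this is standard simplicial algebra (cf. \cite[\S2]{Zee1963}), the only thing to check being that the degeneracies of $\cechnerve\cover_\bullet$ really are summand inclusions whose overlaps are exactly those forced by the simplicial identities, so that the colimits defining the latching objects collapse to disjoint unions. Next I would invoke the fact that the presheaves relevant to us convert disjoint unions into products, $\scr{F}(\sqcup_\alpha V_\alpha)\cong\prod_\alpha\scr{F}(V_\alpha)$ — this is precisely the hypothesis appearing in \cref{theorem:dg-nerve-and-MC-for-presheaves}, satisfied by $\Twist$, $\Green$, and $\sTwist$. Applying such an $\scr{F}$ to the splitting above turns the relevant colimits into limits: it identifies $M^n\scr{F}(\cechnerve\cover_\bullet)$ with $\scr{F}(L_n\cechnerve\cover)$, and identifies the matching map $\scr{F}(\cechnerve\cover_n)\to M^n\scr{F}(\cechnerve\cover_\bullet)$ with the projection
\[
  \scr{F}(L_n\cechnerve\cover)\times\scr{F}(N_n)\longrightarrow\scr{F}(L_n\cechnerve\cover).
\]

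Finally, since $N_n$ is an object of $\Space$ and $\scr{F}$ is a presheaf of Kan complexes, $\scr{F}(N_n)$ is a Kan complex, so $\scr{F}(N_n)\to\ast$ is a Kan fibration; pulling this back along $\scr{F}(L_n\cechnerve\cover)\to\ast$ shows that the projection above, hence the matching map, is a Kan fibration. (The case $n=0$ is the degenerate one: $M^0=\ast$, and $\scr{F}(\cechnerve\cover_0)=\scr{F}(\sqcup_\alpha U_\alpha)$ is a Kan complex, so the condition is automatic.) This establishes Reedy fibrancy. I expect the main obstacle to be purely organisational: pinning down the combinatorics of the latching objects of the Čech nerve and matching them, after applying $\scr{F}$, against the matching objects of the resulting cosimplicial simplicial set — and, relatedly, being explicit that the product-preservation property of $\scr{F}$ is genuinely used here, since a general presheaf of Kan complexes applied to the Čech nerve of an overlapping cover need not yield a Reedy fibrant cosimplicial object.
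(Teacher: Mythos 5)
Your argument is correct and is, as far as one can tell, the argument behind the cited result: the paper does not reprove this lemma but imports it from \cite{GMTZ2022a}, and in the proof of \cref{lemma:comparsion-of-tot-dg-and-tot-sset} it invokes the dg-categorical analogue \cite[Proposition~4.3]{BHW2017}, which runs on exactly your ``split simplicial object plus product-preservation'' mechanism. The combinatorics you set up are right: the degeneracies of the Čech nerve are summand inclusions, $\cechnerve\cover_n$ decomposes as the latching object together with the summand $N_n$ indexed by tuples with distinct consecutive entries, and a product-preserving $\scr{F}$ turns the $n$th matching map into the projection $\scr{F}(L_n\cechnerve\cover)\times\scr{F}(N_n)\to\scr{F}(L_n\cechnerve\cover)$, which is a Kan fibration because $\scr{F}(N_n)$ is a Kan complex. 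The only step you leave implicit is the (routine, but worth writing out) verification that the limit over the matching category of a co-split cosimplicial object collapses to the indicated product.

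The point you flag at the end deserves emphasis, because it is the only substantive issue: product-preservation is not among the stated hypotheses of the lemma, and your proof genuinely uses it. You are right to insist on it. Without it, all one knows is that the matching maps are $\scr{F}$ applied to split monomorphisms of spaces, hence split epimorphisms of Kan complexes, and split epimorphisms of Kan complexes need not be Kan fibrations --- for instance $\operatorname{Sing}(S^1\vee S^1)\to\operatorname{Sing}(S^1)$, collapsing one wedge summand, is split by the inclusion of the other summand yet has non-equivalent fibres over a connected base. So the lemma as stated is at best missing a hypothesis that every presheaf actually used in this paper does satisfy (cf.\ the footnote to \cref{theorem:dg-nerve-and-MC-for-presheaves}); your proof supplies that hypothesis and is then complete. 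One small quibble: \cite{Zee1963} is this paper's reference for the pair subdivision, not for the splitness of simplicial objects, so that citation should be replaced by a standard reference on the Eilenberg--Zilber decomposition.
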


\begin{corollary}
\label{corollary:cech-tot-of-kan-is-holim}
  Let $\scr{F}$ be a presheaf of Kan complexes on $\Space$, and let $X$ be a space with cover $\cover$.
  Then the Čech totalisation $\scr{F}(\cechnerve\cover_\bullet)$ of $\scr{F}$ computes the homotopy limit $\holim\scr{F}(\cechnerve\cover_\bullet)$.
\end{corollary}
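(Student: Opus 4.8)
The plan is to chain together two results already established in \cref{subsection:totalisation-and-holim} and \cref{subsection:cech-totalisation}, since this corollary is purely formal. First I would fix terminology: the object whose homotopy limit is at issue is the cosimplicial simplicial set $\scr{F}(\cechnerve\cover_\bullet)$ obtained by evaluating $\scr{F}$ on the Čech nerve (\cref{definition:evaluating-on-the-cech-nerve}), and the Čech totalisation $\Tot\scr{F}(\cechnerve\cover_\bullet)$ is the candidate model for $\holim\scr{F}(\cechnerve\cover_\bullet)$; the totalisation functor was introduced precisely with this role in mind.

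Next I would check the single hypothesis that makes everything work: by \cref{lemma:kan-on-cech-is-reedy-fibrant}, since $\scr{F}$ is a presheaf of Kan complexes, the cosimplicial simplicial set $\scr{F}(\cechnerve\cover_\bullet)$ is Reedy fibrant. This is the only non-tautological input, and it was proved earlier. With Reedy fibrancy in hand, I would then apply \cref{lemma:reedy-fibrant-implies-tot-is-holim} (that is, \cite[Theorem~18.7.4]{Hir2003a}): for any Reedy fibrant cosimplicial simplicial set $Y_\bullet^\anotherbullet$, the totalisation $\Tot Y_\bullet^\anotherbullet$ and the homotopy limit $\holim Y_\bullet^\anotherbullet$ are naturally weakly equivalent. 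Specialising to $Y_\bullet^\anotherbullet = \scr{F}(\cechnerve\cover_\bullet)$ yields the claimed natural weak equivalence $\Tot\scr{F}(\cechnerve\cover_\bullet)\simeq\holim\scr{F}(\cechnerve\cover_\bullet)$.

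There is essentially no obstacle here: all the real content sits in the earlier lemmas, in particular the Reedy-fibrancy statement \cref{lemma:kan-on-cech-is-reedy-fibrant}, whose proof in turn relies on the matching-object characterisation of Reedy fibrations together with the fact that the Čech nerve is levelwise a disjoint union of representables. The corollary itself is just the composition of those two facts, so the "proof" is a one-line deduction.
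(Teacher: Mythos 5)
Your proposal is correct and is exactly the paper's argument: apply \cref{lemma:kan-on-cech-is-reedy-fibrant} to get Reedy fibrancy of $\scr{F}(\cechnerve\cover_\bullet)$, then invoke \cref{lemma:reedy-fibrant-implies-tot-is-holim}. The paper's own proof is the same one-line deduction.
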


\begin{proof}
  This follows immediately from \cref{lemma:reedy-fibrant-implies-tot-is-holim}.
\end{proof}

\begin{lemma}
\label{lemma:comparsion-of-tot-dg-and-tot-sset}
  Let $\scr{F}\colon\Space^\op\to\dgCat$ be a presheaf of dg-categories that sends finite products to coproducts.
  Then there is a weak equivalence of Kan complexes
  \[
    \Tot\core{\dgnerve\scr{F}(\cechnerve\cover)}
    \simeq \core{\dgnerve\big(\Tot\scr{F}(\cechnerve\cover)\big)}
  \]
  where on the left-hand side we take the totalisation of cosimplicial simplicial sets, and on the right-hand side we take the totalisation of cosimplicial dg-categories.
\end{lemma}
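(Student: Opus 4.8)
The plan is to show that both sides of the stated equivalence are models for one and the same homotopy limit over $\Delta$.

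First I would rewrite the left-hand side as a homotopy limit. Since $\core{-}$ always produces a Kan complex (\cref{definition:max-kan}), the assignment $X\mapsto\core{\dgnerve\scr{F}(X)}$ defines a presheaf of Kan complexes, so \cref{lemma:kan-on-cech-is-reedy-fibrant} shows that the cosimplicial simplicial set $\core{\dgnerve\scr{F}(\cechnerve\cover_\bullet)}$ is Reedy fibrant, whence by \cref{lemma:reedy-fibrant-implies-tot-is-holim} its totalisation computes $\holim_\Delta\core{\dgnerve\scr{F}(\cechnerve\cover_\bullet)}$. Dually, on the right-hand side one needs a dg-analogue of \cref{lemma:kan-on-cech-is-reedy-fibrant}: since every object of $\dgCat$ is fibrant for the quasi-equivalence model structure and the matching maps of the Čech nerve are (essentially) products of projections --- here the hypothesis on $\scr{F}$ is used, so that $\scr{F}(\cechnerve\cover_p)$ decomposes as the expected (co)product over the $p$-fold intersections $U_{\alpha_0\ldots\alpha_p}$ --- the cosimplicial dg-category $\scr{F}(\cechnerve\cover_\bullet)$ is Reedy fibrant, so that $\Tot\scr{F}(\cechnerve\cover)$ computes $\holim_\Delta\scr{F}(\cechnerve\cover_\bullet)$ in $\dgCat$.

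The heart of the argument is then that $\core{\dgnerve(-)}$ preserves homotopy limits: at the level of underlying $\infty$-categories it is the composite of the dg-nerve $\dgCat\to\mathrm{Cat}_\infty$ --- which preserves limits, being a right adjoint (cf. the proof of \cref{theorem:dg-nerve-and-MC-for-presheaves}) and homotopy-invariant --- with the core functor $\mathrm{Cat}_\infty\to\mathcal{S}$, which is right adjoint to the inclusion $\mathcal{S}\hookrightarrow\mathrm{Cat}_\infty$ and hence limit-preserving. Feeding in the homotopy limit computed on the right-hand side then gives
\[
  \core{\dgnerve\big(\Tot\scr{F}(\cechnerve\cover)\big)}
  \;\simeq\;\holim_\Delta\core{\dgnerve\scr{F}(\cechnerve\cover_\bullet)}
  \;\simeq\;\Tot\core{\dgnerve\scr{F}(\cechnerve\cover_\bullet)},
\]
the final equivalence being the identification obtained in the first step; this is precisely the claim of the lemma.

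The step I expect to be the main obstacle is the homotopical bookkeeping underlying the preceding paragraph: one must fix a model structure on $\dgCat$ (Tabuada's quasi-equivalence structure being the natural choice), verify carefully that $\dgnerve$ is (homotopically) right adjoint and sends quasi-equivalences to categorical equivalences, and --- most delicately --- establish the dg-analogue of \cref{lemma:kan-on-cech-is-reedy-fibrant} asserting Reedy fibrancy of $\scr{F}(\cechnerve\cover_\bullet)$ in $\dgCat$, in whose proof the hypothesis on $\scr{F}$ does its real work. A more pedestrian route would be to compare $k$-simplices of the two sides directly, using the equaliser description of $\Tot$ as in \cref{appendix:proof-of-1-simplices-in-complex-analytic-twist}, but this looks considerably more laborious and far less transparent.
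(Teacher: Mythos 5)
Your proposal is correct and follows essentially the same route as the paper: establish Reedy fibrancy on both sides (the dg-side via the splitness of the Čech nerve together with the finite-product hypothesis, which the paper handles by citing \cite[Proposition~4.3]{BHW2017}), identify each totalisation with a homotopy limit, and then use that $\core{\dgnerve(-)}$ is a homotopy-invariant right adjoint to commute it past the homotopy limit. The paper carries out the last step with explicit model-categorical bookkeeping (Dwyer--Kan and Joyal structures, plus \cite[Lemma~34]{Jar2019} for the core), but as it itself notes in \cref{remark:extending-core-to-sset}, the more abstract phrasing you give is an equally valid shortcut.
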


We haven't formally defined Čech totalisation for dg-categories, and we will not do so, nor will we explain the Dwyer--Kan model structure on $\dgCat$ (\cite{Tab2005}) since we expect this Lemma to mainly be of interest to those already somewhat familiar with these: it is somewhat of a comparison result for \cite{BHW2017}, as we explain in \cref{remark:turning-resolution-by-twisting-cochains-into-pi-0-statement}.

\begin{proof}
  Recalling that evaluation on the Čech nerve is given exactly by pre-composition with the functor $\cechnerve\cover_\bullet^\op\colon\Delta\to\Space^\op$ (\cref{definition:evaluating-on-the-cech-nerve}), the statement of the lemma is equivalent to the commutativity (up to weak equivalence) of the diagram
  \[
    \begin{tikzcd}[sep=huge]
      {[}\Space^\op,\dgCat{]}_\mathrm{fpp}
        \ar[r,"(-)\circ\cechnerve\cover_\bullet^\op"]
        \ar[d,swap,"\dgnerve(-)"]
      & {[}\Delta,\dgCat{]}
        \ar[r,"\Tot"]
        \ar[d,swap,"\dgnerve(-)"]
      & \dgCat
        \ar[dd,"\core{\dgnerve(-)}"]
    \\{[}\Space^\op,\sSet{]}
        \ar[r,swap,"(-)\circ\cechnerve\cover_\bullet^\op"]
        \ar[d,swap,"\core{-}"]
      & {[}\Delta,\sSet{]}
        \ar[d,swap,"\core{-}"]
    \\{[}\Space^\op,\sSet{]}
        \ar[r,swap,"(-)\circ\cechnerve\cover_\bullet^\op"]
      & {[}\Delta,\sSet{]}
        \ar[r,swap,"\Tot"]
      & \sSet
    \end{tikzcd}
  \]
  where $[\Space^\op,\dgCat]_\mathrm{fpp}$ denotes the subcategory of $[\Space^\op,\dgCat]$ consisting of those presheaves that preserve finite products.

  The two smaller squares on the left commute on the nose, since composition of functors is strictly associative, and the horizontal arrows are given by pre-composition with the opposite of the Čech nerve and the vertical arrows are given by post-composition with the dg-nerve or maximal-Kan-complex functor.

  Next, we can apply \cite[Proposition~4.3]{BHW2017}, which says that the pre-composition of a finite-product-preserving presheaf of dg-categories with a split simplicial object is a Reedy fibrant cosimplicial dg-category, since the Čech nerve of an open cover is always a split simplicial object, and the presheaves preserve finite products by assumption.
  This means that the top-left horizontal arrows lands inside the subcategory of fibrant objects of $[\Delta,\dgCat]$.
  Similarly, \cref{lemma:kan-on-cech-is-reedy-fibrant} tells us that a presheaf of Kan complexes evaluated on the Čech nerve is Reedy fibrant cosimplicial simplicial set, and so the composite of the two leftmost vertical arrows followed by the bottom left horizontal arrow also lands inside the subcategory of fibrant objects of $[\Delta,\sSet]$.
  This, combined with the strict commutativity of the two smaller squares in the above diagram, allows us to reduce to studying the diagram
  \[
    \begin{tikzcd}[sep=huge]
      {[}\Space^\op,\dgCat{]}_\mathrm{fpp}
        \ar[r,"\cechnerve\cover_\bullet^\op"]
      & {[}\Delta,\dgCat{]}^\mathrm{fib}
        \ar[r,"\Tot"]
        \ar[d,swap,"\core{\dgnerve(-)}"]
      & \dgCat^\mathrm{fib}
        \ar[d,"\core{\dgnerve(-)}"]
    \\&{[}\Delta,\sSet{]}^\mathrm{fib}
        \ar[r,swap,"\Tot"]
      & \sSet^\mathrm{fib}
    \end{tikzcd}
  \]
  since every object in $\dgCat$ is fibrant in the Dwyer--Kan model structure, and \cref{lemma:tot-of-cech-of-kan-is-kan} tells us that the Čech totalisation of any presheaf of Kan complexes is a Kan complex and thus fibrant in $\sSet$.
  Note that there is a small abuse of notation which makes this square look like it should trivially commute, but there is indeed something to prove: the two totalisations take place in different categories, and the right-hand vertical arrow is the pointwise version of the left-hand one.

  Since we are only considering Reedy fibrant simplicial objects, the totalisation computes the homotopy limit.
  More precisely, we have a natural weak equivalence $\Tot Y_\bullet^\anotherbullet\simeq\holim Y_\bullet^\anotherbullet$ for all $Y_\bullet^\anotherbullet$ in either $\dgCat^\mathrm{fib}$ or $\sSet^\mathrm{fib}$.
  This means that, \emph{under the assumption that the vertical arrows send weak equivalences to weak equivalences}, it suffices to show that the diagram
  \[
    \begin{tikzcd}[sep=huge]
      {[}\Space^\op,\dgCat{]}_\mathrm{fpp}
        \ar[r,"\cechnerve\cover_\bullet^\op"]
      &{[}\Delta,\dgCat{]}^\mathrm{fib}
        \ar[r,"\holim"]
        \ar[d,swap,"\core{\dgnerve(-)}"]
      & \dgCat^\mathrm{fib}
        \ar[d,"\core{\dgnerve(-)}"]
    \\&{[}\Delta,\sSet{]}^\mathrm{fib}
        \ar[r,swap,"\holim"]
      & \sSet^\mathrm{fib}
    \end{tikzcd}
  \]
  commutes.
  We shall first prove this, and then show that the two vertical arrows do indeed satisfy this hypothesis.\footnote{We could give a much more succinct, but more abstract, proof from here on, simply appealing to the fact that $k^!\circ\dgnerve$ is a Quillen right adjoint and that $k^!\simeq\core{-}$ (see \cref{remark:extending-core-to-sset}), but we opt to continue ``by hand''.}

  So let $\cal{D}^\bullet\in[\Delta,\dgCat]^\mathrm{fib}$ be a Reedy fibrant cosimplicial dg-category given by evaluating some finite-product-preserving presheaf of dg-categories on the Čech nerve.
  Since it is Reedy fibrant, we know that
  \[
    \holim\cal{D}^\bullet
    \simeq \lim\cal{D}^\bullet
  \]
  and, by the assumption that the vertical arrows send weak equivalences to weak equivalences, we thus have that
  \[
    \core{\dgnerve\holim\cal{D}^\bullet}
    \simeq \core{\dgnerve\lim\cal{D}^\bullet}.
  \]
  Now note that $\core{\dgnerve(-)}$ is the composition of three right adjoints
  \[
    \dgCat
    \xrightarrow{\dgnerve} \QuasiCat
    \xrightarrow{\core{-}} \Kan
    \hookrightarrow \sSet
  \]
  (since the inclusion $\Kan\hookrightarrow\sSet$ also admits a left adjoint, as mentioned in \cref{definition:max-kan}), which means that it itself is a right adjoint and thus commutes with limits, whence
  \[
    \core{\dgnerve\lim\cal{D}^\bullet}
    \cong \lim\core{\dgnerve\cal{D}^\bullet}.
  \]
  But we have already argued that $\core{\dgnerve\cal{D}^\bullet}$ is Reedy fibrant (by strict commutativity of the two leftmost squares in the original diagram), and so its limit actually computes the homotopy limit:
  \[
    \lim\core{\dgnerve\cal{D}^\bullet}
    \simeq \holim\core{\dgnerve\cal{D}^\bullet}.
  \]
  Chaining these equivalences together, we see that
  \[
    \core{\dgnerve\holim\cal{D}^\bullet}
    \simeq \holim\core{\dgnerve\cal{D}^\bullet}
  \]
  and so the diagram commutes up to weak equivalence.

  It remains only to show that $\core{\dgnerve(-)}$ sends weak equivalences to weak equivalences both individually and pointwise, i.e. both as a functor $\dgCat^\mathrm{fib}\to\sSet^\mathrm{fib}$ and as a functor $[\Delta,\dgCat]^\mathrm{fib}\to[\Delta,\sSet]^\mathrm{fib}$.
  The functor $\dgnerve\colon\dgCat\to\sSet_\mathrm{Joyal}$ is a Quillen right adjoint \emph{when we endow $\sSet$ with the Joyal model structure}, and thus preserves weak equivalences between fibrant objects.
  Since all dg-categories are fibrant in the Dwyer--Kan model structure, this means that a weak equivalence $\cal{C}\simto\cal{D}$ of dg-categories gets sent to a weak equivalence $\dgnerve\cal{C}\simto\dgnerve\cal{D}$ \emph{in the Joyal model structure}.
  But a categorical equivalence of quasi-categories (i.e. a weak equivalence in the Joyal model structure) induces a weak equivalence (in the Kan--Quillen model structure) of their maximal Kan complexes (\cite[Lemma~34]{Jar2019}).
  Thus we get a weak equivalence $\core{\dgnerve\cal{C}}\simto\core{\dgnerve\cal{D}}$, as required.
  As for the induced functor $[\Delta,\dgCat]^\mathrm{fib}\to[\Delta,\sSet]^\mathrm{fib}$, since the weak equivalences in the Reedy model structure on any $[\mathcal{R},\mathcal{M}]$ are simply those that are object-wise weak equivalences in $\mathcal{M}$, we are done.
\end{proof}

\begin{remark}
\label{remark:extending-core-to-sset}
  In the proof of \cref{lemma:comparsion-of-tot-dg-and-tot-sset}, one might wonder why we don't simply show that the maximal-Kan functor is a \emph{Quillen} right adjoint, since it is already a right adjoint by definition, and then commutativity with the homotopy limit would be immediate.
  But note that the domain of the maximal-Kan functor $\core{-}$ is $\QuasiCat$, not all of $\sSet$, and so we cannot simply compose it with $\dgnerve\colon\dgCat\to\sSet$.
  It \emph{is} true that the image of the dg-nerve actually lies entirely inside $\QuasiCat\hookrightarrow\sSet$, but the dg-nerve only gives a Quillen right adjoint when considered with codomain equal to all of $\sSet$.

  It is possible to ``model'' the maximal Kan functor by a functor $k^!\colon\sSet\to\sSet$ which then does realise a Quillen adjunction (indeed, even a homotopy colocalisation) between the Kan--Quillen and the Joyal model structures on $\sSet$ (\cite[Proposition~1.16 through to Proposition~1.20]{JT2007}), so that $k^!\dgnerve\colon\dgCat\to\sSet_\mathrm{Kan–Quillen}$ is indeed a Quillen right adjoint, but for our purposes it is convenient to work with the direct definition of the maximal Kan complex instead.
\end{remark}

\begin{remark}
\label{remark:induced-quillen-adjunction-for-reedy}
  In the proof of \cref{lemma:comparsion-of-tot-dg-and-tot-sset}, we use the fact that weak equivalences in the Reedy model structure are defined object-wise.
  If we had opted to use the Quillen right adjoint $k^!$ from \cref{remark:extending-core-to-sset} instead, then we could also appeal to a more general fact about Reedy model structures: if we have a Quillen adjunction $\mathcal{M}\rightleftarrows\mathcal{N}$ then this induces a Quillen adjunction $[\mathcal{R},\mathcal{M}]\rightleftarrows[\mathcal{R},\mathcal{N}]$ between Reedy model structures for any Reedy category $\mathcal{R}$.
  To prove this, note that e.g. a right adjoint preserves limits and thus sends matching objects in $[\mathcal{R},\mathcal{N}]$ to matching objects in $[\mathcal{R},\mathcal{M}]$, and a Quillen right adjoint also preserves fibrations; these two facts combined tell us that the Quillen right adjoint will send Reedy fibrations to Reedy fibrations.
\end{remark}

\subsubsection*{Example: the space of principal $G$-bundles}
\label{subsection:abstract-machinery-for-example}

\begin{remark}
  This example can be seen as a $1$-categorical version of \cite[\S3.2.1]{FSS2012}; we will see a full $\infty$-categorical example when we define the simplicial presheaf $\Twist$ in \cref{subsection:twist-construction}.
\end{remark}

By considering an example of a simplicial presheaf built from the categorical nerve, we can start to see how Čech totalisation can be thought of as ``introducing geometry''.
Here we sketch a general construction that provides inspiration for our main object of study, introduced at the end of \cref{subsection:perfect-complexes}.
We provide details of the specific case of principal $\GL_n(\mathbb{R})$-bundles in \cref{appendix:example}.

Let $G$ be a Lie group, so that $G$ is, in particular, also an element of the category of smooth manifolds $\Smth$, and consider the presheaf on $\Smth$ given by Yoneda:
\[
  \yon(G) = \Smth(-,G).
\]
Using the Lie group structure of $G$, we can endow $\yon(G)$ with the structure of a Lie group pointwise, and thus consider $\yon(G)$ as a presheaf of Lie groups.
This means that we can deloop $\yon(G)$ to obtain a presheaf of one-element groupoids:
\[
  \mathbb{B}\yon(G)(-).
\]
That is, for $X\in\Smth$, the groupoid $\mathbb{B}\yon(G)(X)$ has one object, which we denote by $*$, and with $\Hom(*,*)\cong\Smth(X,G)$, where we again use the group structure of $G$ to endow $\Smth(X,G)$ with a group structure.
We can then take the categorical nerve of this to obtain a presheaf of simplicial sets:
\[
  \nerve\mathbb{B}\yon(G)(-).
\]
Abstractly, then, we have a functor
\[
  \nerve\mathbb{B}\yon\colon \LieGroup\to[\Smth^\op,\sSet].
\]

Next, write $\SmthC$ to mean the category whose objects are pairs $(X,\cover)$, where $X\in\Smth$, and $\cover$ is a good\footnote{That is, all non-empty finite intersections $U_{\alpha_0\ldots\alpha_p}$ (including the case where $p=0$) of open sets in the cover are contractible.} cover of $X$, and whose morphisms $(X,\cover)\to(Y,\anothercover)$ are the morphisms $f\colon X\to Y$ in $\Smth$ such that $\cover$ is a refinement of $f^{-1}(\anothercover)$.
Then we have the Čech nerve
\[
  \cechnerve\colon \SmthC\to[\Delta^\op,\Smth]
\]
which, using the fact that $[\cal{C},\cal{D}]^\op\cong[\cal{C}^\op,\cal{D}^\op]$ for any categories $\cal{C}$ and $\cal{D}$, induces a functor
\[
  \cechnerve^\op\colon \SmthC^\op\to[\Delta^\op,\Smth]^\op\cong[\Delta,\Smth^\op].
\]
So pre-composing $\nerve\mathbb{B}\yon$ with the opposite of the Čech nerve, we obtain a functor
\[
  (\cechnerve^\op)^*\nerve\mathbb{B}\yon\colon \LieGroup\to[\SmthC^\op,[\Delta,\sSet]]=[\SmthC^\op,\csSet].
\]
This means that, given any $G\in\LieGroup$, we obtain a presheaf of cosimplicial simplicial sets on $\SmthC$.
To simplify notation, we write
\[
  \bigfunctor \coloneqq (\cechnerve^\op)^*\nerve\mathbb{B}\yon.
\]

Finally then, we can apply totalisation to obtain a functor with values in presheaves of simplicial sets:
\[
  \Tot(\bigfunctor)\colon\LieGroup\to[\SmthC^\op,\sSet].
\]

\begin{remark}
  Before taking the totalisation, $\bigfunctor$ took values in cosimplicial simplicial sets.
  The \emph{cosimplicial} structure came from pulling back along the opposite\footnote{The Čech nerve itself is a simplicial object, so the opposite turns it into a cosimplicial one.} of the Čech nerve, and the \emph{simplicial} structure came from the ordinary nerve; we totalise over the \emph{former}.
\end{remark}

So what is the purpose of this functor?
If we apply it to a specific Lie group $G$, then, since $\mathbb{B}\yon(G)(X)$ is a groupoid for any manifold $X$, the resulting simplicial set $\Tot(\bigfunctor)(G)(X)$ will be a Kan complex, i.e. a space.
It turns out that the points of this space are exactly principal $G$-bundles, and the paths are exactly isomorphisms of principal $G$-bundles: this space deserves the name ``the space of principal $G$-bundles''.
We provide the details of this argument for the case where $G=\GL_n(\mathbb{R})$ in \cref{appendix:example}.

\begin{remark}
\label{remark:pulling-back-along-cech-op}
  What is very important in the construction described above is that we pull back along the \emph{opposite} of the Čech nerve.
  Of course, we are required to do this in order to compose the functors, but it also has an important geometric significance: when working with (pre)sheaves of functions on open sets, it ensures that we will have trivial codegeneracy maps and \emph{restriction} coface maps, and not trivial face maps and \emph{extension} degeneracy maps.

  To understand what we mean by this, consider the Čech nerve, which has face maps $f_p^i\colon U_{\alpha_0\ldots\alpha_p}\to U_{\alpha_0\ldots\widehat{\alpha_i}\ldots\alpha_p}$ and degeneracy maps $s_i^p\colon U_{\alpha_0\ldots\alpha_p}\to U_{\alpha_0\ldots\alpha_i\alpha_i\ldots\alpha_p}$.
  The degeneracy maps are trivial: $U_{\alpha_0\ldots\alpha_p} = U_{\alpha_0\ldots\alpha_i\alpha_i\ldots\alpha_p}$; the face maps are (in general) non-trivial: $U_{\alpha_0\ldots\alpha_p}\subseteq U_{\alpha_0\ldots\widehat{\alpha_i}\ldots\alpha_p}$.
  If we are considering, say, a (pre)sheaf $\scr{F}$ such that $\scr{F}(U)$ consists of some sort of functions on $U$, then defining a map $\scr{F}(U_\alpha)\to\scr{F}(U_{\alpha\alpha})$ is trivial, since we can simply take the identity; defining a map $\scr{F}(U_{\alpha\beta})\to\scr{F}(U_\alpha)$ is \emph{hard}, since we might not be able to extend functions.
  Working with the \emph{opposite} of the Čech nerve, however, means that we will not have this problem: we will have to construct maps of the form $\scr{F}(U_{\alpha})\to\scr{F}(U_{\alpha\beta})$, and this can be done by simply restricting the functions on the former.

  This is explained in the context of a worked example in \cref{appendix:example}.
\end{remark}

\subsection{Perfectness of complexes}
\label{subsection:perfect-complexes}

The classical references for the various notions relating to perfectness are \cite[Exposés~I and II]{BGI1971}; see also \cite[\S2.1]{Wei2016} and \cite[\href{https://stacks.math.columbia.edu/tag/08C3}{Tag 08C3}]{stacks-project}.

One important fact of complex-analytic geometry is that not every coherent analytic sheaf can be resolved by a complex of locally free sheaves, but it can be \emph{locally} resolved.
Indeed, throughout this paper, \emph{the motivating example is always when $(X,\OO_X)$ is a complex-analytic manifold with the sheaf of holomorphic functions}.
Perfectness conditions allow us to study this phenomenon more generally.

\begin{definition}
\label{definition:perfect-complexes}
  Let $(X,\OO_X)$ be a locally ringed space, and $M^\bullet$ a cochain complex of $\OO_X$-modules.
  \begin{itemize}
    \item We say that $M^\bullet$ is \emph{finitely generated free} if it is bounded and such that each $M^i$ is a finite\footnote{By which we mean ``of finite rank''. Note that, if $X$ is connected, then the rank is also constant.} free $\OO_X$-module.
    \item We say that $M^\bullet$ is \emph{strictly perfect} if it is bounded and such that each $M^i$ is a finite locally free $\OO_X$-module.
    \item We say that $M^\bullet$ is \emph{perfect} if it is locally quasi-isomorphic to a strictly perfect complex.
      That is, if, for all $x\in X$, there exists some open neighbourhood $U$ of $x$, and some bounded complex $L_U^\bullet$ of finite locally free $\OO_X$-modules on $U$, such that $M^\bullet|U \simeq L_U^\bullet$.
  \end{itemize}
  We write $\Free(X)$ to denote the dg-category of finitely generated free complexes on $(X,\OO_X)$.
\end{definition}

Here we are mainly interested in finitely generated free complexes, and we mention strictly perfect and perfect complexes simply for context.
The ``full'' story about these finiteness conditions involves the fact that twisting cochains constitute a dg-enhancement of the category of perfect complexes (a result of \cite{Wei2016}), something to which we later allude in \cref{theorem:1-simplices-in-complex-analytic-twist} and \cref{remark:turning-resolution-by-twisting-cochains-into-pi-0-statement}.

Now we can restate the fact about local resolutions of coherent analytic sheaves using this more abstract terminology.
Indeed, \cite[Exposé~I, Exemple~5.11]{BGI1971} tells us that the derived category of bounded complexes of coherent analytic sheaves on a complex-analytic manifold $X$ is equivalent to the derived category of perfect complexes on $X$.
But the fact that there exist coherent analytic sheaves that do \emph{not} admit global resolutions by locally free sheaves is an example of the fact that, although strictly perfect clearly implies perfect, the converse is not necessarily true (see also \cite[Remark~2.4]{Wei2016} for examples of how this converse is also false in the algebraic case).

\begin{remark}
\label{remark:strictly-perfect-locally-ringed}
  The hypothesis that $(X,\OO_X)$ be \emph{locally} ringed is necessary for our definition of \emph{strictly} perfect in \cref{definition:perfect-complexes}: for an arbitrary ringed space, it is not necessarily true that a direct summand of a finite free $\OO_X$-module is finite and locally free, and we have used this statement to simplify the definition of strictly perfect complexes.
  However, in the rest of this paper we do not deal with the notion of strictly perfect complexes, and so we will instead work in the more general setting of arbitrary ringed spaces.
\end{remark}

\begin{remark}
  The constructions that we are going to give build things out of free $\OO_X$-modules, so if we want any hope of recovering \emph{coherent} sheaves of $\OO_X$-modules at the end somehow, then it needs to be the case that \emph{free modules are themselves coherent}.
  In the complex-analytic setting, this is ensured by the Oka coherence theorem, which tells us that $\OO_X$ is coherent; in the complex-algebraic setting, this is ensured if we work with a locally Noetherian scheme (\cite[\href{https://stacks.math.columbia.edu/tag/01XZ}{Tag 01XZ}]{stacks-project}).
  Although the constructions still ``make sense'' in settings where $\OO_X$ is \emph{not} coherent, we do not know how exactly the objects that we construct will relate to coherent sheaves.
  To deal with such questions, one would need to appeal to the more general definition of \emph{pseudo-coherence} (\cite[Exposé~I, \S0.~Introduction]{BGI1971}).

  More generally, the relation between perfectness and coherence is an interesting subject of study.
  One particularly useful result is that, if the local rings $\OO_{X,x}$ are all regular (which is the case if, for example, $(X,\OO_X)$ is a complex-analytic manifold, and thus smooth), then every coherent sheaf is perfect, and, more generally, every pseudo-coherent complex with locally bounded cohomology is perfect \cite[Exposé~I, Corollaire~5.8.1]{BGI1971}.
\end{remark}

\section{Three simplicial presheaves}
\label{section:three-simplicial-presheaves}

Using the Čech totalisation from \cref{subsection:cech-totalisation} we can ``apply geometry'' to presheaves of simplicial sets, and it turns out that many familiar geometric objects arise in this way, such as complexes of locally free sheaves.
We start by considering exactly this example, building it from the category of finitely generated free complexes (\cref{definition:perfect-complexes}), and then describe how to make this functorial, turning it into a presheaf on connected ringed spaces.
From this presheaf we will construct three generalisations (\cref{subsection:twist-construction}, \cref{subsection:green-construction}, and \cref{subsection:stwist-construction}), which form the main objects of study of this paper.

As a gentle reminder, we draw attention to \cref{remark:abuse-of-terminology-no-rectification}: \textbf{these presheaves are in fact only \emph{pseudo}-presheaves in general, and if applied to anything other than the Čech nerve must possibly first be rectified}.

\subsection{Narrative}
\label{subsection:narrative}

Since we will eventually be interested in \emph{local} properties of simplicial presheaves on ringed spaces, from now on we freely switch between $(U,\OO_U)$ and $(X,\OO_X)$ as notation for an arbitrary ringed space.

We are interested in the most restrictive of the notions of perfectness from \cref{definition:perfect-complexes}, namely that of finitely generated free complexes.
Given a ringed space $(U,\OO_U)$, the objects of $\Free(U)$ are \emph{bounded} complexes of \emph{finite-rank} \emph{free} $\OO_U$-modules\footnote{Formally, we really work with the skeleton of this category: a free sheaf is uniquely determined by its rank.}
\[
  C = \big(
  0 \to \OO_U^{\oplus r_k} \xrightarrow{d_{k-1}} \OO_U^{\oplus r_{k-1}} \xrightarrow{{d_{k-2}}} \quad\ldots\quad \xrightarrow{d_2} \OO_U^{\oplus r_2} \xrightarrow{d_1} \OO_U^{\oplus r_1} \to 0
  \big)
\]
and the morphisms are given by
\[
  \Hom_{\Free(U)}^n(C,D) =
  \prod_{m\in\mathbb{Z}} \Hom_{\OO_U}(C^m,D^{m+n}).
\]
This gives a dg-category by defining the differential $\partial$ on the hom-sets as in \cref{definition:dg-category}.

Taking the ordinary nerve of this category\footnote{Recall \cref{definition:1-nerve-of-dg-category}: this really means the ordinary nerve of $K_0$ of this category.} gives us a simplicial set $\nerve\Free(U)$, whose $p$-simplices are composible sequences of $p$-many morphisms:
\[
  \nerve\Free(U)_p =
  \big\{
    C_0\xrightarrow{\varphi_1}C_1\xrightarrow{\varphi_2}\ldots\xrightarrow{\varphi_p}C_p
    \mid \varphi_i\in\Hom_{K_0\Free(U)}(C_{i-1},C_i)
  \big\}
\]
(though it will prove useful to think of the nerve as in \cref{remark:blown-up-nerve}, so that we really have $\binom{p+1}{2}$ many morphisms).
Finally, we take the maximal Kan complex
\[
  \core{\nerve\Free(U)}\subseteq\nerve\Free(U)
\]
which, by \cref{lemma:nerve-inside-dg-nerve}~(ii), is equivalent to asking that all the $\varphi_i$ be \emph{isomorphisms}.
Now we have a simplicial presheaf on ringed spaces given by
\[
  (U,\OO_U) \mapsto \core{\nerve\Free(U)}
\]
and so we can try to understand the Čech totalisation (at a given space $X$ and cover $\cover$) of this simplicial presheaf through its homotopy groups:
\[
\label{equation:what-we-want-to-generalise}
  \pi_n\Tot\core{\nerve\Free(\cechnerve\cover_\bullet)}.
\tag{$\divideontimes$}
\]
Note that here we are extending Čech totalisation from simplicial presheaves on spaces to simplicial presheaves on \emph{ringed} spaces, which relies on us extending the Čech nerve from spaces to ringed spaces: with which ringed space structure do we endow $\cechnerve\cover_p$?
The sheaf condition requires that
\[
  \colim_{i\in I}(U_i,\OO_{U_i})
  \cong \big(\colim_{i\in I} U_i, \lim_{i\in I}((\iota_i)_*\OO_{U_i})\big)
\]
where $\iota_i\colon U_i\to\colim U_i$ are the morphisms induced by the universal property of the colimit.
In particular, this tells us that we should set
\[
  \coprod_{i\in I}(U_i,\OO_{U_i})
  = \left(
    \coprod_{i\in I} U_i,
    \prod_{i\in I}\big((\iota_i)_*\OO_{U_i}\big)
  \right).
\]

\bigskip

From one point of view, finding good generalisations of the construction in ($\divideontimes$)  is one key motivation for this entire paper.
To explain this, let us take a step back and first explain why we care about \cref{equation:what-we-want-to-generalise}.

The fact that \cref{equation:what-we-want-to-generalise} describes any sort of interesting mathematical object is at least partially justified by an example.\footnote{On a higher level, the general idea of (co)descent objects being useful for computing higher stackification is one with far-reaching consequences throughout geometry and other fields. We do not give a full justification for this here.}
In \cref{appendix:example} we show how this machinery recovers a space whose points are principal bundles and whose paths are gauge transformations (and thus whose loops are gauge groups).
For our applications, we use locally free sheaves instead of principal bundles, since these admit morphisms that are not simply automorphisms, which is necessary for the following.

So we start with the notion of locally free sheaves (on some fixed ringed space), and think about useful ways in which we can generalise this.
If we let the rank of the sheaf change across open subsets, and allow things to be \emph{surjected on} by something free instead of being free themselves, all in a ``controlled'' way, then we arrive at the notion of \emph{coherent} sheaf.
The category of coherent sheaves is also very well behaved: it is an abelian category, whereas the category of vector bundles is not.
Because of this (amongst many other reasons), coherent sheaves turn out to be very useful objects to study.
For some particularly nice ringed spaces, the category of coherent sheaves is equivalent to the category of \emph{cochain complexes} of locally free sheaves, which suggests that we might eventually wish to consider cochain complexes.

To relate this back to the story we are trying to tell, let's consider the subcategory of $\Free(U)$ spanned by complexes concentrated in degree zero (i.e. each complex is just a single locally free sheaf).
Then a point in the $\pi_0$ case of \cref{equation:what-we-want-to-generalise} describes the data of a free sheaf over each open subset, with isomorphisms between them on overlaps wherever possible.
In particular, the rank of the free sheaf on each open subset is the same.
That is, we are describing exactly locally free sheaves \emph{of constant rank}.
If we consider all of $\Free(U)$ then we end up with something similar: we have a complex of free sheaves on each open subset, with isomorphisms between them on overlaps, so we are describing cochain complexes of locally free sheaves of constant rank\footnote{In the case where $(U,\OO_U)$ is \emph{locally} ringed, these are exactly strictly perfect complexes, following \cref{remark:strictly-perfect-locally-ringed}.} (the formal version of this statement is \cref{theorem:tot0-free-is-loc-free}).
But we wanted to be able to talk about objects where the rank can jump across open subsets, so we see that \cref{equation:what-we-want-to-generalise} is too strict or discrete in some sense, since sheaves of different ranks cannot interact with one another.
There are (at least) two natural ways to solve this problem:
\begin{enumerate}
  \item we could use \cref{equation:what-we-want-to-generalise} as local input for some simplicial construction, obtaining an infinite tower of homotopical data; or
  \item we could replace the nerve in \cref{equation:what-we-want-to-generalise} by the dg-nerve, since \cref{lemma:nerve-inside-dg-nerve} tells us that then we won't be restricted to isomorphisms, but will instead be allowing \emph{quasi-isomorphisms}.\footnote{Since we are working with \emph{free} modules, the generalised Whitehead theorem tells us that quasi-isomorphisms are exactly chain homotopy equivalences.}
\end{enumerate}
We will end up formalising both of these approaches: the first in \cref{subsection:green-construction}, and the second in \cref{subsection:twist-construction}.
The obvious question then presents itself: ``how do these two constructions relate to one another?''.
One way of answering this is to consider what happens when we apply both simultaneously, and to ask if this lets us mediate between them --- this is what we do in \cref{subsection:stwist-construction} and further in \cref{section:morphisms-between-the-presheaves}.

\bigskip

The first step is to generalise the construction described above to obtain a simplicial presheaf $\core{\nerve\Free(-)}$ on the category of connected ringed spaces.
From this, we will construct three simplicial presheaves which are the main subjects of this current paper.
All of these presheaves are constructed precisely so that \cref{theorem:tot0-all-three} holds, i.e. so that we recover \emph{Green complexes}, \emph{twisting cochains}, and \emph{simplicial twisting cochains} after Čech totalisation.
On this note, we also point out that these three presheaves will be named for what they become \emph{after} applying Čech totalisation in the complex-analytic case, not for what they are beforehand.

The category on which these presheaves are defined is the category $\ConnRingSpace$ of \emph{connected} ringed spaces, where we require connectedness in order for the rank of a free sheaf to be constant.

\begin{lemma}
\label{lemma:free-gives-pseduopresheaf}
  The assignment $(U,\OO_U)\mapsto\Free(U)$ that sends a ringed space to the dg-category of bounded complexes of free modules on that space defines a pseudofunctor $\Free\colon(\RingSpace)^\op\to\dgCat$ by sending a morphism $(f,f^\sharp)\colon(U,\OO_U)\to(V,\OO_V)$ to the pullback $f^*=f^{-1}(-)\otimes_{f^{-1}\OO_V}\OO_U\colon\Mod{V}\to\Mod{U}$.
\end{lemma}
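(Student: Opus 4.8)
The plan is to exhibit $\Free$ as the restriction of the classical inverse-image pseudofunctor on sheaves of modules. Recall that sending $(U,\OO_U)$ to the category $\Mod{U}$, and a morphism $(f,f^\sharp)\colon(U,\OO_U)\to(V,\OO_V)$ to the pullback $f^*=f^{-1}(-)\otimes_{f^{-1}\OO_V}\OO_U$, is a well-known pseudofunctor $\RingSpace^\op\to\mathsf{Cat}$; applying it termwise to cochain complexes extends it to bounded complexes. So there are really only three things to verify: (i) that $f^*$ carries $\Free(V)$ into $\Free(U)$; (ii) that $f^*$ is a dg-functor for the hom-complexes of \cref{definition:dg-category}; and (iii) that the associativity and unit isomorphisms of the classical pseudofunctor restrict to the subcategories of bounded free complexes and continue to satisfy the coherence axioms there.

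For (i): since $f^{-1}$ commutes with finite coproducts and $f^{-1}\OO_V\otimes_{f^{-1}\OO_V}\OO_U\cong\OO_U$, one gets a canonical isomorphism $f^*(\OO_V^{\oplus r})\cong\OO_U^{\oplus r}$ for every $r$, so $f^*$ sends a finite free module to a finite free module of the same rank. As $f^*$ is additive it then acts termwise on a bounded complex $C$ of finite free modules, producing a bounded complex $f^*C$ of finite free modules with differentials $f^*(d_C)$, and boundedness is immediate. Here I would also note that, since (as in \cref{subsection:narrative}) we really work with the skeleton of $\Free$, we transport the chosen skeletal representative along the canonical isomorphism above; this forced choice is exactly the point at which strict functoriality in $f$ breaks, which is why the statement can only assert a pseudofunctor.

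For (ii): the functor $f^*$ induces additive maps $\Hom_{\OO_V}(A,B)\to\Hom_{\OO_U}(f^*A,f^*B)$ on morphism groups, and assembling these over the grading yields maps $\Hom^n_{\Free(V)}(C,D)\to\Hom^n_{\Free(U)}(f^*C,f^*D)$. These commute with composition and units because $f^*$ is a functor, and they commute with the hom-differential $\partial$ because, by \cref{definition:dg-category}, $\partial$ is built from pre- and post-composition with the differentials $d_C,d_D$, and $f^*$ preserves composition and satisfies $f^*(d_C)=d_{f^*C}$ by construction (up to the canonical identifications of (i)). Hence $f^*\colon\Free(V)\to\Free(U)$ is a genuine dg-functor.

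For (iii): the coherence data consists of natural isomorphisms $\phi_{g,f}\colon f^*\circ g^*\Rightarrow(g\circ f)^*$ and $\phi_U\colon\id\Rightarrow\id_U^*$, built from the canonical isomorphisms $(g\circ f)^{-1}\cong f^{-1}\circ g^{-1}$ and $\id^{-1}=\id$ together with associativity and base change for $\otimes$; these are precisely the structure isomorphisms of the classical module-pullback pseudofunctor. Since the pullback of a free module is free, each $\phi_{g,f}$ and $\phi_U$ restricts to the full dg-subcategories of bounded free complexes — and, being built from isomorphisms of modules, its components are degree-$0$ closed invertible morphisms, so it is a dg-natural isomorphism — and the pentagon and triangle identities, being equalities of natural transformations, persist on the restriction. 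I expect the only genuine obstacle to be bookkeeping: checking the coherence identities is standard but tedious, yet since all of it is inherited verbatim from the classical inverse-image pseudofunctor on $\OO$-modules, the substantive content of the lemma is really just (i) and (ii), namely that this pseudofunctor restricts along the inclusions of the dg-categories of bounded free complexes.
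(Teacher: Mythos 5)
Your proposal is correct and follows the same route as the paper's own (much terser) proof: check that $f^*(\OO_V^{\oplus r})\cong\OO_U^{\oplus r}$ so the pullback restricts to $\Free$, and observe that pseudofunctoriality is inherited from the standard fact that $(g\circ f)^*\cong f^*\circ g^*$ is only a natural isomorphism. You simply spell out the dg-functoriality and coherence bookkeeping that the paper leaves implicit.
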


\begin{proof}
  First, note that $f^*$ sends free $\OO_V$-modules to free $\OO_U$-modules:
  \[
    f^*(\OO_V^r)
    \cong f^{-1}\OO_V^r\otimes_{f^{-1}\OO_V}\OO_U
    \cong \OO_U^r.
  \]
  Pseudofunctoriality follows from the fact that $(gf)^*\cong f^*g^*$ is a natural isomorphism but not necessarily an equality.
\end{proof}

The fact that $\Free$ only defines a \emph{pseudo}functor means that a diagram in $\RingSpace$ will \emph{not} give us a diagram in $\dgCat$ when we compose with $\Free$, but merely a ``pseudo-diagram'', and we cannot a priori take (homotopy) limits of such things.
One solution to this problem is via \emph{rectification}, which is a strictification procedure: \cite[Proposition~4.2]{BHW2017} shows that one can replace any pseudo-presheaf of dg-categories with an dg-equivalent strict presheaf.
However, for our purposes in this paper, we can make use of a much more elementary fact (which also appears as \cite[Remark~4.5]{BHW2017}), which is that evaluating $\Free$ specifically on the Čech nerve does result in a strict cosimplicial diagram since the coface maps are then given by restriction to open subsets.
Because of this, we will not worry about rectification here, but this disclaimer is important enough to merit a remark.

\begin{remark}
\label{remark:abuse-of-terminology-no-rectification}
  Since $\Free(\cechnerve\cover_\bullet)$ is a \emph{strict} cosimplicial dg-category, we do not need to first rectify $\Free$ and obtain a strict presheaf of dg-categories.
  Throughout this paper, since we only work with the Čech nerve, we will continue to refer to $\Free$ (the other ``presheaves'' that we define in \cref{section:three-simplicial-presheaves}) as a presheaf instead of a pseudo-presheaf, \textbf{but this really is an abuse of terminology}.
\end{remark}

Note, however, that this pseudo/strict distinction only really matters when considering $\Free$ applied to some diagram of ringed spaces --- whenever we talk about $\Free(U)$ for some fixed $(U,\OO_U)$, the issue of pseudofunctoriality does not arise.
It is also important to understand that $(-)^*$ is a map $\Hom_{\RingSpace}(U,V)\to[\Free(V),\Free(U)]$ that forms part of a \emph{pseudo}functor, whereas, for any specific $f\colon U\to V$, the map $f^*\colon\Mod{V}\to\Mod{U}$ is a \emph{strict} functor.

\begin{lemma}
\label{lemma:maxkan-nerve-free-is-presheaf}
  The assignment $(U,\OO_U)\mapsto\core{\nerve\Free(U)}$ defines a simplicial presheaf on $\RingSpace$.
\end{lemma}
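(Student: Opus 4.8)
The plan is to realise $\core{\nerve\Free(-)}$ as a composite of (pseudo)functors that are already understood, so that the only non-strictness present is the one already carried by $\Free$. Concretely, I would factor the assignment as
\[
  \RingSpace^\op \xrightarrow{\ \Free\ } \dgCat \xrightarrow{\ K_0\ } \mathsf{Cat} \xrightarrow{\ \nerve\ } \QuasiCat \xrightarrow{\ \core{-}\ } \Kan \hookrightarrow \sSet
\]
and argue that every arrow \emph{after} $\Free$ is an honest (strict) functor; precomposing with the pseudofunctor $\Free$ from \cref{lemma:free-gives-pseduopresheaf} then yields a simplicial presheaf in exactly the sense permitted by \cref{remark:abuse-of-terminology-no-rectification}.

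First I would check that $K_0$ is functorial on $\dgCat$: a dg-functor acts on each hom-complex by a chain map, hence carries closed degree-zero morphisms to closed degree-zero morphisms, and so restricts to a functor of the underlying ordinary categories, strictly preserving identities and composition; functoriality in the dg-functor itself is then immediate. Next, the ordinary nerve $\nerve\colon\mathsf{Cat}\to\sSet$ is functorial and factors through $\QuasiCat$ (the nerve of a category is even $2$-coskeletal, with uniquely fillable inner horns), and $\core{-}\colon\QuasiCat\to\Kan$ is functorial by \cref{definition:max-kan}; on the nerve of a category this core is just the nerve of the maximal groupoid, as already observed in the proof of \cref{lemma:nerve-inside-dg-nerve}~(ii), and a functor restricts to maximal groupoids. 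Composing these three functors gives a functor $\dgCat\to\Kan\hookrightarrow\sSet$, and precomposing with $\Free$ produces the claimed (pseudo)presheaf on $\RingSpace$.

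The one genuine subtlety, and the place where I expect the only real obstacle, is the pseudofunctoriality inherited from $\Free$: since $(gf)^*\cong f^*g^*$ is merely a natural isomorphism, the induced maps of nerves agree only up to the homotopy it determines, so $\core{\nerve\Free(-)}$ is a priori only a pseudo-presheaf. I would dispose of this exactly as flagged in \cref{remark:abuse-of-terminology-no-rectification}: in this paper we only ever evaluate on the Čech nerve $\cechnerve\cover_\bullet$, whose face maps are inclusions of open sets, so that the coface maps of $\Free(\cechnerve\cover_\bullet)$ are honest restrictions and hence compose strictly; thus $\core{\nerve\Free(\cechnerve\cover_\bullet)}$ is a strict cosimplicial simplicial set, no rectification is required, and the abuse of calling $\core{\nerve\Free(-)}$ a simplicial presheaf is harmless. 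Beyond stating this disclaimer carefully, the remainder is a routine unwinding of definitions.
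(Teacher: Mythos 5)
Your proposal is correct and is essentially the paper's argument in different packaging: the paper verifies directly on $p$-simplices that $f^*$ sends chain maps to chain maps, respects faces and degeneracies (functors preserve compositions and identities), and preserves isomorphisms so as to restrict to the maximal Kan complex, which is precisely the content of your factorisation through $K_0$, $\nerve$, and $\core{-}$. Your explicit treatment of the pseudofunctoriality inherited from $\Free$ is handled correctly and matches what the paper defers to \cref{remark:abuse-of-terminology-no-rectification} rather than including in the proof itself.
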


\begin{proof}
  Let
  \[
    C_0 \xrightarrow{\varphi_1}
    C_1 \xrightarrow{\varphi_2}
    \ldots \xrightarrow{\varphi_p}
    C_p
  \]
  be a $p$-simplex in $\nerve\Free(V)$, so that each $C_i$ is a bounded chain complex of finite free $\OO_V$-modules, and each $\varphi_i$ is a chain map.
  Since $f^*$ is a functor from $\OO_V$-modules to $\OO_U$-modules, it gives objects $f^*C_0,\ldots,f^*C_p$ in $\Free(U)$, as well as degree-wise maps $f^*\varphi_1,\ldots\,f^*\varphi_p$, but we need to justify why these are indeed still chain maps in order to obtain a $p$-simplex in $\nerve\Free(U)$.
  However, this follows immediately from the functoriality of $f^*$, since functors preserve commutative squares, and so the $f^*\varphi_i$ are indeed chain maps.

  So we have a map on objects $f^*\colon\nerve\Free(V)\to\nerve\Free(U)$, but for this to be a morphism of simplicial sets we need to show that it commutes with the simplicial structure of the nerve.
  But since $f^*$ is a functor, it sends identities to identities and compositions to compositions, which means that it respects the face and degeneracy maps of the nerve, and thus indeed gives a morphism of simplicial sets $f^*\colon\nerve\Free(V)\to\nerve\Free(U)$.

  Finally, for $f^*$ to induce a morphism $\core{\nerve\Free(V)}\to\core{\nerve\Free(U)}$, we need to know that it sends isomorphisms to isomorphisms (since \cref{lemma:nerve-inside-dg-nerve}~(ii) tells us that this defines the maximal Kan complex of the ordinary nerve).
  But this is again immediate: any functor preserves isomorphisms, by functoriality.
\end{proof}

\subsection{Twisting cochains}
\label{subsection:twist-construction}

We are now in a position to define the first of our three simplicial presheaves, namely that of \emph{twisting cochains}.
This definition will recover (\cref{theorem:tot0-all-three}) the classical definition of twisting cochains as found throughout the work of Toledo--Tong \cite{TT1976,TT1978,TT1986} and O'Brian--Toledo--Tong \cite{OTT1981c,OTT1981b,OTT1981a,OTT1985}, as well as in more recent work \cite{Wei2016,BHW2017,Wei2021,GMTZ2022a,GMTZ2022b}.

\begin{definition}
\label{definition:twist}
  Define
  \[
    \Twist(U) = \core{\dgnerve\Free(U)}
  \]
  for any ringed space $(U,\OO_U)$.
  Note that this is, by definition, a simplicial set, and indeed even a Kan complex.
\end{definition}

\begin{lemma}
\label{lemma:twist-is-a-simplicial-presheaf}
  The assignment $(U,\OO_U)\mapsto\Twist(U)$ defines a simplicial presheaf on $\ConnRingSpace$.
\end{lemma}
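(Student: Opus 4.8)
The strategy is to run the argument of \cref{lemma:maxkan-nerve-free-is-presheaf} essentially verbatim, with the ordinary nerve replaced by the dg-nerve and ``isomorphism'' replaced by ``quasi-isomorphism''. Given a morphism $(f,f^\sharp)\colon(U,\OO_U)\to(V,\OO_V)$ in $\ConnRingSpace$, \cref{lemma:free-gives-pseduopresheaf} supplies the pullback $f^*\colon\Free(V)\to\Free(U)$, and the first thing I would check is that $f^*$ is a \emph{dg}-functor, not merely an additive functor on objects and morphisms. Additivity on the hom-complexes and compatibility with composition and identities are immediate from functoriality of $f^*$, while compatibility with the hom-differential $\partial$ follows from the explicit formula of \cref{definition:dg-category} once one observes that $f^*$ carries the internal differential $\mathrm{d}_C$ of a complex $C$ to the internal differential $\mathrm{d}_{f^*C}$ of $f^*C$.

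Since $\dgnerve\colon\dgCat\to\sSet$ is a functor, the dg-functor $f^*$ then induces a morphism of simplicial sets $\dgnerve(f^*)\colon\dgnerve\Free(V)\to\dgnerve\Free(U)$. Unwinding \cref{definition:dg-nerve}, a $p$-simplex of $\dgnerve\Free(V)$ is a labelling of the faces of $\Delta[p]$ by objects $C_i$ and morphisms $f_I$ satisfying \cref{equation:dg-nerve-definition}; applying $f^*$ levelwise gives the labelling by $f^*C_i$ and $f^* f_I$, and because $f^*$ is a dg-functor we have $f^*(\partial f_I)=\partial(f^* f_I)$ and $f^*$ preserves composition, so these relations continue to hold. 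Compatibility with the face and degeneracy maps is then just the statement that $f^*$ preserves identities and compositions, exactly as in the proof of \cref{lemma:maxkan-nerve-free-is-presheaf}.

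It remains to pass to the maximal Kan complexes. Abstractly, $\core{-}\colon\QuasiCat\to\Kan$ is a functor (\cref{definition:max-kan}) and $\dgnerve(f^*)$ is a morphism of quasi-categories, so $\core{\dgnerve(f^*)}$ is a well-defined morphism $\Twist(V)=\core{\dgnerve\Free(V)}\to\core{\dgnerve\Free(U)}=\Twist(U)$; concretely, \cref{lemma:nerve-inside-dg-nerve}~(iii) identifies $\core{\dgnerve\Free(U)}$ with the sub-simplicial-set on which the $1$-simplices are quasi-isomorphisms, so the point to verify is that $f^*$ carries quasi-isomorphisms of bounded complexes of free modules to quasi-isomorphisms. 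Functoriality in $(U,\OO_U)$ then follows from that of $\dgnerve$ and $\core{-}$, with the usual caveat that $\Free$ is only a pseudofunctor, so that strictly speaking $\Twist$ is a pseudo-presheaf — an abuse of terminology we permit ourselves following \cref{remark:abuse-of-terminology-no-rectification}.

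The one genuinely non-formal step is the preservation of quasi-isomorphisms invoked in the previous paragraph: $f^*$ need not be exact on arbitrary $\OO_V$-modules, so one cannot argue cohomologically. Instead I would appeal to the generalised Whitehead theorem — a quasi-isomorphism between bounded complexes of free modules is a chain homotopy equivalence — together with the elementary fact that any additive functor preserves chain homotopy equivalences, since a homotopy inverse $\psi$ and homotopies $H,K$ with $\psi\varphi-\id=\partial H$ and $\varphi\psi-\id=\partial K$ are carried by $f^*$ to witnesses of the same kind for $f^*\varphi$.
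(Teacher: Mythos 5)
Your proposal is correct, and its overall architecture (mirror the proof of \cref{lemma:maxkan-nerve-free-is-presheaf}, verify that $f^*$ is a dg-functor, induce a map on dg-nerves, then restrict to cores) matches the paper's. The one place where you genuinely diverge is the single non-formal step, preservation of quasi-isomorphisms. The paper argues that this is equivalent to exactness of $f^*$ and then deduces exactness from flatness of free modules via a $\operatorname{Tor}$ long-exact-sequence argument (together with the observation that a quasi-isomorphism is a map with acyclic mapping cone and that additive functors preserve cones). You instead invoke the generalised Whitehead theorem — a quasi-isomorphism between bounded complexes of free modules is a chain homotopy equivalence — and note that any dg-functor preserves chain homotopy equivalences, since it carries the witnessing homotopies $H,K$ to witnesses for $f^*\varphi$. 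Both routes are valid; yours is arguably cleaner here, as it sidesteps the question of exactness of $f^*$ on non-free syzygies entirely and uses only the dg-functoriality you have already established, at the cost of invoking Whitehead (which the paper itself freely uses elsewhere, e.g.\ in the proofs of \cref{theorem:pi0-of-green-and-pi0-of-stwist} and \cref{lemma:filling-2-horns-in-stwist}). The paper's flatness argument buys slightly more — genuine exactness of $f^*$ on these complexes, not merely preservation of quasi-isomorphisms — but that extra strength is not needed for this lemma.
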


\begin{proof}
  The proof of this statement is almost identical to that of \cref{lemma:maxkan-nerve-free-is-presheaf}, but we just need to modify the argument to account for the fact that we are now taking the dg-nerve instead of the ordinary nerve.

  First of all, note that $f^*$ does indeed induce a dg-functor $\Free(V)\to\Free(U)$, since (\cite[\href{https://stacks.math.columbia.edu/tag/09LB}{Tag 09LB}]{stacks-project}) it is an additive functor from $\OO_V$-modules to $\OO_U$-modules.
  Secondly, we need to know that $f^*$ sends quasi-isomorphisms to quasi-isomorphisms, which is equivalent to $f^*$ being exact\footnote{This is a ``standard'' fact, but we sketch a proof here for completeness. If $f^*$ preserves quasi-isomorphisms then it will in particular preserve the quasi-isomorphism between a short exact sequence and the zero complex, and so the image will also be quasi-isomorphic to zero, i.e. short exact; if $f^*$ is exact, then it preserves acyclic complexes, but a quasi-isomorphism is exactly a morphism whose mapping cone is acyclic, and since $f$ is additive we know that $f^*$ preserves mapping cones, and thus sends quasi-isomorphisms to quasi-isomorphisms.}, but this follows from the fact that we are working only with chain complexes of \emph{free} modules, which are, in particular, flat.\footnote{Since $\operatorname{Tor}$ is symmetric with respect to its two arguments, the functor $(-\otimes N)$ is exact if $N$ is flat but also if it is restricted to a full subcategory consisting of flat modules: given a short exact sequence $0\to A\to B\to C\to 0$, the associated long exact sequence is $\ldots\to\operatorname{Tor}_1(C,N)\to A\otimes N\to B\otimes N\to C\otimes N\to0$, and all the $\operatorname{Tor}$ terms vanish if all of $A$, $B$, and $C$ are flat.}
\end{proof}

\begin{remark}
  If we were not working with the dg-categories of complexes of \emph{free} modules, but instead arbitrary modules, then we would need to restrict the presheaf to the wide subcategory of (connected) ringed spaces with e.g. flat morphisms.
  The key point is that dg-functors do not a priori preserve quasi-isomorphisms, as one might hope.
\end{remark}

\subsection{Green complexes}
\label{subsection:green-construction}

\begin{definition}
\label{definition:elementary-complex}
  Let $R$ be a commutative ring.
  Given $R$-modules $M_1,\ldots,M_r$, we say that a complex of $R$-modules is \emph{$\{M_1,\ldots,M_n\}$-elementary} if it is a direct sum \mbox{$\epsilon_{p_1}^{i_1}\oplus\ldots\oplus\epsilon_{p_m}^{i_m}$} of complexes of the form
  \[
    \epsilon_{p_j}^{i_j} \coloneqq (0\to M_{i_j}\xrightarrow{\id}M_{i_j}\to0)[p_j]
  \]
  for some $p_1,p_2,\ldots,p_m\in\mathbb{Z}$.
  More generally, we simply say that a complex is \emph{elementary} if there exists some finite set of modules $\{M_1,\ldots,M_n\}$ for which it is $\{M_1,\ldots,M_n\}$-elementary.
\end{definition}

Note that the definition of elementary complexes extends immediately to complexes of sheaves of $\OO_X$-modules.

\begin{definition}
\label{definition:elementary-morphism}
  Given two elementary complexes, we define \emph{the elementary morphism} between them to be the (unique) morphism given by the ``maximal'' direct sum of elementary identity morphisms.
  That is, if $E$ is $\{M_1,\ldots,M_n\}$-elementary and $E'$ is $\{M'_1,\ldots,M'_{n'}\}$-elementary $E'$, then we can write
  \[
    E =\bigoplus_{j=1}^m \epsilon_{p_j}^{i_j}
    \quad\mbox{and}\quad
    E' =\bigoplus_{j=1}^{m'} {\epsilon'}_{p'_j}^{i'_j}
  \]
  and the elementary morphism from $E$ to $E'$, which we denote by $E\dashrightarrow E'$, is then defined to be
  \[
    \bigoplus_j\id_{\epsilon_{p_j}^{i_j}} \colon E \dashrightarrow E'
  \]
  where the sum is taken over all $j$ such that $\epsilon_{p_j}^{i_j}={\epsilon'}_{p'_j}^{i'_j}$.

  In the dg-category of complexes of modules, we define \emph{the elementary morphism of degree~$k$} to be exactly the elementary morphism defined above when $k=0$, and exactly the zero map when $k\neq0$.
  Concretely then, the elementary morphism is either zero or the inclusion into a direct sum (and thus, in particularly nice cases, the identity map).
\end{definition}

By construction, elementary complexes are acyclic.
Morally, we can think of them as the algebraic analogue of \emph{contractible} spaces.

\begin{lemma}
\label{lemma:adding-elementary-is-quasi-iso}
  Taking the direct sum with an elementary complex induces a quasi-isomorphism.
  More explicitly, if $C^\bullet$ is a complex of modules, and $E=(M\xrightarrow{\id}M)[0]$ is an elementary complex, then the inclusion
  \[
    i\colon C^\bullet \hookrightarrow C^\bullet\oplus E^\bullet
  \]
  is a chain homotopy equivalence (and thus, in particular, a quasi-isomorphism).
  The quasi-inverse is given by the projection
  \[
    p\colon C^\bullet\oplus E^\bullet \twoheadrightarrow C^\bullet.
  \]
\end{lemma}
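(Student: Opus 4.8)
The plan is to exhibit an explicit chain homotopy, reducing everything to the contractibility of a two-term elementary complex. First I would record the trivial half: by construction $p\circ i=\id_{C^\bullet}$ as a map of complexes, since projecting onto $C^\bullet$ after including it is literally the identity. It remains to produce a chain homotopy $h$ of degree $-1$ on $C^\bullet\oplus E^\bullet$ with $\mathrm{d}h+h\mathrm{d}=\id_{C^\bullet\oplus E^\bullet}-i\circ p$, which will show $i\circ p\simeq\id$.

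Next I would observe that $\id-i\circ p$ is precisely the map $(c,e)\mapsto(0,e)$, i.e.\ the identity on the $E^\bullet$ summand and zero on the $C^\bullet$ summand. Hence it suffices to contract $E^\bullet$. Writing $E^\bullet$ as concentrated in two consecutive degrees (the shift $[0]$ placing them in degrees $0$ and $1$) with differential $\mathrm{d}=\id_M$, I define $s\colon E^1\to E^0$ to be $\id_M$ and zero in all other degrees; a direct check gives $\mathrm{d}s+s\mathrm{d}=\id_{E^\bullet}$. Extending $s$ by zero on the $C^\bullet$ summand yields a degree $-1$ map $h$ on $C^\bullet\oplus E^\bullet$, and then $\mathrm{d}h+h\mathrm{d}$ is the identity on the $E^\bullet$ summand and zero on the $C^\bullet$ summand, i.e.\ exactly $\id-i\circ p$. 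Therefore $i$ and $p$ are mutually quasi-inverse chain homotopy equivalences, and in particular quasi-isomorphisms.

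Finally, for the more general assertion (direct sum with an arbitrary elementary complex $E$), I would note that a finite direct sum of chain homotopy equivalences is again a chain homotopy equivalence (take the direct sum of the homotopies) and that the shift functor $[p]$ preserves chain homotopy equivalences; since an elementary complex is by \cref{definition:elementary-complex} a finite direct sum of shifts of complexes of the form $(M\xrightarrow{\id}M)$, the general case follows formally from the case treated above.

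I do not expect any genuine obstacle here; the only point requiring care is matching the degree conventions (non-negatively graded cochain complexes, and the meaning of the shift $[p]$) when writing down the contracting homotopy $s$, and keeping track of which summand each map lands in so that the identity $\mathrm{d}h+h\mathrm{d}=\id-i\circ p$ holds on the nose.
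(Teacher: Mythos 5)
Your proof is correct and is essentially the paper's own argument: $p\circ i=\id$ holds on the nose, and $i\circ p$ is homotopic to the identity via a contracting homotopy supported on the elementary summand, equal to $\pm\id_M$ in a single degree (plus the formal reduction of a general elementary complex to this two-term case). The only discrepancy is the direction and sign of that homotopy, which is purely the grading convention you already flag: the paper's differential lowers the index, so its homotopy is $(0,-\id_M)\colon C^0\oplus M\to C^1\oplus M$, matching your $s$ under the opposite convention.
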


\begin{proof}
  One composition $p\circ i\colon C^\bullet\to C^\bullet$ is the identity on the nose.
  The other composition $i\circ p\colon C^\bullet\oplus E^\bullet\to C^\bullet\oplus E^\bullet$ is homotopic to the identity, as witnessed by the homotopy that is zero in all degrees except for in degree~$0$, where it is $\left(\begin{smallmatrix}0&0\\0&-\id_M\end{smallmatrix}\right)$, i.e.
  \[
    \begin{tikzcd}[ampersand replacement=\&,column sep=huge,row sep=4em]
      \ldots \ar[r,"d_C"]
      \& C^2 \ar[r,"{\begin{pmatrix}d_C\\0\end{pmatrix}}"]
        \ar[dl,"0" description]
      \& C^1\oplus M \ar[r,"{\begin{pmatrix}d_C&0\\0&\id_M\end{pmatrix}}"]
        \ar[dl,"0" description]
      \& C^0\oplus M \ar[r,"{\begin{pmatrix}d_C&0\end{pmatrix}}"]
        \ar[dl,"{\begin{pmatrix}0&0\\0&-\id_M\end{pmatrix}}" description]
      \& C^{-1} \ar[r,"d_C"]
        \ar[dl,"0" description]
      \& \ldots
        \ar[dl,"0" description]
    \\\ldots \ar[r,swap,"d_C"]
      \& C^2 \ar[r,swap,"{\begin{pmatrix}d_C\\0\end{pmatrix}}"]
      \& C^1\oplus M \ar[r,swap,"{\begin{pmatrix}d_C&0\\0&\id_M\end{pmatrix}}"]
      \& C^0\oplus M \ar[r,swap,"{\begin{pmatrix}d_C&0\end{pmatrix}}"]
      \& C^{-1} \ar[r,swap,"d_C"]
      \& \ldots
    \end{tikzcd}
  \]
\end{proof}

We now give a fundamental definition, which will be used in constructing two of the three simplicial presheaves in which we are interested.
For want of a more descriptive name, we use the acronym ``GTT'' for ``Green--Toledo--Tong'' in reference to \cite{TT1986}.

\begin{definition}
\label{definition:GTT-labelling}
  A \emph{GTT-labelling of $\Delta[p]$ by $\core{\dgnerve\Free(U)}$} (or simply a \emph{GTT-labelling of $\Delta[p]$}) consists of a labelling of $\pair{\Delta[p]}$ (\cref{definition:pair-subdivision}) subject to some conditions.
  More precisely, it consists of the following data:
  \begin{enumerate}[(a)]
    \item To each $0$-cell
      \[
        (\sigma,\sigma)
        \quad\longleftrightarrow\quad
        \{i_0<\ldots<i_k\}\subseteq\{i_0<\ldots<i_k\}\subseteq[p]
      \]
      we assign a $k$-simplex of $\core{\dgnerve\Free(U)}$, i.e. bounded complexes of finite-rank free $\OO_U$-modules
      \[
        C_{i_0}(\sigma),\, C_{i_1}(\sigma),\, \ldots,\, C_{i_k}(\sigma)
        \in\Free(U)
      \]
      along with, for all non-empty subsets $J=\{j_0<\ldots<j_\ell\}\subseteq\{i_0<\ldots<i_k\}$, morphisms
      \[
        \varphi_J(\sigma)
        \in\Hom_{\Free(U)}^{1-\ell}\left(
          C_{j_\ell}(\sigma),C_{j_0}(\sigma)
        \right)
      \]
      such that
      \begin{itemize}
        \item if $|J|=1$, then $\varphi_J(\sigma)$ is the differential on $C_J(\sigma)$;
        \item if $|J|=2$, then $\varphi_J(\sigma)\colon C_{j_1}(\sigma)\to C_{j_0}(\sigma)$ is a chain map;
        \item if $|J|\geq3$, then
        \[
          \partial \varphi_J(\sigma)
          =
          \sum_{m=1}^{\ell-1} (-1)^{m-1} \varphi_{J\setminus\{j_m\}}(\sigma)
          +
          \sum_{m=1}^{\ell-1} (-1)^{\ell(m-1)+1} \varphi_{\{j_m<\ldots<j_k\}}(\sigma) \circ \varphi_{\{j_0<\ldots<j_m\}}(\sigma).
        \]
      \end{itemize}
    \item To each $(k-\ell)$-cell
      \[
        (\tau,\sigma)
        \quad\longleftrightarrow\quad
        \{j_0<\ldots<j_\ell\}\subset\{i_0<\ldots<i_k\}\subseteq[p]
      \]
      (where $0\leq l<k$) we assign an $(\ell+1)$-tuple of objects
      \[
        \left(
          C_{j_m}^{\perp\sigma}(\tau) \in \Free(U)
        \right)_{0\leq m\leq \ell}
      \]
      where each $C_{j_m}^{\perp\sigma}(\tau)$ is elementary.\footnote{See \cref{remark:elementary-complements-in-GTT}.}
      We refer to the $C_{j_m}^{\perp\sigma}(\tau)$ as the \emph{elementary (orthogonal) complements}.
  \end{enumerate}
  This data is subject to the conditions that, for any $(k-\ell)$-cell
  \[
    (\tau,\sigma)
    \quad\longleftrightarrow\quad
    \{j_0<\ldots<j_\ell\}\subset\{i_0<\ldots<i_k\}\subseteq[p]
  \]
  (where $0\leq l<k$) the following are satisfied:
  \begin{enumerate}[(i)]
    \item There is a direct-sum decomposition
      \[
        \theta_{j_m}^{\perp\sigma}(\tau)\colon
        C_{j_m}(\tau)\oplus C_{j_m}^{\perp\sigma}(\tau)
        \xrightarrow{\cong} C_{j_m}(\sigma)
      \]
      for all $0\leq m\leq \ell$.
      We refer to the isomorphism $\theta_{j_m}^{\perp\sigma}(\tau)$ as the \emph{$\tau$-trivialisation of $C_{j_m}(\sigma)$}.
    \item For any non-empty subset $K\subseteq\{j_0<\ldots<j_\ell\}$ with $|K|\geq2$, the diagram
      \[
        \begin{tikzcd}[sep=huge]
          C_{\ver_{|K|}K}(\tau)
            \ar[r,hook]
            \ar[d,swap,"\varphi_K(\tau)"]
          & C_{\ver_{|K|}K}(\tau)\oplus C_{\ver_{|K|}K}^{\perp\sigma}(\tau)
            \ar[r,two heads]
            \ar[d,"\varphi_K(\sigma)_\tau"]
          & C_{\ver_{|K|}K}^{\perp\sigma}(\tau)
            \ar[d,dashed]
        \\C_{\ver_0K}(\tau)
            \ar[r,hook]
          & C_{\ver_0K}(\tau)\oplus C_{\ver_0K}^{\perp\sigma}(\tau)
            \ar[r,two heads]
          & C_{\ver_0K}^{\perp\sigma}(\tau)
        \end{tikzcd}
      \]
      commutes, where the $\hookrightarrow$ are the inclusions and the $\twoheadrightarrow$ are the projections of the direct sum, the dashed arrow on the far right is the elementary morphism\footnote{Recall \cref{definition:elementary-morphism}: for $|K|\neq2$, this is zero; for $|K|=2$ (i.e. for $K=\{j_a<j_b\}$) this is a sum of identity maps. In the latter case, although $C_{j_a}^{\perp\sigma}(\tau)$ is elementary in $C_{j_a}(\sigma)$, and $C_{j_b}^{\perp\sigma}(\tau)$ is elementary in $C_{j_b}(\sigma)$, both $C_{j_a}(\sigma)$ and $C_{j_a}(\sigma)$ consist of free modules over the same ring, namely $\OO(U)$, and so the elementary morphism will ``often'' (i.e. in practice, when the GTT-labelling arises from the twisting cochain constructed from a coherent sheaf) be non-zero.} of degree~$(2-|K|)$, and we write $\varphi_K(\sigma)_\tau$ to mean the composition
      \[
        C_{\ver_{|K|}K}(\tau)\oplus C_{\ver_{|K|}K}^{\perp\sigma}(\tau)
        \xrightarrow{\theta_{\ver_{|K|}K}^{\perp\sigma}(\tau)} C_{\ver_{|K|}K}(\sigma)
        \xrightarrow{\varphi_K(\sigma)} C_{\ver_0K}(\sigma)
        \xrightarrow{\theta_{\ver_0K}^{\perp\sigma}(\tau)^{-1}} C_{\ver_0K}(\tau)\oplus C_{\ver_0K}^{\perp\sigma}(\tau)
      \]
      which we refer to as the \emph{$\tau$-trivialisation of $\varphi_K(\sigma)$}.
  \end{enumerate}
  \vspace{-3em}
\end{definition}

\begin{remark}
\label{remark:comments-on-GTT-definition}
  There are some important comments to make concerning \cref{definition:GTT-labelling}, which hopefully elucidate the rather opaque specificities.

  Firstly, the direct-sum decomposition in condition~(ii) is ``strict'', i.e. the morphism $C_{i_m}(\tau)\hookrightarrow C_{i_m}(\sigma)$ is exactly the inclusion into the direct sum, and not just some arbitrary monomorphism (and similarly for the projection $\twoheadrightarrow$ out of the direct sum).
  Another way of expressing this condition would be to ask for the $\varphi_K$ to induce a morphism of short exact sequences
  \[
    \begin{tikzcd}[sep=huge]
      0
        \ar[r]
      & C_{\ver_{|K|}K}(\tau)
        \ar[r,hook]
        \ar[d,swap,"\varphi_K(\tau)"]
      & C_{\ver_{|K|}K}(\sigma)
        \ar[r,two heads]
        \ar[d,"\varphi_K(\sigma)"]
      & C_{\ver_{|K|}K}^{\perp\sigma}(\tau)
        \ar[r]
        \ar[d,dashed]
      & 0
    \\0
        \ar[r]
      & C_{\ver_0K}(\tau)
        \ar[r,hook]
      & C_{\ver_0K}(\sigma)
        \ar[r,two heads]
      & C_{\ver_0K}^{\perp\sigma}(\tau)
        \ar[r]
      & 0
    \end{tikzcd}
  \]
  where now the middle vertical arrow is $\varphi_K(\sigma)$ instead of $\varphi_K(\sigma)_\tau$, and the horizontal arrows contain the composition with the $\theta_{j_m}^{\perp\sigma}(\tau)$.
  The moral reason for this condition is that we want for the homotopies on higher-dimensional faces to restrict down to agree exactly with those already present on the lower-dimensional faces, and to also restrict down to be exactly the elementary morphism on the elementary orthogonal complements.
  Alternatively, writing morphisms between direct sums in block matrix form, this condition says that, in the $\tau$-trivialisation,
  \[
    \varphi_K(\sigma)
    = \begin{pmatrix}
      \varphi_K(\tau) & *
    \\0 & e
    \end{pmatrix}
  \]
  where $e$ is the elementary morphism of degree~$(2-|K|)$, and $*$ is some arbitrary morphism $C_{\ver_{|K|}K}^{\perp\sigma}(\tau)\to C_{\ver_0K}(\tau)$.

  Secondly, it is tempting to try to include the $|K|=1$ case in condition~(ii), since this would seem to express the fact that the isomorphisms $\theta_{j_m}^{\perp\sigma}(\tau)$ from condition~(i) do indeed commute with the differentials, allowing us to weaken condition~(i) to simply ask for \emph{degree-wise} isomorphisms of the complexes.
  However, this isn't quite so simple, since we do not want the differential $\varphi_K(\sigma)$ (in the case $|K|=1$) to simply be upper triangular (in the block-matrix point of view described above) with respect to the differential $\varphi_K(\tau)$, but instead diagonal: the differential on $C_{j_m}(\sigma)$ should be exactly the direct sum of the differential of $C_{j_m}(\tau)$ with the differential of $C_{j_m}^{\perp\sigma}(\tau)$.

  Finally, note that we could remove the need for the data of the elementary complements entirely (thus giving only the data of $k$-simplices labelling the \emph{vertices} of $\pair{\Delta[p]}$) and rephrase condition~(i) entirely to require two things: that $C_{j_m}(\tau)\hookrightarrow C_{j_m}(\sigma)$ for all $0\leq m\leq\ell$; and that the cokernel of this morphism be elementary (and thus free, implying that the short exact sequence splits).
  This definition sounds much more concise, and could maybe even be expressed without reference to the pair subdivision at all, but instead as some sort of totalisation.
  But for the purposes of explicit calculation, we need a specific choice of cokernel and isomorphism with the direct-sum decomposition for each pair of faces $\tau\subset\sigma$, and these all need to be coherent with one another, whence the definition we give.

  In summary, there is some matter of taste in how one chooses to phrase this definition, and we have opted for the one that seems closest to that found in \cite{TT1986}, but we do not think that this is necessarily the most succinct one possible.
  Indeed, finding a better definition would potentially allow us to prove the full generalisation of \cref{corollary:filling-2-horns-in-green}.
\end{remark}

\begin{remark}
\label{remark:elementary-complements-in-GTT}
  In \cite[\S1.4]{Gre1980}, Green specifies the modules with respect to which the orthogonal complements in \cref{definition:GTT-labelling} are elementary; the restatement of this definition in \cite[Green's~Theorem~1]{TT1986}, as well as the very definition of simplicial twisting cochains \loccit, makes no reference to these modules.
  In practice, this makes no difference: the important fact is that the complexes are elementary with respect to \emph{something}, since being elementary implies being homotopically zero, irrespective of the choice of modules.
  Furthermore, being elementary ensures that one obtains a \emph{compatible sequence of connections} (\cite[\S4.5]{Hos2020a}).
  We \emph{could} say something more precise about how these complexes are elementary: one can ask for those corresponding to inclusions $\{i\}\subset\{i<j\}$ to be $\{C_i(i<j),C_j(i<j)\}$-elementary, and for all complements labelling higher-dimensional cells to be elementary with respect to both these and the modules constituting the target complexes (as is the case in the proof of \cref{lemma:filling-2-horns-in-stwist}, for example).
\end{remark}

There is a particular case of \cref{definition:GTT-labelling} which merits its own name.

\begin{definition}
\label{definition:GTT-1-labelling}
  If a GTT-labelling of $\Delta[p]$ by $\core{\dgnerve\Free(U)}$ is such that the $\varphi_J(\sigma)$ are zero for all $|J|\geq3$ then we call it a \emph{GTT-$1$-labelling}.
\end{definition}

\begin{remark}
\label{remark:GTT-1-labelling-equivalent-definition}
  Although the condition on $|J|\geq3$ in \cref{definition:GTT-1-labelling} might suggest that this should instead be called a ``3-labelling'', this is another occurrence of the surprisingly common ``+2'' degree shift that one sees in the theory of higher sheaves.\footnote{Another example: the condition defining an $n$-sheaf concerns the $(n+2)$-truncation of the Čech nerve.}

  By \cref{lemma:nerve-inside-dg-nerve}, asking for $\varphi_J(\sigma)$ to be zero for all $|J|\geq3$ in \cref{definition:GTT-1-labelling} is equivalent to asking that they lie inside the Kan sub-complex $\core{\nerve\Free(U)}\hookrightarrow\core{\dgnerve\Free(U)}$.
  In particular, the $\varphi_J(\sigma)$ for $|J|=2$ are then isomorphisms, i.e. the $k$-simplex assigned to any $0$-cell $\sigma$ in $\pair{\Delta[p]}$ is of the form
  \[
    C_{i_0}(\sigma) \cong C_{i_1}(\sigma) \cong \ldots \cong C_{i_k}(\sigma)
  \]
  with $\varphi_{i_j<i_{j+1}}(\sigma)$ being the isomorphism $C_{i_{j+1}}(\sigma)\to C_{i_j}(\sigma)$.
  Because of this, we may also refer to a GTT-$1$-labelling as a \emph{GTT-labelling by $\core{\nerve\Free(U)}$}, which is purely 1-categorical (whence the terminology of GTT-1-labelling).
\end{remark}

\begin{definition}
\label{definition:green}
  Define
  \[
    \Green(U)_p \coloneqq
    \Big\{
      \mbox{GTT-$1$-labellings of $\Delta[p]$}
    \Big\}
  \]
  for any ringed space $(U,\OO_U)$ and any integer $p\geq0$.
\end{definition}

\begin{remark}
  It is a sign of rather poorly chosen notation that the left-hand side of \cref{definition:green} has an explicit dependence on $U$ whereas the right-hand side does not.
  However, we do not intend for the terminology of GTT-labellings to be adopted beyond this paper.
  Indeed, this method of labelling the pair subdivision provides an incredibly concrete hands-on way of constructing these objects, but one would hope that there is a cleaner, more conceptual definition.
\end{remark}

\begin{lemma}
\label{lemma:green-is-a-simplicial-set}
  The set $\{\Green(U)_p\}_{p\in\mathbb{N}}$ can be given the structure of a simplicial set, such that the face maps are given by restricting labellings and the degeneracy maps are given by adding trivial data.
\end{lemma}

Before giving the detailed proof, let us give a sketch.
We need to construct face and degeneracy maps for $\Green(U)_\bullet$ and show that they satisfy the simplicial identities.
The maps that we construct will be induced by the coface and codegeneracies of the standard simplices $\Delta[p]$ (which, recalling \cref{subsection:simplicial-sets-and-spaces}, form a \emph{cosimplicial} space).
The underlying idea of constructing these maps is relatively simple: morphisms $[p-1]\to[p]$ correspond to inclusions $\Delta[p-1]\hookrightarrow\Delta[p]$ of standard simplices, and this induces $\Green(U)_p\to\Green(U)_{p-1}$ by \emph{restricting} (or \emph{forgetting}) labellings, as in \cref{figure:simplicial-structure-of-green-face}; morphisms $[p+1]\to[p]$ correspond to ``collapses'' $\Delta[p+1]\twoheadrightarrow\Delta[p]$ of standard simplices, and this induces $\Green(U)_p\to\Green(U)_{p+1}$ by \emph{``adding trivial data''} to the degenerate faces of $\Delta[p+1]$, as in \cref{figure:simplicial-structure-of-green-degeneracy}.
We now give the full technical details.

\begin{proof}
  First, say we have some GTT-$1$-labelling of $\Delta[p]$ by $\core{\nerve\Free(U)}$, and consider a coface map $f_p^i\colon[p-1]\to[p]$ in $\Delta$.
  The corresponding coface map $\Delta[p-1]\to\Delta[p]$, which is simply the inclusion of a codimension-$1$ face, defines a labelling of $\Delta[p-1]$ by restriction (or ``forgetting'').
  For example, given a labelling $\cal{L}$ of $\Delta[3]$, we define $f_3^1(\cal{L})$ to be the labelling of $\Delta[p-1]$ given by the labelling of the face $f_3^1(\Delta[p-1])\subset\Delta[p]$, which is a (strict) subset of the data described by $\cal{L}$.

  Although restriction defines an ``obvious'' face map on labellings, we need to check that the conditions of \cref{definition:GTT-1-labelling} (and thus also \cref{definition:GTT-labelling}) are still satisfied in order for this face map to actually land in $\Green(U)_{p-1}$.
  But note that conditions~(i) and (ii) refer to the data that labels the $0$-cells, and so when we remove a single $0$-cell and all of the cells that contain it (which is exactly what restriction does), none of the remaining higher-dimensional cells will ``see'' this missing data.

  Now, a codegeneracy map $s_i^p\colon[p+1]\to[p]$ in $\Delta$ gives a geometric degeneracy map $\Delta[p]\to\Delta[p+1]$ that ``repeats'' the $i$th vertex, so we obtain a \emph{partial} labelling of $\Delta[p+1]$ by simply using our existing labelling of $\Delta[p]$.
  The vertices that we have left to label are exactly those corresponding to simplices of $\Delta[p+1]$ that contain the new copy of the $i$th vertex;
  the higher-dimensional cells that we have left to label are exactly those corresponding to pairs $(\tau,\sigma)$ where either $\sigma$ or $\tau$ contains the new copy of the $i$th vertex.

  We number the vertices of $\Delta[p+1]$ with $\{0<1<\ldots<i-1<i<i'<i+1<\ldots<p\}$, where $i'$ is the new copy of the $i$th vertex.
  Given any simplex in $\Delta[p+1]$ containing $i'$ but \emph{not containing} $i$, we label the corresponding vertex in $\pair{\Delta[p+1]}$ with whatever labels the already-labelled vertex corresponding to the simplex in $\Delta[p+1]$ that has the same vertices but with $i$ replacing $i'$.
  In particular, the vertex corresponding to $\{i'\}$ is labelled with the same data as what already labels the vertex corresponding to $\{i\}$.
  We label the vertices corresponding to simplices containing both $i$ and $i'$ by simply inserting an identity morphism.
  For example, since the vertices corresponding to $\{i\}$ and to $\{i'\}$ are labelled identically, say with the complex $C$, we label the vertex corresponding to the $1$-simplex $\{i<i'\}$ with the identity morphism $C\xleftarrow{\id} C$.
  Then, for any $n$-simplex $\sigma$ in $\Delta[p+1]$ of the form $\{j_0<\ldots<i<i'<\ldots<j_{n-2}\}$ with $n\geq2$, the vertex in $\pair{\Delta[p+1]}$ corresponding to the $(n-1)$-simplex $\{j_0<\ldots<i<\ldots<j_{n-2}\}$ is already labelled with some
  \[
    C_{j_0}\from\ldots\from C_i\from\ldots\from C_{j_{n-2}}
  \]
  and so we label the vertex in $\pair{\Delta[p+1]}$ corresponding to $\sigma$ with
  \[
    C_{j_0}\from\ldots\from C_i\xleftarrow{\id}C_i\from\ldots\from C_{j_{n-2}}.
  \]

  Now for the $n$-cells of $\pair{\Delta[p+1]}$ for $n\geq1$, which correspond to codimension-$n$ inclusions $\tau\subset\sigma$ of simplices in $\Delta[p+1]$.
  If $\sigma$ contains $i'$ but \emph{does not contain} $i$, then we label the corresponding cell in $\pair{\Delta[p+1]}$ with whatever labels the already-labelled cell corresponding to the pair $(\widetilde{\tau},\widetilde{\sigma})$ where we replace $i'$ by $i$.
  Otherwise, if $i'\not\in\tau$ but $i'\in\sigma$, then we know that the vertex corresponding to $\sigma$ is labelled in such a way that the face corresponding to $\tau$ of the element of the nerve is exactly what labels the vertex corresponding to $\tau$, and so we can simply label $(\tau,\sigma)$ with all zero objects.
  Finally, if $i'\in\tau$, then $i'\in\sigma$, and we label the cell $(\tau,\sigma)$ in $\pair{\Delta[p+1]}$ with whatever already labels the cell $(\widehat{\tau},\widehat{\sigma})$, where $\widehat{\tau}\coloneqq\tau\setminus\{i'\}$ and $\widehat{\sigma}\coloneqq\sigma\setminus\{i'\}$, with one complement corresponding to $i$ duplicated for $i'$, i.e.
  \[
    C_j^{\perp\sigma}(\tau)\coloneqq
    \begin{cases}
      C_j^{\perp\widehat{\sigma}}(\widehat{\tau})
      &\mbox{if $j\neq i'$};
    \\C_i^{\perp\widehat{\sigma}}(\widehat{\tau})
      &\mbox{if $j=i'$}.
    \end{cases}
  \]

  It remains only to check that the conditions of \cref{definition:GTT-labelling} are satisfied by this new labelling.
  By construction, we only need to check conditions~(i) and (ii) for the $n$-cells that contain both $i$ and $i'$.
  If $i'\not\in\tau$ but $i'\in\sigma$, then the elementary complements are all zero, and so both conditions hold immediately;
  if $i'\in\tau$ then the elementary complements are exactly those that come from the original GTT-labelling of $\Delta[p]$, and so the conditions hold by hypothesis.
\end{proof}

\begin{figure}[!ht]
  \centering
    \begin{tikzpicture}[scale=3.5]
      \node (0) at (0,0) {$C_0(0)$};
      \node (1) at (1,1.5) {$C_1(1)$};
      \node [vertex=white] (2) at (2,0) {};
      \draw [white] (0) to node [black] (01) {\footnotesize$C_0(0<1)\xleftarrow{\varphi_{0<1}(0<1)} C_1(0<1)$} (1);
      \draw [edge,dashed] (0) to (2);
      \node [vertex=white] (02) at (1,0) {};
      \draw [edge,dashed] (1) to node (12) [vertex=white] {} (2);
      \node [vertex=white] (012) at (1,0.5) {};
      \draw [edge,dashed] (01) to (012);
      \draw [edge,dashed] (12) to (012);
      \draw [edge,dashed] (02) to (012);
      \draw [thick,-latex] (0) to node[fill=white]{\footnotesize$C_0^{\perp\{0<1\}}(0)$} (01);
      \draw [thick,-latex] (1) to node[fill=white]{\footnotesize$C_1^{\perp\{0<1\}}(1)$} (01);
    \end{tikzpicture}
  \caption{The image of a $2$-simplex in $\Green(U)$ under the face map $f_2^2$, given simply by forgetting the labelling outside of the face $\{0<1\}$.}
  \label{figure:simplicial-structure-of-green-face}
\end{figure}

\begin{figure}[!ht]
  \centering
  \begin{tikzpicture}[xscale=6,yscale=5.5]
    \node (0) at (210:1) {$C_0(0)$};
    \node (1) at (90:1) {$C_1(1)$};
    \node (2) at (330:1) {$C_1(1)$};
    \node (01) at (150:1) {\footnotesize$C_0(0<1)\xleftarrow{\varphi_{0<1}(0<1)} C_1(0<1)$};
    \node (12) at (30:1) {\footnotesize$C_1(1)\xleftarrow{\id}C_1(1)$};
    \node (02) at (270:1) {\footnotesize$C_0(0<1)\xleftarrow{\varphi_{0<1}(0<1)}C_1(0<1)$};
    \node (012) at (0,0) {\footnotesize$\begin{tikzcd}[row sep=6em,column sep=0em]&C_1(0<1)\ar[dl,swap,"\varphi_{0<1}(0<1)"]\\C_0(0<1)&&C_1(0<1)\ar[ul,swap,"\id"]\ar[ll,"\varphi_{0<1}(0<1)"]\end{tikzcd}$};
    \draw [thick,-latex] (0) to node[fill=white]{\footnotesize$C_0^{\perp\{0<1\}}(0)$} (01);
    \draw [thick,-latex] (0) to node[fill=white]{\footnotesize$C_0^{\perp\{0<1\}}(0)$} (02);
    \draw [thick,-latex] (1) to node[fill=white]{\footnotesize$C_1^{\perp\{0<1\}}(1)$} (01);
    \draw [thick,-latex] (1) to node[fill=white]{\footnotesize$0$} (12);
    \draw [thick,-latex] (2) to node[fill=white]{\footnotesize$0$} (12);
    \draw [thick,-latex] (2) to node[fill=white]{\footnotesize$C_1^{\perp\{0<1\}}(1)$} (02);
    \draw [thick,-latex,shorten >=-3em] (01) to node[fill=white]{\footnotesize$(0,0)$} (012);
    \draw [thick,-latex,shorten >=-3em] (12) to node[fill=white]{\footnotesize$\big(C_1^{\perp\{0<1\}}(1),C_1^{\perp\{0<1\}}(1)\big)$} (012);
    \draw [thick,-latex,shorten >=1em] (02) to node[fill=white]{\footnotesize$(0,0)$} (012);
    \draw [double,double equal sign distance,-implies,shorten <=1em,shorten >=0.5em] (0) to node[fill=white]{\footnotesize$C_0^{\perp\{0<1\}}(0)$} (012);
    \draw [double,double equal sign distance,-implies,shorten <=2em,shorten >=1em] (1) to node[fill=white]{\footnotesize$C_1^{\perp\{0<1\}}(0)$} (012);
    \draw [double,double equal sign distance,-implies,shorten <=1em,shorten >=0.5em] (2) to node[fill=white]{\footnotesize$C_1^{\perp\{0<1\}}(0)$} (012);
  \end{tikzpicture}
  \caption{The image of a $1$-simplex in $\Green(U)$ under the degeneracy map $s_1^1$, given by placing the $1$-simplex along the $\{0<1\}$ edge and the $\{0<2\}$ edge, and then filling in the rest with homotopically trivial data. To make the diagram easier to read, we ``inflate'' the triangle into a hexagon. }
  \label{figure:simplicial-structure-of-green-degeneracy}
\end{figure}

\begin{lemma}
\label{lemma:green-is-a-simplicial-presheaf}
  The assignment $(U,\OO_U)\mapsto\Green(U)_\bullet$ defines a simplicial presheaf on $\ConnRingSpace$.
\end{lemma}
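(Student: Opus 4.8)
The plan is to follow the template of the proofs of \cref{lemma:maxkan-nerve-free-is-presheaf} and \cref{lemma:twist-is-a-simplicial-presheaf}: given a morphism $(f,f^\sharp)\colon(U,\OO_U)\to(V,\OO_V)$ of connected ringed spaces, I would show that the pullback $f^*\colon\Mod{V}\to\Mod{U}$ carries a GTT-$1$-labelling of $\Delta[p]$ by $\core{\nerve\Free(V)}$ to one by $\core{\nerve\Free(U)}$, and that the resulting maps are compatible with the face and degeneracy maps constructed in \cref{lemma:green-is-a-simplicial-set}. Recall from \cref{lemma:free-gives-pseduopresheaf} that $f^*$ sends bounded complexes of finite free $\OO_V$-modules to bounded complexes of finite free $\OO_U$-modules; exactly as in the proof of \cref{lemma:twist-is-a-simplicial-presheaf} it is additive and exact (free modules being flat), hence preserves chain maps, quasi-isomorphisms, isomorphisms, finite direct sums, shifts, identity morphisms, zero objects, and commutative diagrams. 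Applying $f^*$ cell by cell to the data of \cref{definition:GTT-labelling} therefore produces a labelling of the same pair subdivision $\pair{\Delta[p]}$: the $k$-simplices of $\core{\nerve\Free(V)}$ labelling $0$-cells are sent to $k$-simplices of $\core{\nerve\Free(U)}$ (and the vanishing $\varphi_J(\sigma)$ for $|J|\geq3$ are sent to zero, so the GTT-$1$ condition is preserved), while the tuples of elementary complements are sent to tuples of objects of $\Free(U)$.

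Next I would check that $f^*$ preserves the two defining conditions of \cref{definition:GTT-labelling}. For condition~(i), additivity means $f^*$ carries the $\tau$-trivialisation $\theta_{j_m}^{\perp\sigma}(\tau)\colon C_{j_m}(\tau)\oplus C_{j_m}^{\perp\sigma}(\tau)\xrightarrow{\cong}C_{j_m}(\sigma)$ to an isomorphism of the same shape over $U$, taking the canonical inclusions and projections of the biproduct to the canonical inclusions and projections, so the strictness of the decomposition noted in \cref{remark:comments-on-GTT-definition} survives. For condition~(ii), the point is that $f^*$ sends elementary complexes to elementary complexes and elementary morphisms to elementary morphisms: an elementary complex is by definition a finite direct sum of shifts of identity complexes $M\xrightarrow{\id}M$ with $M$ free, and $f^*$ commutes with finite direct sums, shifts, identity morphisms, and sends free modules to free modules of the same rank; the elementary morphism between two elementary complexes, being the maximal direct sum of elementary identities, is then likewise preserved, since $f^*$ preserves the ranks and shifts that decide which summands match. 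Applying $f^*$ to the commuting square of condition~(ii) gives the corresponding square over $U$, as functors preserve commutative diagrams. Hence $f^*(\cal{L})\in\Green(U)_p$ whenever $\cal{L}\in\Green(V)_p$.

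Finally I would verify that these maps assemble into a morphism of simplicial sets $\Green(V)_\bullet\to\Green(U)_\bullet$, i.e.\ that $f^*$ commutes with the face and degeneracy maps of \cref{lemma:green-is-a-simplicial-set}. The face maps simply forget the part of a labelling lying outside a codimension-$1$ face, and the degeneracy maps insert identity morphisms on the vertices meeting the repeated vertex and zero objects on the newly created higher cells (duplicating the relevant complements); since $f^*$ acts on a labelling cell by cell and sends identities to identities and zero objects to zero objects, it commutes with both operations on the nose. As with $\Free$ and $\Twist$, I would close by recalling that $\Green$ is strictly speaking only a pseudofunctor, so that the word ``presheaf'' here is the same mild abuse of terminology flagged in \cref{remark:abuse-of-terminology-no-rectification}, harmless because we only ever evaluate on the Čech nerve.

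I expect the main obstacle to be organisational rather than conceptual: \cref{definition:GTT-labelling} carries a large amount of interlocking data (complexes, the isomorphisms $\varphi_{\{i<j\}}(\sigma)$, the elementary complements, the trivialisations $\theta$, the block-triangular form of condition~(ii)), and the work is in confirming that \emph{every} piece is genuinely stable under $f^*$ — in particular that the ``elementary'' qualifier and the elementary morphisms are preserved. Once that is in place, compatibility with the simplicial structure is immediate from the cell-by-cell nature of the construction.
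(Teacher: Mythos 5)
Your proposal is correct and follows essentially the same route as the paper's proof: reduce to the argument of \cref{lemma:maxkan-nerve-free-is-presheaf} and then observe that $f^*$ preserves the GTT-$1$ conditions because it commutes with (finite) direct sums, being an adjoint/additive functor, and sends commutative squares to commutative squares, being a functor. The paper's version is terser — it does not spell out the preservation of elementary complexes, elementary morphisms, or the compatibility with the face and degeneracy maps — but your additional checks are all sound and consistent with what the paper leaves implicit.
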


\begin{proof}
  Again, the majority of this proof is identical to the proof of \cref{lemma:maxkan-nerve-free-is-presheaf}, and the only thing that we need to prove here is that the GTT-$1$-labelling conditions are preserved by $f^*$.
  But this is immediate, since the only conditions of \cref{definition:GTT-1-labelling} are that certain direct sum decompositions exist and that two squares commute, and $f^*$ commutes with direct sums (since it is an adjoint) and sends commutative squares to commutative squares (since it is a functor).
\end{proof}

\subsection{Simplicial twisting cochains}
\label{subsection:stwist-construction}

\begin{definition}
\label{definition:stwist}
  Define
  \[
    \sTwist(U)_p \coloneqq
    \Big\{
      \mbox{GTT-labellings of $\Delta[p]$}
    \Big\}
  \]
  for any ringed space $(U,\OO_U)$ and any integer $p\geq0$.
\end{definition}

\begin{remark}
  Comparing \cref{definition:stwist} to \cref{definition:green} and \cref{definition:twist}, it might be surprising that we write $\sTwist$ instead of $\sGreen$, since the definition seems much closer to that of $\Green$ than of $\Twist$.
  We justify this in \cref{remark:green-as-a-specific-case-of-stwist}.
\end{remark}

\begin{lemma}
  The assignment $(U,\OO_U)\mapsto\sTwist(U)_\bullet$ defines a simplicial presheaf on $\ConnRingSpace$.
\end{lemma}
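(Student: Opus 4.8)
The plan is to run the argument in parallel with \cref{lemma:green-is-a-simplicial-set} and \cref{lemma:green-is-a-simplicial-presheaf}, since $\sTwist$ differs from $\Green$ in exactly one respect: the higher homotopies $\varphi_J(\sigma)$ with $|J|\geq3$ are no longer required to vanish. First I would endow $\{\sTwist(U)_p\}_{p\in\mathbb{N}}$ with a simplicial structure whose face and degeneracy maps are defined verbatim as in the proof of \cref{lemma:green-is-a-simplicial-set}: a coface $f_p^i\colon[p-1]\to[p]$ induces $\sTwist(U)_p\to\sTwist(U)_{p-1}$ by restricting (forgetting) the labelling of $\pair{\Delta[p]}$ to the sub-pair-subdivision carried by $f_p^i(\Delta[p-1])$, and a codegeneracy $s_i^p\colon[p+1]\to[p]$ induces $\sTwist(U)_p\to\sTwist(U)_{p+1}$ by retaining the old labelling on the non-degenerate part, filling vertices containing both $i$ and its duplicate $i'$ with identity morphisms, filling cells $(\tau,\sigma)$ with $i'\in\sigma$, $i'\notin\tau$ by zero objects, and duplicating the $i$-indexed elementary complement for $i'$ on cells with $i'\in\tau$. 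The verification that these maps land back in $\sTwist(U)_\bullet$, as far as the elementary-complement conditions (i) and (ii) of \cref{definition:GTT-labelling} are concerned, is word-for-word that of \cref{lemma:green-is-a-simplicial-set}, and the simplicial identities hold because they hold for the cosimplicial object $\Delta[\anotherbullet]$.

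The one substantive point beyond \cref{lemma:green-is-a-simplicial-set} is that the dg-nerve relation in \cref{definition:GTT-labelling}(a) on the family $(\varphi_J(\sigma))_J$ attached to each $0$-cell $\sigma$ must be preserved. For face maps this is automatic: restriction of a labelling of $\core{\dgnerve\Free(U)}$ is again such a labelling, since $\core{\dgnerve\Free(U)}$ is a simplicial set. For degeneracy maps it follows from the fact that the codegeneracy maps of $\dgnerve\Free(U)$ are given precisely by inserting identity morphisms (\cref{definition:dg-nerve}): the family produced on a degenerate $0$-cell is the image of $(\varphi_J(\sigma))_J$ under a codegeneracy of $\dgnerve\Free(U)$, and so still satisfies \cref{equation:dg-nerve-definition}.

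For functoriality in the ringed space I would, given $(f,f^\sharp)\colon(U,\OO_U)\to(V,\OO_V)$ in $\ConnRingSpace$, apply the pullback dg-functor $f^*\colon\Free(V)\to\Free(U)$ — additive, exact on complexes of free modules, and sending free modules to free modules and quasi-isomorphisms to quasi-isomorphisms, exactly as in \cref{lemma:twist-is-a-simplicial-presheaf} — to every piece of data of a GTT-labelling of $\Delta[p]$ by $\core{\dgnerve\Free(V)}$. Since $f^*$ induces a morphism $\core{\dgnerve\Free(V)}\to\core{\dgnerve\Free(U)}$ of simplicial sets, the $k$-simplices attached to $0$-cells go to $k$-simplices and the dg-nerve relation is preserved; since $f^*$ sends $(M_i\xrightarrow{\id}M_i)[p]$ to $(f^*M_i\xrightarrow{\id}f^*M_i)[p]$ with $f^*M_i$ free (\cref{lemma:free-gives-pseduopresheaf}), elementary complexes go to elementary complexes; since $f^*$ is a left adjoint it preserves direct sums and the canonical inclusions and projections, and it preserves isomorphisms, so condition (i) is preserved; and since $f^*$ is a functor it sends commutative diagrams to commutative diagrams and (being compatible with direct sums and identities) sends the elementary morphism to the elementary morphism, so condition (ii) is preserved. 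Finally $f^*$ commutes with the face and degeneracy maps above because it preserves identities, zero objects, and compositions, hence yields a morphism $\sTwist(V)_\bullet\to\sTwist(U)_\bullet$ of simplicial sets; as in \cref{lemma:free-gives-pseduopresheaf} the assignment $f\mapsto f^*$ is only pseudofunctorial, so we strictly obtain a pseudo-presheaf, which per \cref{remark:abuse-of-terminology-no-rectification} we continue to call a presheaf.

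I do not expect a genuine obstacle: the whole proof is parallel to \cref{lemma:green-is-a-simplicial-set} and \cref{lemma:green-is-a-simplicial-presheaf}. The only place requiring real (if short) care is the degeneracy map, where one must check that inserting identity morphisms into the higher homotopies $\varphi_J$ on the repeated faces is compatible with the relation of \cref{definition:GTT-labelling}(a) — which is exactly the assertion that $\dgnerve\Free(U)$ has well-defined codegeneracy maps, a fact already built into \cref{definition:dg-nerve}.
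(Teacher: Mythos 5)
Your proposal is correct and follows essentially the same route as the paper: reduce the simplicial-set structure to the argument for $\Green(U)_\bullet$, observe that the only new point is that the degeneracy maps must produce degenerate simplices of the dg-nerve (i.e.\ insert identities and zero out the higher homotopies on the repeated faces, which is exactly well-definedness of the codegeneracies of $\dgnerve\Free(U)$), and obtain functoriality from the fact that $f^*$ is a dg-functor preserving direct sums, quasi-isomorphisms, elementary complexes, and commutative squares. If anything, your treatment of the degeneracy maps and of the pseudo-presheaf caveat is slightly more explicit than the paper's.
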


\begin{proof}
  The fact that $\sTwist(U)_\bullet$ has the structure of a simplicial set is almost the same as the proof of \cref{lemma:green-is-a-simplicial-set}.
  The only real difference is in constructing the degeneracy maps, since we need to label vertices of $\pair{\Delta[p+1]}$ with elements of the dg-nerve instead of the regular nerve.
  However, since all the faces of any element of the nerve commute on the nose, we can enrich to obtain an element of the dg-nerve by simply adding identity homotopies everywhere necessary.
  In other words, the degeneracy maps actually land in the subset $\Green(U)_{p+1}\subset\sTwist(U)_{p+1}$.

  Showing functoriality is then exactly the same as in \cref{lemma:green-is-a-simplicial-presheaf}, since the higher conditions necessary to be a full GTT-labelling (rather than just a GTT-$1$-labelling) just posit that yet more squares commute, and $f^*$ sends commutative squares to commutative squares.
\end{proof}

\begin{remark}
\label{remark:green-as-a-specific-case-of-stwist}
  Just as $\Green$ was defined exactly to recover the definition of complexes of ``simplicial vector bundles'' (satisfying the conditions arising from Green's resolution) given in \cite[\S1]{TT1986}, we have defined $\sTwist$ exactly to recover the definition of simplicial twisting cochains given in \cite[\S3]{TT1986}, as we will prove in \cref{theorem:tot0-all-three}.

  We know, by \cref{lemma:nerve-inside-dg-nerve}, how the nerve sits inside the dg-nerve (and similarly for their maximal Kan complexes), and this corresponds exactly to the explanation that ``\emph{complexes of simplicial vector bundles are specific examples of simplicial twisting cochains}'' given in \cite[p.~269]{TT1986}:
  \begin{enumerate}[1)]
    \item the fact that the $f_{\{0<1\}}$ are isomorphisms (\cref{lemma:nerve-inside-dg-nerve}~(ii)) means that
      \begin{quote}
        \emph{$E_{\sigma,\alpha}^\bullet$ is independent of $\alpha$}
      \end{quote}
      since all the complexes are isomorphic; and
    \item the fact that the $f_I$ are zero for $I\geq3$ (\cref{lemma:nerve-inside-dg-nerve}~(i)) means that
      \begin{quote}
        \emph{${}^\sigma \mathrm{a}^{k,1-k}=0$ for $k>1$}.
      \end{quote}
  \end{enumerate}
  This suggests the alternative nomenclature ``\emph{dg-Green complex}'' to mean ``simplicial twisting cochain''; conversely, we might suggest ``\emph{homotopy-truncated simplicial twisting cochain}'' to mean ``Green complex''.
\end{remark}

\section{Complex-analytic examples}
\label{section:complex-analytic-examples}

To justify our interest in the three simplicial presheaves $\Twist$, $\Green$, and $\sTwist$, we now show that, in the setting of complex-analytic manifolds, we recover well-known objects (and, in the case of $\Twist$, well-known paths between them).
This means that we now turn our study towards the Čech totalisation of these presheaves in the case where we have a good Stein cover of a connected complex-analytic manifold $X$, with $\OO_X$ the sheaf of holomorphic functions.
After this section, we will return to the more general study of the three simplicial presheaves before Čech totalisation.

In some sense, the main theorem of this paper (namely \cref{theorem:tot0-all-three}) is a purely technical lemma of combinatorics and higher category theory, requiring no real input from complex geometry.
Although it might be seen as requiring the reader to be familiar with the definitions that it is drawing together (from \cite{TT1986,Gre1980,Hos2020a}), it is also intended to be useful in the other direction, as more of a black box that one can treat as ``the'' primitive definition of twisting cochains, simplicial twisting cochains, and Green complexes, without asking for any sort of complex-geometric intuition.

\subsection{Points in all three}
\label{subsection:vertices-in-complex-analytic-examples}

\begin{theorem}
\label{theorem:tot0-all-three}
  Let $\underline{X}=(X,\cover)$, with $X$ a connected complex-analytic manifold with the structure sheaf $\OO_X$ of holomorphic functions, and $\cover$ a Stein cover.\footnote{For the statement of the theorem we do not necessarily need the cover to be Stein, but any open cover can always be refined to a Stein cover (and, more precisely, one whose intersections are also all Stein) so there is no loss in generality in assuming this, and this will be necessary if one wants to talk about coherent analytic sheaves.}
  Then
  \begin{theoremenum}
    \item\label{theorem:tot0-free-is-loc-free}
      $\Tot^0\core{\nerve\Free(\cechnerve\cover_\bullet)}$ is the set of bounded complexes of locally free sheaves\footnote{Since here $(X,\OO_X)$ is \emph{locally} ringed, these are exactly the strictly perfect complexes, as mentioned in \cref{remark:strictly-perfect-locally-ringed}.} on $(X,\cover)$;
    \item\label{theorem:tot0-twist-is-twcs}
      $\Tot^0\Twist(\cechnerve\cover_\bullet)$ is the set of twisting cochains \cite[\S3]{TT1986} on $(X,\cover)$;
    \item\label{theorem:tot0-green-is-green-complexes}
      $\Tot^0\Green(\cechnerve\cover_\bullet)$ is the set of complexes of ``simplicial vector bundles'' \cite[\S1]{TT1986} satisfying the conditions necessary for them to be a Green complex \cite{Hos2020a} on $(X,\cover)$;
    \item\label{theorem:tot0-stwist-is-simplicial-twcs}
      $\Tot^0\sTwist(\cechnerve\cover_\bullet)$ is the set of simplicial twisting cochains \cite[\S3]{TT1986} on $(X,\cover)$.
  \end{theoremenum}
\end{theorem}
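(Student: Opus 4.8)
The plan is to handle the four parts in increasing order of complexity, since \cref{theorem:tot0-twist-is-twcs} refines \cref{theorem:tot0-free-is-loc-free} by replacing the nerve with the dg-nerve, \cref{theorem:tot0-green-is-green-complexes} refines it by replacing ``point of a category'' with ``labelling of the pair subdivision'', and \cref{theorem:tot0-stwist-is-simplicial-twcs} combines both refinements. The single underlying tool is the concrete description of the points of a totalisation recalled in \cref{subsection:totalisation-and-holim}: a point of $\Tot\scr{F}(\cechnerve\cover_\bullet)$ is a compatible family $(y^0,y^1,y^2,\ldots)$ with $y^p\in\scr{F}(\cechnerve\cover_p)_p$, in which the coface maps produce the $p$-dimensional faces of $y^{p+1}$ and the codegeneracy maps recover $y^{p-1}$ up to degeneracy. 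Since a Stein cover of a complex-analytic manifold has representable structure sheaves on all intersections, $\Free$ (and hence $\core{\nerve\Free(-)}$, $\Twist$, $\Green$, $\sTwist$) turns disjoint unions into products, so such a compatible family is the same as a morphism of cosimplicial simplicial sets $\Delta[\anotherbullet]\to\scr{F}(\cechnerve\cover_\anotherbullet)$, which by the Yoneda lemma at cosimplicial level $q$ is a $q$-simplex of $\scr{F}(\cechnerve\cover_q)\cong\prod_{\alpha_0\ldots\alpha_q}\scr{F}(U_{\alpha_0\ldots\alpha_q})$, the various multi-indices being glued by the cosimplicial coface maps, which act by restriction to overlaps.

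For \cref{theorem:tot0-free-is-loc-free} I would unwind this for $\scr{F}=\core{\nerve\Free(-)}$: $y^0$ is a complex $C_\alpha\in\Free(U_\alpha)$ for each $\alpha$; $y^1$ is, for each pair, an isomorphism $C_\alpha|_{U_{\alpha\beta}}\xrightarrow{\sim}C_\beta|_{U_{\alpha\beta}}$ of complexes of free $\OO_{U_{\alpha\beta}}$-modules (the $1$-simplices of $\core{\nerve\Free}$ being isomorphisms by \cref{lemma:nerve-inside-dg-nerve}~(ii)); and $y^2$ imposes the cocycle identity on triple overlaps, while the codegeneracies force the isomorphisms over $U_{\alpha\alpha}$ to be identities. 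Since the nerve of a category is $2$-coskeletal, the higher $y^p$ are then determined, and this descent datum is precisely a bounded complex of locally free sheaves. For \cref{theorem:tot0-twist-is-twcs} I would instead quote \cref{theorem:dg-nerve-and-MC-for-presheaves} directly: a point of $\Tot\Twist(\cechnerve\cover_\bullet)=\Tot\core{\dgnerve\Free(\cechnerve\cover_\bullet)}$ is a morphism $\Delta[\anotherbullet]\to\dgnerve\Free(\cechnerve\cover_\anotherbullet)$ landing in the core, hence the data of a labelling $\cal{L}=\{C_\alpha\}$ together with a Maurer--Cartan element $f=(f_p)_{p\geq1}$, $f_p\in\delcech^{p,1-p}(\cover,\Free;\cal{L})$, of the Čech bialgebra, the ``core'' constraint being (by the reasoning of \cref{corollary:morphism-into-dg-nerve-maximal-kan} applied at each cosimplicial level) exactly that each $f_{\alpha\beta}\colon C_\beta|_{U_{\alpha\beta}}\to C_\alpha|_{U_{\alpha\beta}}$ is a quasi-isomorphism. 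Matching the components and the equation $\operatorname{D}f+f\cdot f=0$ against \cite[\S3]{TT1986} (the degree-$2$ case being spelled out in detail) identifies these with twisting cochains.

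For \cref{theorem:tot0-green-is-green-complexes} and \cref{theorem:tot0-stwist-is-simplicial-twcs} I would push the same unwinding through \cref{definition:GTT-1-labelling} (resp. \cref{definition:GTT-labelling}): a point of $\Tot\Green(\cechnerve\cover_\bullet)$ assigns to each multi-index $\alpha_0\ldots\alpha_q$ a labelling of $\pair{\Delta[q]}$ by $\core{\nerve\Free(U_{\alpha_0\ldots\alpha_q})}$ — complexes $C_{i_m}(\sigma)$ on the vertices, isomorphisms $\varphi_J(\sigma)$ on the $0$-cells coming from positive-dimensional faces $\sigma$, and elementary complements $C_{j_m}^{\perp\sigma}(\tau)$ on the higher cells satisfying the direct-sum decomposition \cref{definition:GTT-labelling}~(i) and the commuting squares \cref{definition:GTT-labelling}~(ii) — with the cosimplicial coface maps gluing these labellings over overlaps by restriction and the codegeneracies forcing the repeated-index data to be trivial (identities and zero complements, exactly as in the degeneracy construction of \cref{lemma:green-is-a-simplicial-set}). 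Comparing this data and these conditions term by term with the definition of a complex of ``simplicial vector bundles'' in \cite[\S1]{TT1986} and the quasi-isomorphism/connection conditions of \cite{Hos2020a} (cf. \cref{remark:elementary-complements-in-GTT}) gives (c); replacing $\core{\nerve\Free}$ by $\core{\dgnerve\Free}$ — allowing the higher $\varphi_J(\sigma)$ for $|J|\geq3$ and the corresponding higher instances of \cref{definition:GTT-labelling}~(ii) — and comparing with the definition of simplicial twisting cochains in \cite[\S3]{TT1986} gives (d).

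The main obstacle I expect is bookkeeping, but of a nontrivial kind. In (c) and (d) one must reconcile the two-index structure ``(face $\sigma$ of the abstract simplex $\Delta[q]$, Čech multi-index $\alpha_0\ldots\alpha_q$)'' produced by Čech totalisation of a pair-subdivision labelling with the single ``simplicial'' indexing of \cite{TT1986}, tracking precisely which data the codegeneracy constraints force to be degenerate, and reconciling the sign conventions of \cref{definition:dg-nerve} and \cref{definition:bigraded-dg-algebra-for-presheaves} with those of \loccit\@ (the same sign issue already appears, in miniature, in part (b)). The crux is checking that the restriction-gluing built into Čech totalisation yields \emph{exactly} the overlap-compatibility axioms of \loccit, neither more nor less; this is precisely where building $\Green$ and $\sTwist$ out of the pair subdivision, rather than some opaque totalisation (cf. \cref{remark:comments-on-GTT-definition}), makes the comparison tractable.
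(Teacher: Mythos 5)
Your proposal is correct and follows essentially the same route as the paper: the compatible-family description of $\Tot^0$, the product decomposition coming from $\scr{D}(\sqcup_\alpha U_\alpha)\cong\prod_\alpha\scr{D}(U_\alpha)$, the appeal to \cref{theorem:dg-nerve-and-MC-for-presheaves} and \cref{corollary:morphism-into-dg-nerve-maximal-kan} for part (b), and the term-by-term unwinding of GTT-labellings for parts (c) and (d). The only (immaterial) difference is that the paper deduces (c) as a special case of (d) via \cref{remark:green-as-a-specific-case-of-stwist}, whereas you treat (c) directly and then upgrade to (d); you also correctly locate the real work in the indexing and sign bookkeeping that the paper relegates to \cref{appendix:proof-of-tot0-stwist-is-simplicial-twcs}.
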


\begin{proof}
  The proof of these theorems is generally nothing more than ``by construction'' --- the simplicial presheaves $\Green$, $\Twist$, and $\sTwist$ were defined exactly so that these results would hold.
  Here we will assume some familiarity with $\Tot$ calculations; we refer the interested reader to \cref{appendix:example} or \cref{appendix:proof-of-tot0-stwist-is-simplicial-twcs} for examples of worked calculations with more detail, or to \cite[Appendix~B]{GMTZ2022a} for a more general formal discussion.

  \begin{enumerate}[(a)]
    \item
      Since $\OO(\coprod_\alpha U_\alpha)\cong\prod_\alpha\OO(U_\alpha)$, a free $\OO(\coprod_\alpha U_\alpha)$-module is exactly the data of a free $\OO(U_\alpha)$-module for all $\alpha$.
      Since the dg-nerve and the core functor are both right adjoints, their composition preserves all limits.
      This means that
      \[
        \core{\nerve\Free(\coprod_\alpha U_\alpha)}_0\cong\prod_\alpha\core{\nerve\Free(U_\alpha)}
      \]
      Thus the data of a $0$-simplex in $\Tot\core{\nerve\Free(\cechnerve\cover_\bullet)}$ is exactly a cohesive choice of
      \begin{itemize}
        \item $E_\alpha^\bullet\in\core{\nerve\Free(U_\alpha)}_0=\Free(U_\alpha)$ for all $U_\alpha\in\cover$; and
        \item $\varphi_{\alpha_0\ldots\alpha_p}\in\core{\nerve\Free(U_{\alpha_0\ldots\alpha_p})}_p$ for all $U_{\alpha_0\ldots\alpha_p}$
      \end{itemize}
      where ``cohesive'' means that, in particular, the endpoints of $\varphi_{\alpha\beta}$ are exactly $E_\alpha^\bullet$ and $E_\beta^\bullet$, and the boundary of $\varphi_{\alpha\beta\gamma}$ consists exactly of $\varphi_{\alpha\beta}$, $\varphi_{\beta\gamma}$, and $\varphi_{\alpha\gamma}$; there are analogous conditions coming from the degeneracy maps.

      The $1$-dimensional face conditions first tell us that $\varphi_{\alpha\beta}\colon E_\beta^\bullet|U_{\alpha\beta}\to E_\alpha^\bullet|U_{\alpha\beta}$; the fact that $\varphi_{\alpha\beta}$ is an element of the maximal Kan complex of the nerve tells us that it is an isomorphism.
      The $2$-dimensional face conditions tell us that $\varphi_{\beta\gamma}\circ\varphi_{\alpha\beta}=\varphi_{\alpha\gamma}$.
      Since the ordinary nerve is generated by its $1$-simplices, and since composition of chain maps is strictly associative, the higher-dimensional face conditions impose no further restriction nor give any further information;
      in fact, this also tells us that the $\varphi_{\alpha_0\ldots\alpha_p}$ are simply sequences of $(p+1)$-many composible morphisms, and the face conditions tell us that they are given exactly by the $\varphi_{\alpha_i\alpha_{i+1}}$.
      The degeneracy conditions tell us that $\varphi_{\alpha\alpha}=\id_{E_\alpha^\bullet}$ since the only degenerate simplices in the nerve are those given by identity morphisms.

      In summary then, we have bounded complexes $E_\alpha^\bullet$ of free sheaves for each $U_\alpha$, along with isomorphisms $\varphi_{\alpha\beta}\colon E_\beta^\bullet|U_{\alpha\beta}\to E_\alpha^\bullet|U_{\alpha\beta}$ on each overlap $U_{\alpha\beta}$, satisfying the cocycle condition $\varphi_{\beta\gamma}\circ\varphi_{\alpha\beta}=\varphi_{\alpha\gamma}$ and the degeneracy condition $\varphi_{\alpha\alpha}=\id_{E_\alpha^\bullet}$.
      This is exactly the data of a bounded complex of locally free sheaves.

    \item Morally, the argument here starts identically to that of \cref{theorem:tot0-free-is-loc-free}, in that the data of a $0$-simplex in $\Tot\core{\dgnerve\Free(\cechnerve\cover_\bullet)}$ is exactly a cohesive choice of
      \begin{itemize}
        \item $E_\alpha^\bullet\in\core{\dgnerve\Free(U_\alpha)}_0$ for all $U_\alpha\in\cover$; and
        \item $\varphi_{\alpha_0\ldots\alpha_p}\in\core{\dgnerve\Free(U_{\alpha_0\ldots\alpha_p})}_p$ for all $U_{\alpha_0\ldots\alpha_p}$
      \end{itemize}
      but since we are working with the dg-nerve instead of the ordinary nerve, all $\varphi_{\alpha_0\ldots\alpha_p}$ will be relevant, not just the $\varphi_{\alpha\beta}$.

      As before, the $1$-dimensional face conditions tell us that $\varphi_{\alpha\beta}\colon E_\beta^\bullet|U_{\alpha\beta}\to E_\alpha^\bullet|U_{\alpha\beta}$; the fact that $\varphi_{\alpha\beta}$ is an element of the maximal Kan complex of the dg-nerve tells us that it is a \emph{quasi}-isomorphism.
      But now it is \emph{not} the case that these satisfy the cocycle condition: the $2$-dimensional face conditions tell us that $\varphi_{\alpha\beta\gamma}\colon\varphi_{\alpha\gamma}\Rightarrow\varphi_{\beta\gamma}\circ\varphi_{\alpha\beta}$ is a (possibly non-trivial) chain homotopy.
      As a specific example of this, $\varphi_{\alpha\beta\alpha}$ and $\varphi_{\beta\alpha\beta}$ are the chain homotopies witnessing that $\varphi_{\alpha\beta}$ is a quasi-isomorphism with quasi-inverse $\varphi_{\beta\alpha}$.
      More generally, the $p$-dimensional face conditions describe homotopies controlling the $(p-1)$-dimensional elements.
      As described in \cite[(8.2.7)]{Hos2020}, this seems to correspond exactly to the data of a twisting cochain --- an idea that this theorem now makes precise.

      To prove this formally, note that $\scr{D}=\Free$ satisfies the hypothesis of \cref{theorem:dg-nerve-and-MC-for-presheaves}, in that it turns disjoint unions of spaces into products\footnote{An isomorphism of ringed spaces $f\colon(X,\OO_X)\cong(Y,\OO_Y)$ induces an equivalence of categories $f^*\colon\Free(Y)\simeq\Free(X)$, since $f^*\circ(f^{-1})^*\cong(f^{-1}\circ f)^*\cong\id_{\Free(X)}$, and this is bijective on objects since we are working only with free modules, which are uniquely determined by their rank, and this is preserved by pullback: $f^*\OO_Y^r\coloneqq f^{-1}\OO_Y^r\otimes_{f^{-1}\OO_Y}\OO_X\cong\OO_X^r$. Then we just need to show that $\OO_{(-)}$ turns disjoint unions into products, but in the case of the sheaf of holomorphic functions this follows from the fact that it is representable, with $\OO_{(-)}\cong\Hom(-,\mathbb{C})$.}, and so elements of $\Tot^0\dgnerve\Free(\cechnerve\cover_\bullet)$ are exactly Maurer--Cartan elements in the corresponding Čech algebra.
      Then we simply appeal to \cref{corollary:morphism-into-dg-nerve-maximal-kan}, which tells us that elements of $\Tot^0\core{\dgnerve\Free(\cechnerve\cover_\bullet)}$ are exactly those Maurer--Cartan elements whose $1$-simplices are quasi-isomorphisms, and these are exactly twisting cochains.

      To better explain this, it may be helpful to spell out the details of what happens in degree~$2$.
      By definition, a $0$-simplex of $\Tot\core{\dgnerve\Free(\cechnerve\cover_\bullet)}$ is exactly a morphism of cosimplicial simplicial sets
      \[
        \Delta[\anotherbullet] \to \core{\dgnerve\Free(\cechnerve\cover_\anotherbullet)}
      \]
      which is exactly a functorial collection of morphisms of simplicial sets
      \[
        \begin{aligned}
          \Delta[0]
          &\to \Free(\cechnerve\cover_0) \cong \prod_\alpha\Free(U_\alpha)
        \\\Delta[1]
          &\to \core{\dgnerve\Free(\cechnerve\cover_1)}_1 \cong \prod_{\alpha\beta}\core{\dgnerve\Free(U_{\alpha\beta})}
        \\\Delta[2]
          &\to \core{\dgnerve\Free(\cechnerve\cover_2)}_1 \cong \prod_{\alpha\beta\gamma}\core{\dgnerve\Free(U_{\alpha\beta\gamma})}
        \end{aligned}
      \]
      and so on, where we again appeal to the fact that both the dg-nerve and the core functor are right adjoints (as explained in \cref{appendix:proof-of-tot0-stwist-is-simplicial-twcs}).
      This data forms a Čech bialgebra, generalising \cref{definition:bigraded-dg-algebra} to a presheaf of dg-categories instead of a single fixed dg-category, though this relies on the fact that the simplicial set in question is exactly the Čech nerve (see \cref{definition:bigraded-dg-algebra-for-presheaves} for the precise definition).
      In particular, we can consider the bidegree-$(2,0)$ parts of a $0$-simplex of this totalisation.
      The image of $\Delta[2]$ will give us one: some homotopy $f_{xyz}$ fitting into a diagram of the form
      \[
        \begin{tikzcd}[sep=large]
          & y \\
          x && z
          \arrow["{f_{xy}}"', from=1-2, to=2-1]
          \arrow["{f_{yz}}"', from=2-3, to=1-2]
          \arrow[""{name=0, anchor=center, inner sep=0}, "{f_{xz}}", from=2-3, to=2-1]
          \arrow["{f_{xyz}}"', shorten <=2pt, Rightarrow, from=1-2, to=0]
        \end{tikzcd}
      \]
      i.e. a map $f_{xyz}:z\to x$ of degree~$-1$ such that $\partial f_{xyz}=f_{xz}-f_{yz}\circ f_{xy}$, defined over some $U_{\alpha\beta\gamma}$, with $x\in\Free(U_\alpha)$, $y\in\Free(U_\beta)$, and $z\in\Free(U_\gamma)$.
      There are two other bidegree-$(2,0)$ terms that arise when we apply the two differentials: writing $f=(f_{yz},f_{xz},f_{xy})$, they are $\hat{\delta}f$ and $f\cdot f$, defined by
      \[
        \begin{aligned}
          (\hat{\delta}f)_{xyz}
          &= f_{xz}
        \\(f\cdot f)_{xyz}
          &= f_{xy}\cdot f_{yz}
          \coloneqq f_{yz}\circ f_{xy}
        \end{aligned}
      \]
      (and here we are using functoriality of this collection of morphisms of simplicial sets in the definition of $\hat{\delta}$, for example).
      But then the defining equation for $\partial f$ tells us exactly that the Maurer--Cartan equation is satisfied in bidegree~$(2,0)$, since
      \[
        \underbrace{(\partial-\hat{\delta})f}_{\eqqcolon\,\,\operatorname{D}f} + f\cdot f=0.
      \]

      One important thing to mention is that the definition of twisting cochain that we recover from this construction is indeed the ``classical'' one --- we expand upon this comment in \cref{remark:equal-vs-chain-homotopic-to-identity}.

    \item This will follow immediately from \cref{theorem:tot0-stwist-is-simplicial-twcs} combined with \cref{remark:green-as-a-specific-case-of-stwist}.

    \item This proof consists solely of unravelling definitions; we spell out the full details in \cref{appendix:proof-of-tot0-stwist-is-simplicial-twcs}.
      The intuition is that, in a GTT-labelling, after Čech totalisation, the \emph{vertices} of $\pair{\Delta[p]}$ are labelled exactly with a twisting cochain;
      the \emph{edges} (and higher-dimensional cells) are labelled with the extra data of the elementary orthogonal complements;
      the extra conditions describe how different twisting cochains ${}^\sigma\mathfrak{a}$ and ${}^\tau\mathfrak{a}$ (in the notation of \cite{TT1986}) fit together in a compatible manner.
  \end{enumerate}
  \vspace{-2em}
\end{proof}

The natural question to ask next is what the $\Tot^1$ analogue of \cref{theorem:tot0-all-three} is, or to ask what the $\pi_0$ of these three simplicial presheaves are.
In the rest of this section, we give some partial and some complete answers to these questions.

Of course, it would be satisfying to have results pertaining to the higher-dimensional simplices (or homotopy groups) as well, but this lies beyond the scope of this paper.

\subsection{Edges in twisting cochains}
\label{subsection:1-simplices-in-complex-twist}

Twisting cochains have been well studied, especially in the language of dg-categories.
This means that there are, for example, definitions of morphisms and weak equivalences of twisting cochains that can be found elsewhere.
We can now study how our construction of twisting cochains via $\Tot\Twist(\cechnerve\cover_\bullet)$ relates to these other approaches.

\begin{remark}
\label{remark:equal-vs-chain-homotopic-to-identity}
  In earlier papers on twisting cochains (such as \cite{TT1978}), the condition imposed on the ${\alpha\alpha}$ term of a twisting cochain was that it be \emph{equal} to the identity map of the $E_\alpha$ complex.
  However, as pointed out in \cite{Wei2016}, if one wants to construct a pre-triangulated dg-category of twisting cochains, then, in order for mapping cones to exist, this condition needs to be weakened to only asking that the $\alpha\alpha$ term be \emph{chain homotopic} to the identity map.\footnote{In \cite{Wei2016} the terminology \emph{twisted (perfect) complex} is used instead, to differentiate between the classical and the more homotopical definitions. In this present article, we say e.g. ``morphism of twisting cochains'' to mean ``morphism of twisted perfect complexes'', i.e. considering twisting cochains as a full subcategory of twisted perfect complexes. Throughout the literature in general, the use of twisted/twisting cochain/complex is not entirely consistent.}

  One might ask whether we could modify the construction somehow in order to rectify this discrepancy, obtaining the more modern definition.
  But note that degenerate $1$-simplices are defined entirely by an \emph{object}, i.e. by a degeneracy map applied to a $0$-simplex.
  Inherent to the dg- (or, more generally, enriched-) categorical framework is the fact that we only enrich the \emph{morphisms}; the objects remain purely set-theoretical, and so cannot hope to describe any sort of homotopical information.

  This point aside, more specific to the question in hand, we note that it is not too surprising that we are not able to recover the pre-triangulated structure on twisting cochains.
  Indeed, we are not trying to construct from them a dg-category, but instead a \emph{space}, and so rather than expecting something resembling a pre-triangulated structure, one should instead look towards studying the higher $\pi_n$ of the resulting space to see what information it contains.
\end{remark}

Since $\Twist$ is a presheaf of Kan complexes by definition, \cref{lemma:tot-of-cech-of-kan-is-kan} says that $\Tot\Twist(\cechnerve\cover_\bullet)$ is a Kan complex.
Morally, this means that we should expect its $1$-simplices to describe \emph{invertible} morphisms.
We can make this observation formal by showing that the $1$-simplices are not merely morphisms of twisting cochains, but instead \emph{weak equivalences}.

\begin{theorem}
\label{theorem:1-simplices-in-complex-analytic-twist}
  Let $E=(E^\bullet,\varphi)$ and $F=(F^\bullet,\psi)$ be points in $\Tot^0\Twist(\cechnerve\cover_\bullet)$, where $(X,\cover)$ is as in \cref{theorem:tot0-all-three}.
  Then a path $\lambda\in\Tot^1\Twist(\cechnerve\cover_\bullet)$ from $E$ to $F$ gives a weak equivalence of twisting cochains $(F^\bullet,\psi)\simto(E^\bullet,\varphi)$ in the sense of \cite[Definition~2.27]{Wei2016}.
\end{theorem}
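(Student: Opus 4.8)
The plan is to unpack the combinatorial data carried by a $1$-simplex of $\Tot\Twist(\cechnerve\cover_\bullet)$ and match it, component by component, with the definition of a weak equivalence of twisting cochains in \cite[Definition~2.27]{Wei2016}. Recall from \cref{subsection:totalisation-and-holim} that a $1$-simplex of $\Tot Y_\bullet^\anotherbullet$ is a morphism $\Delta[\anotherbullet]\times\Delta[1]\to Y_\bullet^\anotherbullet$ of cosimplicial simplicial sets, subject to the same coface/codegeneracy compatibilities as for points but with $\Delta[\anotherbullet]$ replaced throughout by $\Delta[\anotherbullet]\times\Delta[1]$. Taking $Y_\bullet^\anotherbullet=\Twist(\cechnerve\cover_\anotherbullet)=\core{\dgnerve\Free(\cechnerve\cover_\anotherbullet)}$, the first step is to establish the $1$-simplex analogue of \cref{theorem:dg-nerve-and-MC-for-presheaves}: as indicated in \cref{subsection:dg-nerve} and worked out in \cref{appendix:proof-of-1-simplices-in-complex-analytic-twist}, such a path $\lambda$ is equivalent to the data of a degree-zero, \emph{closed} element of the Čech bialgebra $\delcech^{\bullet,\anotherbullet}(\cover,\Free;\cal{L})$ interpolating between the two Maurer--Cartan elements $\varphi$ and $\psi$ which (via \cref{theorem:tot0-twist-is-twcs}) correspond to $E$ and $F$. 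The endpoint conditions are automatic: restricting $\lambda$ along $\Delta[\anotherbullet]\times\Delta[\{0\}]\hookrightarrow\Delta[\anotherbullet]\times\Delta[1]$ (resp. $\Delta[\anotherbullet]\times\Delta[\{1\}]$) recovers exactly $E$ (resp. $F$), by the coface-map conditions defining $\Tot$.

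The second step is to read off the components of $\lambda$ over the Čech nerve. Since $\Free$ turns disjoint unions into products, evaluation on $\cechnerve\cover_p$ splits as a product over the $p$-fold intersections, so $\lambda$ decomposes as a family $(\lambda_{\alpha_0\ldots\alpha_p})$ with $\lambda_{\alpha_0\ldots\alpha_p}\in\Hom^{-p}_{\Free(U_{\alpha_0\ldots\alpha_p})}(F_{\alpha_p}^\bullet,E_{\alpha_0}^\bullet)$ after restriction (the ``backwards'' direction being the convention of \cref{remark:morphisms-in-dg-nerve-go-backwards}, which is what forces the reversal $(F^\bullet,\psi)\simto(E^\bullet,\varphi)$). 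The closedness equation for $\lambda$ in the bialgebra then unravels, bidegree by bidegree, into precisely the system of equations of \cite[Definition~2.27]{Wei2016} expressing that $\lambda$ is a morphism of twisting cochains $(F^\bullet,\psi)\to(E^\bullet,\varphi)$: the $(0,0)$-component $\lambda_\alpha$ is a chain map $F_\alpha^\bullet\to E_\alpha^\bullet$, the $(1,0)$-component $\lambda_{\alpha\beta}$ is the homotopy controlling compatibility with the transition data $\varphi_{\alpha\beta},\psi_{\alpha\beta}$, and so on. This is the same bookkeeping as in the proof of \cref{theorem:tot0-twist-is-twcs}, carried out one Čech degree higher and with the extra $\Delta[1]$-direction.

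The third step upgrades ``morphism'' to ``weak equivalence''. The face map $\Tot\Twist(\cechnerve\cover_\bullet)\to\Twist(U_\alpha)$ obtained by restricting along $\{\alpha\}\hookrightarrow\cechnerve\cover_0$ sends $\lambda$ to a genuine $1$-simplex of $\Twist(U_\alpha)=\core{\dgnerve\Free(U_\alpha)}$ with endpoints $E_\alpha^\bullet$ and $F_\alpha^\bullet$ and underlying degree-zero morphism $\lambda_\alpha$. By \cref{lemma:nerve-inside-dg-nerve}~(iii) (equivalently \cref{corollary:morphism-into-dg-nerve-maximal-kan}), a $1$-simplex of the maximal Kan complex of a dg-nerve is precisely a quasi-isomorphism; hence each $\lambda_\alpha\colon F_\alpha^\bullet\to E_\alpha^\bullet$ is a quasi-isomorphism. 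As \cite[Definition~2.27]{Wei2016} defines a weak equivalence of twisting cochains to be exactly a morphism whose local components over the $U_\alpha$ are quasi-isomorphisms, this finishes the proof; it also makes precise the moral observation that, $\Tot\Twist(\cechnerve\cover_\bullet)$ being a Kan complex by \cref{lemma:tot-of-cech-of-kan-is-kan}, its $1$-simplices ought to be ``invertible''.

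The main obstacle is the second step: matching the closedness equation of a $\Delta[\anotherbullet]\times\Delta[1]$-shaped morphism into the Čech bialgebra with the explicit equations of \cite[Definition~2.27]{Wei2016} for a morphism of twisted complexes is a delicate sign- and index-tracking exercise, since one must propagate the extra $\Delta[1]$-direction through every coface/codegeneracy compatibility and through the graded multiplication of the bialgebra. This is exactly why we defer the full computation --- the precise form of the interpolating element and the verification of every sign --- to \cref{appendix:proof-of-1-simplices-in-complex-analytic-twist}, where the $1$-simplex case of the totalisation is spelled out in complete detail; the remaining steps are then essentially formal consequences of \cref{theorem:tot0-twist-is-twcs} and \cref{lemma:nerve-inside-dg-nerve}.
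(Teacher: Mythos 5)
Your overall strategy is the same as the paper's: unpack a path as a compatible family of morphisms $\Delta[p]\times\Delta[1]\to\core{\dgnerve\Free(\cechnerve\cover_p)}$, match the resulting relations bidegree by bidegree against \cite[Definition~2.27]{Wei2016}, and deduce that the local components are quasi-isomorphisms from \cref{lemma:nerve-inside-dg-nerve}~(iii). The endpoint-restriction step and the quasi-isomorphism step are correct as you state them.

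The gap is in your second step. A morphism $\Delta[p]\times\Delta[1]\to\core{\dgnerve\Free(U_{\alpha_0\ldots\alpha_p})}$ does \emph{not} hand you a single element $\lambda_{\alpha_0\ldots\alpha_p}\in\Hom^{-p}(F_{\alpha_p}^\bullet,E_{\alpha_0}^\bullet)$ sitting in a Čech bialgebra: it is a labelling of \emph{every} non-degenerate face of the prism, which has $p+1$ top-dimensional simplices $\Delta_m^{p+1}$ (plus all the diagonal lower-dimensional faces), each carrying its own dg-nerve equation \cref{equation:dg-nerve-definition}. The component of the purported weak equivalence has to be \emph{constructed} as the alternating sum
\[
  \lambda_{\alpha_0\ldots\alpha_p}\coloneqq\sum_{m=0}^p(-1)^m f_{\Delta_m^{p+1}},
\]
and only after this combination do the $p+1$ separate dg-nerve equations telescope --- the terms labelled by diagonal faces cancelling in pairs --- into the single relation $\hat{\delta}\lambda+\varphi\cdot\lambda-\lambda\cdot\psi=0$. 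Already for $p=1$ this is visible: the square-with-diagonal gives two homotopies $h_{\alpha\beta}^E$ and $h_{\alpha\beta}^F$ plus a diagonal quasi-isomorphism $d_{\alpha\beta}$, and one must set $\lambda_{\alpha\beta}\coloneqq h_{\alpha\beta}^F-h_{\alpha\beta}^E$ so that $d_{\alpha\beta}$ drops out. Asserting that the ``closedness equation unravels into precisely'' Wei's equations skips exactly this construction; without it the bidegrees of the raw simplicial data do not even line up with those of a morphism of twisted complexes. This is also why the data of a path is strictly more than a weak equivalence (the diagonal labels are discarded by the alternating sum) and why the theorem is phrased as ``gives'' rather than ``is'' --- see \cref{remark:1-simplices-in-twist-and-cubical}.
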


\begin{proof}
  This is a purely combinatorial calculation using an explicit description of the non-degenerate simplices of $\Delta[p]\times\Delta[1]$.
  We give the full details (including recalling the definition of a weak equivalence of twisting cochains) in \cref{appendix:proof-of-1-simplices-in-complex-analytic-twist}.
\end{proof}

As an immediate consequence, we can use the language of \cref{subsection:simplicial-homotopy-theory} (since $\Twist$ is a presheaf of Kan complexes, and is thus in particular such that $2$-horns fill) to say the following.

\begin{corollary}
\label{corollary:pi0-twist-is-twc-mod-we}
  Let $(X,\cover)$ be as in \cref{theorem:tot0-all-three}.
  Then $\pi_0\Tot\Twist(\cechnerve\cover_\bullet)$ consists of twisting cochains modulo weak equivalence of twisting cochains.
\end{corollary}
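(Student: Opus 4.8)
The plan is to deduce this from the preceding results with essentially no new work. First I would recall that $\Twist$ is a presheaf of Kan complexes by construction (\cref{remark:twist-is-kan}), so by \cref{lemma:tot-of-cech-of-kan-is-kan} the Čech totalisation $\Tot\Twist(\cechnerve\cover_\bullet)$ is itself a Kan complex; in particular all $2$-horns fill in it. Therefore \cref{lemma:2-horns-fill-implies-simplicial-pi0-equals-topological-pi0} applies, and the simplicial $\pi_0$ coincides with the topological $\pi_0$, so it suffices to compute the set of $0$-simplices modulo the equivalence relation generated by $1$-simplices.

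Next I would identify the two ingredients. By \cref{theorem:tot0-twist-is-twcs}, the $0$-simplices of $\Tot\Twist(\cechnerve\cover_\bullet)$ are exactly the twisting cochains on $(X,\cover)$ in the sense of \cite[\S3]{TT1986}. By \cref{theorem:1-simplices-in-complex-analytic-twist}, a $1$-simplex (i.e. a path) $\lambda$ from $E=(E^\bullet,\varphi)$ to $F=(F^\bullet,\psi)$ yields a weak equivalence of twisting cochains $(F^\bullet,\psi)\simto(E^\bullet,\varphi)$ in the sense of \cite[Definition~2.27]{Wei2016}. Hence the equivalence relation on $\pi_0$ generated by $1$-simplices is precisely the relation ``connected by a (single) weak equivalence''. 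Since $\Tot\Twist(\cechnerve\cover_\bullet)$ is Kan, this single-step relation is already an equivalence relation, and it agrees with the zig-zag relation; combining with \cref{theorem:1-simplices-in-complex-analytic-twist}, the quotient is exactly the set of twisting cochains modulo weak equivalence.

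The one point that deserves a sentence of care — and the only place where something could conceivably go wrong — is the converse direction: we need that \emph{every} weak equivalence of twisting cochains in the sense of \cite{Wei2016} is realised by some $1$-simplex of $\Tot\Twist(\cechnerve\cover_\bullet)$, so that the equivalence relation is not strictly coarser than intended. This follows because the notion of weak equivalence in \cite[Definition~2.27]{Wei2016} is itself given by Maurer--Cartan-type data (a closed element of the relevant bialgebra), and the explicit description of $1$-simplices in the Čech totalisation carried out in \cref{appendix:proof-of-1-simplices-in-complex-analytic-twist} identifies such data with paths; so the correspondence of \cref{theorem:1-simplices-in-complex-analytic-twist} is in fact a bijection between paths and weak equivalences (with the appropriate fixed endpoints), not merely a one-directional assignment. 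Granting this, the proof is immediate:

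\begin{proof}
  By \cref{remark:twist-is-kan}, $\Twist$ is a presheaf of Kan complexes, so \cref{lemma:tot-of-cech-of-kan-is-kan} shows that $\Tot\Twist(\cechnerve\cover_\bullet)$ is a Kan complex. In particular all $2$-horns fill, so by \cref{lemma:2-horns-fill-implies-simplicial-pi0-equals-topological-pi0} the simplicial homotopy group $\pi_0\Tot\Twist(\cechnerve\cover_\bullet)$ is in bijection with the topological one; thus it suffices to describe the set of $0$-simplices modulo the equivalence relation generated by $1$-simplices.

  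By \cref{theorem:tot0-twist-is-twcs}, the $0$-simplices of $\Tot\Twist(\cechnerve\cover_\bullet)$ are exactly the twisting cochains on $(X,\cover)$. By \cref{theorem:1-simplices-in-complex-analytic-twist}, a $1$-simplex from a $0$-simplex $E=(E^\bullet,\varphi)$ to a $0$-simplex $F=(F^\bullet,\psi)$ yields a weak equivalence of twisting cochains $(F^\bullet,\psi)\simto(E^\bullet,\varphi)$ in the sense of \cite[Definition~2.27]{Wei2016}, and conversely, as explained in \cref{appendix:proof-of-1-simplices-in-complex-analytic-twist}, every such weak equivalence arises from a $1$-simplex with the corresponding endpoints. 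Since $\Tot\Twist(\cechnerve\cover_\bullet)$ is a Kan complex, the relation ``there is a $1$-simplex connecting two $0$-simplices'' is already an equivalence relation and agrees with the zig-zag relation computing $\pi_0$. Hence $\pi_0\Tot\Twist(\cechnerve\cover_\bullet)$ is the set of twisting cochains on $(X,\cover)$ modulo weak equivalence of twisting cochains.
\end{proof}
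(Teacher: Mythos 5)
Your argument follows the paper's own (implicit) reasoning exactly: the corollary is stated there as an immediate consequence of \cref{theorem:tot0-twist-is-twcs} and \cref{theorem:1-simplices-in-complex-analytic-twist}, using that $\Twist$ is a presheaf of Kan complexes so that \cref{lemma:tot-of-cech-of-kan-is-kan} and \cref{lemma:2-horns-fill-implies-simplicial-pi0-equals-topological-pi0} identify the simplicial and topological $\pi_0$. Your extra paragraph insisting that every weak equivalence of twisting cochains must actually be realised by some $1$-simplex is a point the paper leaves implicit (cf. \cref{remark:1-simplices-in-twist-and-cubical}, which tacitly treats the assignment from paths to weak equivalences as surjective, the extra data being only the diagonal factorisations), so flagging and sketching it is a genuine improvement in care rather than a deviation or a gap.
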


\begin{remark}
\label{remark:1-simplices-in-twist-and-cubical}
  One important thing to note here is what we mean when we say that a path ``gives'' a weak equivalence in the statement of \cref{theorem:1-simplices-in-complex-analytic-twist}.
  The data of a $1$-simplex in the totalisation is actually slightly more than just a weak equivalence, since it also contains factorisations of the $\lambda_{\alpha_0\ldots\alpha_p}$ terms for $p\geq1$.
  For example, in \cref{appendix:proof-of-1-simplices-in-complex-analytic-twist} we \emph{define} the $\lambda_{\alpha\beta}$ term of the purported weak equivalence associated to the path $\lambda$ as the composition of two homotopies, and in doing so thus forget about the common (co)domain through which this factors.
  This is because the product $\Delta[1]\times\Delta[1]$ is not simply a square, but instead a square with diagonal, and it is exactly the data associated to the diagonal that we forget when constructing a weak equivalence of twisting cochains.
  As a consequence, there are multiple $1$-simplices in $\Tot\Twist(\cechnerve\cover_\bullet)$ which describe the same weak equivalence of twisting cochains.

  Rather than quotienting the space by some equivalence relation in order to remedy this (harmless) situation, it seems more desirable to better understand how the dg-nerve has an inherent \emph{cubical} structure to it, by relating its combinatorics to that of the pair subdivision; the extra data floating around in the $1$-simplices in the totalisation comes exactly from the fact that we are working simplicially instead of cubically.
\end{remark}

\begin{remark}
\label{remark:turning-resolution-by-twisting-cochains-into-pi-0-statement}
  The study of twisting cochains in the dg-category setting is well explained in papers such as \cite{Wei2016,BHW2017}, where it is shown how they relate to the dg-category of perfect complexes.
  More precisely, in the language of this present paper, \cite[Proposition~4.9]{BHW2017} says that, for any ringed space $(X,\OO_X)$ with locally finite open cover $\cover$, the dg-category of twisting cochains is exactly the Čech totalisation of the presheaf that sends a ringed space to the dg-category of strictly perfect complexes.
  One can consider \cref{theorem:tot0-twist-is-twcs} and \cref{theorem:1-simplices-in-complex-analytic-twist} as a sort of space-theoretic analogue of these dg-categorical results: \cref{lemma:comparsion-of-tot-dg-and-tot-sset} tells us that if we applied $\core{\dgnerve(-)}$ to the dg-category described as the homotopy limit in \cite[Proposition~4.9]{BHW2017} then we would obtain the same space as given by our construction here.

  For example, if $M^\bullet$ and $N^\bullet$ are perfect complexes of quasi-coherent (or, in particular, coherent) $\OO_X$-modules on a connected complex-analytic manifold $X$, and $\cover$ is a locally finite Stein cover, then \cite[Proposition~3.21]{Wei2016} tells us that we can resolve $M^\bullet$ and $N^\bullet$ by twisting cochains $E$ and $F$ (respectively), in that their sheafifications \cite[Definition~3.1]{Wei2016} satisfy $\cal{S}(E)\simeq M^\bullet$ and $\cal{S}(F)\simeq N^\bullet$.
  Now, if $M^\bullet$ and $N^\bullet$ are quasi-isomorphic, then $\cal{S}(E)\simeq M^\bullet\simeq N^\bullet\simeq\cal{S}(F)$; we can then apply \cite[Corollary~3.10]{Wei2016} to show that $E$ and $F$ are weakly equivalent twisting cochains.
  But, by \cref{theorem:1-simplices-in-complex-analytic-twist}, this says that if two complexes are connected by a quasi-isomorphism, then we can resolve them by twisting cochains that are connected by a path in $\pi_0\Tot\Twist(\cechnerve\cover_\bullet)$.
  In the language of Kan complexes and their homotopy theory, this is exactly saying that there is an isomorphism between $\pi_0\Tot\Twist(\cechnerve\cover_\bullet)$ and the $\pi_0$ of the space whose points are perfect complexes of quasi-coherent sheaves and whose paths are quasi-isomorphisms between them, induced by the sheafification and resolution functors $\cal{S}$ and $\cal{T}$ from \cite{Wei2016}.
\end{remark}

\begin{remark}
\label{remark:constructing-spaces-for-perf-and-q-coh}
  Given \cref{remark:turning-resolution-by-twisting-cochains-into-pi-0-statement}, it seems natural to try to construct a space of perfect complexes using the same method of Čech totalisation, but the structure of perfect complexes does not really allow for this.
  Note that, when describing the $1$-simplices in the Čech totalisation, the degree-$0$ term is exactly a $1$-simplex in the same simplicial set as the degree-$1$ terms that constitute the $0$-simplices (see e.g. \cref{subsection:lines-in-bun-x} or \cref{appendix:proof-of-1-simplices-in-complex-analytic-twist}).
  For example, in the case of $\Tot\core{\nerve\Free(\cechnerve\cover_\bullet)}$, which is the space of complexes of locally free sheaves (though for simplicity here we will consider a complex concentrated in degree~$0$, i.e. a single locally free sheaf), the degree-$0$ part $\lambda_\alpha\colon E_\alpha\to F_\alpha$ of the $1$-simplices is of the same type as the degree-$1$ part $g_{\alpha\beta}$ of the $0$-simplices: the local data of morphisms of locally free sheaves and the transition functions are both $1$-simplices in $\core{\nerve\Free(U)}$ (for some $U$), i.e. isomorphisms of free sheaves.
  But in the case of perfect complexes, the gluing data (playing the role of the transition functions) consists of isomorphisms (since a perfect complex is a single global object), whereas the morphisms consist of \emph{quasi-}isomorphisms --- this mismatch means that we cannot describe a space whose points are perfect complexes and whose paths are quasi-isomorphisms via Čech totalisation.
  In terms of morphisms, this space looks like it arises from the Čech totalisation of some $\core{\dgnerve\cal{D}(-)}$; morally, in terms of objects, it sits somewhere between $\core{\nerve\cal{D}(-)}$ and $\core{\dgnerve\cal{D}(-)}$.

  However, this is not a defect of perfect complexes.
  Indeed, one reason that perfect complexes are so useful is the fact that they are global objects with homotopically weak local properties (i.e. they are locally quasi-isomorphic to complexes of locally free sheaves).
  This relates to the other key example that we cannot express in this language of Čech totalisation: the space of quasi-coherent sheaves.
\end{remark}

\subsection{Edges in Green complexes}
\label{subsection:1-simplices-in-complex-green}

When it comes to the $1$-simplices in $\Tot\Green(\cechnerve\cover_\bullet)$, there is not really a classical notion of morphism against which we can compare them.
The only definition that we know of is implicitly in \cite[Definition~3.2]{Hos2020a}, where the category of Green complexes is defined as a full subcategory of the homotopical category of cartesian locally free sheaves on the Čech nerve, meaning that the morphisms are simply chain maps, and the weak equivalences are quasi-isomorphisms.
It is with this structure that the $(\infty,1)$-category of Green complexes is shown \loccit\@ to be equivalent to that of locally coherent sheaves (after taking a homotopy colimit over refinements of covers).
The structure that we obtain here, from $\Tot\Green(\cechnerve\cover_\bullet)$, is very different: since we know that all $2$-horns fill in $\Green$ (\cref{corollary:filling-2-horns-in-green}), we know that the resulting morphisms will all be invertible.
But the difference is much more profound than this: as explained in \cref{remark:constructing-spaces-for-perf-and-q-coh}, the $1$-simplices in a Čech totalisation are of the same type as the gluing data for objects, so we should not expect to recover something like chain maps or quasi-isomorphisms, but instead something built from GTT-labellings.
However, it could be the case that these two notions happen to coincide, in that one can be strictified to recover the other, as we allude to in \cref{section:summary-and-future-work}.

Following the general description of $1$-simplices in the totalisation (cf. \cref{appendix:proof-of-1-simplices-in-complex-analytic-twist}), we can start to describe those in $\Tot\Green(\cechnerve\cover_\bullet)$.
Given Green complexes $C$ and $D$ in $\Tot^0\Green(\cechnerve\cover_\bullet)$, the degree-$0$ component of a path $f\colon C\to D$ consists of $1$-simplices
\[
  \begin{tikzpicture}[scale=2]
    \node (C) at (0,0) {$C_\alpha(\alpha)$};
    \node (D) at (0,2) {$D_\alpha(\alpha)$};
    \node (CD) at (0,1) {${C}'_\alpha(\alpha)\cong{D}'_\alpha(\alpha)$};
    \draw [-stealth] (C) to node[fill=white]{\scriptsize$\scr{L}_C$} (CD);
    \draw [-stealth] (D) to node[fill=white]{\scriptsize$\scr{L}_D$} (CD);
  \end{tikzpicture}
\]
in $\Green(U_\alpha)$.
In other words, the degree-$0$ component of the morphism $f$ is exactly a ``common generalisation'' of the degree-$0$ parts of a Green complex (the complexes of free sheaves over each open subset), namely isomorphic complexes $C'_\alpha(\alpha)\cong C_\alpha(\alpha)\oplus\scr{L}_C$ and $D'_\alpha(\alpha)\cong D_\alpha(\alpha)\oplus\scr{L}_D$ such that $\scr{L}_C$ and $\scr{L}_D$ are elementary.
In terms of the homotopy theory, this says that a necessary condition for there to exist a morphism between $C$ and $D$ is that they are built from resolutions of ``the same'' coherent sheaf (or complexes of coherent sheaves, cf. \cref{subsection:greens-resolution}), since $C_\alpha(\alpha)$ and $D_\alpha(\alpha)$ are forced to have the same homology.

The degree-$1$ component will then be some common generalisation of $C_\alpha(\alpha)$ and $D_\beta(\beta)$, along with two $2$-simplices in $\core{\nerve\Free(U_\alpha\beta)}$ mediating between this diagonal, the degree-$0$ parts of the morphism, and the $\alpha\beta$ parts of the Green complexes (cf. \cref{figure:abstract-diagonal-square}); but the nerve is $2$-coskeletal, so it is only the diagonal $1$-simplex that actually provides any new data.

Continuing on for higher simplices, we see that a path $f\colon C\to D$ will provide us with these common generalisations $C_{\alpha_i}(\alpha_i)\leftrightarrow D_{\alpha_j}(\alpha_j)$ for all $i<j$.
But since the $D_{\alpha_j}(\alpha_j)$ form a Green complex, we already have common generalisations $D_{\alpha_j}(\alpha_j)\leftrightarrow D_{\alpha_k}(\alpha_k)$ for \emph{all} $j,k$.
Composing with these allows us to find common generalisations $C_{\alpha_i}(\alpha_i)\leftrightarrow D_{\alpha_k}$ for \emph{all} $i,k$, and so it seems likely that we can invert the morphism $f$.
This (weakly) suggests the following conjecture, which is related to \cref{subsection:horn-filling-conditions}.

\begin{conjecture}
\label{conjecture:green-is-globally-fibrant}
  The simplicial presheaf $\Green$ is a presheaf of Kan complexes.
\end{conjecture}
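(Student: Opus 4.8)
The plan is to prove that for every connected ringed space $(U,\OO_U)$ the simplicial set $\Green(U)_\bullet$ of \cref{definition:green} admits fillers for all horns $\Lambda_i[p]\to\Green(U)$, $p\geq1$, $0\leq i\leq p$. The first step is bookkeeping. Unwinding \cref{definition:GTT-1-labelling}, a map $\Lambda_i[p]\to\Green(U)$ is a GTT-$1$-labelling of the restriction of $\pair{\Delta[p]}$ to the horn, i.e.\ of the cells $(\tau,\sigma)$ whose ambient face $\sigma$ lies in $\Lambda_i[p]$, subject to conditions (i) and (ii). The faces of $\Delta[p]$ \emph{not} in $\Lambda_i[p]$ are precisely $[p]$ itself and the face $\tau_0\coloneqq[p]\setminus\{i\}$ opposite the $i$th vertex, so the data still to be produced are: a labelling of the vertex $(\tau_0,\tau_0)$ and of all cells $(\tau,\tau_0)$; and a labelling of $([p],[p])$ and of all cells $(\tau,[p])$. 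One handles $\tau_0$ first (since cells of the form $(\tau_0,[p])$ refer to the label of $\tau_0$), then $[p]$. On each of these two faces, because the ordinary nerve $\core{\nerve\Free(U)}$ is the nerve of a groupoid (\cref{lemma:nerve-inside-dg-nerve}(ii)), hence $2$-coskeletal and Kan, the only genuinely new pieces of data are the complexes $C_j(\sigma)$ labelling its vertices, the edge-isomorphisms $\varphi_{jk}(\sigma)$, and the elementary complements $C_j^{\perp\sigma}(\tau)$; the higher $\varphi_J(\sigma)$ vanish by definition of a GTT-$1$-labelling, and the cocycle relations among the $\varphi_{jk}(\sigma)$ hold automatically once these are chosen compatibly (uniformly for inner and outer horns, since every $\varphi_{jk}$ in sight is an isomorphism).

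The heart of the argument is a strictification procedure in the spirit of \cref{lemma:adding-elementary-is-quasi-iso} and \cref{remark:green-lets-you-turn-quasi-isos-into-isos}. Fix a new face $\sigma\in\{\tau_0,[p]\}$ and a vertex $j\in\sigma$. For every proper face $\eta\subsetneq\sigma$ already labelled by the horn one is given a complex $C_j(\eta)$ of free $\OO_U$-modules, and for every nested pair $\eta\subseteq\eta'$ of such faces one is given, via the prescribed $\theta$, a split inclusion $C_j(\eta)\hookrightarrow C_j(\eta')$ with elementary complement; these fit into a diagram indexed by the poset of proper faces of $\sigma$ containing $j$, with minimum $\{j\}$. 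We take $C_j(\sigma)$ to be built from $C_j(\{j\})$ by adjoining one copy of each incremental elementary complement prescribed by the horn as one ascends this poset, so that each $C_j(\eta)$ includes into $C_j(\sigma)$ with elementary complement, and we take $C_j^{\perp\sigma}(\tau)$ to be the corresponding complementary summand. The edge-isomorphisms $\varphi_{jk}(\sigma)$ are then defined block-diagonally — the old $\varphi_{jk}(\eta)$ on the sub-face summand, the elementary morphism of \cref{definition:elementary-morphism} on the complement — which makes condition (ii) hold by construction and the cocycle conditions follow since products of block-diagonal isomorphisms are block-diagonal. One checks the resulting complements are elementary, using only that a finite direct sum of elementary complexes is elementary and that the horn data is elementary on every sub-cell.

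The step I expect to be the main obstacle is making this direct-sum construction genuinely coherent: condition (i) must hold not merely for each pair $\tau\subset\sigma$ in isolation but simultaneously across the whole lattice of cells of $\pair{\Delta[p]}$ lying above $\tau_0$ and $[p]$, and the incremental complement of $C_j(\eta)$ inside $C_j(\sigma)$ can depend on which chain up the poset one uses — this is precisely the unwieldiness of the explicit pair-subdivision bookkeeping flagged in \cref{remark:comments-on-GTT-definition}. I therefore expect the cleanest route is to first reformulate \cref{definition:GTT-labelling} without reference to the pair subdivision: as a diagram indexed by the poset of faces of $[p]$, valued in complexes of free $\OO_U$-modules and split inclusions with elementary cokernel, subject to the $1$-labelling condition, and to prove this reformulation equivalent to the present one. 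Granting that, horn-extension becomes the purely homological task of lifting a finite diagram of complexes of free $\OO_U$-modules along the class of such maps, which can be carried out by induction on cell dimension, since elementary complexes are contractible and split off freely. The statement is only conjectural at present because carrying out this reformulation, and verifying that it faithfully captures \cref{definition:GTT-labelling}, is itself a nontrivial combinatorial task; the evidence that it can be pushed through is \cref{corollary:filling-2-horns-in-green} (the case $p=2$), the fact that $\Twist$ is already globally fibrant (\cref{lemma:twist-is-a-simplicial-presheaf}, \cref{remark:twist-is-kan}), and the direct analogy with Green's original strictification of local resolutions.
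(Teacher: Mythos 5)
The statement you are addressing is \cref{conjecture:green-is-globally-fibrant}, which the paper explicitly leaves as a conjecture: the only thing actually proved is that $2$-horns fill (\cref{corollary:filling-2-horns-in-green}, via \cref{lemma:filling-2-horns-in-stwist}), together with the remark that the argument ``does not immediately generalise'' because it is unclear what to choose for $C_0(0123)$ when filling $\Lambda_0[3]$. So there is no proof in the paper to compare against, and your proposal is not a proof either --- it is a strategy outline whose $p=2$ instance reproduces the paper's argument (take $C_j(\sigma)$ to be $C_j(\{j\})$ plus all the ``opposite'' elementary complements, extend the edge isomorphisms block-diagonally, and note that outer horns are no harder than inner ones because everything in sight is invertible), and whose proposed reformulation (a poset-indexed diagram of split inclusions with elementary cokernel, discarding the explicit pair-subdivision bookkeeping) is essentially the one the paper itself suggests in \cref{remark:comments-on-GTT-definition} as the likely route to the full result.

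The gap you flag is genuine and is exactly why the statement is conjectural. For $p\geq3$ the maximal proper faces of $\sigma$ containing $j$ intersect in faces of positive dimension, so ``adjoining one copy of each incremental elementary complement as one ascends the poset'' is not well defined: the complement $C_j^{\perp\eta'}(\eta)$ depends on the chain $\{j\}\subset\cdots\subset\eta\subset\eta'$, the naive direct sum over all chains overcounts the shared summands, and some inclusion--exclusion-type choice must be made coherently across the whole face lattice. There is also a second coherence problem your sketch glosses over: for $|K|=2$ the trivialised $\varphi_K(\eta)$ is only \emph{upper triangular} with respect to the decomposition (the $*$ block in \cref{remark:comments-on-GTT-definition} is arbitrary), so the various $\varphi_{jk}(\eta)$ for the different maximal $\eta\supseteq\{j<k\}$ are not block-diagonal extensions of a common map, and assembling them into a single $\varphi_{jk}([p])$ restricting correctly to every sub-face is an additional constraint beyond the choice of objects. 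Neither obstruction is resolved by your construction, so the proposal should be read as a plausible plan of attack --- consistent with the paper's own evidence and its suggested reformulation --- rather than as a proof.
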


Following on from \cref{remark:turning-resolution-by-twisting-cochains-into-pi-0-statement}, note that the existence of a $1$-simplex connecting two Green complexes built from resolutions of two quasi-isomorphic complexes would follow from \cref{conjecture:TTs-green-gives-morphism-stwist-to-green}.
We discuss the relation between $\Green$ and complexes of coherent sheaves further in \cref{section:summary-and-future-work}.

\subsection{Edges in simplicial twisting cochains}
\label{subsection:1-simplices-in-complex-stwist}

The $1$-simplices in $\Tot\sTwist(\cechnerve\cover_\bullet)$ can be described in exactly the same way as those of $\Tot\Green(\cechnerve\cover_\bullet)$ in \cref{subsection:1-simplices-in-complex-green}, but with two key differences: the isomorphisms become quasi-isomorphisms; and the structure is no longer $2$-coskeletal, so there is higher homotopy data than simply a collection of $1$-simplices (or of ``common generalisations'', in the language of \cref{subsection:1-simplices-in-complex-green}).
Since \cite{TT1986} does not define any notion of morphism of simplicial twisting cochains, we have nothing against which to compare the $1$-simplices in $\Tot\sTwist(\cechnerve\cover_\bullet)$.

\section{Relations between the three presheaves}
\label{section:morphisms-between-the-presheaves}

Recall that we have constructed three simplicial presheaves from $\core{\nerve\Free(-)}$, using the methods of GTT-labellings and ``upgrading'' the ordinary nerve to the dg-nerve, as shown in \cref{figure:square-of-simplicial-presheaves}.
We said in \cref{subsection:narrative} that $\core{\nerve\Free(-)}$ was somehow too discrete, and so we wanted to construct more subtle generalisations.
The aim of this section is to explain how the three generalisations that we have constructed ($\Twist$, $\Green$, and $\sTwist$) relate to one another, and to tie them in to the story told in \cite{TT1986} using the result of \cref{theorem:tot0-all-three}.
To be clear, we now return to working with the simplicial presheaves $\Green$, $\Twist$, and $\sTwist$ before Čech totalisation.

\begin{figure}[ht!]
  \centering
  \begin{tikzpicture}[xscale=3.5,yscale=2.5]
    \node (nfree) at (0,2) {$\core{\nerve\Free(-)}$};
    \node (twist) at (-1,1) {$\Twist=\core{\dgnerve\Free(-)}$};
    \node (green) at (1,1) {$\Green=\core{\nerve\Free(-)}^\mathrm{GTT}$};
    \node (stwist) at (0,0) {$\sTwist=\core{\dgnerve\Free(-)}^\mathrm{GTT}$};
    \draw [thick,-stealth,dashed] (nfree) to node[fill=white]{\footnotesize dg-nerve} (twist);
    \draw [thick,-stealth,dashed] (nfree) to node[fill=white]{\footnotesize GTT-labelling} (green);
    \draw [thick,-stealth,dashed] (twist) to node[fill=white]{\footnotesize GTT-labelling} (stwist);
    \draw [thick,-stealth,dashed] (green) to node[fill=white]{\footnotesize dg-nerve} (stwist);
  \end{tikzpicture}
  \caption{The three simplicial presheaves that we have so far constructed from $\core{\nerve\Free(-)}$, where we use a superscript $\mathrm{GTT}$ to denote the GTT-labelling construction of \cref{definition:GTT-labelling}. The dashed arrows show how each presheaf is constructed from another; they are \emph{not} morphisms of presheaves (although morphisms do exist, and we discuss them in \cref{definition:the-morphism-green-to-stwist,definition:the-morphism-twist-to-stwist}).}
  \label{figure:square-of-simplicial-presheaves}
\end{figure}

Using \cref{theorem:tot0-all-three}, we can rephrase the construction of Green's resolution, as described in \cite{TT1986}, as the existence of a composite map of sets
\[
  \Tot_0\Twist(\cechnerve\cover_\bullet)
  \to \Tot_0\sTwist(\cechnerve\cover_\bullet)
  \to \Tot_0\Green(\cechnerve\cover_\bullet).
\]
But now we have the language to study something much more general: morphisms of simplicial presheaves
\[
  \Twist \to \sTwist \from \Green
\]
and how they behave with respect to homotopy groups.
As a small note, the ``natural'' morphism between $\sTwist$ and $\Green$, given our definitions, goes in the opposite direction from the one in the composite morphism from \cite{TT1986}, but we conjecture (\cref{conjecture:green-into-stwist}) that these two morphisms are homotopy inverse to one another.

This is an important generalisation in two ways: first of all, this is \emph{independent of the geometry}, since we are working \emph{before} Čech totalisation; secondly, this contains information about \emph{higher structure}, since we are not just working with the $0$-simplices, but instead the entire space.

\subsection{Horn filling conditions}
\label{subsection:horn-filling-conditions}

By construction, $\Twist$ is a presheaf of Kan complexes, and thus globally fibrant.
This means that it satisfies many nice properties.
For example, its simplicial homotopy groups are naturally isomorphic to its topological homotopy groups; and its Čech totalisation is a Kan complex (\cref{lemma:tot-of-cech-of-kan-is-kan}), so we obtain a \emph{space} of twisting cochains.

It is not so simple to show whether or not $\Green$ and $\sTwist$ are presheaves of Kan complexes.
Indeed, in this paper, we only provide a partial result in this direction: we show that all $2$-horns in $\Green$ and $\sTwist$ fill.
Of course, it would be desirable to fully generalise this result and show that \emph{all} horns fill, but with the current definition of GTT-labelling this is not particularly easy (though we do still conjecture that $\Green$ at least is a presheaf of Kan complexes in \cref{conjecture:green-is-globally-fibrant}).
But, at the very least, showing that $2$-horns fill allows us to apply \cref{lemma:2-horns-fill-implies-simplicial-pi0-equals-topological-pi0}, which says that the simplicial $\pi_0$ is isomorphic to the topological $\pi_0$.

After proving that $2$-horns fill, we will remark on how one might try to generalise the proof to higher dimensions, and also on how the outer horns seem to be actually no more difficult to fill than the inner ones.

\begin{lemma}
\label{lemma:filling-2-horns-in-stwist}
  Any $2$-horn in $\sTwist(U)$ can be filled, for any $(U,\OO_U)\in\ConnRingSpace$.
\end{lemma}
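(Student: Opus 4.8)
The plan is to fill a $2$-horn $\Lambda_i[2]\to\sTwist(U)$ by hand, treating the inner horn $i=1$ and the two outer horns $i=0,2$ separately but with a common technique. A $2$-horn consists of two GTT-labellings of $\Delta[1]$ that agree on a shared vertex of $\Delta[1]$; filling it means producing a GTT-labelling of $\Delta[2]$ whose two relevant $1$-faces are exactly the given data. The cells of $\pair{\Delta[2]}$ that we must newly populate are: the barycentre $(\{012\},\{012\})$ (a $0$-cell), the three edge–midpoint $0$-cells $(\{ij\},\{ij\})$ that are not already given, the vertex $0$-cells $(\{k\},\{k\})$, and then the higher cells $(\tau,\sigma)$ recording elementary complements and $\tau$-trivialisations. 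The heart of the construction is to choose, for the missing top face $\sigma=\{012\}$, a complex $C_k(\{012\})$ for each vertex $k$ together with all the homotopy data $\varphi_J(\{012\})$, and elementary complements $C_k^{\perp\{012\}}(\tau)$ realising $C_k(\tau)\oplus C_k^{\perp\{012\}}(\tau)\cong C_k(\{012\})$ for every proper face $\tau\subset\{012\}$.

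**Key steps.** First I would handle the inner horn $\Lambda_1[2]$: here the two given $1$-simplices are the $\{01\}$-face and the $\{12\}$-face, agreeing at vertex $1$. I would set $C_k(\{012\})$ to be a ``sufficiently large common extension'' of the complexes $C_k(\{01\})$ and $C_k(\{12\})$ appearing on the two given edges — concretely, the direct sum of each with an elementary complement making them isomorphic, as furnished by \cref{lemma:adding-elementary-is-quasi-iso}. The morphisms $\varphi_{\{01\}}(\{012\})$ and $\varphi_{\{12\}}(\{012\})$ are then obtained by conjugating the given chain maps by the $\tau$-trivialisation isomorphisms and padding with elementary morphisms in the block-upper-triangular form dictated by condition~(ii) and spelled out in \cref{remark:comments-on-GTT-definition}; the missing edge $\varphi_{\{02\}}(\{012\})$ and the top homotopy $\varphi_{\{012\}}(\{012\})$ are then \emph{defined} to make the degree-$(-1)$ relation
\[
  \partial\varphi_{\{012\}}(\{012\})
  = \varphi_{\{02\}}(\{012\}) - \varphi_{\{12\}}(\{012\})\circ\varphi_{\{01\}}(\{012\})
\]
hold, which one can always do by taking, say, $\varphi_{\{012\}}(\{012\})=0$ and $\varphi_{\{02\}}(\{012\})=\varphi_{\{12\}}(\{012\})\circ\varphi_{\{01\}}(\{012\})$, since we are free to choose the not-yet-constrained face. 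The elementary complements for the pairs $(\tau,\{012\})$ with $\tau\in\{\{0\},\{1\},\{2\},\{01\},\{12\}\}$ are read off from the chosen common extensions; the complements for the genuinely new face $\tau=\{02\}$, and the compatibility squares of condition~(ii) for $K\subseteq\{02\}$, can be taken to be elementary with respect to the modules in play (as in \cref{remark:elementary-complements-in-GTT}) and arranged to commute by the block-triangular normal form. One then checks conditions~(i) and~(ii) hold for every cell — but by construction the only cells carrying nontrivial new data are those involving $\{02\}$ or $\{012\}$, and there the relations were imposed by fiat.

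**Outer horns and the main obstacle.** For the outer horns $\Lambda_0[2]$ and $\Lambda_2[2]$, the two given edges share vertex $0$ (resp.\ vertex $2$) but one of the constrained faces is now an ``outer'' edge; the construction is symmetric, and the remark at the end of \cref{subsection:horn-filling-conditions} anticipates that outer horns are no harder. The mild subtlety is that for an outer horn one of the two chain maps we are composing may need to be inverted up to homotopy; here I would invoke that we are inside $\core{\dgnerve\Free(U)}$, so the $1$-simplices are already quasi-isomorphisms (\cref{lemma:nerve-inside-dg-nerve}), and that over \emph{free} modules quasi-isomorphisms are chain homotopy equivalences (the generalised Whitehead theorem, as noted in \cref{subsection:narrative}), so a homotopy inverse exists and can be packaged into the same common-extension device. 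The step I expect to be the genuine obstacle is the bookkeeping of \emph{coherence}: ensuring that the elementary complements and $\tau$-trivialisations chosen for the three proper faces all restrict compatibly onto the lower cells and simultaneously into the block-triangular normal form for \emph{every} subset $K$, so that condition~(ii) holds on the nose rather than merely up to isomorphism. This is exactly the kind of rigidity that \cref{remark:comments-on-GTT-definition} flags as awkward in the present formulation of GTT-labelling, and it is why the argument does not obviously generalise to higher horns; I would handle it by fixing, once and for all, splittings of the relevant short exact sequences of free modules and defining all higher data by those splittings, so that compatibility becomes a formal consequence of associativity of direct sums rather than a separate verification.
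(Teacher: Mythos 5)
Your proposal is correct and follows essentially the same route as the paper: build the $2$-simplex labelling the barycentre by enlarging each $C_k$ with the elementary complements coming from the two given edges, reduce the problem to a $2$-horn in the Kan complex $\core{\dgnerve\Free(U)}$ (filled by composition for the inner horn, and by inverting a quasi-isomorphism via the generalised Whitehead theorem over free modules for the outer horns), then read off the remaining cells and verify conditions~(i) and~(ii) by the block-triangular/conjugation normal form. The paper's only extra specificity is the explicit ``minimal'' choice $C_0(\{012\})=C_0(0)\oplus C_0^{\perp\{01\}}(0)\oplus C_0^{\perp\{02\}}(0)$ (and its analogues with the ``opposite'' complements adjoined), which pins down the coherence bookkeeping you rightly flag as the delicate point.
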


We give here a proof that the outer horn $\Lambda_0[2]$ lifts, since the same argument can be applied to the other outer horn $\Lambda_2[2]$, and the argument for the inner horn $\Lambda_1[2]$ is strictly simpler (as we explain in the proof).

\begin{proof}
  Consider an arbitrary $2$-horn $\Lambda_0[2]\to\sTwist(U)$ as in \cref{figure:2-horn-in-stwist}.

  \begin{figure}[!ht]
    \centering
    \begin{tikzpicture}[scale=4]
      \node (0) at (210:1) {$C_0(0)$};
      \node (1) at (90:1) {$C_1(1)$};
      \node (2) at (330:1) {$C_2(2)$};
      \node (01) at (150:1) {\footnotesize$C_0(01)\xleftarrow{\varphi_{01}(01)} C_1(01)$};
      \node [vertex=white] (12) at (30:1) {};
      \node (02) at (270:1) {\footnotesize$C_0(02)\xleftarrow{\varphi_{02}(02)}C_2(02)$};
      \node [vertex=white] (012) at (0,0) {};
      \draw [-latex] (0) to node[fill=white]{\footnotesize$C_0^{\perp 01}(0)$} (01);
      \draw [-latex] (1) to node[fill=white]{\footnotesize$C_1^{\perp 01}(1)$} (01);
      \draw [dashed,-latex,shorten >=1em] (1) to (12);
      \draw [dashed,-latex,shorten >=1em] (2) to (12);
      \draw [-latex] (0) to node[fill=white]{\footnotesize$C_0^{\perp 02}(0)$} (02);
      \draw [-latex] (2) to node[fill=white]{\footnotesize$C_2^{\perp 02}(2)$} (02);
      \draw [dashed,-latex,shorten <=1em,shorten >=1em] (01) to (012);
      \draw [dashed,-latex,shorten <=1em,shorten >=1em] (12) to (012);
      \draw [dashed,-latex,shorten <=1em,shorten >=1em] (02) to (012);
    \end{tikzpicture}
    \caption{An arbitrary outer $2$-horn in $\sTwist(U)$, which we want to fill. For brevity, we write $ij$ instead of $\{i<j\}$.}
    \label{figure:2-horn-in-stwist}
  \end{figure}

  To fill this horn, we need, in particular, to construct a $2$-simplex in $\core{\dgnerve\Free(U)}$ to label the central vertex $\{0<1<2\}$, and we already have some restrictions on what the vertices of this $2$-simplex must look like in terms of the complexes that must sit inside it with elementary orthogonal complements.
  Labelling the three vertices of this $2$-simplex as $C_0(012)$, $C_1(012)$, and $C_2(012)$, we know that, for example, $C_0(012)$ must be such that both $C_0(01)$ and $C_0(02)$ must sit inside as a direct summand with elementary direct-sum complement.
  But we can write both of these as a direct sum of $C_0(0)$ with something elementary, since by assumption
  \[
    \begin{aligned}
      C_0(01) &\cong C_0(0) \oplus C_0^{\perp01}(0)
    \\C_0(02) &\cong C_0(0) \oplus C_0^{\perp02}(0).
    \end{aligned}
  \]
  This suggests that the ``minimal'' possibility for $C_0(012)$ is
  \[
    C_0(012) \coloneqq C_0(0) \oplus C_0^{\perp01}(0) \oplus C_0^{\perp02}(0)
  \]
  where we simply add both elementary complements, since then we can mediate between $C_0(01)$ and $C_0(02)$ to try to construct the rest of the $2$-simplex in a compatible way.

  Another condition of being a GTT-labelling (\cref{definition:GTT-labelling}) is that whatever quasi-isomorphisms $\varphi_{ij}(012)\colon C_j(012)\simto C_i(012)$ constitute the $2$-simplex that we construct must be compatible extensions of the already given $\varphi_{ij}(ij)$.
  Again, this suggest the ``minimal'' possibility for $\varphi_{ij}(012)$ to be simply given by taking the direct sum of $\varphi_{ij}(ij)$ with the identity on the remaining elementary component.
  Putting this all together, the given $2$-horn in $\sTwist(U)$ shown in \cref{figure:2-horn-in-stwist} induces a $2$-horn in $\core{\dgnerve\Free(U)}$, as shown in \cref{figure:induced-2-horn-in-dg-nerve}.

  \begin{figure}[!ht]
    \centering
    \begin{tikzpicture}[xscale=3.5,yscale=2.5]
      \node (0) at (0,0) {$C_0^{\perp01}(0)\oplus C_0(0)\oplus C_0^{\perp02}(0)$};
      \node (1) at (1,1.2) {$C_1^{\perp01}(1)\oplus C_1(1)\oplus C_0^{\perp02}(0)$};
      \node (2) at (2,0) {$C_0^{\perp01}(0)\oplus C_2(2)\oplus C_2^{\perp02}(2)$};
      \draw[-latex] (1) to node[above left]{\footnotesize$\widetilde{\varphi}_{01}(01)\oplus\id_{C_0^{\perp02}(0)}$} (0);
      \draw[dashed,-latex] (2) to (1);
      \draw[-latex] (2) to node[below]{\footnotesize$\id_{C_0^{\perp01}(0)}\oplus\widetilde{\varphi}_{02}(02)$} (0);
      \draw[dashed,double,double equal sign distance,-implies,shorten <=1em,shorten >=1.5em] (1) to (1,0);
    \end{tikzpicture}
    \caption{The ``minimal'' $2$-horn in $\core{\dgnerve\Free(U)}$ induced by the $2$-horn in $\sTwist(U)$ from \cref{figure:2-horn-in-stwist}. We write $\widetilde{\varphi}_{ij}(ij)$ to denote the conjugation of $\varphi_{ij}(ij)$ by the isomorphisms $\theta_k^{\perp ij}(k)$ between the direct sum $C_k(k)\oplus C_k^{\perp ij}(k)$ and $C_k(ij)$ for $k\in\{i,j\}$, i.e. $\widetilde{\varphi}_{ij}(ij)\coloneqq\theta_j^{\perp ij}(j)^{-1}\varphi_{ij}(ij)\theta_i^{\perp ij}(i)$.} 
    \label{figure:induced-2-horn-in-dg-nerve}
  \end{figure}

  But now we have a $2$-horn in a Kan complex (by definition, since we are labelling by the maximal Kan complex of the dg-nerve), and so we know that it can be filled.
  Here we can actually give a slightly more concrete description of how this works: we can apply the generalised Whitehead theorem to invert the quasi-isomorphism $\widetilde{\varphi}_{01}(01)\oplus\id$ and obtain a chain homotopy witnessing this quasi-inverse; pre-composition with $\id\oplus\widetilde{\varphi}_{02}(02)$ then gives the desired quasi-isomorphism $\varphi_{12}(012)$ along with the chain homotopy $\varphi_{012}(012)$.
  This gives us a $2$-simplex as shown in \cref{figure:filled-induced-2-horn-in-dg-nerve}.
  Note here that, if we were filling an inner horn $\Lambda_1[2]\to\sTwist(U)$ then we would not need to apply this argument, since we could simply compose the two existing quasi-isomorphisms and let $\varphi_{012}(012)$ be the identity chain homotopy.

  \begin{figure}[!ht]
    \centering
    \begin{tikzpicture}[xscale=4,yscale=3]
      \node (0) at (0,0) {$\underbrace{C_0^{\perp01}(0)\oplus C_0(0)\oplus C_0^{\perp02}(0)}_{\eqqcolon\,\, C_0(012)}$};
      \node (1) at (1,1.2) {$\overbrace{C_1^{\perp01}(1)\oplus C_1(1)\oplus C_0^{\perp02}(0)}^{\eqqcolon\,\, C_1(012)}$};
      \node (2) at (2,0) {$\underbrace{C_0^{\perp01}(0)\oplus C_2(2)\oplus C_2^{\perp02}(2)}_{\eqqcolon\,\, C_2(012)}$};
      \draw[-latex] (1) to node[above left]{\footnotesize$\overbrace{\widetilde{\varphi}_{01}(01)\oplus\id}^{\eqqcolon\,\,\varphi_{01}(012)}$} (0);
      \draw[-latex] (2) to node[above right]{\footnotesize$\varphi_{12}(012)$} (1);
      \draw[-latex] (2) to node[below]{\footnotesize$\underbrace{\id\oplus\widetilde{\varphi}_{02}(02)}_{\eqqcolon\,\,\varphi_{02}(012)}$} (0);
      \draw[double,double equal sign distance,-implies,shorten <=1em,shorten >=1.5em] (1) to node[fill=white]{\footnotesize$\varphi_{012}(012)$} (1,0);
    \end{tikzpicture}
    \caption{The filling of the induced $2$-horn from \cref{figure:induced-2-horn-in-dg-nerve} given by applying the generalised Whitehead theorem to invert $\widetilde{\varphi}_{01}(01)\oplus\id$ and then pre-composing with $\id\oplus\widetilde{\varphi}_{02}(02)$ to obtain $\varphi_{12}(012)$ and $\varphi_{012}(012)$.}
    \label{figure:filled-induced-2-horn-in-dg-nerve}
  \end{figure}

  Using this $2$-simplex from \cref{figure:filled-induced-2-horn-in-dg-nerve} to label the vertex $\{0<1<2\}$, all that remains to label is the collection of vertices and edges between $\{1\}$ and $\{2\}$, and then the three $2$-cells.
  But starting from this $2$-simplex we basically have no choices to make in how to label the lower-dimensional components.
  The only exception is $C_1(12)\simfrom C_2(12)$, since we could conceivably set $C_1(12)$ to be any of $C_1(1)$, $C_1(1)\oplus C_1^{\perp01}(1)$, or $C_1(1)\oplus C_1^{\perp01}(1)\oplus C_0^{\perp02}(0)$.
  However, since we want $\varphi_{12}(12)$ to be an \emph{isomorphism} in the case where we restrict to $\Green$ instead of $\sTwist$ (\cref{corollary:filling-2-horns-in-green}), the only option that makes sense is the last one.
  This gives us the complete labelling as shown in \cref{figure:filled-2-horn-in-stwist}.

  \begin{figure}[!ht]
    \centering
    \begin{tikzpicture}[scale=6]
      \node [gray] (0) at (210:1) {$C_0(0)$};
      \node [gray] (1) at (90:1) {$C_1(1)$};
      \node [gray] (2) at (330:1) {$C_2(2)$};
      \node [gray] (01) at (150:1) {\footnotesize$C_0(01)\xleftarrow{\varphi_{01}(01)} C_1(01)$};
      \node (12) at (30:1) {\footnotesize$C_1(012)\xleftarrow{\varphi_{12}(012)}C_2(012)$};
      \node [gray] (02) at (270:1) {\footnotesize$C_0(02)\xleftarrow{\varphi_{02}(02)}C_2(02)$};
      \node (012) at (0,0) {(\cref{figure:filled-induced-2-horn-in-dg-nerve})};
      \draw [gray,-latex] (0) to node[fill=white]{\footnotesize$C_0^{\perp01}(0)$} (01);
      \draw [gray,-latex] (1) to node[fill=white]{\footnotesize$C_1^{\perp01}(1)$} (01);
      \draw [-latex] (1) to node[fill=white]{\footnotesize$C_1^{\perp01}(1)\oplus C_0^{\perp02}(0)$} (12);
      \draw [-latex] (2) to node[fill=white]{\footnotesize$C_0^{\perp01}(0)\oplus C_2^{\perp02}(2)$} (12);
      \draw [gray,-latex] (0) to node[fill=white]{\footnotesize$C_0^{\perp02}(0)$} (02);
      \draw [gray,-latex] (2) to node[fill=white]{\footnotesize$C_2^{\perp02}(2)$} (02);
      \draw [-latex,shorten <=1em,shorten >=1em] (01) to node[fill=white]{\footnotesize$\Big(C_0^{\perp02}(0),C_0^{\perp02}(0)\Big)$} (012);
      \draw [-latex,shorten <=1em,shorten >=1em] (12) to node[fill=white]{\footnotesize$(0,0)$} (012);
      \draw [-latex,shorten <=1em,shorten >=1em] (02) to node[fill=white]{\footnotesize$\Big(C_0^{\perp01}(0),C_0^{\perp01}(0)\Big)$} (012);
      \draw [double,double equal sign distance,-implies,shorten <=1em,shorten >=1em] (0) to node[fill=white]{\footnotesize$C_0^{\perp01}(0)\oplus C_0^{\perp02}(0)$} (012);
      \draw [double,double equal sign distance,-implies,shorten <=1em,shorten >=1em] (1) to node[fill=white]{\footnotesize$C_1^{\perp01}(1)\oplus C_0^{\perp02}(0)$} (012);
      \draw [double,double equal sign distance,-implies,shorten <=1em,shorten >=1em] (2) to node[fill=white]{\footnotesize$C_2^{\perp02}(2)\oplus C_0^{\perp01}(0)$} (012);
    \end{tikzpicture}
    \caption{The filling of the $2$-horn from \cref{figure:2-horn-in-stwist}, where we use the $2$-simplex from \cref{figure:filled-induced-2-horn-in-dg-nerve} to label the central vertex. Note that the $2$-cells are labelled trivially, in that the two paths along their boundary (giving two choices of elementary complement) agree on the nose.}
    \label{figure:filled-2-horn-in-stwist}
  \end{figure}

  Now we need to check that this labelling does indeed satisfy the GTT-labelling conditions of \cref{definition:GTT-labelling} in order for it to be a $2$-simplex in $\sTwist(U)$.

  First of all, the data ``type-checks'': the vertices are labelled with simplices in $\core{\dgnerve\Free(U)}$ of the right degree, and the higher-dimensional cells are labelled with elementary complexes.\footnote{Recall \cref{remark:elementary-complements-in-GTT}, and note how here the elementary complements are not completely arbitrary, but instead built up from the $C_k^{\perp ij}(ij)$.}

  Next we need to check that we do indeed have direct-sum decompositions
  \[
    \theta_{j_m}^{\perp\sigma}(\tau)\colon C_{j_m}(\tau)\oplus C_{j_m}^{\perp\sigma}(\tau) \xrightarrow{\cong} C_{j_m}(\sigma)
  \]
  for all $\tau\subset\sigma$, but this is clear by construction.

  Finally, we need certain diagrams to commute, and there are three cases to check, corresponding to the three subsets $\{0<1\}$, $\{0<2\}$, and $\{1<2\}$ of $\{0<1<2\}$.
  The diagram for $\{0<1\}$ is
  \[
    \begin{tikzcd}[sep=large]
      C_1(01)
        \ar[r,hook]
        \ar[d,swap,"\varphi_{01}(01)"]
      & C_1^{\perp01}(1)\oplus C_1(1)\oplus C_0^{\perp02}(0)
        \ar[r,two heads]
        \ar[d,"\widetilde{\varphi}_{01}(01)\oplus\id_{C_0^{\perp02}(0)}"]
      & C_0^{\perp02}(0)
        \ar[d,"\id_{C_0^{\perp02}(0)}"]
    \\C_0(01)
        \ar[r,hook]
      & C_0^{\perp01}(0)\oplus C_0(0)\oplus C_0^{\perp02}(0)
        \ar[r,two heads]
      & C_0^{\perp02}(0)
    \end{tikzcd}
  \]
  where the $\hookrightarrow$ (resp. $\twoheadrightarrow$) are now not just the inclusion (resp. projection) of the direct sum, but instead the post-composition (resp. pre-composition) with the corresponding $\theta_k^{\perp ij}(k)^{-1}$ (resp. $\theta_k^{\perp ij}(k)$), but this expresses exactly the same condition as the diagram in \cref{definition:GTT-labelling}, since $\theta$ is an isomorphism.
  But this clearly commutes by definition: the left-hand square commutes since $\widetilde{\varphi}_{01}(01)$ is defined (in \cref{figure:induced-2-horn-in-dg-nerve}) exactly as the conjugation of $\varphi_{01}(01)$ by the corresponding $\theta_k^{\perp 01}(k)$, and the right-hand square commutes since the two horizontal arrows are identical.
  The same argument applies for the diagram corresponding to $\{0<2\}$.

  For $\{1<2\}$, the commutativity is even more immediate: the diagram is
  \[
    \begin{tikzcd}[sep=large]
      C_2(12)
        \ar[r,hook]
        \ar[d,swap,"\varphi_{12}(12)"]
      & C_2(012)
        \ar[r,two heads]
        \ar[d,"\varphi_{012}(012)"]
      & 0
        \ar[d]
    \\C_1(12)
        \ar[r,hook]
      & C_1(012)
        \ar[r,two heads]
      & 0
    \end{tikzcd}
  \]
  and this commutes by definition, since the $\hookrightarrow$ are simply identity morphisms, and $\varphi_{12}(12)\coloneqq\varphi_{12}(012)$.
\end{proof}

\begin{corollary}
\label{corollary:filling-2-horns-in-green}
  Any $2$-horn in $\Green(U)$ can be filled, for any $(U,\OO_U)\in\ConnRingSpace$.
\end{corollary}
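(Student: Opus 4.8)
The proof will be a careful re-reading of the proof of \cref{lemma:filling-2-horns-in-stwist}. That proof already constructs, for an arbitrary $2$-horn of $\sTwist(U)$, a filling, and it was written so as to flag the choices that are forced if one wants to stay inside $\Green(U)\subseteq\sTwist(U)$; what remains is to check that when the input horn is a horn of GTT-$1$-labellings (\cref{definition:GTT-1-labelling}), the filling it produces is again a GTT-$1$-labelling. By \cref{remark:GTT-1-labelling-equivalent-definition} and \cref{lemma:nerve-inside-dg-nerve}, this amounts to verifying two things about the filled $2$-simplex: that every degree-$0$ morphism $\varphi_{ij}(\sigma)$ is an \emph{isomorphism} rather than merely a quasi-isomorphism, and that the degree-$(-1)$ homotopies $\varphi_J(\sigma)$ with $|J|\geq 3$ all vanish.

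First I would examine the $2$-simplex of $\core{\dgnerve\Free(U)}$ produced at the central vertex $\{0<1<2\}$ of $\pair{\Delta[2]}$. When the input horn lies in $\Green(U)$, the given morphisms $\varphi_{01}(01)$ and $\varphi_{02}(02)$ (for the outer horn $\Lambda_0[2]$) are isomorphisms, so their conjugates $\widetilde\varphi_{01}(01)$ and $\widetilde\varphi_{02}(02)$ by the trivialisation isomorphisms $\theta$ are isomorphisms, and hence the morphisms $\varphi_{01}(012)=\widetilde\varphi_{01}(01)\oplus\id$ and $\varphi_{02}(012)=\id\oplus\widetilde\varphi_{02}(02)$ of \cref{figure:induced-2-horn-in-dg-nerve} are isomorphisms, being direct sums of isomorphisms with identity maps. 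The generalised-Whitehead step of the proof of \cref{lemma:filling-2-horns-in-stwist} then collapses: the quasi-inverse of $\varphi_{01}(012)$ is a genuine two-sided inverse, so the witnessing chain homotopy — and therefore $\varphi_{012}(012)$ — may be taken to be the zero map, while $\varphi_{12}(012)$ is a composite of isomorphisms, hence an isomorphism. Thus the central vertex is labelled by a $2$-simplex that in fact lies in $\core{\nerve\Free(U)}\subseteq\core{\dgnerve\Free(U)}$. For the inner horn $\Lambda_1[2]$ one simply sets $\varphi_{02}(012)$ to be the relevant composite of the two given isomorphisms and $\varphi_{012}(012)=0$, and the argument for $\Lambda_2[2]$ is identical to that for $\Lambda_0[2]$.

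Then I would propagate this downward exactly as in \cref{lemma:filling-2-horns-in-stwist}: the remaining vertices and edges of $\pair{\Delta[2]}$ between $\{1\}$ and $\{2\}$ are labelled by restriction, making the ``maximal'' choice $C_1(12)\coloneqq C_1(1)\oplus C_1^{\perp01}(1)\oplus C_0^{\perp02}(0)$ that the proof of \cref{lemma:filling-2-horns-in-stwist} already singled out as the only one compatible with $\Green$, so that $\varphi_{12}(12)$ is the restriction of the isomorphism $\varphi_{12}(012)$ and is again an isomorphism. The elementary complements and the isomorphisms $\theta$ are unchanged from \cref{figure:filled-2-horn-in-stwist}, so conditions~(i) and~(ii) of \cref{definition:GTT-labelling} hold verbatim, and since no higher homotopy $\varphi_J$ was ever assigned a non-trivial value, the resulting labelling satisfies \cref{definition:GTT-1-labelling}; hence it is a $2$-simplex of $\Green(U)$ with the prescribed horn. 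The only delicate point — which I would single out as the main (though minor) obstacle — is the observation that when $\varphi_{01}(012)$ is an honest isomorphism of complexes of free modules and not merely a chain homotopy equivalence, the data witnessing invertibility requires no homotopy, so that the spurious degree-$(-1)$ term appearing in the $\sTwist$ filling is genuinely zero here; this is precisely what keeps the filling inside the strict nerve. Everything else transcribes directly from the proof of \cref{lemma:filling-2-horns-in-stwist}.
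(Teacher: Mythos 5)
Your proposal is correct and takes essentially the same route as the paper: reduce via \cref{remark:GTT-1-labelling-equivalent-definition} to checking that the filling of \cref{lemma:filling-2-horns-in-stwist} stays in the strict nerve, observe that $\widetilde{\varphi}_{ij}(ij)\oplus\id$ is an isomorphism when $\varphi_{ij}(ij)$ is, so the generalised Whitehead step is unnecessary, $\varphi_{12}(012)$ is a composite of isomorphisms, and $\varphi_{012}(012)$ may be taken to be zero. Your write-up is just a more detailed transcription of the same argument.
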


\begin{proof}
  By \cref{remark:GTT-1-labelling-equivalent-definition}, we simply need to show that, in the proof of \cref{lemma:filling-2-horns-in-stwist}, if the $\varphi_J(\sigma)$ given in the horn are isomorphisms, then all other $\varphi_{ij}(ij)$ are also isomorphisms, and $\varphi_{012}(012)$ is zero.
  But if $\varphi_{ij}(ij)$ is an isomorphism, then so too is $\widetilde{\varphi}_{ij}(ij)\oplus\id$, and we don't need to apply generalised Whitehead in order to construct $\varphi_{12}(012)$, since it will simply be the isomorphism $(\widetilde{\varphi}_{01}(01)\oplus\id)^{-1}\circ(\id\oplus\widetilde{\varphi}_{02}(02))$, and the triangle in \cref{figure:filled-induced-2-horn-in-dg-nerve} will commute strictly, meaning that we can take $\varphi_{012}(012)$ to be zero.
\end{proof}

These proofs do not immediately generalise to higher dimensions: it is not clear, in the case of filling $\Lambda_0[3]$, for example, what one should choose for $C_0(0123)$.
However, it does seem likely that the only real difficulty is in constructing the $p$-simplex that labels the central vertex of $\pair{\Delta[p]}$, since then one should be able to simply apply topological face maps to label all vertices of lower virtual dimension (i.e. those corresponding to lower-dimensional sub-simplices of $\Delta[p]$), as we do in the proof of \cref{lemma:filling-2-horns-in-stwist}.
A consequence of this is that the argument for filling outer horns should be almost identical to that for filling inner horns, since we are labelling by a Kan complex, and so this ``induced horn'' (as in \cref{figure:induced-2-horn-in-dg-nerve}) can always be filled, whether inner or outer.

One final remark on the proof of \cref{lemma:filling-2-horns-in-stwist} is that the main idea --- adding all ``opposite'' elementary complements: defining $C_0(012)$ to be $C_0(0)\oplus \bigoplus_{i\neq0}C_0^{\perp 0i}(0)$ --- is taken directly from the construction of Green's simplicial resolution (\cite{Gre1980} or \cite{TT1986}).

\subsection{Inclusions}
\label{subsection:inclusions-of-simplicial-presheaves}

\begin{definition}
\label{definition:the-morphism-twist-to-stwist}
  The \emph{inclusion of $\Twist$ into $\sTwist$} is the injective morphism
  \[
    \Twist\hookrightarrow\sTwist
  \]
  given object-wise by labelling the central vertex, using face maps to label the other vertices, and then labelling all edges with zero elementary orthogonal complements.

  In more detail, given a $p$-simplex $\tau$ of $\Twist$, we need to construct a GTT-labelling of $\pair{\Delta[p]}$, since this is exactly a $p$-simplex of $\sTwist$.
  But $\Twist=\core{\dgnerve\Free(-)}$ is exactly the thing that we label by in a GTT-labelling.
  So we can simply label the central vertex $([p],[p])$ of $\pair{\Delta[p]}$ with $\tau$, use the face maps of $\Delta[p]$ to label all other vertices, and finally label all edges with zero elementary orthogonal complements (so that the inclusions into the direct sums are exactly identity maps).
  By construction, all the conditions of \cref{definition:GTT-labelling} are then trivially satisfied.

  For clarity, we spell this out explicitly in dimensions $0$, $1$, and $2$.
  \begin{itemize}
    \item A $0$-simplex of $\Twist(U)$ is a complex $A$ of $\OO_U$-modules, which gives us a labelling of \mbox{$\pair{\Delta[0]}=\{*\}$} by simply labelling the single vertex with this complex.
    \item A $1$-simplex of $\Twist(U)$ is a quasi-isomorphism $A\simfrom B$ of complexes.
      We label the vertex of $\pair{\Delta[1]}$ corresponding to $\{0<1\}$ with this element $\sigma$, label $\{0\}$ with $A$, and $\{1\}$ with $B$.
      Both edges $\{0\}\subset\{0<1\}$ and $\{1\}\subset\{0<1\}$ are labelled with $0$ (so the inclusion into direct-sum decomposition is given by the corresponding identity map), giving us
      \[
        \begin{tikzpicture}[xscale=2.5]
          \node (0) at (0,0) {$A$};
          \node (01) at (1,0) {$(A\simfrom B)$};
          \node (1) at (2,0) {$B$};
          \draw[-latex] (0) to node[fill=white]{\footnotesize$0$} (01);
          \draw[-latex] (1) to node[fill=white]{\footnotesize$0$} (01);
          \node[vertex] at (0,-0.5) {};
          \node[vertex] at (1,-0.5) {};
          \node[vertex] at (2,-0.5) {};
          \draw[thick] (0,-0.5) to (2,-0.5);
        \end{tikzpicture}
      \]
    \item A $2$-simplex $\tau$ of $\Twist(U)$ consists of three quasi-isomorphisms and a chain homotopy, of the form
      \[
        \begin{tikzpicture}
          \node (A) at (0,0) {$A$};
          \node (B) at (1,1.2) {$B$};
          \node (C) at (2,0) {$C$};
          \draw[-latex] (B) to node[sloped,above]{$\sim$} (A);
          \draw[-latex] (C) to node[sloped,above]{$\sim$} (B);
          \draw[-latex] (C) to node[sloped,below]{$\sim$} (A);
          \draw[double,double equal sign distance,-implies,shorten >=5pt,shorten <=2pt] (B) to (1,0);
        \end{tikzpicture}
      \]
      We label the vertex of $\pair{\Delta[2]}$ corresponding to $\{0<1<2\}$ with $\tau$, and then label the vertex corresponding to $\{i<j\}$ with the face $\{i<j\}$ of $\tau$, and finally the vertex corresponding to $\{i\}$ with the vertex $\{i\}$ of $\tau$, giving us
      \[
        \begin{tikzpicture}[scale=3.5]
          \node (0) at (210:1) {$A$};
          \node (1) at (90:1) {$B$};
          \node (2) at (330:1) {$C$};
          \node (01) at (150:1) {$A\simfrom B$};
          \node (12) at (30:1) {$B\simfrom C$};
          \node (02) at (270:1) {$A\simfrom C$};
          \node (012) at (0,0) {};
          \draw[thick,-latex] (0) to node[fill=white]{\footnotesize$0$} (01);
          \draw[thick,-latex] (1) to node[fill=white]{\footnotesize$0$} (01);
          \draw[thick,-latex] (1) to node[fill=white]{\footnotesize$0$} (12);
          \draw[thick,-latex] (2) to node[fill=white]{\footnotesize$0$} (12);
          \draw[thick,-latex] (0) to node[fill=white]{\footnotesize$0$} (02);
          \draw[thick,-latex] (2) to node[fill=white]{\footnotesize$0$} (02);
          \draw[thick,-latex,shorten >=18pt] (01) to node[fill=white,pos=0.35]{\footnotesize$(0,0)$} (012);
          \draw[thick,-latex,shorten >=18pt] (12) to node[fill=white,pos=0.35]{\footnotesize$(0,0)$} (012);
          \draw[thick,-latex,shorten >=22pt] (02) to node[fill=white,pos=0.4]{\footnotesize$(0,0)$} (012);
          \draw[double,double equal sign distance,-implies,shorten <=1em,shorten >=4em] (0) to node[pos=0.3,fill=white]{\footnotesize$0$} (012);
          \draw[double,double equal sign distance,-implies,shorten <=1em,shorten >=3em] (1) to node[pos=0.4,fill=white]{\footnotesize$0$} (012);
          \draw[double,double equal sign distance,-implies,shorten <=1em,shorten >=4em] (2) to node[pos=0.3,fill=white]{\footnotesize$0$} (012);
          \begin{scope}[scale=0.27,shift={(-1,-0.6)}]
            \node (A) at (0,0) {$A$};
            \node (B) at (1,1.2) {$B$};
            \node (C) at (2,0) {$C$};
            \draw[-latex] (B) to node[sloped,above]{$\sim$} (A);
            \draw[-latex] (C) to node[sloped,above]{$\sim$} (B);
            \draw[-latex] (C) to node[sloped,below]{$\sim$} (A);
            \draw[double,double equal sign distance,-implies,shorten >=5pt,shorten <=2pt] (B) to (1,0);
          \end{scope}
        \end{tikzpicture}
      \]
  \end{itemize}

  The fact that this defines a morphism follows from the fact that this construction commutes with the face and degeneracy maps in $\Twist$ and $\sTwist$.
\end{definition}

\begin{definition}
\label{definition:the-morphism-green-to-stwist}
  The \emph{inclusion of $\Green$ into $\sTwist$} is the injective morphism
  \[
    \Green\hookrightarrow\Twist
  \]
  induced by the inclusion of the nerve into the dg-nerve.
  In other words, any GTT-$1$-labelling of $\Delta[p]$ by $\core{\nerve\Free(U)}$ is a specific example of a GTT-labelling of $\Delta[p]$ by $\core{\dgnerve\Free(U)}$ where the $\varphi_J$ are zero for $|J|\geq3$, by definition
\end{definition}

\subsection{Equivalences}
\label{subsection:equivalences}

For presheaves of Kan complexes, \cref{lemma:tot-of-we-of-kan-is-we} tells us that weak equivalences are preserved by Čech totalisation.
Just as it is easier to work with simplicial homotopy groups instead of the homotopy groups of the geometric realisation, it can be easier to work with globally fibrant simplicial presheaves directly instead of their Čech totalisations.
If we already know a concrete description of $\Tot\scr{F}(\cechnerve\cover_\bullet)$ and $\Tot\scr{G}(\cechnerve\cover_\bullet)$ (as we do for $\Twist$, $\Green$, and $\sTwist$ on any complex-analytic $(X,\cover)$, thanks to \cref{theorem:tot0-all-three}), then to show they are equivalent as spaces it suffices to show the stronger statement that $\scr{F}\simeq\scr{G}$.

Since we are unable to prove whether or not $\Green$ and $\sTwist$ are indeed presheaves of Kan complexes, we cannot even work with their simplicial $\pi_n$ for $n\geq1$, let alone hope to see the existence of, or obstruction to, an equivalence between $\Tot\Green(\cechnerve\cover_\bullet)$ and $\Tot\sTwist(\cechnerve\cover_\bullet)$.
However, since we know that $2$-horns fill (\cref{lemma:filling-2-horns-in-stwist} and \cref{corollary:filling-2-horns-in-green}), their simplicial $\pi_0$ are indeed well defined and calculate the topological $\pi_0$ (\cref{lemma:2-horns-fill-implies-simplicial-pi0-equals-topological-pi0}).
Using this, we can show that $\pi_0\Twist$, $\pi_0\Green$, and $\pi_0\sTwist$ are all isomorphic.

\begin{theorem}
\label{theorem:pi0-of-twist-and-pi0-of-stwist}
  The inclusion $i\colon\Twist\hookrightarrow\sTwist$ induces an isomorphism
  \[
    i_0\colon\pi_0\Twist(U) \cong \pi_0\sTwist(U)
  \]
  for any $(U,\OO_U)\in\ConnRingSpace$.
\end{theorem}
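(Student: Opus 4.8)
The plan is to show that the inclusion $i\colon\Twist\hookrightarrow\sTwist$ is surjective on $\pi_0$ and injective on $\pi_0$, working entirely with simplicial $\pi_0$ (which is legitimate since $\Twist$ is a presheaf of Kan complexes, and $2$-horns fill in $\sTwist$ by \cref{lemma:filling-2-horns-in-stwist}, so \cref{lemma:2-horns-fill-implies-simplicial-pi0-equals-topological-pi0} applies). Recall that a $0$-simplex of $\sTwist(U)$ is a GTT-labelling of $\Delta[0]$, which is just a complex $C$ of free $\OO_U$-modules together with its differential, so in dimension $0$ there is actually no difference between $\Twist(U)_0$, $\sTwist(U)_0$, and $\core{\nerve\Free(U)}_0$: they are all just $\operatorname{Ob}\Free(U)$. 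Thus $i$ is already a \emph{bijection} on $0$-simplices, and surjectivity of $i_0$ on $\pi_0$ is immediate.

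The real content is injectivity: I must show that if two complexes $A,B\in\Free(U)$ become equivalent in $\pi_0\sTwist(U)$, then they are already equivalent in $\pi_0\Twist(U)$. So suppose there is a $1$-simplex of $\sTwist(U)$ with endpoints $A$ and $B$, i.e.\ a GTT-labelling of $\Delta[1]$. By \cref{definition:GTT-labelling}, such a labelling consists of: a $1$-simplex $\varphi_{\{0<1\}}(\{0<1\})$ of $\core{\dgnerve\Free(U)}$ labelling the central vertex of $\pair{\Delta[1]}$ — i.e.\ a quasi-isomorphism $C_1(\{0<1\})\simto C_0(\{0<1\})$ — together with elementary complements $A^{\perp}$ and $B^{\perp}$ labelling the two edges, and isomorphisms $\theta_0\colon A\oplus A^{\perp}\xrightarrow{\cong}C_0(\{0<1\})$ and $\theta_1\colon B\oplus B^{\perp}\xrightarrow{\cong}C_1(\{0<1\})$ (here I'm using $C_0(\{0\})=A$, $C_1(\{1\})=B$). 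The strategy is to produce from this data a zig-zag of $1$-simplices of $\Twist(U)$ connecting $A$ and $B$: namely, by \cref{lemma:adding-elementary-is-quasi-iso}, the inclusions $A\hookrightarrow A\oplus A^{\perp}$ and $B\hookrightarrow B\oplus B^{\perp}$ are quasi-isomorphisms (since $A^{\perp},B^{\perp}$ are elementary, hence acyclic and in fact contractible), and these, composed with the isomorphisms $\theta_0,\theta_1$, give quasi-isomorphisms $A\simto C_0(\{0<1\})$ and $B\simto C_1(\{0<1\})$ in $\core{\dgnerve\Free(U)}$, i.e.\ $1$-simplices of $\Twist(U)$. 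Together with the $1$-simplex $\varphi_{\{0<1\}}(\{0<1\})\colon C_1(\{0<1\})\simto C_0(\{0<1\})$ of $\Twist(U)$, this is a zig-zag $A \to C_0(\{0<1\}) \leftarrow C_1(\{0<1\}) \leftarrow B$ of three $1$-simplices in $\Twist(U)$. Since $\Twist(U)$ is a Kan complex, any such zig-zag can be collapsed to a single $1$-simplex, so $[A]=[B]$ in $\pi_0\Twist(U)$.

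To finish, I need the reverse implication for well-definedness of the claim that $i_0$ is injective, but that is trivial: $i$ is a morphism of simplicial sets, so it sends a $1$-simplex of $\Twist(U)$ connecting $A,B$ to a $1$-simplex of $\sTwist(U)$ connecting them, hence $[A]=[B]$ in $\pi_0\Twist(U)$ implies $[A]=[B]$ in $\pi_0\sTwist(U)$. Combining, $i_0$ is a bijection. The main obstacle — and it is a mild one — is bookkeeping: verifying carefully that the composites $\theta_0\circ(A\hookrightarrow A\oplus A^{\perp})$ really are chain maps and quasi-isomorphisms (so that they are genuine $1$-simplices of $\core{\dgnerve\Free(U)}$ and not merely degree-$0$ morphisms), which follows because $\theta_0$ is an isomorphism of complexes and the inclusion of a direct summand is a chain map by \cref{definition:GTT-labelling}(i) (the direct-sum decomposition is ``strict'', as emphasised in \cref{remark:comments-on-GTT-definition}), combined with \cref{lemma:adding-elementary-is-quasi-iso}. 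One should also note that the zig-zag argument only needs the Kan condition on $\Twist(U)$ for the final collapse, and that no claim is made about $\pi_n$ for $n\geq1$, consistent with the scope disclaimer in the surrounding text.
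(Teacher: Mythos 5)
Your proof is correct and follows essentially the same route as the paper: surjectivity is immediate from $\Twist(U)_0=\sTwist(U)_0$, and injectivity comes from observing (via \cref{lemma:adding-elementary-is-quasi-iso}) that the inclusions $A\hookrightarrow A\oplus A^\perp$ and $B\hookrightarrow B\oplus B^\perp$ are quasi-isomorphisms, so the data of a $1$-simplex in $\sTwist(U)$ yields a chain of quasi-isomorphisms connecting $A$ and $B$ in $\Twist(U)$. The only cosmetic difference is at the last step: the paper composes directly with the projections (the explicit quasi-inverses from \cref{lemma:adding-elementary-is-quasi-iso}) to produce a single quasi-isomorphism $A\simfrom B$, whereas you collapse the three-term zig-zag using the Kan property of $\Twist(U)$ — both are valid.
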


\begin{proof}
  Since $\pi_0$ is merely a set, we simply need to show that $i_0$ is a bijection.
  Firstly, note it is surjective, since $\Twist(U)_0=\sTwist(U)_0$.
  To prove that it is injective, we need to show that, if (the image under $i$ of) two $0$-simplices of $\Twist(U)$ are connected by some $1$-simplex in $\sTwist(U)$, then they are already connected by some $1$-simplex in $\Twist(U)$.
  So let $A,B\in\Twist(U)_0$ be such that there exists some $1$-simplex
  \[
    \begin{tikzpicture}[xscale=2.5]
      \node (0) at (0,0) {$A$};
      \node (01) at (1,0) {$(A'\simfrom B')$};
      \node (1) at (2,0) {$B$};
      \draw[-latex] (0) to (01);
      \draw[-latex] (1) to (01);
      \node[vertex] at (0,-0.5) {};
      \node[vertex] at (1,-0.5) {};
      \node[vertex] at (2,-0.5) {};
      \draw[thick] (0,-0.5) to (2,-0.5);
    \end{tikzpicture}
  \]
  in $\sTwist(U)$ (where we omit the elementary orthogonal complements labelling the $1$-simplices since they do not play a role in this proof).
  Note that, by definition, the morphisms $A\to A'$ and $B\to B'$ are not just quasi-isomorphisms, but also have quasi-inverses (\cref{lemma:adding-elementary-is-quasi-iso}).\footnote{We could also appeal to the generalised Whitehead theorem in order to invert the quasi-isomorphism $A\simto A'$, but we do not need such heavy machinery here.}
  Finding some $1$-simplex in $\Twist(U)$ connecting $A$ and $B$ just means, by definition, finding some quasi-isomorphism $A\simfrom B$, and so we can simply take the composite $A\simfrom A'\simfrom B'\simfrom B$.
\end{proof}

\begin{remark}
\label{remark:green-lets-you-turn-quasi-isos-into-isos}
  One particularly useful special case of Green's construction is given by looking at the Čech-degree-$1$ part (\cite[(8.3.6)]{Hos2020} or \cite[pp.~23--25]{Gre1980}).
  This shows that any quasi-isomorphism of bounded complexes of free modules can be ``strictified'' to an isomorphism in the following sense: if $A\simfrom_f B$ is a quasi-isomorphism of bounded complexes of free modules, then there exists complexes $\widetilde{A}$ and $\widetilde{B}$ such that
  \begin{enumerate}[(i)]
    \item $\widetilde{A}=A\oplus E_A$, where $E_A$ is a (bounded) $B$-elementary complex;
    \item $\widetilde{B}=B\oplus E_B$, where $E_B$ is a (bounded) $A$-elementary complex;
    \item $\widetilde{A}\xleftarrow{\widetilde{f}}\widetilde{B}$ is an isomorphism;
    \item the restriction of $\widetilde{f}$ to $B$ is exactly $A\simfrom_f B$.
  \end{enumerate}
  Note that (i) and (ii) imply, in particular, that $A\simeq\widetilde{A}$ and $B\simeq\widetilde{B}$ are chain homotopy equivalent.

  In fact, Green proves something stronger, even in Čech-degree~$1$.
  He shows that, if we further have coherent homotopies $p_i$ and $q_i$ for $i\geq1$ (so, for example, $p_1$ and $q_1$ are the homotopies witnessing that $f$ has a quasi-inverse, say $g$; $p_2$ and $q_2$ witness the failure of $p_1$ and $q_1$ to commute with $f$ and $g$; etc.), then this construction can still be applied, resulting in a strict isomorphism, with all higher homotopies $p_i$ and $q_i$ being strictly zero.
\end{remark}

\begin{theorem}
\label{theorem:pi0-of-green-and-pi0-of-stwist}
  The inclusion $i\colon\Green\hookrightarrow\sTwist$ induces an isomorphism
  \[
    i_0\colon\pi_0\Green(U) \cong \pi_0\sTwist(U)
  \]
  for any $(U,\OO_U)\in\ConnRingSpace$.
\end{theorem}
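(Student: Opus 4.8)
The plan is to mimic the proof of \cref{theorem:pi0-of-twist-and-pi0-of-stwist}, using the composite inclusion $\Green\hookrightarrow\Twist\hookrightarrow\sTwist$ together with \cref{theorem:pi0-of-twist-and-pi0-of-stwist} to reduce to comparing $\pi_0\Green$ with $\pi_0\Twist$. First I would note that surjectivity of $i_0$ is immediate for the same reason as before: a $0$-simplex of $\sTwist(U)$ is a labelling of $\pair{\Delta[0]}=\{*\}$ by a single complex, and the conditions on higher cells are vacuous, so $\sTwist(U)_0=\Twist(U)_0=\Green(U)_0=\Free(U)$ and every $0$-simplex of $\sTwist$ is literally a $0$-simplex of $\Green$. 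Hence the only content is injectivity: given $A,B\in\Green(U)_0$ connected by a $1$-simplex in $\sTwist(U)$, I must produce a $1$-simplex in $\Green(U)$ between them, i.e. (recalling \cref{remark:GTT-1-labelling-equivalent-definition}) an \emph{isomorphism} $A\from B$ of complexes — not merely a quasi-isomorphism.

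This is exactly where the proof diverges from \cref{theorem:pi0-of-twist-and-pi0-of-stwist}, and where the main obstacle lies: a $1$-simplex in $\Green(U)$ demands a genuine isomorphism, whereas two resolutions $A$ and $B$ of the same coherent sheaf need only be chain homotopy equivalent, never literally isomorphic. The key input to bridge this gap is \cref{remark:green-lets-you-turn-quasi-isos-into-isos}: Green's Čech-degree-$1$ strictification says that any quasi-isomorphism $A\from_f B$ of bounded complexes of free modules can be extended, after adding elementary complexes $E_A$ (which is $B$-elementary) and $E_B$ (which is $A$-elementary), to an \emph{isomorphism} $\widetilde{A}=A\oplus E_A \xleftarrow{\widetilde{f}} B\oplus E_B=\widetilde{B}$ restricting to $f$ on $B$. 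So the strategy is: extract from the given $1$-simplex in $\sTwist(U)$ a quasi-isomorphism $A\from B$ (the degree-$0$ component already supplies complexes $A',B'$ with $A'\cong A\oplus\scr{L}_A$, $B'\cong B\oplus\scr{L}_B$ both elementary, and an isomorphism $A'\cong B'$, whence composing with the quasi-isomorphisms $A\hookrightarrow A'$ and $B\hookrightarrow B'$ from \cref{lemma:adding-elementary-is-quasi-iso} gives a quasi-isomorphism $A\from B$); apply Green's strictification to get an isomorphism $\widetilde{A}\from\widetilde{B}$; then exhibit a GTT-$1$-labelling of $\Delta[1]$ realising this, with vertex $\{0\}$ labelled $A$, vertex $\{1\}$ labelled $B$, the central vertex $\{0<1\}$ labelled by the isomorphism $\widetilde{A}\from\widetilde{B}$, and the two edges $\{0\}\subset\{0<1\}$, $\{1\}\subset\{0<1\}$ labelled by the elementary complements $E_A$, $E_B$ respectively (with the $\theta$-trivialisations being the canonical direct-sum inclusions).

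The remaining work is to check that this labelling genuinely satisfies the conditions of \cref{definition:GTT-labelling} in the GTT-$1$ case: condition~(i) is the direct-sum decomposition $A\oplus E_A\xrightarrow{\cong}\widetilde{A}$ (and likewise for $B$), which holds by construction since $\widetilde{A}$ is defined as that direct sum; condition~(ii) for the unique $K=\{0<1\}$ of size $2$ is the requirement that the block form of $\widetilde{f}$ with respect to these decompositions be upper triangular with the prescribed elementary morphism $E_B\dashrightarrow E_A$ in the bottom-right corner and $f$ in the top-left — and this is precisely property~(iv) of \cref{remark:green-lets-you-turn-quasi-isos-into-isos} (that $\widetilde{f}$ restricts to $f$ on $B$), together with the fact that $\widetilde{f}$ respects the elementary summands. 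I expect this verification to be essentially bookkeeping once the strictification is in hand; the genuine obstacle is recognising that Green's construction is the right tool and checking that the elementary complements $E_A,E_B$ produced by it have the compatibility needed to land in a legitimate GTT-$1$-labelling (in particular, that the ``$*$'' entry in the block matrix of $\varphi_K(\sigma)$ is allowed to be arbitrary, as noted in \cref{remark:comments-on-GTT-definition}, so no extra constraint is violated). This gives a $1$-simplex of $\Green(U)$ from $A$ to $B$, proving injectivity and hence the theorem.
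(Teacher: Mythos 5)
Your proposal is correct and takes essentially the same route as the paper: surjectivity is immediate from $\Green(U)_0=\sTwist(U)_0$, and injectivity is obtained by using Green's Čech-degree-$1$ strictification (\cref{remark:green-lets-you-turn-quasi-isos-into-isos}) to upgrade the quasi-isomorphism supplied by the $\sTwist$-edge to an isomorphism with elementary complements, which then labels the central vertex of a $1$-simplex of $\Green(U)$. The only (harmless) differences are that you first compose down to a quasi-isomorphism $A\simfrom B$ before strictifying whereas the paper strictifies $A'\simfrom B'$ directly, and that your final verification of condition~(ii) of \cref{definition:GTT-labelling} is actually vacuous for $1$-simplices (there is no $K\subseteq\tau$ with $|K|\geq2$ when $\tau$ is a single vertex), so only the direct-sum decompositions and elementarity of the complements need checking.
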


\begin{proof}
  Since $\pi_0$ is merely a set, we simply need to show that $i_0$ is a bijection.
  Firstly, note that it is surjective, since $\Green(U)_0=\sTwist(U)_0$.
  To prove that it is injective, we need to show that, if (the image under $i$ of) two $0$-simplices of $\Green(U)$ are connected by some $1$-simplex in $\sTwist(U)$, then they are already connected by some $1$-simplex in $\Green(U)$.
  So let $A,B\in\Green(U)_0$ be such that there exists some $1$-simplex
  \[
    \begin{tikzpicture}[xscale=2.5]
      \node (0) at (0,0) {$A$};
      \node (01) at (1,0) {$(A'\simfrom B')$};
      \node (1) at (2,0) {$B$};
      \draw[-latex] (0) to node[fill=white]{\footnotesize$A^\perp$} (01);
      \draw[-latex] (1) to node[fill=white]{\footnotesize$B^\perp$} (01);
      \node[vertex] at (0,-0.5) {};
      \node[vertex] at (1,-0.5) {};
      \node[vertex] at (2,-0.5) {};
      \draw[thick] (0,-0.5) to (2,-0.5);
    \end{tikzpicture}
  \]
  in $\sTwist(U)$.

  Since $A'$ and $B'$ are bounded complexes of free modules, the generalised Whitehead theorem tells us that the quasi-isomorphism $A'\simfrom B'$ is actually a chain homotopy equivalence, with chain homotopy inverse.\footnote{To show this, we appeal to the classical abstract argument: we endow $\Free(U)$ with the projective model structure; since the objects of $\Free(U)$ are \emph{bounded} complexes, they are fibrant; since they are complexes of \emph{free} modules, they are also cofibrant; by the generalised Whitehead theorem \cite[\S II,~Theorem~1.10]{GJ2009}, every quasi-isomorphism in $\Free(U)$ is a homotopy equivalence, and thus has a homotopy inverse.}
  But this then puts us in the setting of \cref{remark:green-lets-you-turn-quasi-isos-into-isos}, and so obtain an isomorphism
  \[
    \widetilde{A} \coloneqq A\oplus\widetilde{A}^\perp
    \cong B\oplus\widetilde{B}^\perp \eqqcolon \widetilde{B}
  \]
  where
  \[
    \begin{aligned}
      \widetilde{A}^\perp
      &\coloneqq
      A^\perp \oplus
      \left(
        \bigoplus_{(B')^i\neq0} (B\oplus B^\perp)^i\xrightarrow{\id}(B\oplus B^\perp)^i
      \right)
    \\\widetilde{B}^\perp
      &\coloneqq
      B^\perp \oplus
      \left(
        \bigoplus_{(A')^i\neq0} (A\oplus A^\perp)^i\xrightarrow{\id}(A\oplus A^\perp)^i
      \right)
    \end{aligned}
  \]
  and where the resulting isomorphism $\widetilde{B}\cong_f\widetilde{A}$ is such that its restriction $B'\to A'$ is exactly the initial quasi-isomorphism, but it is non-trivially modified in its other three components.
  By definition, $A^\perp$ is elementary in $A'$, and so $\widetilde{A}^\perp$ is elementary in $A'$ and $B'$, since we simply take the direct sum of $A^\perp$ with $B'$-elementary complexes.
  So there are no further conditions to check: we have constructed a $1$-simplex
  \[
    \begin{tikzpicture}[xscale=2.5]
      \node (0) at (0,0) {$A$};
      \node (01) at (1,0) {$(\widetilde{A}\cong\widetilde{B})$};
      \node (1) at (2,0) {$B$};
      \draw[-latex] (0) to node[fill=white]{\footnotesize$\widetilde{A}^\perp$} (01);
      \draw[-latex] (1) to node[fill=white]{\footnotesize$\widetilde{B}^\perp$} (01);
      \node[vertex] at (0,-0.5) {};
      \node[vertex] at (1,-0.5) {};
      \node[vertex] at (2,-0.5) {};
      \draw[thick] (0,-0.5) to (2,-0.5);
    \end{tikzpicture}
  \]
  in $\Green(U)$.
\end{proof}

\begin{corollary}
\label{corollary:pi0-all-three-equivalent}
  Let $(X,\OO_X)\in\ConnRingSpace$ with cover $\cover$.
  Then
  \[
    \pi_0\Tot\Twist(\cechnerve\cover_\bullet) \cong \pi_0\Tot\Green(\cechnerve\cover_\bullet) \cong \pi_0\Tot\sTwist(\cechnerve\cover_\bullet).
  \]
\end{corollary}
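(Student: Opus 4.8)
The plan is to deduce \cref{corollary:pi0-all-three-equivalent} from the pointwise statements \cref{theorem:pi0-of-twist-and-pi0-of-stwist} and \cref{theorem:pi0-of-green-and-pi0-of-stwist}, together with the fact that $\Twist$ is a presheaf of Kan complexes and that $2$-horns fill in $\Green$ and $\sTwist$. The only genuinely new ingredient is that the $\pi_0$-isomorphisms at the level of presheaves survive Čech totalisation; everything else is assembling results already proved.

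First I would note that \cref{theorem:pi0-of-twist-and-pi0-of-stwist} and \cref{theorem:pi0-of-green-and-pi0-of-stwist} give, for every $(U,\OO_U)\in\ConnRingSpace$, bijections $\pi_0\Twist(U)\cong\pi_0\sTwist(U)$ and $\pi_0\Green(U)\cong\pi_0\sTwist(U)$, induced by the inclusions $i\colon\Twist\hookrightarrow\sTwist$ and $j\colon\Green\hookrightarrow\sTwist$ of \cref{definition:the-morphism-twist-to-stwist} and \cref{definition:the-morphism-green-to-stwist}. Since $\Twist$ is a presheaf of Kan complexes (\cref{remark:twist-is-kan}), and since all $2$-horns in $\Green(U)$ and $\sTwist(U)$ fill (\cref{corollary:filling-2-horns-in-green} and \cref{lemma:filling-2-horns-in-stwist}), \cref{lemma:2-horns-fill-implies-simplicial-pi0-equals-topological-pi0} tells us that in all three cases the simplicial $\pi_0$ agrees with the topological $\pi_0$; in particular $i$ and $j$ are (global) weak equivalences on $\pi_0$, i.e.\ they induce isomorphisms $\pi_0|\scr{F}(U)|\cong\pi_0|\sTwist(U)|$ for $\scr{F}\in\{\Twist,\Green\}$ and all $U$.

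Next I would pass to Čech totalisation. The cleanest route is via \cref{lemma:kan-on-cech-is-reedy-fibrant}: since $\Twist$ is a presheaf of Kan complexes, $\Twist(\cechnerve\cover_\bullet)$ is a Reedy fibrant cosimplicial simplicial set, and by \cref{lemma:tot-of-cech-of-kan-is-kan} its totalisation is a Kan complex. For $\Green$ and $\sTwist$ we do not know that they are presheaves of Kan complexes, so $\Tot$ need not compute a homotopy limit on the nose; however, what we need is only a statement about $\pi_0$, and here the equaliser description of $\Tot$ from \cref{subsection:totalisation-and-holim} suffices. A $0$-simplex of $\Tot\scr{F}(\cechnerve\cover_\bullet)$ is a compatible family $(y^p)_{p\geq0}$ with $y^p\in\scr{F}(\cechnerve\cover_p)_p$, and a $1$-simplex joining two such families is a compatible family of maps $\Delta[1]\times\Delta[p]\to\scr{F}(\cechnerve\cover_p)$. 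The inclusions $i$ and $j$, being morphisms of simplicial presheaves, induce maps on these totalisations compatible with face and degeneracy maps; I would show directly that the induced maps $\pi_0\Tot\Twist(\cechnerve\cover_\bullet)\to\pi_0\Tot\sTwist(\cechnerve\cover_\bullet)$ and $\pi_0\Tot\Green(\cechnerve\cover_\bullet)\to\pi_0\Tot\sTwist(\cechnerve\cover_\bullet)$ are bijections. Surjectivity is immediate since $\Twist(\cechnerve\cover_p)_0=\Green(\cechnerve\cover_p)_0=\sTwist(\cechnerve\cover_p)_0$ for all $p$, so every $0$-simplex of $\Tot\sTwist(\cechnerve\cover_\bullet)$ lies in the image; for injectivity one takes a $1$-simplex in $\Tot\sTwist(\cechnerve\cover_\bullet)$ connecting two totalised points coming from $\Twist$ (resp.\ $\Green$) and applies the pointwise arguments of \cref{theorem:pi0-of-twist-and-pi0-of-stwist} (resp.\ \cref{theorem:pi0-of-green-and-pi0-of-stwist}) level by level in the Čech degree, using that composing quasi-isomorphisms (resp.\ applying Green's strictification of \cref{remark:green-lets-you-turn-quasi-isos-into-isos}) is compatible with the restriction coface maps of the Čech nerve. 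Chaining the two resulting bijections through $\pi_0\Tot\sTwist(\cechnerve\cover_\bullet)$ yields the claimed chain of isomorphisms.

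The main obstacle is the level-by-level injectivity argument: one must check that the pointwise constructions of \cref{theorem:pi0-of-twist-and-pi0-of-stwist} and \cref{theorem:pi0-of-green-and-pi0-of-stwist} — taking a composite $A\simfrom A'\simfrom B'\simfrom B$ of quasi-isomorphisms, or applying Green's strictification of a quasi-isomorphism to an isomorphism after adding elementary complements — can be performed \emph{coherently across all Čech degrees simultaneously}, so that the resulting family of $1$-simplices really does assemble into a $1$-simplex of the totalisation (i.e.\ is compatible with the coface maps $\scr{F}(\cechnerve\cover_p)\to\scr{F}(\cechnerve\cover_{p+1})$). Because those coface maps are restriction along $U_{\alpha_0\ldots\alpha_{p+1}}\hookrightarrow U_{\alpha_0\ldots\widehat{\alpha_i}\ldots\alpha_p}$, and the constructions in question are built from direct sums and compositions — both of which commute with restriction — this compatibility does hold, but spelling it out carefully (keeping track of the elementary complements in the $\Green$ case, cf.\ \cref{figure:simplicial-structure-of-green-degeneracy}) is where the real work lies. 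An alternative, more abstract route would be to prove that $\Green$ and $\sTwist$ are in fact presheaves of Kan complexes (\cref{conjecture:green-is-globally-fibrant}) and then invoke \cref{lemma:tot-of-we-of-kan-is-we} directly; but since that is left as a conjecture, the hands-on equaliser argument above is the one I would carry out.
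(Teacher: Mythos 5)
Your overall strategy is reasonable in spirit, and you are right to be uneasy about the fibrancy hypotheses; but the paper's own proof is simply the one-line application of \cref{lemma:tot-of-we-of-kan-is-we} to \cref{theorem:pi0-of-twist-and-pi0-of-stwist} and \cref{theorem:pi0-of-green-and-pi0-of-stwist}, whereas your hands-on replacement contains a genuine error at its first step. You claim that surjectivity of $\pi_0\Tot\Twist(\cechnerve\cover_\bullet)\to\pi_0\Tot\sTwist(\cechnerve\cover_\bullet)$ is ``immediate since $\Twist(\cechnerve\cover_p)_0=\sTwist(\cechnerve\cover_p)_0$ for all $p$, so every $0$-simplex of $\Tot\sTwist(\cechnerve\cover_\bullet)$ lies in the image.'' This conflates the $0$-simplices of the simplicial sets $\scr{F}(\cechnerve\cover_p)$ with the $0$-simplices of the totalisation. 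By the equaliser description you yourself quote, a $0$-simplex of $\Tot\scr{F}(\cechnerve\cover_\bullet)$ is a compatible family $(y^p)$ with $y^p\in\scr{F}(\cechnerve\cover_p)_p$ --- a \emph{$p$-simplex} of $\scr{F}(\cechnerve\cover_p)$ in cosimplicial degree $p$ --- and for $p\geq1$ the sets $\Twist(\cechnerve\cover_p)_p$ and $\sTwist(\cechnerve\cover_p)_p$ are very different: the latter carries all the data of a GTT-labelling of $\Delta[p]$, including nonzero elementary complements. Indeed \cref{theorem:tot0-all-three} identifies $\Tot^0\Twist(\cechnerve\cover_\bullet)$ with twisting cochains and $\Tot^0\sTwist(\cechnerve\cover_\bullet)$ with simplicial twisting cochains, and the induced map is the inclusion of those simplicial twisting cochains with all complements zero --- far from surjective on $0$-simplices. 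Surjectivity on $\pi_0$ of the totalisations therefore requires connecting an arbitrary simplicial twisting cochain by a $1$-simplex of $\Tot\sTwist(\cechnerve\cover_\bullet)$ to one with trivial complements, which is a nontrivial global statement that your argument never addresses. Your injectivity step has a related, acknowledged but unresolved, difficulty: a $1$-simplex of $\Tot\Twist(\cechnerve\cover_\bullet)$ is a compatible family of maps $\Delta[1]\times\Delta[p]\to\Twist(\cechnerve\cover_p)$, so ``composing quasi-isomorphisms level by level'' must also produce all the higher prism-filling homotopies coherently; this is more than commuting direct sums with restriction.

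For comparison, the paper deduces the corollary by feeding the pointwise $\pi_0$-isomorphisms of \cref{theorem:pi0-of-twist-and-pi0-of-stwist} and \cref{theorem:pi0-of-green-and-pi0-of-stwist} into \cref{lemma:tot-of-we-of-kan-is-we}, so that the passage across Čech totalisation is handled by the Quillen right-adjoint property of $\Tot$ rather than by an explicit equaliser computation. You are correct that this invocation is delicate (the lemma as stated asks for presheaves of Kan complexes and a full weak equivalence, while only $2$-horn filling and $\pi_0$-bijectivity are established for $\Green$ and $\sTwist$), and flagging that is a genuine observation; but replacing it with the explicit argument requires you to actually prove $\pi_0$-surjectivity and $\pi_0$-injectivity at the level of the totalisations, and as written the surjectivity claim is false and the injectivity claim is deferred. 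The gap is not cosmetic: it is precisely the content that distinguishes the corollary from the pointwise theorems.
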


\begin{proof}
  This is \cref{lemma:tot-of-we-of-kan-is-we} applied to \cref{theorem:pi0-of-twist-and-pi0-of-stwist} and \cref{theorem:pi0-of-green-and-pi0-of-stwist}.
\end{proof}

Knowing that $\Green$, $\Twist$, and $\sTwist$ all have equivalent $\pi_0$ leads to the natural question: are all higher $\pi_n$ equivalent as well?
In other words, \emph{are these simplicial presheaves globally weakly equivalent}?
We do not claim to have an answer to this question, as we are largely hindered by the fact that we are unable to prove global fibrancy of $\Green$ and $\sTwist$.
\emph{If}, however, one could prove global fibrancy, then it is not too difficult to show by hand that, for example, $i\colon\pi_1\Twist\to\pi_1\sTwist$ is a surjection, and it seems possible that the method of proof could generalise to higher dimensions.
We further discuss what the implications of these global weak equivalences would be in \cref{section:summary-and-future-work}.
For now, we provide a single conjecture.

\begin{conjecture}
\label{conjecture:green-into-stwist}
  The inclusion $i\colon\Green\hookrightarrow\sTwist$ induces a weak equivalence of simplicial presheaves.
  Furthermore, this weak equivalence is witnessed by a homotopy inverse, given by the construction of \cref{conjecture:TTs-green-gives-morphism-stwist-to-green}.
\end{conjecture}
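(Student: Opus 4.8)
The plan is to promote the conjecture to a theorem by first establishing \cref{conjecture:TTs-green-gives-morphism-stwist-to-green} --- that Green's simplicial resolution of \cite{TT1986,Gre1980} assembles into a genuine morphism of simplicial presheaves $r\colon\sTwist\to\Green$ --- and then exhibiting \emph{natural simplicial} homotopies $r\circ i\simeq\id_\Green$ and $i\circ r\simeq\id_\sTwist$. Since a simplicial homotopy equivalence is a weak equivalence in the Kan--Quillen model structure regardless of fibrancy, and since a levelwise such equivalence with natural inverse and homotopies is a global weak equivalence, this would give the statement \emph{without} first having to resolve \cref{conjecture:green-is-globally-fibrant}. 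The ``furthermore'' clause is then immediate, the homotopy inverse being exactly the map of \cref{conjecture:TTs-green-gives-morphism-stwist-to-green} by construction.

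First I would make precise the morphism $r$. On a $p$-simplex of $\sTwist(U)$ --- that is, a GTT-labelling of $\Delta[p]$ --- Green's resolution replaces each complex $C_i(\sigma)$ labelling a vertex of $\pair{\Delta[p]}$ by the direct sum of $C_i(\sigma)$ with all its ``opposite'' elementary complements, following the recipe $C_0(\sigma)\coloneqq C_0(0)\oplus\bigoplus_{i\neq 0}C_0^{\perp 0i}(0)$ already used in the proof of \cref{lemma:filling-2-horns-in-stwist}, extended coherently over the entire pair subdivision, so that the chain maps $\varphi_{\{0<1\}}$ become \emph{isomorphisms} and all higher homotopies $\varphi_J$ with $|J|\geq 3$ vanish. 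The work here is bookkeeping but nontrivial: one must verify that this assignment (i)~lands in $\Green(U)\subseteq\sTwist(U)$, i.e.\ produces a GTT-$1$-labelling satisfying all clauses of \cref{definition:GTT-labelling}, in particular the strictness of the direct-sum decompositions in condition~(i) and the commutativity of the squares in condition~(ii); (ii)~is simplicial, i.e.\ commutes with the face and degeneracy maps constructed in \cref{lemma:green-is-a-simplicial-set}; and (iii)~is natural in $(U,\OO_U)$, which follows as in \cref{lemma:green-is-a-simplicial-presheaf} since $f^*$ commutes with direct sums and preserves elementary complexes and commuting squares.

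Next I would build the two homotopies. For $r\circ i\simeq\id_\Green$: on a GTT-$1$-labelling the $\varphi_{\{0<1\}}$ are already isomorphisms and the higher $\varphi_J$ already vanish, so $r$ only enlarges each complex by a direct sum with elementary complexes, and the inclusion $C\hookrightarrow C\oplus E$ of \cref{lemma:adding-elementary-is-quasi-iso} together with its projection quasi-inverse supplies, levelwise and naturally, a map $\Green\times\Delta[1]\to\Green$ realising the homotopy, the elementary-complement data on the higher cells being interpolated in the evident way. For $i\circ r\simeq\id_\sTwist$: starting from a general GTT-labelling, the strictification of \cref{remark:green-lets-you-turn-quasi-isos-into-isos} replaces each quasi-isomorphism by an isomorphism after adding elementary complexes and simultaneously annihilates all the coherent homotopies $p_i,q_i$, and one assembles the desired homotopy $\sTwist\times\Delta[1]\to\sTwist$ out of the data (inclusion of the original complex, chain homotopy witnessing the modification) provided by Green's argument. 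The main obstacle lies precisely here: making this strictification homotopy \emph{coherent} --- compatible with restriction to the faces of the pair subdivision, with the simplicial face and degeneracy maps, and with restriction along inclusions of ringed spaces --- which is exactly the ``refinement of the constructions given here'' anticipated in \cref{subsection:greens-resolution} and the reason the statement is still a conjecture; a secondary, more routine difficulty is checking that the enlarged labellings produced by Green's recipe do satisfy \emph{all} the clauses of \cref{definition:GTT-labelling}.

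Once $r$ and the two natural simplicial homotopies are in place the conclusion is formal: $i\colon\Green(U)\to\sTwist(U)$ is a simplicial homotopy equivalence for every $(U,\OO_U)\in\ConnRingSpace$, hence a weak equivalence, and naturality promotes this to a weak equivalence $i\colon\Green\to\sTwist$ of simplicial presheaves in the global projective model structure. As a byproduct this reproves \cref{corollary:pi0-all-three-equivalent}, and it yields an equivalence $\Tot\Green(\cechnerve\cover_\bullet)\simeq\Tot\sTwist(\cechnerve\cover_\bullet)$ of Čech totalisations (directly from the levelwise homotopy equivalence, or via \cref{lemma:tot-of-we-of-kan-is-we} once \cref{conjecture:green-is-globally-fibrant} is known).
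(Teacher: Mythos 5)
The statement you are addressing is a \emph{conjecture}: the paper offers no proof of it, only a heuristic justification (that surjectivity of $i_n\colon\pi_n\Green(U)\to\pi_n\sTwist(U)$ should follow from Green's resolution, suggesting that resolution ought to furnish a homotopy inverse). So there is nothing to compare your argument against line-by-line; what can be assessed is whether your plan actually closes the gaps that keep the statement conjectural. It does not, though it is a sensible roadmap, and one of your strategic observations is genuinely better than the paper's framing: the paper states that the conjecture ``assumes that both $\Green$ and $\sTwist$ are globally fibrant,'' whereas you correctly note that an explicit \emph{natural simplicial homotopy equivalence} is a weak equivalence irrespective of fibrancy, so your route would bypass \cref{conjecture:green-is-globally-fibrant} entirely. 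That is a real improvement in strategy, if the homotopies could be produced.

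The genuine gap is in step one, and you mislocate the obstruction. You treat the construction of $r\colon\sTwist\to\Green$ as ``bookkeeping but nontrivial,'' placing the main difficulty in the coherence of the homotopy $i\circ r\simeq\id_\sTwist$. But the paper's stated obstruction (in \cref{subsection:greens-resolution}) is earlier and more structural: a $p$-simplex of $\sTwist(U)$ simply does not contain enough data to run Green's construction as written in \cite{TT1986}, because condition~(STC~2) there involves \emph{unordered} terms such as $\mathfrak{a}_{\beta\alpha}^{0,1}$ (and degenerate terms such as $\mathfrak{a}_{\alpha\beta\alpha}^{2,-1}$) which a GTT-labelling of $\Delta[p]$ never records. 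This is why the paper proposes replacing $\Delta[p]$ by the $S^\infty$-type thickenings to obtain $\Green'$ and $\sTwist'$ in \cref{conjecture:TTs-green-gives-morphism-stwist-to-green} before the morphism can even be defined. Your plan, which keeps the existing definitions of $\Green$ and $\sTwist$, cannot produce $r$ without either supplying this missing data (a nontrivial modification of the framework, not bookkeeping) or reproving Green's resolution using only ordered, non-degenerate input. The second serious gap is the one you do flag but do not resolve: the strictification of \cref{remark:green-lets-you-turn-quasi-isos-into-isos} is a degree-one statement, and assembling it into a map $\sTwist\times\Delta[1]\to\sTwist$ compatible with the pair subdivision, the simplicial operators, and restriction of ringed spaces is exactly the unproven coherence problem. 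As written, your proposal is a plausible programme, not a proof, and it inherits both open conjectures it is meant to subsume.
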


Note that this conjecture assumes that both $\Green$ and $\sTwist$ are globally fibrant, and is thus dependent on both \cref{conjecture:green-is-globally-fibrant} and the statement (which we do not separately conjecture) that $\sTwist$ is a presheaf of Kan complexes.
The justification for \cref{conjecture:green-into-stwist} is that one can try to show by hand that the induced map $i_n\colon\pi_n\Green(U)\to\pi_n\sTwist(U)$ is an isomorphism for each $n\in\mathbb{N}$, and showing surjectivity amounts to applying Green's resolution (specifically the version described in \cite[\S3]{TT1986}).
This suggests that realising Green's resolution as a morphism $\sTwist\to\Green$ of simplicial presheaves would provide an explicit homotopy inverse to the inclusion $\Green\hookrightarrow\sTwist$.
But the construction of this morphism is not so trivial, and requires certain modifications to the theory that we have developed, as we now explain.

\subsection{Green's resolution}
\label{subsection:greens-resolution}

We \emph{would like} to be able to say that Green's resolution as described in \cite[\S3]{TT1986} defines a morphism of simplicial presheaves
\[
  \sTwist\to\Green
\]
and, indeed, this is the content of \cref{conjecture:TTs-green-gives-morphism-stwist-to-green}.
Although we are \emph{almost} able to construct this morphism, we will show how there is a technical problem that requires us to enrich our framework with the addition of \emph{cyclic structure} on our simplices.
In the name of brevity and clarity, we are reluctant to introduce the necessary extra details here, and instead give a sketch of the idea and leave the formalism to appear in future work.

For the convenience of the reader, we reproduce here conditions (STC~1) to (STC~4) from \cite[\S3]{TT1986}; for an explanation of the notation, see \cref{appendix:proof-of-tot0-stwist-is-simplicial-twcs}.
\begin{quote}
  \itshape
  \begin{enumerate}
    \item[(STC~1)]
      For each vertex $\alpha$ and each $\sigma\geqslant\langle\alpha\rangle$ a free resolution $E_{\sigma,\alpha}^\bullet$ of $\mathscr{S}|U_\sigma$:
      \[
        0
        \to E_{\sigma,\alpha}^{-n}
        \xrightarrow{{}^\sigma\mathfrak{a}_\alpha^{0,1}} \ldots
        \xrightarrow{{}^\sigma\mathfrak{a}_\alpha^{0,1}} E_{\sigma,\alpha}^0
        \to \mathscr{S}|U_\sigma
        \to 0
      \]
    \item[(STC~2)]
      For each $\sigma=\langle\alpha_0\ldots\alpha_p\rangle$ a twisting cochain
      \[
        {}^\sigma\mathfrak{a}
        = {}^\sigma\mathfrak{a}^{0,1}
        + {}^\sigma\mathfrak{a}^{1,0}
        + {}^\sigma\mathfrak{a}^{2,-1}
        + \ldots
      \]
      acting on the resolutions $E_{\sigma,\alpha_i}^\bullet$
    \item[(STC~3)]
      Whenever $\sigma<\tau$ and $\alpha$ a vertex of $\sigma$, a short exact sequence of free modules
      \[
        0
        \to E_{\sigma,\alpha}^\bullet|U_\tau
        \to E_{\tau,\alpha}^\bullet
        \to E_{\sigma,\tau,\alpha}^\bullet
        \to 0
      \]
      as in (SB~4) (\loccit)
    \item[(STC~4)]
      Whenever $\sigma<\tau$ and $\alpha_0,\ldots,\alpha_k$ any vertices of $\sigma$ (repetitions allowed!), the matrix ${}^\tau\mathfrak{a}_{\alpha_0\ldots\alpha_k}^{k,1-k}$ is of the form
      \begin{enumerate}[(i)]
        \item for $k=0$, ${}^\tau\mathfrak{a}_{\alpha}^{0,1}={}^\sigma\mathfrak{a}_{\alpha}^{0,1}\oplus\text{elementary sequences}$
        \item for $k=1$, ${}^\tau\mathfrak{a}_{\alpha\beta}^{1,0}=\begin{pmatrix}{}^\sigma\mathfrak{a}_{\alpha\beta}^{1,0}&*\\0&1\end{pmatrix}$
        \item for $k>1$, ${}^\tau\mathfrak{a}_{\alpha_0\ldots\alpha_k}^{k,1-k}=\begin{pmatrix}{}^\sigma\mathfrak{a}_{\alpha_0\ldots\alpha_k}^{k,1-k}&*\\0&0\end{pmatrix}$
      \end{enumerate}
  \end{enumerate}
\end{quote}

By design, it is \emph{not} true that simplices of $\sTwist$ are exactly simplicial twisting cochains: to get such a result, we need to apply Čech totalisation, and we then recover simplicial twisting cochains as the $0$-simplices (\cref{theorem:tot0-stwist-is-simplicial-twcs}).
If, however, we apply a suitable change of notation, then the simplices of $\sTwist$ behave ``sufficiently similar' to simplicial twisting cochains that we could try to directly apply the version of Green's construction given in \cite[\S3]{TT1986}, since they satisfy conditions (STC~1) to (STC~4) \loccit\@, which is all that is necessary.
Indeed, as we will see, the $p$-simplices of $\sTwist(U)$ are \emph{local} and \emph{truncated} simplicial twisting cochains: they live over a single space $U$ instead of a cover, and their bidegree-$(k,1-k)$ terms are zero for $k>p$.
This is exactly why we can prove \cref{theorem:tot0-stwist-is-simplicial-twcs}: pulling back along the opposite of the Čech nerve lets us remove the adjective ``local'' by resolving $U$ by a cover, and taking the totalisation (which is a way of computing the homotopy limit) lets us remove the adjective ``truncated'' by gluing together infinitely many $p$-simplices for all $p\in\mathbb{N}$.
However, there is one very important caveat hidden in this adjective ``local'' which is what prevents us from formally constructing this morphism, namely that our change of notation will only give us \emph{ordered} and \emph{non-degenerate} terms, as we will explain.

So let $\tau\in\sTwist(U)_p$ be a $p$-simplex, which is exactly a GTT-labelling of $\Delta[p]$ by $\core{\dgnerve\Free(U)}$.
For $\alpha\in[p]$ and $\sigma$ a subset of $[p]$ containing $\alpha$, set
\[
  E_{\sigma,\alpha}^\bullet
  \coloneqq C_\alpha(\sigma)
\]
and, for $\tau\subseteq\sigma$ a sub-face also containing $\alpha$, set
\[
  E_{\sigma,\tau,\alpha}^\bullet
  \coloneqq C_\alpha^{\perp\sigma}(\tau).
\]
For $\sigma=\{\alpha_0<\ldots<\alpha_k\}$ and any non-empty subset $J=\{\beta_0<\ldots<\beta_\ell\}\subseteq\sigma$, set
\[
  {}^{\sigma}\mathfrak{a}_{\beta_0\ldots\beta_\ell}^{\ell,1-\ell}
  \coloneqq \varphi_J(\sigma)
\]
and define
\[
  {}^\sigma\mathfrak{a}
  \coloneqq \sum_{i=0}^k {}^{\sigma}\mathfrak{a}^{i,1-i}.
\]

The fact that $\tau$ is a GTT-labelling (\cref{definition:GTT-labelling}) immediately tells us that this change of notation gives objects that satisfy conditions (STC~3) and (STC~4) (which posit, respectively, the existence of elementary orthogonal complements and that higher homotopies be upper triangular, as in \cref{remark:comments-on-GTT-definition}).
It remains only to check three things:
\begin{enumerate}
  \item that conditions (STC~1) and (STC~2) are also satisfied (where the former asks that each $E_{\sigma,\alpha}^\bullet$ be a free resolution of a given coherent sheaf, and the latter asks that the ${}^\sigma\mathfrak{a}$ satisfy the Maurer--Cartan equation);
  \item that the result of the inductive construction defined there gives a $p$-simplex of $\Green(U)$, i.e. that the codomain of the morphism thus constructed is indeed $\Green$; and
  \item that the construction is functorial, and thus defines a morphism of simplicial presheaves.
\end{enumerate}
The second would follow rather immediately from \cref{remark:green-as-a-specific-case-of-stwist}, and the third would be a lengthy but uninspired explicit calculation; it is the first that poses a technical problem.

Condition~(STC~1) asks that the $E_{\sigma,\alpha}^\bullet$ be local resolutions of some given coherent sheaf $\scr{S}$ restricted to $U_\sigma$.
In particular, this implies that the $E_{\sigma,\alpha}^\bullet$ be \emph{exact} in all but the highest degree.
However, this condition is not actually necessary for the construction of Green's resolution: what is important is that the construction does not modify the internal homology of the complexes, i.e. the resulting object still resolves the same coherent sheaf $\scr{S}$.
Indeed, \cite{Wei2016} deals with twisting cochains that are not exact --- in general, these correspond to resolutions of \emph{complexes} of coherent sheaves.
As mentioned in \cref{section:summary-and-future-work}, a better understanding of complexes of coherent (analytic) sheaves in one of the main motivations for this present work.

Condition~(STC~2) is where the real problem lies.
This condition concerns two types of terms outside of those that arise from our tentative morphism: unordered ones, such as $\mathfrak{a}_{\beta\alpha}^{0,1}$, and degenerate ones, such as $\mathfrak{a}_{\alpha\beta\alpha}^{2,-1}$.
It seems possible to recover the degenerate terms by applying degeneracy maps, but the unordered ones are simply extra information that is not given in our current framework.
However, by replacing $\Delta[p]$ with some ``thicker'' structures in our definitions, it seems that we could resolve this problem.
Before formalising this, let us first motivate what we mean.

If we wish to describe a morphism $f$ between two objects $x$ and $y$, then we might label a $1$-simplex accordingly: label $\{0\}$ with $x$, $\{1\}$ with $y$, and $\{0<1\}$ with $f$.
Now suppose we wish to record the fact that $f$ is actually invertible up to homotopy, such as in the case of a quasi-isomorphism between complexes of free modules, or a chain homotopy in general.
Then it makes sense to actually introduce \emph{another} $1$-cell connecting $\{0\}$ and $\{1\}$ with the opposite orientation, and to label it with $f^{-1}$.
But now we have a non-contractible space: we have constructed a copy of the circle $S^1$.
So to ensure that we have something homotopically equivalent to our original description, we might try introducing a $2$-cell bounded by the two $1$-simplices, filling in $S^1$ with a disc to obtain $D^2$.
This is indeed now contractible, but what information does this $2$-cell describe?
Since it goes from one $1$-cell to the other, it seems justifiable to label it with the data of the homotopy $f^{-1}f\Rightarrow\id_x$.
But now we have the same asymmetry as we did at the start, and we should introduce another $2$-cell with the opposite orientation, labelled with $\id_y\Rightarrow ff^{-1}$.
Again, however, we have created a sphere, namely $S^2$, and want to fill this with a $3$-cell in order for it to remain contractible.
Repeating this process indefinitely, labelling with higher and higher homotopical data, we see that to ``really'' describe the data of a morphism with homotopy inverse, we want to replace $\Delta[1]$ by $S^\infty$.
More generally, this leads to the idea of replacing $\Delta[p]$ by the nerve of the groupoid with $(p+1)$-many objects and a unique (invertible) morphism between any two objects.

\begin{conjecture}
\label{conjecture:TTs-green-gives-morphism-stwist-to-green}
  If we define $\Green'$ and $\sTwist'$ by replacing $\Delta[1]$ with $S^\infty$, as described above (and analogously for higher simplices), after taking the pair subdivision in the construction of $\Green$ and $\sTwist$ (respectively), then the construction of Green's resolution described in \cite{TT1986} defines a morphism of simplicial presheaves $\sTwist\simeq\sTwist'\to\Green'\simeq\Green$.
\end{conjecture}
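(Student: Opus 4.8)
The plan is to follow the outline already present in the paper, which has reduced the problem to three verifiable claims, and then to address the genuine technical obstacle via the enriched structures $\Green'$ and $\sTwist'$. First I would set up the change of notation precisely: given a $p$-simplex $\tau\in\sTwist'(U)_p$, which is a GTT-labelling of the pair subdivision of the thickened simplex (the nerve of the codiscrete groupoid on $[p]$) by $\core{\dgnerve\Free(U)}$, I would extract the families $E_{\sigma,\alpha}^\bullet$, $E_{\sigma,\tau,\alpha}^\bullet$, and ${}^\sigma\mathfrak{a}_{\beta_0\ldots\beta_\ell}^{\ell,1-\ell}$ exactly as displayed in \cref{subsection:greens-resolution}, but now reading off the \emph{unordered} and \emph{degenerate} terms from the extra cells of the thickened structure. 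The key point is that the codiscrete groupoid has a unique morphism between any two of its $p+1$ objects, in either direction, so every term ${}^\sigma\mathfrak{a}_{\beta\alpha}^{0,1}$ with $\beta>\alpha$ now has a genuine home (the corresponding $1$-cell with reversed orientation), and its relation to ${}^\sigma\mathfrak{a}_{\alpha\beta}^{0,1}$ (being its homotopy inverse) is recorded by a $2$-cell, and so on up the tower. Thus the GTT-labelling conditions of \cref{definition:GTT-labelling}, transported to $\sTwist'$, should yield precisely the full data $({}^\sigma\mathfrak{a})$ with all indices, ordered or not, together with the coherent homotopies witnessing invertibility --- which is exactly the input that conditions (STC~1)--(STC~4) of \cite[\S3]{TT1986} require.

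Next I would verify the three bullet points listed in \cref{subsection:greens-resolution}. For (STC~2), the Maurer--Cartan equation on ${}^\sigma\mathfrak{a}$ should be a direct translation of the dg-nerve defining relation \cref{equation:dg-nerve-definition} satisfied by the $\varphi_J(\sigma)$ (exactly the translation carried out in the proof of \cref{theorem:dg-nerve-and-MC}), together with the fact that the thickened-simplex structure forces the unordered terms to be the homotopy inverses and hence satisfy the ``mixed'' MC relations; the degenerate terms $\mathfrak{a}_{\alpha\beta\alpha}^{2,-1}$ are obtained by applying the appropriate degeneracy maps of $\sTwist'$, which land in $\Green'\subset\sTwist'$ by the remark that degeneracies insert identity homotopies. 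For (STC~1), I would follow the paper's own observation: exactness of the $E_{\sigma,\alpha}^\bullet$ is not actually used in Green's inductive construction --- only that the construction does not alter the internal homology --- and so I would phrase the statement for resolutions of complexes of coherent sheaves à la \cite{Wei2016}, rather than single sheaves, and check that the inductive step (adding opposite elementary complements, $C_0(012)\coloneqq C_0(0)\oplus\bigoplus_{i\neq 0}C_0^{\perp 0i}(0)$, as in the proof of \cref{lemma:filling-2-horns-in-stwist}) preserves homology because elementary complexes are acyclic. For the second and third bullets: that the output is a GTT-$1$-labelling (i.e.\ lies in $\Green'$) follows from \cref{remark:green-as-a-specific-case-of-stwist} since Green's construction strictifies all higher homotopies to zero (cf.\ the stronger statement in \cref{remark:green-lets-you-turn-quasi-isos-into-isos}); functoriality in $U$ follows because Green's construction is built entirely from direct sums, inclusions, projections, and applications of the generalised Whitehead theorem, all of which are preserved by the pullback functors $f^*$ (the same reasoning as in \cref{lemma:green-is-a-simplicial-presheaf} and \cref{lemma:twist-is-a-simplicial-presheaf}), and because the construction commutes with face and degeneracy maps by its inductive, dimension-by-dimension nature.

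The main obstacle I expect is precisely the one flagged in the paper: establishing rigorously that the weak equivalences $\Delta[p]\simeq S^\infty$-type-thickening hold at the level of simplicial presheaves, i.e.\ that $\sTwist\simeq\sTwist'$ and $\Green\simeq\Green'$, and --- more delicately --- that replacing $\Delta[p]$ by the nerve $\nerve(\mathbf{I}_{p+1})$ of the codiscrete groupoid $\mathbf{I}_{p+1}$ on $p+1$ objects \emph{before} taking the pair subdivision produces a simplicial presheaf at all (one must check the simplicial identities for the new face and degeneracy maps, which now involve both orientations and the cyclic/symmetric structure on cells). The cleanest route is probably to observe that $\nerve(\mathbf{I}_{p+1})$ is contractible and that $\core{-}$ of the relevant (quasi-)groupoid is insensitive, up to weak equivalence, to replacing a simplex by any contractible space mapping to it --- formally, that the inclusion $\Delta[p]\hookrightarrow\nerve(\mathbf{I}_{p+1})$ induces a trivial cofibration, so that precomposition induces a weak equivalence on labellings valued in a Kan complex; one then has to propagate this through the pair subdivision and the GTT conditions, which is where the bookkeeping becomes heavy. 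A secondary difficulty is checking that the resulting unordered/degenerate MC terms coming from the thickened structure match \emph{exactly} (not just up to homotopy) the terms that Green's inductive construction in \cite[\S3]{TT1986} expects as input; any sign or indexing mismatch here would need the kind of careful cyclic-structure formalism the paper defers to future work, which is exactly why this is stated as a conjecture rather than a theorem.
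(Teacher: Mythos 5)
The statement you are trying to prove is stated in the paper as a \emph{conjecture}, and the paper deliberately provides no proof --- only the sketch in \cref{subsection:greens-resolution} identifying the obstruction (the unordered terms such as $\mathfrak{a}_{\beta\alpha}^{0,1}$ and degenerate terms such as $\mathfrak{a}_{\alpha\beta\alpha}^{2,-1}$ required by (STC~2)) and the proposed remedy (thickening $\Delta[p]$ to the nerve of the codiscrete groupoid). Your proposal is essentially a faithful elaboration of that same sketch: the change of notation, the three bullet points, the dispensability of (STC~1), the role of \cref{remark:green-as-a-specific-case-of-stwist} for the codomain, and the functoriality argument all match the paper's outline. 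So there is no divergence of approach to report --- but there is also no proof here, and the gaps that remain open are exactly the ones that keep this a conjecture.

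Concretely, two steps in your plan do not go through as written. First, your argument that $\sTwist\simeq\sTwist'$ via ``$\core{-}$ of a Kan-valued labelling is insensitive to replacing a simplex by a contractible space mapping to it'' treats a GTT-labelling as if it were merely a simplicial map into a Kan complex. It is not: the data labels the \emph{pair subdivision} and is subject to the strict direct-sum conditions (i) and (ii) of \cref{definition:GTT-labelling}, which are not homotopy-invariant. A trivial cofibration $\Delta[p]\hookrightarrow\nerve(\mathbf{I}_{p+1})$ gives no control over whether a GTT-labelling of $\pair{\Delta[p]}$ extends to one of the pair subdivision of the thickened object, nor over whether the restriction map on such labellings is a weak equivalence; this requires the cyclic/symmetric combinatorial formalism the paper defers to future work, and without it even the claim that $\sTwist'$ is a well-defined simplicial presheaf is unestablished. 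Second, you assert that the extra cells of the thickened structure ``yield precisely'' the unordered and degenerate terms with the mixed Maurer--Cartan relations of (STC~2). That is the crux of the conjecture, not a consequence of the setup: one must verify that the homotopy-inverse data recorded by the reversed cells satisfies, with the correct signs and indexing, the specific relations Green's inductive construction consumes as input --- you flag this yourself as a ``secondary difficulty,'' but it is in fact the primary one. Until those two points are resolved, the proposal remains a plan rather than a proof, which is consistent with the paper's own assessment.
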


\section{Future work}
\label{section:summary-and-future-work}

In this paper we have shown how to construct generalisations of $\core{\nerve\Free(U)}$, as motivated in \cref{subsection:narrative} --- one version using the dg-nerve and one using a simplicial labelling construction.
The objects resulting from the former can be thought of as abstract twisting cochains, and from the latter as abstract Green complexes; this is justified by \cref{theorem:tot0-all-three} which tells us that we do indeed recover the corresponding classical objects when passing to the Čech totalisation in the holomorphic setting.

We have shown that the connected components of these two simplicial presheaves, $\Twist$ and $\Green$, are in bijection (\cref{theorem:pi0-of-twist-and-pi0-of-stwist} and \cref{theorem:pi0-of-green-and-pi0-of-stwist}), via the construction of their common generalisation $\sTwist$, which can be thought of as the abstractification of simplicial twisting cochains.
This leads us to possibly the largest unanswered question that we have raised:
\begin{quote}
  \itshape
  What is the full description of the relationship between $\Twist$, $\Green$, and $\sTwist$ in terms of the homotopy theory of simplicial presheaves?
\end{quote}
We hope, as described in \cref{conjecture:green-into-stwist}, that $\Green$ and $\sTwist$ can be shown to be weakly equivalent, using Green's resolution, as described in \cref{conjecture:TTs-green-gives-morphism-stwist-to-green}.
However, the question of whether or not $\Twist$ and $\sTwist$ are weakly equivalent is much more open; if it is indeed the case that $\Green\simeq\sTwist$, then the weak equivalence $\Twist\simeq\sTwist$ would give us a weak equivalence between $\Twist$ and $\Green$, and this would seem to have rather far-reaching implications, as we now explain.

\bigskip

We know that weak equivalence of globally fibrant simplicial presheaves is preserved by Čech totalisation (\cref{lemma:tot-of-we-of-kan-is-we}), and so if $\Twist$, $\Green$, and $\sTwist$ are indeed all weakly equivalent \emph{and also all globally fibrant}, then so too are their Čech totalisations.
Although we do not prove (nor even claim) that this is indeed the case, we have seen a very weak version of this statement, namely \cref{corollary:pi0-all-three-equivalent}, which says that
\[
  \pi_0\Tot\Twist(\cechnerve\cover_\bullet) \cong \pi_0\Tot\Green(\cechnerve\cover_\bullet) \cong \pi_0\Tot\sTwist(\cechnerve\cover_\bullet).
\]
In the case where $(X,\OO_X)$ is a complex-analytic manifold with Stein cover $\cover$, this first equivalence tells us that twisting cochains up to weak equivalence are ``the same as'' Green complexes up to weak equivalence (whatever this might mean for Green complexes, cf. \cref{subsection:1-simplices-in-complex-green}).

The relation between twisting cochains and perfect complexes is rather well studied, with \cite{Wei2016} showing that the dg-category of the former gives a dg-enhancement of the latter (see also \cite{GMTZ2022a}), and it is a classical fact that perfect complexes allow access to the derived category of coherent sheaves: if $(X,\OO_X)$ is smooth, then there is an equivalence of triangulated categories between that of perfect complexes on $X$ and the bounded derived category of coherent sheaves on $X$ (\cite[Exposé~I, Corollaire~5.10 and Exemples~5.11]{BGI1971}).
On the other hand, Green complexes allow us to resolve not just coherent sheaves, but actually complexes of sheaves of $\OO_X$-modules whose cohomology consists of coherent modules: if $(X,\OO_X)$ is a complex-analytic manifold, then there is an equivalence of $(\infty,1)$-categories between Green complexes on $X$ (after taking a homotopy colimit over refinements of covers) and complexes of sheaves of $\OO_X$-modules with coherent cohomology (\cite[Corollary~3.21 and Lemma~4.36]{Hos2020a}).

The question of whether or not the derived category of coherent sheaves is equivalent to the category of complexes of sheaves with coherent cohomology is a long-standing problem in the complex-analytic setting, and it seems as though this framework describing the simplicial presheaves $\Twist$ and $\Green$ (and their common generalisation $\sTwist$) might provide another way of approaching this question.

One specific aspect of $\Green$ that first needs to be better understood is whether or not it is globally fibrant (or, at least, for which specific spaces its Čech totalisation is fibrant).
Then one can try to relate this simplicial presheaf to the $(\infty,1)$-category of Green complexes described in \cite{Hos2020a}, with the hope being that there is an equivalence between the cores of the two.
At the moment, however, one can still start to phrase statements of this flavour: for example, \cite[Corollary~3.21 and Lemma~4.36]{Hos2020a} tell us that, given a complex with coherent cohomology, we can resolve it (up to quasi-isomorphism) by a Green complex, i.e. there is a surjection \emph{of sets} from $\Tot^0\Green(\cechnerve\cover_\bullet)$ to the set of quasi-isomorphism classes of complexes with coherent cohomology.

In other words, it would be useful to construct the two horizontal morphisms in the diagram
\[
  \begin{tikzcd}
    \Tot\Twist(\cechnerve\cover_\bullet)
      \ar[r] \ar[d]
    & D^\mathrm{b}(\mathsf{Coh}(X))
  \\\Tot\sTwist(\cechnerve\cover_\bullet)
  \\\Tot\Green(\cechnerve\cover_\bullet)
      \ar[r] \ar[u]
    & \mathsf{CCoh}(X)
  \end{tikzcd}
\]
and study their properties.

\bigskip

Finally, this machinery should allow us to study equivariant theories.
Given a group $G$ and a $G$-space $X$ with suitable cover $\cover$, we can consider the bisimplicial space given by the natural combination of the Čech nerve of $\cover$ and the bar construction of the $G$-action;
the diagonal of this should give a suitable simplicial space on which to define simplicial presheaves analogous to those considered in this present work.
However, in order to provide the full technical details, one first needs to understand, for example, what cofibrant replacements look like in this setting.


\part*{Appendices}
\addcontentsline{toc}{part}{Appendices}
\appendix

\renewcommand{\thefigure}{\Alph{section}.\arabic{subsection}.\roman{figure}}

\section{Motivation: principal bundles}
\label{appendix:example}

In this appendix we show how the abstract machinery of \cref{subsection:abstract-machinery-for-example} can be used to recover principal $\GL_n(\mathbb{R})$-bundles.

Using the notation of \cref{subsection:abstract-machinery-for-example}, take $G=\GL_n(\mathbb{R})\in\LieGroup$ (which we write as $\GL_n$ for brevity), and consider the presheaf of simplicial sets
\[
  \Bun \coloneqq \Tot(\bigfunctor)(\GL_n) \in [\SmthC^\op,\sSet].
\]
Let $\underline{X}=(X,\{U_\alpha\})\in\SmthC$.

\bigskip
\textbf{Question.}
\emph{What does the space $\Bun(\underline{X})$ look like?}

\smallskip

\textbf{Answer.}
{It classifies principal $\GL_n(\mathbb{R})$-bundles on $X$.}

\bigskip

To see why this is the answer (and to understand what exactly we mean by ``classifies''), we can use \cref{subsection:totalisation-and-holim} to explicitly describe the points and lines in $\Bun(\underline{X})$, as well as higher simplices.
Since we are working with simplicial sets as a model for spaces, we should gain some understanding from calculating the \emph{homotopy groups} of $\Bun(\underline{X})$.

\begin{remark}
  Our choice of $G=\GL_n(\mathbb{R})$ is somewhat arbitrary: the following construction works regardless of the choice of group, and would give principal $G$-bundles in general.
  Indeed, we could take $G=\GL_n(\mathbb{C})$, in the category of \emph{Stein} (instead of smooth) manifolds, and recover \emph{holomorphic} vector bundles.
\end{remark}

\subsection{Points in $\Bun(\underline{X})$}
\label{subsection:points-in-bun-x}

Let us start by considering what a \emph{point}  $v\in\Bun(\underline{X})_0$ is.
It consists of components $v^p\in\bigfunctor(\GL_n)(\underline{X})_p^p$ for $p\in\mathbb{N}$, subject to certain conditions.
First we describe the data of each $v^p$, and then we describe the conditions.

\begin{enumerate}
  \item[($p=0$)] By definition, $v^0\in\bigfunctor(\GL_n)(\underline{X})_0^0$.
    Unravelling the definition of $\bigfunctor$, and using the fact that $\mathbb{B}$ is a right adjoint, and thus preserves limits, this means that $v^0$ is a collection of elements
    \[
      \begin{aligned}
        \left(\nerve\mathbb{B}\yon(\GL_n)(\sqcup_\alpha U_\alpha)\right)_0
        &= \mathbb{B}\yon(\GL_n)(\sqcup_\alpha U_\alpha)
      \\&= \mathbb{B}\left(\Smth(\sqcup_\alpha U_\alpha,\GL_n)\right)
      \\&= \prod_\alpha\mathbb{B}\left(\Smth(U_\alpha,\GL_n)\right)
      \end{aligned}
    \]
    for all $\alpha$.
    But this is a product over one-element groupoids, and so $v^0$ is simply the (uniquely determined) collection
    \[
      v^0 =
      \bigg\{
        *_\alpha \in \mathbb{B}\big(\Smth(U_\alpha,\GL_n)\big)
      \bigg\}_\alpha.
    \]
    However, since the objects of a category are in bijection with the identity morphisms, we can instead think of this as
    \[
      v^0 =
      \bigg\{
        \id_{*_\alpha} \in \Smth(U_\alpha,\GL_n)
      \bigg\}_\alpha
    \]
    where $\id_{*_\alpha}$ is the constant map to the identity, i.e. $\id_{*_\alpha}\colon x\mapsto\id$ for all $x\in U_\alpha$.
  \item[($p=1$)] Again, unravelling the definition of $\bigfunctor$, we see that $v^1$ is an element of
    \[
      \begin{aligned}
        \left(\bigfunctor(\GL_n)(\underline{X})\right)_1^1
        &=\left(\nerve\mathbb{B}\yon(\GL_n)(\sqcup_\alpha U_\alpha)\right)_1
      \\&= \operatorname{Mor}\left(\mathbb{B}\yon(\GL_n)(\sqcup_{\alpha,\beta} U_{\alpha\beta})\right)
      \\&= \yon(\GL_n)(\sqcup_{\alpha,\beta} U_{\alpha\beta})
      \\&= \Smth(\sqcup_{\alpha,\beta}U_{\alpha\beta},\GL_n)
      \\&\cong \prod_{\alpha,\beta} \Smth(U_{\alpha\beta},\GL_n),
      \end{aligned}
    \]
    i.e. $v^1$ is a collection $\{g_{\alpha\beta}\colon U_{\alpha\beta}\to\GL_n\}_{\alpha,\beta}$ of smooth maps.
  \item[($p=2$)]
    Since the nerve of a category is generated by its $1$-simplices, an element of $(\bigfunctor(\GL_n)(\underline{X}))_2^2$ is given by a pair of composible morphisms, along with their composite:
    \[
      \left(\bigfunctor(\GL_n)(\underline{X})\right)_2^2
      = \bigg\{ (g_{\alpha\beta\gamma}^{(1)},g_{\alpha\beta\gamma}^{(2)},g_{\alpha\beta\gamma}^{(2)}\circ g_{\alpha\beta\gamma}^{(1)}) \mid g_{\alpha\beta\gamma}^{(1)},g_{\alpha\beta\gamma}^{(2)}\in \operatorname{Mor}\left(\mathbb{B}\yon(\GL_n)(\sqcup_{\alpha,\beta,\gamma} U_{\alpha\beta\gamma})\right) \bigg\}.
    \]
    But note that, since we are working with a one-element groupoid, all morphisms are composable, and the composition is uniquely determined;
    just as for the case for $v^1$, this means that $v^2$ is simply a collection of pairs of smooth maps (along with their composition)
    \[
      v^2 = \Big\{
        \left(
          g_{\alpha\beta\gamma}^{(1)},g_{\alpha\beta\gamma}^{(2)},g_{\alpha\beta\gamma}^{(2)}\circ g_{\alpha\beta\gamma}^{(1)}
        \right)
        \mid
        g_{\alpha\beta\gamma}^{(1)},g_{\alpha\beta\gamma}^{(2)}\colon U_{\alpha\beta\gamma}\to\GL_n
      \Big\}_{\alpha,\beta,\gamma}.
    \]
  \item[($p\geq3$)] In general, each $v^p$ for $p\geq3$ will be a collection of $p$-tuples of smooth maps
    \[
      v^p = \Big\{
        \left(
          g_{\alpha_0\ldots\alpha_p}^{(1)},\ldots,g_{\alpha_0\ldots\alpha_p}^{(p)}
        \right)
        \mid
        g_{\alpha_0\ldots\alpha_p}^{(1)},\ldots,g_{\alpha_0\ldots\alpha_p}^{(p)}\colon U_{\alpha_0\ldots\alpha_p}\to\GL_n
      \Big\}_{\alpha_0,\ldots,\alpha_p}
    \]
    where we omit the compositions simply for ease of notation.
\end{enumerate}

Now, as mentioned in \cref{subsection:totalisation-and-holim}, the $v^p$ also satisfy some conditions, given by the face and degeneracy maps in the \emph{opposite} of the Čech nerve.
The important thing here is indeed the word ``opposite''.
For example, one condition is that $f_1^0(v^0)$ and $f_1^1(v^0)$ must be the two vertices of $v^1$.
We know that the usual Čech face maps act via $U_{\alpha\beta}\mapsto U_\alpha$ and $U_{\alpha\beta}\mapsto U_\beta$, but, since we are considering the \emph{opposite} of the Čech nerve, we understand the condition in question \emph{not} by looking at the image of some $g_{\alpha\beta}$, but \emph{instead} by fixing some $\alpha\beta$, and then looking at the $g_{\alpha'}$ for all $\alpha'$ that are mapped to from $\alpha\beta$ under the usual face maps, i.e. exactly for $\alpha'\in\{\alpha,\beta\}$.

We start with the degeneracy maps $s_i^p$.

\begin{itemize}
  \item[($p=0$)]
    Here we have only one condition: $s_0^0(v^0)=v^1$, corresponding to the degeneracy map given by $\alpha\mapsto\alpha\alpha$.
    This condition thus reduces to asking that
    \[
      g_{\alpha\alpha} = \id_{*_\alpha}
    \]
    for all $\alpha$.
  \item[($p=1$)]
    The degeneracy map $s_0^1$ acts via $\alpha\beta\mapsto\alpha\alpha\beta$; the map $s_1^1$ via $\alpha\beta\mapsto\alpha\beta\beta$.
    The corresponding conditions thus reduce to asking that
    \[
      g_{\alpha\alpha\beta} = g_{\alpha\beta\beta} = g_{\alpha\beta}
    \]
    for all $\alpha,\beta$.
  \item[($p\geq2$)]
    More generally, the degeneracy maps $s_i^p$ for $p\geq2$ will give analogous conditions to the $p=1$ case: any $g_{\alpha_0\ldots\alpha_i\alpha_i\ldots\alpha_p}$ is equal to $g_{\alpha_0\ldots\alpha_i\ldots\alpha_p}$, i.e. we can always remove repeated indices without changing the map.
\end{itemize}

Now for the face maps $f_p^i$.

\begin{itemize}
  \item[($p=1$)]
    The face map $f_1^0$ acts via $\alpha\beta\mapsto\beta$; the map $f_1^1$ via $\alpha\beta\mapsto\alpha$.
    The condition given by these face maps is that $f_1^0(v^0)$ and $f_1^1(v^0)$ should be the endpoints of $v^1$, i.e. that the line labelled by $g_{\alpha\beta}$ should go from $*_\alpha$ to $*_\beta$.
  \item[($p=2$)]
    The face map $f_2^0$ acts via $\alpha\beta\gamma\mapsto\beta\gamma$; the map $f_2^1$ via $\alpha\beta\gamma\mapsto\alpha\gamma$; and the map $f_2^2$ via $\alpha\beta\gamma\mapsto\beta\gamma$.
    Asking for the three edges of $v^2$ to be given by the images of $v^1$ under these face maps thus reduces to asking for
    \[
      \begin{aligned}
        g_{\alpha\beta\gamma}^{(1)}
        &= f_2^2(g_{\alpha\beta})
        \coloneqq g_{\alpha\beta}|U_{\alpha\beta\gamma}
      \\g_{\alpha\beta\gamma}^{(2)}
        &= f_2^0(g_{\beta\gamma})
        \coloneqq g_{\beta\gamma}|U_{\alpha\beta\gamma}
      \\g_{\alpha\beta\gamma}^{(2)}\circ g_{\alpha\beta\gamma}^{(1)}
        &= f_2^1(g_{\alpha\gamma})
        \coloneqq g_{\alpha\gamma}|U_{\alpha\beta\gamma}.
      \end{aligned}
    \]
    But note that composition in $\mathbb{B}\Smth(U_{\alpha\beta\gamma},\GL_n)$ is given by multiplication in $\GL_n$:
    \[
      (h\circ g)(x) \coloneqq h(x)\cdot g(x)
    \]
    and so the above conditions simplify to
    \[
      g_{\beta\gamma}g_{\alpha\beta} = g_{\alpha\gamma}
    \]
    (where we omit the restrictions from our notation).
  \item[($p\geq3$)]
    Since the nerve is generated by $1$-simplices, and since the multiplication in $\GL_n$ is (strictly) associative, the higher face maps give us no further conditions.
\end{itemize}

In summary, a point in $\Bun(\underline{X})$ consists of smooth maps $g_{\alpha\beta}\colon U_{\alpha\beta}\to\GL_n$ that satisfy the identity condition $g_{\alpha\alpha}=\id_{*_\alpha}$ and the cocycle condition $g_{\beta\gamma}g_{\alpha\beta}=g_{\alpha\gamma}$.
More concisely,
\begin{quote}
  \itshape
  a point in $\Bun(\underline{X})$ is exactly the data of a principal $\GL_n$-bundle \mbox{on $X$}.
\end{quote}

\subsection{Edges in $\Bun(\underline{X})$}
\label{subsection:lines-in-bun-x}

We can describe \emph{lines} in a similar way: a line $l\in\Bun(\underline{X})$ is given by
\[
  \left(
    l^p\colon\Delta[1]\times\Delta[p]\to\bigfunctor(\GL_n)(\underline{X})_\bullet^p
  \right)_{p\in\mathbb{N}}
\]
satisfying some conditions.
Geometrically,
\begin{itemize}
  \item $l^0$ is a line in $\bigfunctor(\GL_n)(\underline{X})^0$;
  \item $l^1$ is a square (with diagonal) in $\bigfunctor(\GL_n)(\underline{X})^1$;
  \item $l^2$ is a (triangulated) triangular prism in $\bigfunctor(\GL_n)(\underline{X})^2$;
  \item \ldots and ``so on''.
\end{itemize}
We now make this more precise, although in a slightly more condensed way than we did for simplices, studying the face and degeneracy conditions as we go, rather than separately afterwards.

\begin{enumerate}
  \item[($p=0$)]
    Note that
    \[
      \left[
        \Delta[1]\times\Delta[0],\bigfunctor(\GL_n)(\underline{X})_\bullet^0
      \right]
      \cong\bigfunctor(\GL_n)(\underline{X})_1^0
    \]
    and, by definition,
    \[
      \begin{aligned}
        \bigfunctor(\GL_n)(\underline{X})_1^0
        &= \left(\nerve\mathbb{B}\yon(\GL_n)(\sqcup_\alpha U_\alpha)\right)_1
      \\&= \operatorname{Mor}\left(\mathbb{B}\yon(\GL_n)(\sqcup_\alpha U_\alpha)\right)
      \\&=\prod_\alpha\Smth(U_\alpha,\GL_n)
      \end{aligned}
    \]
    so that $l^0$ consists of a morphism $\lambda_\alpha\colon U_\alpha\to\GL_n$ for all $U_\alpha$.
    As for vertices in the totalisation, the face maps tell us that the $1$-simplex $\lambda_\alpha$ has repeated endpoints $*_\alpha$ and $*_\alpha$.
  \item[($p=1$)]
    At this point, it becomes simpler to draw a diagram describing $l^1$, which is a map from $\Delta[1]\times\Delta[1]$, and thus looks like a square with diagonal, or two $2$-simplices glued together along a common edge:
    \[
      \begin{tikzpicture}
        \node (v00) at (0,0) {$*_\alpha$};
        \node (v10) at (2,0) {$*_\beta$};
        \node (v01) at (0,2) {$*_\alpha$};
        \node (v11) at (2,2) {$*_\beta$};
        \draw[thick,-latex] (v00) to node[label=left:{$\lambda_\alpha$}]{} (v01);
        \draw[thick,-latex] (v10) to node[label=right:{$\lambda_\beta$}]{} (v11);
        \draw[thick,-latex] (v01) to node[label=above:{$h_{\alpha\beta}$}]{} (v11);
        \draw[thick,-latex] (v00) to node[label=below:{$g_{\alpha\beta}$}]{} (v10);
        \draw[thick,-latex] (v00) to (v11);
      \end{tikzpicture}
    \]
    where the face maps tell us that the vertical edges are exactly the $\lambda_\alpha$ and $\lambda_\beta$ from the $(p=0)$ case above, and where we already know that the horizontal edges are of the form $g_{\alpha\beta}$ and $h_{\alpha\beta}$ from our study of the vertices.
    Indeed, this is the general pattern that arises: a $1$-simplex in the totalisation contains, in particular, the data of two $0$-simplex, namely its endpoints.
    This makes precise the sense in which a $1$-simplex should describe a morphism between two bundles $g_{\alpha\beta}$ and $h_{\alpha\beta}$.
    Since this diagram takes values in the nerve, the diagonal is simply labelled with the common composite $h_{\alpha\beta}\circ\lambda_\alpha=\lambda_\beta\circ g_{\alpha\beta}$, where composition is given by multiplication in $\GL_n$, so that
    \[
      h_{\alpha\beta}\lambda_\alpha = \lambda_\beta g_{\alpha\beta}.
    \]
    Note that defining $\lambda_\alpha^{-1}(x)$ to be $\lambda_\alpha(x)^{-1}$ shows that $\lambda_\alpha$ is invertible.
    The degeneracy map gives no extra information here, since it simply says that $\id_\alpha\circ\lambda_\alpha=\lambda_\alpha\circ\id_\alpha$.
  \item[($p\geq2$)]
    For $p\geq2$, there are no extra non-trivial conditions or data.
    This is entirely analogous to how we only needed to study $p\leq2$ in the case of vertices: since the nerve is generated by its $1$-simplices, all the interesting information is concentrated in degrees $0$, $1$, and $2$; since here the degrees shift by $1$ (i.e. $l^0$ consists of a line, not just a point), all the interesting information is concentrated in degrees $0$ and $1$.
\end{enumerate}

In summary then, a line in $\Bun(\underline{X})$ consists of smooth maps $U_\alpha\to\GL_n$ that are invertible and commute with transition maps $g_{\alpha\beta},h_{\alpha\beta}$ given by its endpoints.
More concisely,
\begin{quote}
  \itshape
  a line in $\Bun(\underline{X})$ is exactly the data of a morphism (which is necessarily an isomorphism) between two principal $\GL_n$-bundles on $X$.
\end{quote}

\begin{remark}
  If we did not take the maximal Kan complex in the definition of $\Bun$, then the objects would be exactly the same: the fact that the $g_{\alpha\beta}$ are isomorphisms is implied by the cocycle condition $g_{\beta\gamma}g_{\alpha\beta}=g_{\alpha\gamma}$ and the degeneracy condition $g_{\alpha\alpha}=\id$.
  The morphisms, however, would then simply be morphisms of bundles, not isomorphisms.
  This would give us a \emph{quasi-category} of principal $\GL_n$-bundles instead of a space, which is perfectly usable, but for which the simplicial homotopy groups (\cref{subsection:simplicial-homotopy-theory}) are not a priori well defined: one would have to first pass to a fibrant replacement.
\end{remark}

\subsection{Higher simplices in $\Bun(\underline{X})$}

Since the nerve of a groupoid is $2$-coskeletal, we only need to study the $p$-simplices for $p\leq2$.
But, as already mentioned, the $2$-simplices are uniquely determined by the $1$-simplices on their boundary, which means that
\[
  \pi_1(\Bun(\underline{X})) \cong \Bun(\underline{X})_1
\]
and so we can simply ignore all higher simplices in $\Bun(\underline{X})$.

\subsection{Homotopy groups of $\Bun(\underline{X})$}

In summary, points in $\Bun(\underline{X})$ are principal $\GL_n$-bundles on $X$, and lines in $\Bun(\underline{X})$ are bundle isomorphisms.
Thus\footnote{Here we are making implicit use of the fact that, for Kan complexes, it suffices to work with \emph{simplicial} homotopy groups instead of taking the geometric realisation, as explained in \cref{subsection:simplicial-homotopy-theory}.}
\begin{quote}
  \itshape
  $\pi_0(\Bun(\underline{X}))$ consists of isomorphism classes of principal $\GL_n$-bundles on $X$.
\end{quote}
Furthermore, for any principal $\GL_n$-bundle $E$ on $X$,
\begin{quote}
  \itshape
  $\pi_1(\Bun(\underline{X}),E)$ is the gauge group $\operatorname{Aut}(E)$ of $E$.
\end{quote}
Finally, since $\Bun(\underline{X})$ is $2$-coskeletal, we know that all higher $\pi_n$ (i.e. for $n\geq2$) are zero.

\section{Technical proofs}
\label{appendix:technical-proofs}

Some of the proofs in this paper are so laden with notation (multiple indices, lower-dimensional faces of simplices, etc.) that they obfuscate the actual ideas from which they were constructed.
Because of this, we have decided to place them in an appendix;
the reader is invited to carefully check all details, and to verify that we have not mislead them in claiming that these proofs are more technical than they are interesting.

\subsection{Proof of \cref{theorem:tot0-stwist-is-simplicial-twcs} --- $\Tot^0\sTwist(\cechnerve\cover_\bullet)$ recovers simplicial twisting cochains}
\label{appendix:proof-of-tot0-stwist-is-simplicial-twcs}

The data of a $0$-simplex $v$ in $\Tot\sTwist(\cechnerve\cover_\bullet)$ is the data of $v^p\in\sTwist(\cechnerve\cover_\bullet)_p^p$ for $p\in\mathbb{N}$, subject to face and degeneracy conditions.

By definition,
\[
  v^0\in
  \sTwist(\cechnerve\cover_\bullet)_0^0
  = \sTwist(\sqcup_\alpha U_\alpha)_0
\]
which is the set of GTT-labellings of $\Delta[0]$ by $\core{\dgnerve\Free(\sqcup_\alpha U_\alpha)}$.
But, as in the proof of \cref{theorem:tot0-twist-is-twcs}, since $\OO(\sqcup_\alpha U_\alpha)\cong\prod_\alpha\OO(U_\alpha)$, a free $\OO(\sqcup_\alpha U_\alpha)$-module is exactly the data of a free $\OO(U_\alpha)$-module for all $\alpha$.
Since both the dg-nerve and the core functor are right adjoints, their composition preserves all limits.
In summary then,
\[
  \core{\dgnerve\Free(\sqcup_\alpha U_\alpha)}
  \cong \prod_\alpha\core{\dgnerve\Free(U_\alpha)}
\]
and so $v^0$ is an element of the set of GTT-labellings of $\Delta[0]$ by $\prod_\alpha\core{\dgnerve\Free(U_\alpha)}$, which simply means a choice of complex $C_\alpha\in\Free(U_\alpha)$ for each $U_\alpha$.

Next,
\[
  v^1\in
  \sTwist(\cechnerve\cover_\bullet)_1^1
  =\sTwist(\sqcup_{\alpha\beta}U_{\alpha\beta})_1
\]
which is the set of GTT-labellings of $\Delta[1]$ by $\core{\dgnerve\Free(\sqcup_{\alpha\beta} U_{\alpha\beta})}$.
By the same argument as above,
\[
  \core{\dgnerve\Free(\sqcup_{\alpha\beta} U_{\alpha\beta})}
  \cong \prod_{\alpha\beta}\core{\dgnerve\Free(U_{\alpha\beta})}.
\]
Now, a GTT-labelling of $\Delta[1]$ by this simplicial set consists of a labelling of the three $0$-simplices $\{0\}$, $\{1\}$, and $\{0<1\}$, as well as of the two $1$-simplices $\{0\}\subset\{0<1\}$ and $\{1\}\subset\{0<1\}$, of $\pair{\Delta[1]}$, all such that the conditions of \cref{definition:GTT-labelling} are satisfied.
The face conditions of the totalisation tell us that, for each $U_{\alpha\beta}$, the two $0$-simplices $\{0\}$ and $\{1\}$ are labelled with the complexes $C_0(\alpha)\coloneqq C_\alpha$ and $C_1(\beta)\coloneqq C_\beta$ (respectively) from the $v^0$ above;
the remaining $0$-simplex $\{0<1\}$ is labelled with a quasi-isomorphism $C_0(\alpha\beta)\simfrom C_1(\alpha\beta)$ of complexes in $\Free(U_{\alpha\beta})$.
The two $1$-simplices $\{0\}\subset\{0<1\}$ and $\{1\}\subset\{0<1\}$ are labelled with elementary complexes $C_0^{\perp\alpha\beta}(\alpha)$ and $C_1^{\perp\alpha\beta}(\beta)$ (respectively) such that
\[
  \begin{aligned}
    C_0(\alpha\beta)
    &\cong C_0(\alpha) \oplus C_0^{\perp\alpha\beta}(\alpha)
  \\C_1(\alpha\beta)
    &\cong C_1(\beta) \oplus C_1^{\perp\alpha\beta}(\beta).
  \end{aligned}
\]
Note that the final condition of \cref{definition:GTT-labelling} is irrelevant here, since $k=1$.
The degeneracy map $\alpha\mapsto\alpha\alpha$ imposes the condition that the quasi-isomorphism $C_0(\alpha\alpha)\simfrom C_1(\alpha\alpha)$ be the identity, and that $C_0^{\perp\alpha\alpha}(\alpha)=C_1^{\perp\alpha\alpha}(\alpha)=0$.

We now pass immediately to $v^p\in\sTwist(\cechnerve\cover_\bullet)_p^p$ for arbitrary $p\geq2$.
The face conditions will tell us that all of the $(p-1)$-dimensional data of $v^p$ coincides with that already given by $v^0,\ldots,v^{p-1}$;
the degeneracy conditions will tell us that it suffices to consider non-degenerate intersections $U_{\alpha_0\ldots\alpha_p}$, since if $\alpha_i=\alpha_{i+1}$ for some $0\leq i<p$ then the corresponding edge (containing a $0$-simplex and two $1$-simplices) will be trivially labelled.
As a notational shorthand, given a simplex $I=\{i_0<\ldots<i_k\}$ of indices, we write $\alpha_I\coloneqq\alpha_{i_0}\ldots\alpha_{i_k}$.
Since we are evaluating on the Čech nerve, rather than labelling the simplices of $\pair{\Delta[p]}$ by subsets of $[p]$, it makes sense to label them with subsets of $\{\alpha_0<\ldots<\alpha_p\}$ for each fixed $U_{\alpha_0\ldots\alpha_p}\in\cechnerve\cover_p$.
Following \cref{definition:GTT-labelling}, each $0$-simplex $\alpha_{i_0}\ldots\alpha_{i_k}\leq\alpha_0\ldots\alpha_p$ is labelled with a $k$-simplex of $\core{\dgnerve\Free(U_{\alpha_I})}$, i.e. by complexes
\[
  C_{i_0}(\alpha_I),\ldots,C_{i_k}(\alpha_I)
\]
along with, for all non-empty subsets $J=\{{j_0}<\ldots<{j_\ell}\}\subseteq\{{i_0}<\ldots<{i_k}\}$, morphisms
\[
  \varphi_J(\alpha_I) \in \Hom_{\Free(U_{\alpha_I})}^{1-\ell}\big(C_{j_\ell}(\alpha_I),C_{j_0}(\alpha_I)\big)
\]
such that, for all $J$ with $|J|\geq3$,
\[
  \partial\varphi_J(\alpha_I)
  = \sum_{m=1}^{\ell-1} (-1)^{m-1} \varphi_{J\setminus\{j_m\}}(\alpha_I)
  + \sum_{m=1}^{\ell-1} (-1)^{\ell(m-1)+1} \varphi_{j_0<\ldots<j_m}(\alpha_I)\circ\varphi_{j_m<\ldots<j_k}(\alpha_I).
\]
Next, setting $I=\{i_0<\ldots<i_k\}$ and $J=\{j_0<\ldots<j_\ell\}\subset I$, each $(k-\ell)$-cell $\alpha_J<\alpha_I$ in $\pair{\Delta[p]}$ is labelled with an $(\ell+1)$-tuple of objects
\[
  \Big(
    C_{j_m}^{\perp\alpha_I}(\alpha_J)
    \in\Free(U_{\alpha_I})
  \Big)_{0\leq m\leq\ell}
\]
where each $C_{j_m}^{\perp\alpha_I}(\alpha_J)$ is elementary, and such that we obtain a direct-sum decomposition
\[
  C_{J_m}(\alpha_I) \cong C_{J_m}(\alpha_J)\oplus C_{J_m}^{\perp\alpha_I}(\alpha_J)
\]
for all $0\leq m\leq\ell$.

Now we introduce the change of notation that will recover the definition of simplicial twisting cochain from \cite[\S3]{TT1986}, which we have already seen in \cref{subsection:greens-resolution}.
Given $U_{\alpha_0\ldots\alpha_p}$, some specific $\alpha\coloneqq\alpha_i$, and some subset $I\subseteq[p]$ containing $i$, let $\sigma=\alpha_I$;
for $J\subseteq I$ also containing $i$, let $\tau=\alpha_J$, so that
\[
  \{\alpha_i\}
  =\{\alpha\}
  \subseteq\tau
  \subseteq\sigma
  \subseteq\alpha_{[p]}
  =\alpha_0\ldots\alpha_p.
\]
We then define
\[
  \begin{aligned}
    E_{\sigma,\alpha}^\bullet
    &\coloneqq C_i(\sigma)
  \\E_{\sigma,\tau,\alpha}^\bullet
    &\coloneqq C_i^{\perp\sigma}(\tau)
  \\{}^\sigma\mathfrak{a}_{\alpha_J}^{\ell,1-\ell}
    &\coloneqq \varphi_J(\sigma)
  \\{}^\sigma\mathfrak{a}
    &\coloneqq \sum_{i=0}^{k-1} {}^\sigma\mathfrak{a}^{i,1-i}
  \end{aligned}
\]
where $k=|\sigma|-1$ and $\ell=|\tau|-1$.
It remains to show that conditions (STC~1) to (STC~4) in \cite[\S3]{TT1986} are satisfied.
As mentioned in \cref{subsection:greens-resolution}, conditions~(STC~3) and (STC~4) are satisfied by construction, following the definition of a GTT-labelling, and condition~(STC~1) is not really necessary, if we consider simplicial twisting cochains that resolve \emph{complexes} of coherent sheaves.
What remains to show is that (STC~2) is satisfied, but this is exactly the content of \cref{theorem:tot0-twist-is-twcs}, i.e. that the $\varphi_J(\sigma)$ (which constitute the ${}^\sigma\mathfrak{a}$) satisfy the Maurer--Cartan equation.

\subsection{Proof of \cref{theorem:1-simplices-in-complex-analytic-twist} --- $\Tot^1\Twist(\cechnerve\cover_\bullet)$ recovers weak equivalences}
\label{appendix:proof-of-1-simplices-in-complex-analytic-twist}

We start by unravelling the definition of a weak equivalence of twisting cochains (\cite[Definition~2.27]{Wei2016}) and spelling out the explicit conditions in the first three degrees;
we then do the same for the definition of a $1$-simplex in the totalisation, and show that the conditions agree with those of a weak equivalence.
Finally, we give a general combinatorial argument that applies in arbitrary degree.

Let $(E^\bullet,\varphi)$ be a twisting cochain: the data of $E_\alpha^\bullet\in\Free(U_\alpha)$ for all $U_\alpha\in\cover$, and $\varphi=\sum_{p\in\mathbb{N}}\varphi^{p,1-p}$ satisfying the Maurer--Cartan equation, where
\[
  \varphi_{\alpha_0\ldots\alpha_p}^{p,1-p}\colon E_{\alpha_p}^\bullet\to E_{\alpha_0}^\bullet[p-1]
\]
and with $\varphi_{\alpha}^{0,1}$ being exactly the differential of $E_\alpha$.
If $(F^\bullet,\psi)$ is another twisting cochain, then a degree-$0$ morphism $\Lambda\colon(F^\bullet,\psi)\to(E^\bullet,\varphi)$ is, following \cite[Definition~2.12]{Wei2016}, the data of maps (that do not necessarily commute with the differentials)
\[
  \Lambda_{\alpha_0\ldots\alpha_p}^{p,-p}\colon F_{\alpha_p}^\bullet\to E_{\alpha_0}^\bullet[p].
\]
This morphism is a \emph{weak equivalence} (\cite[Definition~2.27]{Wei2016}) if the $\Lambda_\alpha^{0,0}$ are quasi-isomorphisms, and if the $\Lambda^{p,-p}$ satisfy
\[
  \hat{\delta}\Lambda + \varphi\cdot\Lambda - \Lambda\cdot\psi = 0.
\]
(Morally, this is asking that $\operatorname{D}\Lambda\coloneqq(\hat{\delta}+\partial)\Lambda=0$, where we think of $\partial$ as an analogue to the differential in the category of chain complexes, given by the difference between pre- and post-composition with the two ``differentials'', which are now the twisting cochains $\varphi$ and $\psi$).
All the terms in this equation are of total degree~$1$, and so we can consider what happens in each different bidegree.\footnote{We omit explicitly writing the degrees of the $\varphi$, $\psi$, and $\Lambda$ terms from now on, since they can be deduced from the degree of the simplex $\alpha_0\ldots\alpha_p$ in the subscript, knowing that $\deg\varphi=\deg\psi=1$ and $\deg\Lambda=0$.}
As is often the case, ensuring that the signs are correct is the majority of the work, and there are two things to be aware of: the composition of a $(p,q)$-term with an $(r,s)$-term has a sign of $(-1)^{qr}$ (\cite[\S2.2, Equation~3]{Wei2016}); and the differential of a morphism $f\colon A\to B$ of degree~$n$ in a dg-category of chain complexes is given by $\partial f=f\circ\mathrm{d}_A+(-1)^{n+1}\mathrm{d}_B\circ f$.

\subsubsection{The first three terms}

To simplify notation, we will write $E_{\alpha_0\ldots\alpha_p}$ (resp. $F_{\alpha_0\ldots\alpha_p}$) instead of $\varphi_{\alpha_0\ldots\alpha_p}$ (resp. $\psi_{\alpha_0\ldots\alpha_p}$).
For the sake of clarity, we usually denote the complex by $E_{\alpha_i}^\bullet$ so as not to confuse it with its differential $E_{\alpha_i}$.
\begin{itemize}
  \item The $(0,1)$-terms tell us that
    \[
      E_\alpha\circ\Lambda_\alpha - \Lambda_\alpha\circ F_\alpha
      = 0
      \tag{$\star_0$}
    \]
    which simply says that $\Lambda_\alpha$ is a chain map from $F_\alpha^\bullet$ to $E_\alpha^\bullet$.

  \item The $(1,0)$-terms tell us that
    \[
      \begin{aligned}
        0
        =\phantom{+}& E_{\alpha\beta}\Lambda_\beta - \Lambda_\alpha F_{\alpha\beta}
      \\-& E_\alpha\Lambda_{\alpha\beta} - \Lambda_{\alpha\beta}F_\beta
      \end{aligned}
    \]
    which we can rewrite as
    \[
      \partial\Lambda_{\alpha\beta} = E_{\alpha\beta}\Lambda_\beta - \Lambda_\alpha F_{\alpha\beta}.
      \tag{$\star_1$}
    \]

  \item The $(2,-1)$ terms tell us that
    \[
      \begin{aligned}
        0
        =\phantom{+}& E_{\alpha\beta\gamma}\Lambda_{\gamma} - \Lambda_\alpha F_{\alpha\beta\gamma}
      \\+& E_{\alpha\beta}\Lambda_{\beta\gamma} + \Lambda_{\alpha\beta}F_{\beta\gamma}
      \\+& E_{\alpha}\Lambda_{\alpha\beta\gamma} - \Lambda_{\alpha\beta\gamma}F_\gamma
      \\+& \Lambda_{\alpha\gamma}
      \end{aligned}
    \]
    which we can rewrite as
    \[
      \begin{aligned}
        \partial\Lambda_{\alpha\beta\gamma}
        =\phantom{+}& E_{\alpha\beta\gamma}\Lambda_\gamma - \Lambda_\alpha F_{\alpha\beta\gamma}
      \\+& E_{\alpha\beta}\Lambda_{\beta\gamma} + \Lambda_{\alpha\beta}F_{\beta\gamma}
      \\+& \Lambda_{\alpha\gamma}.
      \end{aligned}
      \tag{$\star_2$}
    \]
\end{itemize}

Now we look at the first three terms of a $1$-simplex in $\Tot\Twist(\cechnerve\cover_\bullet)$.
To do this, we need to understand the simplicial structure of $\Delta[p]\times\Delta[1]$ for $p\in\mathbb{N}$.
Fortunately there is a simple combinatorial description of all sub-simplices, given by considering certain paths in a two-dimensional grid: a non-degenerate $q$-simplex in $\Delta[p]\times\Delta[1]$ is given by a strictly increasing sequence of $(q+1)$ pairs $(i,j)$, for $0\leq i\leq p$ and $0\leq j\leq 1$, where ``strictly increasing'' means that at least one of $i$ and $j$ increases with each successive element.
We write such a sequence as $\simplexpath{i_0&i_1&\ldots&i_q\\j_0&j_1&\ldots&j_q}$.
For example, in $\Delta[1]\times\Delta[1]$,
\begin{itemize}
  \item the $0$-simplices are $\simplexpath{0\\0}$, $\simplexpath{1\\0}$, $\simplexpath{0\\1}$, and $\simplexpath{1\\1}$;
  \item the $1$-simplices are $\simplexpath{0&1\\0&0}$, $\simplexpath{0&0\\0&1}$, $\simplexpath{1&1\\0&1}$, $\simplexpath{0&1\\1&1}$, and $\simplexpath{0&1\\0&1}$;
  \item the $2$-simplices are $\simplexpath{0&1&1\\0&0&1}$ and $\simplexpath{0&0&1\\0&1&1}$.
\end{itemize}
Thinking of $\simplexpath{i\\j}$ as a coordinate, $\simplexpath{i_0&i_1\\j_0&j_1}$ as the path from $\simplexpath{i_0\\j_0}$ to $\simplexpath{i_1\\j_1}$, and $\simplexpath{i_0&i_1&i_2\\j_0&j_1&j_2}$ as the triangle given by the vertices $\simplexpath{i_0\\j_0}$, $\simplexpath{i_1\\j_1}$, and $\simplexpath{i_2\\j_2}$, we can draw $\Delta[1]\times\Delta[1]$ as a square with diagonal, as in \cref{figure:abstract-diagonal-square}.

\begin{figure}[ht!]
  \centering
  \begin{tikzpicture}[scale=3.5]
    \node (v00) at (0,0) {$\simplexpath{0\\0}$};
    \node (v10) at (1,0) {$\simplexpath{1\\0}$};
    \node (v01) at (0,1) {$\simplexpath{0\\1}$};
    \node (v11) at (1,1) {$\simplexpath{1\\1}$};
    \node (centre) at (0.5,0.5) {};
    \draw[thick,-latex] (v00) to node[fill=white]{\scriptsize$\simplexpath{0&0\\0&1}$} (v01);
    \draw[thick,-latex] (v10) to node[fill=white]{\scriptsize$\simplexpath{1&1\\0&1}$} (v11);
    \draw[thick,-latex] (v01) to node[fill=white]{\scriptsize$\simplexpath{0&1\\1&1}$} (v11);
    \draw[thick,-latex] (v00) to node[fill=white]{\scriptsize$\simplexpath{0&1\\0&0}$} (v10);
    \draw[thick,-latex] (v00) to node[fill=white]{\scriptsize$\simplexpath{0&1\\0&1}$} (v11);
    \draw[white] (centre) to node[black]{\scriptsize$\simplexpath{0&0&1\\0&1&1}$} (v01);
    \draw[white] (centre) to node[black]{\scriptsize$\simplexpath{0&1&1\\0&0&1}$} (v10);
  \end{tikzpicture}
  \caption{The canonical simplicial structure of $\Delta[1]\times\Delta[1]$.}
  \label{figure:abstract-diagonal-square}
\end{figure}

The story is entirely analogous in higher dimensions: see \cref{figure:abstract-and-labelled-triangular-prism} for the case of $\Delta[2]\times\Delta[1]$, and \cite[Appendix~B.2]{GMTZ2022a} for the general description of $\Delta[p]\times\Delta[q]$ (though here we are only concerned with $\Delta[p]\times\Delta[1]$).

\begin{figure}[ht!]
  \centering
  \begin{tikzpicture}[scale=2.5]
    \node (v00) at (0,0) {$\simplexpath{0\\0}$};
    \node (v10) at (1,0.5) {$\simplexpath{1\\0}$};
    \node (v20) at (2,0) {$\simplexpath{2\\0}$};
    \node (v01) at (0,2) {$\simplexpath{0\\1}$};
    \node (v11) at (1,2.5) {$\simplexpath{1\\1}$};
    \node (v21) at (2,2) {$\simplexpath{2\\1}$};
    \draw[thick,-stealth] (v00) to node[fill=white]{\scriptsize$\simplexpath{0&0\\0&1}$} (v01);
    \draw[thick,dashed,-stealth] (v10) to node[fill=white]{\scriptsize$\simplexpath{1&1\\0&1}$} (v11);
    \draw[thick,-stealth] (v20) to node[fill=white]{\scriptsize$\simplexpath{2&2\\0&1}$} (v21);
    \draw[thick,dashed,-stealth] (v00) to node[fill=white]{\scriptsize$\simplexpath{0&1\\0&0}$} (v10);
    \draw[thick,dashed,-stealth] (v10) to node[fill=white]{\scriptsize$\simplexpath{1&2\\0&0}$} (v20);
    \draw[thick,-stealth] (v00) to node[fill=white]{\scriptsize$\simplexpath{0&2\\0&0}$} (v20);
    \draw[thick,-stealth] (v01) to node[fill=white]{\scriptsize$\simplexpath{0&1\\1&1}$} (v11);
    \draw[thick,-stealth] (v11) to node[fill=white]{\scriptsize$\simplexpath{1&2\\1&1}$} (v21);
    \draw[thick,-stealth] (v01) to node[fill=white,near end]{\scriptsize$\simplexpath{0&2\\1&1}$} (v21);
    \draw[thick,dashed,-stealth] (v00) to node[fill=white]{\scriptsize$\simplexpath{0&1\\0&1}$} (v11);
    \draw[thick,dashed,-stealth] (v10) to node[fill=white]{\scriptsize$\simplexpath{1&2\\0&1}$} (v21);
    \draw[thick,-stealth] (v00) to node[fill=white,near start]{\scriptsize$\simplexpath{0&2\\0&1}$} (v21);
  \end{tikzpicture}
  \qquad\qquad
  \begin{tikzpicture}[scale=2.5]
    \node (v00) at (0,0) {$E_\alpha^\bullet$};
    \node (v10) at (1,0.5) {$E_\beta^\bullet$};
    \node (v20) at (2,0) {$E_\gamma^\bullet$};
    \node (v01) at (0,2) {$F_\alpha^\bullet$};
    \node (v11) at (1,2.5) {$F_\beta^\bullet$};
    \node (v21) at (2,2) {$F_\gamma^\bullet$};
    \draw[thick,-stealth] (v00) to node[fill=white]{\scriptsize$\lambda_\alpha$} (v01);
    \draw[thick,dashed,-stealth] (v10) to node[fill=white]{\scriptsize$\lambda_\beta$} (v11);
    \draw[thick,-stealth] (v20) to node[fill=white]{\scriptsize$\lambda_\gamma$} (v21);
    \draw[thick,dashed,-stealth] (v00) to node[fill=white]{\scriptsize$E_{\alpha\beta}$} (v10);
    \draw[thick,dashed,-stealth] (v10) to node[fill=white]{\scriptsize$E_{\beta\gamma}$} (v20);
    \draw[thick,-stealth] (v00) to node[fill=white]{\scriptsize$E_{\alpha\gamma}$} (v20);
    \draw[thick,-stealth] (v01) to node[fill=white]{\scriptsize$F_{\alpha\beta}$} (v11);
    \draw[thick,-stealth] (v11) to node[fill=white]{\scriptsize$F_{\beta\gamma}$} (v21);
    \draw[thick,-stealth] (v01) to node[fill=white,near end]{\scriptsize$F_{\alpha\gamma}$} (v21);
    \draw[thick,dashed,-stealth] (v00) to node[fill=white]{\scriptsize$d_{\alpha\beta}$} (v11);
    \draw[thick,dashed,-stealth] (v10) to node[fill=white]{\scriptsize$d_{\beta\gamma}$} (v21);
    \draw[thick,-stealth] (v00) to node[fill=white,near start]{\scriptsize$d_{\alpha\gamma}$} (v21);
  \end{tikzpicture}
  \caption{\emph{Left:} The canonical simplicial structure of $\Delta[2]\times\Delta[1]$. \emph{Right:} The data of a morphism $\Delta[2]\times\Delta[1]\to\Twist(\cechnerve\cover_2)$. Note that the $2$- and $3$-simplices are not labelled in either figure, for the sake of legibility, but the three constituent $3$-simplices are drawn in \cref{figure:three-abstract-tetrahedra}.}
  \label{figure:abstract-and-labelled-triangular-prism}
\end{figure}

\begin{itemize}
  \item Since $\Delta[0]\times\Delta[1]\cong\Delta[1]$, the degree-$0$ component of a $1$-simplex in $\Tot\Twist(\cechnerve\cover_\bullet)$ is given by
    \[
      \begin{tikzpicture}
        \node (F) at (0,2) {$F_\alpha^\bullet$};
        \node (E) at (0,0) {$E_\alpha^\bullet$};
        \draw[thick,-latex] (E) to node[fill=white]{\scriptsize$\lambda_\alpha$} (F);
      \end{tikzpicture}
    \]
    which, by the definition of the dg-nerve, is exactly the data of a quasi-isomorphism\footnote{Recall that morphisms in the dg-nerve go in the ``backwards'' direction, i.e. from $x_{i_k}$ to $x_{i_0}$ (\cref{definition:dg-nerve}).} $\lambda_\alpha\colon F_\alpha^\bullet\simto E_\alpha^\bullet$.
    This means that $\lambda_\alpha$ satisfies
    \[
      E_\alpha\circ\lambda_\alpha - \lambda_\alpha\circ F_\alpha = 0
      \tag{$\ast_0$}
    \]
    since quasi-isomorphisms are, in particular, chain maps.

  \item As described above (and in \cref{figure:abstract-diagonal-square}), the product $\Delta[1]\times\Delta[1]$ is a square with diagonal.
    This means that the degree-$1$ component is of the form
    \[
      \begin{tikzpicture}[scale=3]
        \node (v00) at (0,0) {$E_\alpha^\bullet$};
        \node (v10) at (1,0) {$E_\beta^\bullet$};
        \node (v01) at (0,1) {$F_\alpha^\bullet$};
        \node (v11) at (1,1) {$F_\beta^\bullet$};
        \node (centre) at (0.5,0.5) {};
        \draw[thick,-latex] (v00) to node[fill=white]{\scriptsize$\lambda_\alpha$} (v01);
        \draw[thick,-latex] (v10) to node[fill=white]{\scriptsize$\lambda_\beta$} (v11);
        \draw[thick,-latex] (v01) to node[fill=white]{\scriptsize$F_{\alpha\beta}$} (v11);
        \draw[thick,-latex] (v00) to node[fill=white]{\scriptsize$E_{\alpha\beta}$} (v10);
        \draw[thick,-latex] (v00) to node[fill=white,near start]{\scriptsize$d_{\alpha\beta}$} (v11);
        \draw[thick,double,double distance=2pt,-{Implies}] (centre) to node[fill=white]{\scriptsize$h_{\alpha\beta}^F$} (v01);
        \draw[thick,double,double distance=2pt,-{Implies}] (centre) to node[fill=white]{\scriptsize$h_{\alpha\beta}^E$} (v10);
      \end{tikzpicture}
    \]
    where $\lambda_\alpha$ and $\lambda_\beta$ are the degree-$0$ components from above, $E_{\alpha\beta}$ and $F_{\alpha\beta}$ are the degree-$(1,0)$ components of the twisting cochains, $d_{\alpha\beta}$ is some quasi-isomorphism, and $h_{\alpha\beta}^E$ and $h_{\alpha\beta}^F$ are homotopies satisfying
    \[
      \begin{aligned}
        \partial h_{\alpha\beta}^E
        &= d_{\alpha\beta} - E_{\alpha\beta}\lambda_\beta
      \\\partial h_{\alpha\beta}^F
        &= d_{\alpha\beta} - \lambda_\alpha F_{\alpha\beta}.
      \end{aligned}
    \]
    As mentioned in \cref{remark:1-simplices-in-twist-and-cubical}, this is where we need to ``forget'' that we are working simplicially and construct something cubical.
    To do this, we define
    \[
      \lambda_{\alpha\beta}
      \coloneqq h_{\alpha\beta}^F - h_{\alpha\beta}^E
    \]
    giving us the diagram
    \[
      \begin{tikzpicture}[scale=3]
        \node (v00) at (0,0) {$E_\alpha^\bullet$};
        \node (v10) at (1,0) {$E_\beta^\bullet$};
        \node (v01) at (0,1) {$F_\alpha^\bullet$};
        \node (v11) at (1,1) {$F_\beta^\bullet$};
        \draw[thick,-latex] (v00) to node[fill=white]{\scriptsize$\lambda_\alpha$} (v01);
        \draw[thick,-latex] (v10) to node[fill=white]{\scriptsize$\lambda_\beta$} (v11);
        \draw[thick,-latex] (v01) to node[fill=white]{\scriptsize$F_{\alpha\beta}$} (v11);
        \draw[thick,-latex] (v00) to node[fill=white]{\scriptsize$E_{\alpha\beta}$} (v10);
        \draw[thick,double,double distance=2pt,-{Implies}] (v10) to node[fill=white]{\scriptsize$\lambda_{\alpha\beta}$} (v01);
      \end{tikzpicture}
    \]
    where $\lambda_{\alpha\beta}$ satisfies
    \[
      \partial \lambda_{\alpha\beta}
      = E_{\alpha\beta}\lambda_\beta - \lambda_\alpha F_{\alpha\beta}
      \tag{$\ast_1$}
    \]
    by linearity of $\partial$.

  \item The degree-$2$ component is given by a labelling of the canonical triangulation of $\Delta[2]\times\Delta[1]$, as shown in \cref{figure:abstract-and-labelled-triangular-prism}.
    There are three $3$-simplices (i.e. tetrahedra) that make up this triangular prism, each one labelled with a $3$-simplex in the dg-nerve: the equations that these three homotopies satisfy are shown in \cref{figure:three-abstract-tetrahedra}.
    Applying the linearity of $\partial$ to the equations in \cref{figure:three-abstract-tetrahedra}, we see that
    \[
      \begin{aligned}
        \partial\left(
          f_{\simplexpath{0&1&2&2\\0&0&0&1}}
          - f_{\simplexpath{0&1&1&2\\0&0&1&1}}
          + f_{\simplexpath{0&0&1&2\\0&1&1&1}}
        \right)
        =\phantom{+}& f_{\simplexpath{0&2&2\\0&0&1}}
          - f_{\simplexpath{0&0&2\\0&1&1}}
        \\+& f_{\simplexpath{0&1\\0&0}}\left(f_{\simplexpath{1&1&2\\0&1&1}} - f_{\simplexpath{1&2&2\\0&0&1}}\right)
        \\+& \left(f_{\simplexpath{0&0&1\\0&1&1}} - f_{\simplexpath{0&1&1\\0&0&1}}\right)f_{\simplexpath{1&2\\1&1}}
        \\-& f_{\simplexpath{0&0\\0&1}}f_{\simplexpath{0&1&2\\1&1&1}} + f_{\simplexpath{0&1&2\\0&0&0}}f_{\simplexpath{2&2\\0&1}}.
      \end{aligned}
    \]
    This means that, if we define $\lambda_{\alpha\beta\gamma}$ to be the alternating sum of these three homotopies
    \[
      \lambda_{\alpha\beta\gamma}
      \coloneqq f_{\simplexpath{0&1&2&2\\0&0&0&1}}
      - f_{\simplexpath{0&1&1&2\\0&0&1&1}}
      + f_{\simplexpath{0&0&1&2\\0&1&1&1}}.
    \]
    then, using \cref{figure:abstract-and-labelled-triangular-prism}, the above translates to
    \[
      \begin{aligned}
        \partial\lambda_{\alpha\beta\gamma}
        =\phantom{+}& h_{\alpha\gamma}^F - h_{\alpha\gamma}^E
      \\+& E_{\alpha\beta}\left(h_{\beta\gamma}^F - h_{\beta\gamma}^E\right)
      \\+& \left(h_{\alpha\beta}^F - h_{\alpha\beta}^E\right)F_{\beta\gamma}
      \\-& \lambda_\alpha F_{\alpha\beta\gamma} + E_{\alpha\beta\gamma}\lambda_\gamma
      \end{aligned}
    \]
    which rearranges to give
    \[
      \begin{aligned}
        \partial\lambda_{\alpha\beta\gamma}
        =\phantom{+}& E_{\alpha\beta\gamma}\lambda_\gamma - \lambda_\alpha F_{\alpha\beta\gamma}
      \\+& E_{\alpha\beta}\lambda_{\beta\gamma} + \lambda_{\alpha\beta}F_{\beta\gamma}
      \\+& \lambda_{\alpha\gamma}.
      \end{aligned}
      \tag{$\ast_2$}
    \]
\end{itemize}

But then equations~$(\star_i)$ and $(\ast_i)$ are identical for $i=0,1,2$.
In other words, \emph{up to degree~$2$}, a $1$-simplex $(\lambda_\alpha,\lambda_{\alpha\beta},\lambda_{\alpha\beta\gamma})$ in the totalisation defines exactly a weak equivalence $(\Lambda_\alpha,\Lambda_{\alpha\beta},\Lambda_{\alpha\beta\gamma})$ of twisting cochains.
It remains only to give a general argument for arbitrary degree.

\begin{figure}[ht!]
  \centering
  \[
  \begin{tikzpicture}[scale=2.3,baseline=60pt]
    \node (v00) at (0,0) {$\simplexpath{0\\0}$};
    \node (v10) at (1,0.5) {$\simplexpath{1\\0}$};
    \node (v20) at (2,0) {$\simplexpath{2\\0}$};
    \node (v21) at (2,2) {$\simplexpath{2\\1}$};
    \draw[thick,-stealth] (v20) to node[fill=white]{\scriptsize$\simplexpath{2&2\\0&1}$} (v21);
    \draw[thick,dashed,-stealth] (v00) to node[fill=white]{\scriptsize$\simplexpath{0&1\\0&0}$} (v10);
    \draw[thick,dashed,-stealth] (v10) to node[fill=white]{\scriptsize$\simplexpath{1&2\\0&0}$} (v20);
    \draw[thick,-stealth] (v00) to node[fill=white]{\scriptsize$\simplexpath{0&2\\0&0}$} (v20);
    \draw[thick,dashed,-stealth] (v10) to node[fill=white]{\scriptsize$\simplexpath{1&2\\0&1}$} (v21);
    \draw[thick,-stealth] (v00) to node[fill=white]{\scriptsize$\simplexpath{0&2\\0&1}$} (v21);
  \end{tikzpicture}
  \qquad\qquad
  \begin{aligned}
    \partial f_{\simplexpath{0&1&2&2\\0&0&0&1}}
    =\phantom{+}& f_{\simplexpath{0&2&2\\0&0&1}} - f_{\simplexpath{0&1&2\\0&0&1}}
  \\-& f_{\simplexpath{0&1\\0&0}}f_{\simplexpath{1&2&2\\0&0&1}} + f_{\simplexpath{0&1&2\\0&0&0}}f_{\simplexpath{2&2\\0&1}}
  \end{aligned}
  \]
  \[
  \begin{tikzpicture}[scale=2.3,baseline=80pt]
    \node (v00) at (0,0) {$\simplexpath{0\\0}$};
    \node (v10) at (1,0.5) {$\simplexpath{1\\0}$};
    \node (v11) at (1,2.5) {$\simplexpath{1\\1}$};
    \node (v21) at (2,2) {$\simplexpath{2\\1}$};
    \draw[thick,dashed,-stealth] (v10) to node[fill=white]{\scriptsize$\simplexpath{1&1\\0&1}$} (v11);
    \draw[thick,-stealth] (v00) to node[fill=white]{\scriptsize$\simplexpath{0&1\\0&0}$} (v10);
    \draw[thick,-stealth] (v11) to node[fill=white]{\scriptsize$\simplexpath{1&2\\1&1}$} (v21);
    \draw[thick,-stealth] (v00) to node[fill=white]{\scriptsize$\simplexpath{0&1\\0&1}$} (v11);
    \draw[thick,-stealth] (v10) to node[fill=white]{\scriptsize$\simplexpath{1&2\\0&1}$} (v21);
    \draw[thick,-stealth] (v00) to node[fill=white,near start]{\scriptsize$\simplexpath{0&2\\0&1}$} (v21);
  \end{tikzpicture}
  \qquad\qquad
  \begin{aligned}
    \partial f_{\simplexpath{0&1&1&2\\0&0&1&1}}
    =\phantom{+}& f_{\simplexpath{0&1&2\\0&1&1}} - f_{\simplexpath{0&1&2\\0&0&1}}
  \\-& f_{\simplexpath{0&1\\0&0}}f_{\simplexpath{1&1&2\\0&1&1}} + f_{\simplexpath{0&1&1\\0&0&1}}f_{\simplexpath{1&2\\1&1}}
  \end{aligned}
  \]
  \[
  \begin{tikzpicture}[scale=2.3,baseline=60pt]
    \node (v00) at (0,0) {$\simplexpath{0\\0}$};
    \node (v01) at (0,2) {$\simplexpath{0\\1}$};
    \node (v11) at (1,2.5) {$\simplexpath{1\\1}$};
    \node (v21) at (2,2) {$\simplexpath{2\\1}$};
    \draw[thick,-stealth] (v00) to node[fill=white]{\scriptsize$\simplexpath{0&0\\0&1}$} (v01);
    \draw[thick,-stealth] (v01) to node[fill=white]{\scriptsize$\simplexpath{0&1\\1&1}$} (v11);
    \draw[thick,-stealth] (v11) to node[fill=white]{\scriptsize$\simplexpath{1&2\\1&1}$} (v21);
    \draw[thick,-stealth] (v01) to node[fill=white]{\scriptsize$\simplexpath{0&2\\1&1}$} (v21);
    \draw[thick,dashed,-stealth] (v00) to node[fill=white]{\scriptsize$\simplexpath{0&1\\0&1}$} (v11);
    \draw[thick,-stealth] (v00) to node[fill=white]{\scriptsize$\simplexpath{0&2\\0&1}$} (v21);
  \end{tikzpicture}
  \qquad\qquad
  \begin{aligned}
    \partial f_{\simplexpath{0&0&1&2\\0&1&1&1}}
    =\phantom{+}& f_{\simplexpath{0&1&2\\0&1&1}} - f_{\simplexpath{0&0&2\\0&1&1}}
  \\-& f_{\simplexpath{0&0\\0&1}}f_{\simplexpath{0&1&2\\1&1&1}} + f_{\simplexpath{0&0&1\\0&1&1}}f_{\simplexpath{1&2\\1&1}}
  \end{aligned}
  \]
  \caption{\emph{Left:} The three $3$-simplices of $\Delta[2]\times\Delta[1]$. \emph{Right:} The equations satisfied by the corresponding homotopy in the dg-nerve, where $f_{[-]}$ is the morphism labelling the simplex $[-]$.}
  \label{figure:three-abstract-tetrahedra}
\end{figure}

\subsubsection{Full proof}

Now we give the argument for arbitrary degree.
For $0\leq m\leq p$, we define (cf. \cref{figure:delta-m-p})
\[
  \Delta_m^{p+1}
  \coloneqq \simplexpath{0&1&2&\ldots&m&m&m+1&\ldots&p-1&p\\0&0&0&\ldots&0&1&1&\ldots&1&1}
\]
i.e. the non-degenerate $(p+1)$-simplex of $\Delta[p]\times\Delta[1]$ given by travelling along the bottom $p$-simplex (corresponding to $\Delta[p]\times\{0\}$) for $m$ steps (between the first $m+1$ vertices), then travelling straight up to the top $p$-simplex (corresponding to $\Delta[p]\times\{1\}$), before continuing on along the remaining vertices.
Note that these are \emph{all} of the non-degenerate $(p+1)$-simplices of $\Delta[p]\times\Delta[1]$.

\begin{figure}[ht!]
  \centering
  \begin{tikzpicture}[scale=0.75]
    \begin{scope}
      \foreach \x in {0,1,2,3} {
        \node (\x0) at (\x,0) {$\bullet$};
        \node (\x1) at (\x,1) {$\bullet$};
      }
      \draw[thick] (00) to (01) to (11) to (21) to (31);
      \node at (1.5,-0.75) {$\Delta_0^3$};
    \end{scope}
    \begin{scope}[shift={(5,0)}]
      \foreach \x in {0,1,2,3} {
        \node (\x0) at (\x,0) {$\bullet$};
        \node (\x1) at (\x,1) {$\bullet$};
      }
      \draw[thick] (00) to (10) to (11) to (21) to (31);
      \node at (1.5,-0.75) {$\Delta_1^3$};
    \end{scope}
    \begin{scope}[shift={(10,0)}]
      \foreach \x in {0,1,2,3} {
        \node (\x0) at (\x,0) {$\bullet$};
        \node (\x1) at (\x,1) {$\bullet$};
      }
      \draw[thick] (00) to (10) to (20) to (21) to (31);
      \node at (1.5,-0.75) {$\Delta_2^3$};
    \end{scope}
    \begin{scope}[shift={(15,0)}]
      \foreach \x in {0,1,2,3} {
        \node (\x0) at (\x,0) {$\bullet$};
        \node (\x1) at (\x,1) {$\bullet$};
      }
      \draw[thick] (00) to (10) to (20) to (30) to (31);
      \node at (1.5,-0.75) {$\Delta_3^3$};
    \end{scope}
  \end{tikzpicture}
  \caption{The four non-degenerate $4$-simplices of $\Delta[3]\times\Delta[1]$, represented as paths between vertices: in each diagram, the bottom row consists of the vertices $\simplexpath{0\\0}$ to $\simplexpath{3\\0}$ and the top row of the vertices $\simplexpath{0\\1}$ to $\simplexpath{3\\1}$.}
  \label{figure:delta-m-p}
\end{figure}

We can think of a morphism $\Delta[p]\times\Delta[1]\to\Twist(\cechnerve\cover_p)$ as a labelling of the ``prism'' $\Delta[p]\times\Delta[1]$, where the vertices are labelled with objects $x_{i_j}\in\Twist(\cechnerve\cover_p)^0$, and each simplex $I=\{{i_0}<\ldots<{i_k}\}$ with $k\geq2$ is labelled by $f_I\in\Hom^{k-1}(x_{i_k},x_{i_0})$ satisfying the equation \cref{equation:dg-nerve-definition} defining the dg-nerve, which here is exactly
\[
  \partial f_I
  + \sum_{j=1}^{k-1} (-1)^j f_{I\setminus\{i_j\}}
  + \sum_{j=1}^{k-1} (-1)^{k(j-1)} f_{\{i_0<\ldots<i_j\}}f_{\{i_j<\ldots<i_k\}}
  = 0.
  \tag{$\star$}
\]
Generalising the notation of \cref{figure:abstract-and-labelled-triangular-prism}, we write $E_{\alpha_i}^\bullet$ for the object labelling the vertex $\simplexpath{i\\0}$, and $F_{\alpha_i}^\bullet$ for the object labelling the vertex $\simplexpath{i\\1}$;
given $I=\{i_0<\ldots<i_k\}\subseteq[p]$ with $k\geq1$, we write $E_{\alpha_{i_0}\ldots\alpha_{i_k}}$ for the morphism $f_{\simplexpath{I\\0}}$, and $F_{\alpha_{i_0}\ldots\alpha_{i_k}}$ for the morphism $f_{\simplexpath{I\\1}}$, where $\simplexpath{I\\j}=\simplexpath{i_0&i_1&\ldots&i_k\\j&j&\ldots&j}$.

Given any simplex $(\alpha_0\ldots\alpha_p)$, we define
\[
  \lambda_{\alpha_0\ldots\alpha_p}
  \coloneqq \sum_{m=0}^p (-1)^m f_{\Delta_m^{p+1}}.
\]
By $(\star)$, we know that
\begin{align}
  &\sum_{m=0}^p (-1)^m \partial f_{\Delta_m^{p+1}}
  \tag{$\circledast_\partial$}
\\+& \sum_{m=0}^p \sum_{j=1}^p (-1)^m(-1)^j f_{\Delta_m^{p+1}\setminus\ver_j}
  \tag{$\circledast_{\hat{\delta}}$}
\\+& \sum_{m=0}^p \sum_{j=1}^p (-1)^m(-1)^{(p+1)(j-1)} f_{\Delta_m^{p+1}(0,j)} f_{\Delta_m^{p+1}(j,p+1)}
  \tag{$\circledast_\circ$}
\\=& \,0
  \nonumber
\end{align}
where we introduce two notational shorthands: given a simplex $\sigma$ we write $\sigma\setminus\ver_j$ to mean $\sigma\setminus\ver_j\sigma$, and $\sigma(i,j)$ to mean $\{\ver_i\sigma<\ldots<\ver_j\sigma\}$.
We will now examine each of $(\circledast_\partial)$, $(\circledast_{\hat{\delta}})$, and $(\circledast_\circ)$ in turn.

\bigskip

Firstly, $(\circledast_\partial)$.
By the definition of $\partial$, along with its linearity, and the definition of $\lambda_{\alpha_0\ldots\alpha_p}$, we see that this is exactly
\begin{align}
  \sum_{m=0}^p (-1)^m \partial f_{\Delta_m^{p+1}}
  &= \sum_{m=0}^p (-1)^m \left(f_{\Delta_m^{p+1}}\circ F_{\alpha_p} + (-1)^{p+1} E_{\alpha_0}\circ f_{\Delta_m^{p+1}}\right)
  \nonumber
\\&= \left(\sum_{m=0}^p (-1)^m f_{\Delta_m^{p+1}}\right)\circ F_{\alpha_p} + (-1)^{p+1} E_{\alpha_0}\circ\left(\sum_{m=0}^p (-1)^m f_{\Delta_m^{p+1}}\right)
  \nonumber
\\&= \lambda_{\alpha_0\ldots\alpha_p}F_{\alpha_p} + (-1)^{p+1} E_{\alpha_0}\lambda_{\alpha_0\ldots\alpha_p}.
  \tag{$\boxtimes_\partial$}
\end{align}

\bigskip

Next, $(\circledast_{\hat{\delta}})$.
A table showing all terms in the case $p=3$ is given in \cref{figure:hatdelta-components-for-delta-m-4}, and might prove useful in visualising some of the combinatorics.
First of all, note that $\Delta_m^{p+1}\setminus\ver_m=\Delta_{m-1}^{p+1}\setminus\ver_m$ for all $m=1,\ldots,p$, and the corresponding morphisms in $(\circledast_{\hat{\delta}})$ have opposite signs, since $j=m+1$ is fixed for both, and so they cancel.
Now the remaining morphisms can all\footnote{Indeed, the only morphisms labelling simplices that pass through one of $\simplexpath{j\\0}$ and $\simplexpath{j\\1}$ but not the other are exactly those with diagonal paths, but $\Delta_m^{p+1}$ consists entirely of horizontal and vertical paths (cf. \cref{figure:delta-m-p}) and so the only terms that contain diagonal paths are $\Delta_m^{p+1}\setminus\ver_m$ and $\Delta_m^{p+1}\setminus\ver_{m+1}$, which are exactly the previously mentioned terms that cancel each other out.} be grouped together: those labelling simplices that skip over both $\simplexpath{1\\0}$ and $\simplexpath{1\\1}$; those labelling simplices that skip over both $\simplexpath{2\\0}$ and $\simplexpath{2\\1}$; and so on, up to those that skip over both $\simplexpath{p-1\\0}$ and $\simplexpath{p-1\\1}$.
But the set of all simplices that skip over both $\simplexpath{j\\0}$ and $\simplexpath{j\\1}$ is exactly the set of all $\Delta_m^p$ for $m=0,\ldots,p-1$ where we label the top and bottom $p-1$-simplices in $\Delta[p-1]\times\Delta[1]$ by $\{0<1<\ldots<\hat{j}<\ldots<p\}$.
For example, $\Delta_0^4\setminus\ver_2$, $\Delta_2^4\setminus\ver_1$, and $\Delta_3^4\setminus\ver_1$ are exactly $\Delta_0^3$, $\Delta_1^3$, and $\Delta_2^3$ (respectively) on the copy of $\Delta[2]\times\Delta[1]$ given by $\{0<2<3\}\times\{0<1\}$ (see \cref{figure:hatdelta-components-for-delta-m-4}).
We denote these copies of $\Delta_m^p$ on $\{0<\ldots<\hat{j}<\ldots<p\}\times\{0<1\}$ by $\Delta_m^{p+1\setminus j}$.

In the bottom half of \cref{figure:hatdelta-components-for-delta-m-4} we represent how the 12 terms in the case $p=3$ either cancel out or group together, and this pattern generalises: by the above, $(\circledast_{\hat{\delta}})$ is exactly\footnote{To check that all terms are indeed recovered, note that the right-hand side partitions the set of indices $\{(m,j) \mid 0\leq m\leq p,1\leq j\leq p\}$ into three sets: the ``thick diagonal'' $\{(m,j) \mid m=j\}\cup\{(m,j) \mid m=j-1\}$, the lower triangle $\{(m,j) \mid m<j-1\}$, and the upper triangle $\{(m,j) \mid m>j\}$ (with this last partition being slightly obscured, since there is a shift $j\mapsto j-1$ in the indexing of the removed vertex).}
\[
  \begin{aligned}
    \sum_{m=0}^p \sum_{j=1}^p (-1)^m(-1)^j f_{\Delta_m^{p+1}\setminus\ver_j}
    &= \sum_{m=1}^p (-1)^m \left( f_{\Delta_m^{p+1}\setminus\ver_m} - f_{\Delta_{m-1}^{p+1}\setminus\ver_m} \right)
  \\&+ \sum_{j=2}^p(-1)^j \left( \sum_{m=0}^{j-2} (-1)^m f_{\Delta_m^{p+1}\setminus\ver_j} + \sum_{m=j}^{p}(-1)^{m+1} f_{\Delta_m^{p+1}\setminus\ver_{j-1}} \right)
  \end{aligned}
\]
but the first sum on the right vanishes (since $\Delta_m^{p+1}\setminus\ver_m=\Delta_{m-1}^{p+1}\setminus\ver_m$) and the second simplifies (following the pattern of \cref{figure:hatdelta-components-for-delta-m-4}) to
\[
  \sum_{j=1}^{p-1} \sum_{m=0}^{p-1} (-1)^{j-1} (-1)^m f_{\Delta_m^{p+1\setminus j}}
  = \sum_{j=1}^{p-1} (-1)^{j-1} \left( \sum_{m=0}^{p-1} f_{\Delta_m^{p+1\setminus j}} \right)
\]
which is
\[
  -\sum_{j=1}^{p-1}(-1)^j \lambda_{\alpha_0\ldots\widehat{\alpha_j}\ldots\alpha_p}
  \tag{$\boxtimes_{\hat{\delta}}$}
\]
precisely by our definition of $\lambda$.

\begin{figure}[ht!]
  \centering
  \begin{tikzpicture}[scale=0.65] 
    \begin{scope} 
      \foreach \x in {0,1,2,3} {
        \node (\x0) at (\x,0) {$\bullet$};
        \node (\x1) at (\x,1) {$\bullet$};
      }
      \draw[thick] (00) to (01) to (11) to (21) to (31);
      \begin{scope}[shift={(0,-5)}]
        \begin{scope}[shift={(0,2)}] 
          \node (00) at (0,0) {$\bullet$};
          \node (11) at (1,1) {$\bullet$};
          \node (21) at (2,1) {$\bullet$};
          \node (31) at (3,1) {$\bullet$};
          \draw[thick] (00) to (11) to (21) to (31);
        \end{scope}
        \begin{scope}[shift={(0,0)}] 
          \node (00) at (0,0) {$\bullet$};
          \node (01) at (0,1) {$\bullet$};
          \node (21) at (2,1) {$\bullet$};
          \node (31) at (3,1) {$\bullet$};
          \draw[thick] (00) to (01) to (21) to (31);
        \end{scope}
        \begin{scope}[shift={(0,-2)}] 
          \node (00) at (0,0) {$\bullet$};
          \node (01) at (0,1) {$\bullet$};
          \node (11) at (1,1) {$\bullet$};
          \node (31) at (3,1) {$\bullet$};
          \draw[thick] (00) to (01) to (11) to (31);
        \end{scope}
      \end{scope}
    \end{scope} 
    \begin{scope}[shift={(5,0)}] 
      \foreach \x in {0,1,2,3} {
        \node (\x0) at (\x,0) {$\bullet$};
        \node (\x1) at (\x,1) {$\bullet$};
      }
      \draw[thick] (00) to (10) to (11) to (21) to (31);
      \begin{scope}[shift={(0,-5)}]
        \begin{scope}[shift={(0,2)}] 
          \node (00) at (0,0) {$\bullet$};
          \node (11) at (1,1) {$\bullet$};
          \node (21) at (2,1) {$\bullet$};
          \node (31) at (3,1) {$\bullet$};
          \draw[thick] (00) to (11) to (21) to (31);
        \end{scope}
        \begin{scope}[shift={(0,0)}] 
          \node (00) at (0,0) {$\bullet$};
          \node (10) at (1,0) {$\bullet$};
          \node (21) at (2,1) {$\bullet$};
          \node (31) at (3,1) {$\bullet$};
          \draw[thick] (00) to (10) to (21) to (31);
        \end{scope}
        \begin{scope}[shift={(0,-2)}] 
          \node (00) at (0,0) {$\bullet$};
          \node (10) at (1,0) {$\bullet$};
          \node (11) at (1,1) {$\bullet$};
          \node (31) at (3,1) {$\bullet$};
          \draw[thick] (00) to (10) to (11) to (31);
        \end{scope}
      \end{scope}
    \end{scope} 
    \begin{scope}[shift={(10,0)}] 
      \foreach \x in {0,1,2,3} {
        \node (\x0) at (\x,0) {$\bullet$};
        \node (\x1) at (\x,1) {$\bullet$};
      }
      \draw[thick] (00) to (10) to (20) to (21) to (31);
      \begin{scope}[shift={(0,-5)}]
        \begin{scope}[shift={(0,2)}] 
          \node (00) at (0,0) {$\bullet$};
          \node (20) at (2,0) {$\bullet$};
          \node (21) at (2,1) {$\bullet$};
          \node (31) at (3,1) {$\bullet$};
          \draw[thick] (00) to (20) to (21) to (31);
        \end{scope}
        \begin{scope}[shift={(0,0)}] 
          \node (00) at (0,0) {$\bullet$};
          \node (10) at (1,0) {$\bullet$};
          \node (21) at (2,1) {$\bullet$};
          \node (31) at (3,1) {$\bullet$};
          \draw[thick] (00) to (10) to (21) to (31);
        \end{scope}
        \begin{scope}[shift={(0,-2)}] 
          \node (00) at (0,0) {$\bullet$};
          \node (10) at (1,0) {$\bullet$};
          \node (20) at (2,0) {$\bullet$};
          \node (31) at (3,1) {$\bullet$};
          \draw[thick] (00) to (10) to (20) to (31);
        \end{scope}
      \end{scope}
    \end{scope} 
    \begin{scope}[shift={(15,0)}] 
      \foreach \x in {0,1,2,3} {
        \node (\x0) at (\x,0) {$\bullet$};
        \node (\x1) at (\x,1) {$\bullet$};
      }
      \draw[thick] (00) to (10) to (20) to (30) to (31);
      \begin{scope}[shift={(0,-5)}]
        \begin{scope}[shift={(0,2)}] 
          \node (00) at (0,0) {$\bullet$};
          \node (20) at (2,0) {$\bullet$};
          \node (30) at (3,0) {$\bullet$};
          \node (31) at (3,1) {$\bullet$};
          \draw[thick] (00) to (20) to (30) to (31);
        \end{scope}
        \begin{scope}[shift={(0,0)}] 
          \node (00) at (0,0) {$\bullet$};
          \node (10) at (1,0) {$\bullet$};
          \node (30) at (3,0) {$\bullet$};
          \node (31) at (3,1) {$\bullet$};
          \draw[thick] (00) to (10) to (30) to (31);
        \end{scope}
        \begin{scope}[shift={(0,-2)}] 
          \node (00) at (0,0) {$\bullet$};
          \node (10) at (1,0) {$\bullet$};
          \node (20) at (2,0) {$\bullet$};
          \node (31) at (3,1) {$\bullet$};
          \draw[thick] (00) to (10) to (20) to (31);
        \end{scope}
      \end{scope}
    \end{scope} 
    \draw[thick] (-3,-1) to (19,-1);
    \draw[thick] (-1,-7.5) to (-1,1.5);
    \node at (-2.5,0.5) {\scriptsize$\Delta_m^4$};
    \node at (-2.5,-2.5) {\scriptsize$\Delta_m^4\setminus\ver_1$};
    \node at (-2.5,-4.5) {\scriptsize$\Delta_m^4\setminus\ver_2$};
    \node at (-2.5,-6.5) {\scriptsize$\Delta_m^4\setminus\ver_3$};
    \node at (1.5,2) {\scriptsize$m=0$};
    \node at (6.5,2) {\scriptsize$m=1$};
    \node at (11.5,2) {\scriptsize$m=2$};
    \node at (16.5,2) {\scriptsize$m=3$};
  \end{tikzpicture}

  \bigskip
  
  \begin{tikzpicture}[xscale=1.5,yscale=1.25] 
    \node at (0,3) {$0$};
    \node at (1,3) {$1$};
    \node at (2,3) {$2$};
    \node at (3,3) {$3$};
    \node at (-1,2) {$1$};
    \node at (-1,1) {$2$};
    \node at (-1,0) {$3$};
    \node at (-1,3) {${}_j\setminus^m$};
    \draw[thick] (-1.5,2.5) to (3.5,2.5);
    \draw[thick] (-0.5,3.5) to (-0.5,-0.5);
    \node (m0j1) at (0,2) {$\times$};
    \node (m0j2) at (0,1) {$\Delta_0^{4\setminus1}$};
    \node (m0j3) at (0,0) {$\Delta_0^{4\setminus2}$};
    \node (m1j1) at (1,2) {$\times$};
    \node (m1j2) at (1,1) {$\times$};
    \node (m1j3) at (1,0) {$\Delta_1^{4\setminus2}$};
    \node (m2j1) at (2,2) {$\Delta_1^{4\setminus1}$};
    \node (m2j2) at (2,1) {$\times$};
    \node (m2j3) at (2,0) {$\times$};
    \node (m3j1) at (3,2) {$\Delta_2^{4\setminus1}$};
    \node (m3j2) at (3,1) {$\Delta_2^{4\setminus2}$};
    \node (m3j3) at (3,0) {$\times$};
    \draw[thick] (m0j1) to (m1j1);
    \draw[thick] (m1j2) to (m2j2);
    \draw[thick] (m2j3) to (m3j3);
    \draw[thick] (m0j2) to (m2j1) to (m3j1);
    \draw[thick] (m0j3) to (m1j3) to (m3j2);
  \end{tikzpicture}
  \caption{\emph{Top:} The 12 terms of $(\circledast_{\hat{\delta}})$ in the case $p=3$. \emph{Bottom:} The $(j,m)$ entry corresponds to $\Delta_m^4\setminus\ver_j$. The entries marked with a $\times$ cancel out with the other $\times$ to which it is connected by a line; the remaining entries are labelled with $\Delta_m^{4\setminus j}$, denoting the copy of $\Delta_m^3$ on $\{0<\ldots<\hat{j}<\ldots<3\}\times\{0<1\}$, and the ones joined with lines are the ones that add to give the $\lambda_{\alpha_0\ldots\widehat{\alpha_j}\ldots\alpha_3}$ term. Note that entries that are horizontally or vertically adjacent to one another in this table have opposite signs in $(\circledast_{\hat{\delta}})$.}
  \label{figure:hatdelta-components-for-delta-m-4}
\end{figure}

\bigskip

Finally, $(\circledast_\circ)$.
A table showing all terms in the case $p=3$ is given in \cref{figure:circ-components-for-delta-m-4}.
To start, we split up the sum in $(\circledast_\circ)$ into an ``upper triangular'' part and a ``lower triangular'' part:
\[
  \begin{gathered}
    \sum_{m=0}^p \sum_{j=1}^p (-1)^m (-1)^{(p+1)(j-1)} f_{\Delta_m^{p+1}(0,j)}f_{\Delta_m^{p+1}(j,p+1)}
  \\= \sum_{j=1}^p \sum_{m=0}^{j-1} (-1)^m (-1)^{(p+1)(j-1)} f_{\Delta_m^{p+1}(0,j)}f_{\Delta_m^{p+1}(j,p+1)}
  \\+ \sum_{j=1}^p \sum_{m=j}^{p} (-1)^m (-1)^{(p+1)(j-1)} f_{\Delta_m^{p+1}(0,j)}f_{\Delta_m^{p+1}(j,p+1)}.
  \end{gathered}
\]
Now, in the first sum on the right-hand side, the $f_{\Delta_m^{p+1}(j,p+1)}$ term will be constant over all values of $m$, since if $m<j$ then $\Delta_m^{p+1}(j,p+1)$ is simply $\simplexpath{j-1&j&\ldots&p\\1&1&\ldots&1}$, and similarly for the $f_{\Delta_m^{p+1}(0,j)}$ term in the second sum.
In other words, we can write $(\circledast_\circ)$ as
\[
  \begin{aligned}
    & \sum_{j=1}^p \left( \sum_{m=0}^{j-1} (-1)^m (-1)^{(p+1)(j-1)} f_{\Delta_m^j(0,j)}\right) f_{\simplexpath{j-1&j&\ldots&p\\1&1&\ldots&1}}
  \\+& \sum_{j=1}^p  f_{\simplexpath{0&1&\ldots&j\\0&0&\ldots&0}} \left( \sum_{m=j}^{p} (-1)^m (-1)^{(p+1)(j-1)}f_{\Delta_{m-j}^{p+1-j}+\simplexpath{j\\0}} \right)
  \end{aligned}
\]
where we write $\Delta_{m-j}^{p+1-j}+\simplexpath{j\\0}$ to mean repeated vertex-wise addition, i.e.
\[
  \begin{aligned}
    \Delta_{n}^{q} + \simplexpath{i\\0}
    &= \simplexpath{0&1&2&\ldots&n&n&n+1&\ldots&q-1&q\\0&0&0&\ldots&0&1&1&\ldots&1&1} + \simplexpath{i\\0}
  \\&\coloneqq \simplexpath{i&1+i&2+i&\ldots&n+i&n+i&n+1+i&\ldots&q-1+i&q+i\\0&0&0&\ldots&0&1&1&\ldots&1&1}.
  \end{aligned}
\]
But this is exactly
\[
  \sum_{j=1}^p (-1)^{(p+1)(j-1)} \lambda_{\alpha_0\ldots\alpha_{j-1}} F_{\alpha_{j-1}\ldots\alpha_p} + \sum_{j=1}^p (-1)^{(p+1)(j-1)} (-1)^j E_{\alpha_0\ldots\alpha_j}\lambda_{\alpha_j\ldots\alpha_p}
\]
and shifting the first sum from $\sum_{j=1}^p$ to $\sum_{j=0}^{p-1}$ and the second sum from $\sum_{m=j}^p$ to $\sum_{m=0}^{p-j}$ gives
\[
  \sum_{j=0}^{p-1} (-1)^{(p+1)j}\lambda_{\alpha_0\ldots\alpha_j}F_{\alpha_j\ldots\alpha_p} + \sum_{j=1}^p (-1)^{(p+1)(j-1)} (-1)^j E_{\alpha_0\ldots\alpha_j}\lambda_{\alpha_j\ldots\alpha_p}.
  \tag{$\boxtimes_\circ$}
\]

\begin{figure}[ht!]
  \centering
  \begin{tikzpicture}[scale=0.65] 
    \begin{scope} 
      \foreach \x in {0,1,2,3} {
        \node (\x0) at (\x,0) {$\bullet$};
        \node (\x1) at (\x,1) {$\bullet$};
      }
      \draw[thick] (00) to (01) to (11) to (21) to (31);
      \begin{scope}[shift={(0,-5)}]
        \begin{scope}[shift={(-0.5,2)}] 
          \node (00) at (0,0) {\scriptsize$0$};
          \node (01) at (0,1) {\scriptsize$0$};
          \draw[thick] (00) to (01);
          \draw[thick,dashed] (0.5,-0.25) to (0.5,1.25);
        \end{scope}
        \begin{scope}[shift={(0.5,2)}]
          \node (01) at (0,1) {\scriptsize$0$};
          \node (11) at (1,1) {\scriptsize$1$};
          \node (21) at (2,1) {\scriptsize$2$};
          \node (31) at (3,1) {\scriptsize$3$};
          \draw[thick] (01) to (11) to (21) to (31);
        \end{scope}
        \begin{scope}[shift={(-0.5,0)}] 
          \node (00) at (0,0) {\scriptsize$0$};
          \node (01) at (0,1) {\scriptsize$0$};
          \node (11) at (1,1) {\scriptsize$1$};
          \draw[thick] (00) to (01) to (11);
          \draw[thick,dashed] (1.5,-0.25) to (1.5,1.25);
        \end{scope}
        \begin{scope}[shift={(0.5,0)}]
          \node (11) at (1,1) {\scriptsize$1$};
          \node (21) at (2,1) {\scriptsize$2$};
          \node (31) at (3,1) {\scriptsize$3$};
          \draw[thick] (11) to (21) to (31);
        \end{scope}
        \begin{scope}[shift={(-0.5,-2)}] 
          \node (00) at (0,0) {\scriptsize$0$};
          \node (01) at (0,1) {\scriptsize$0$};
          \node (11) at (1,1) {\scriptsize$1$};
          \node (21) at (2,1) {\scriptsize$2$};
          \draw[thick] (00) to (01) to (11) to (21);
          \draw[thick,dashed] (2.5,-0.25) to (2.5,1.25);
        \end{scope}
        \begin{scope}[shift={(0.5,-2)}]
          \node (21) at (2,1) {\scriptsize$2$};
          \node (31) at (3,1) {\scriptsize$3$};
          \draw[thick] (21) to (31);
        \end{scope}
      \end{scope}
    \end{scope} 
    \begin{scope}[shift={(5,0)}] 
      \foreach \x in {0,1,2,3} {
        \node (\x0) at (\x,0) {$\bullet$};
        \node (\x1) at (\x,1) {$\bullet$};
      }
      \draw[thick] (00) to (10) to (11) to (21) to (31);
      \begin{scope}[shift={(0,-5)}]
        \begin{scope}[shift={(-0.5,2)}] 
          \node (00) at (0,0) {\scriptsize$0$};
          \node (10) at (1,0) {\scriptsize$1$};
          \draw[thick] (00) to (10);
          \draw[thick,dashed] (1.5,-0.25) to (1.5,1.25);
        \end{scope}
        \begin{scope}[shift={(0.5,2)}]
          \node (10) at (1,0) {\scriptsize$1$};
          \node (11) at (1,1) {\scriptsize$1$};
          \node (21) at (2,1) {\scriptsize$2$};
          \node (31) at (3,1) {\scriptsize$3$};
          \draw[thick] (10) to (11) to (21) to (31);
        \end{scope}
        \begin{scope}[shift={(-0.5,0)}] 
          \node (00) at (0,0) {\scriptsize$0$};
          \node (10) at (1,0) {\scriptsize$1$};
          \node (11) at (1,1) {\scriptsize$1$};
          \draw[thick] (00) to (10) to (11);
          \draw[thick,dashed] (1.5,-0.25) to (1.5,1.25);
        \end{scope}
        \begin{scope}[shift={(0.5,0)}]
          \node (11) at (1,1) {\scriptsize$1$};
          \node (21) at (2,1) {\scriptsize$2$};
          \node (31) at (3,1) {\scriptsize$3$};
          \draw[thick] (11) to (21) to (31);
        \end{scope}
        \begin{scope}[shift={(-0.5,-2)}] 
          \node (00) at (0,0) {\scriptsize$0$};
          \node (10) at (1,0) {\scriptsize$1$};
          \node (11) at (1,1) {\scriptsize$1$};
          \node (21) at (2,1) {\scriptsize$2$};
          \draw[thick] (00) to (10) to (11) to (21);
          \draw[thick,dashed] (2.5,-0.25) to (2.5,1.25);
        \end{scope}
        \begin{scope}[shift={(0.5,-2)}]
          \node (21) at (2,1) {\scriptsize$2$};
          \node (31) at (3,1) {\scriptsize$3$};
          \draw[thick] (21) to (31);
        \end{scope}
      \end{scope}
    \end{scope} 
    \begin{scope}[shift={(10,0)}] 
      \foreach \x in {0,1,2,3} {
        \node (\x0) at (\x,0) {$\bullet$};
        \node (\x1) at (\x,1) {$\bullet$};
      }
      \draw[thick] (00) to (10) to (20) to (21) to (31);
      \begin{scope}[shift={(0,-5)}]
        \begin{scope}[shift={(-0.5,2)}] 
          \node (00) at (0,0) {\scriptsize$0$};
          \node (10) at (1,0) {\scriptsize$1$};
          \draw[thick] (00) to (10);
          \draw[thick,dashed] (1.5,-0.25) to (1.5,1.25);
        \end{scope}
        \begin{scope}[shift={(0.5,2)}]
          \node (10) at (1,0) {\scriptsize$1$};
          \node (20) at (2,0) {\scriptsize$2$};
          \node (21) at (2,1) {\scriptsize$2$};
          \node (31) at (3,1) {\scriptsize$3$};
          \draw[thick] (10) to (20) to (21) to (31);
        \end{scope}
        \begin{scope}[shift={(-0.5,0)}] 
          \node (00) at (0,0) {\scriptsize$0$};
          \node (10) at (1,0) {\scriptsize$1$};
          \node (20) at (2,0) {\scriptsize$2$};
          \draw[thick] (00) to (10) to (20);
          \draw[thick,dashed] (2.5,-0.25) to (2.5,1.25);
        \end{scope}
        \begin{scope}[shift={(0.5,0)}]
          \node (20) at (2,0) {\scriptsize$2$};
          \node (21) at (2,1) {\scriptsize$2$};
          \node (31) at (3,1) {\scriptsize$3$};
          \draw[thick] (20) to (21) to (31);
        \end{scope}
        \begin{scope}[shift={(-0.5,-2)}] 
          \node (00) at (0,0) {\scriptsize$0$};
          \node (10) at (1,0) {\scriptsize$1$};
          \node (20) at (2,0) {\scriptsize$2$};
          \node (21) at (2,1) {\scriptsize$2$};
          \draw[thick] (00) to (10) to (20) to (21);
          \draw[thick,dashed] (2.5,-0.25) to (2.5,1.25);
        \end{scope}
        \begin{scope}[shift={(0.5,-2)}]
          \node (21) at (2,1) {\scriptsize$2$};
          \node (31) at (3,1) {\scriptsize$3$};
          \draw[thick] (21) to (31);
        \end{scope}
      \end{scope}
    \end{scope} 
    \begin{scope}[shift={(15,0)}] 
      \foreach \x in {0,1,2,3} {
        \node (\x0) at (\x,0) {$\bullet$};
        \node (\x1) at (\x,1) {$\bullet$};
      }
      \draw[thick] (00) to (10) to (20) to (30) to (31);
      \begin{scope}[shift={(0,-5)}]
        \begin{scope}[shift={(-0.5,2)}] 
          \node (00) at (0,0) {\scriptsize$0$};
          \node (10) at (1,0) {\scriptsize$1$};
          \draw[thick] (00) to (10);
          \draw[thick,dashed] (1.5,-0.25) to (1.5,1.25);
        \end{scope}
        \begin{scope}[shift={(0.5,2)}]
          \node (10) at (1,0) {\scriptsize$1$};
          \node (20) at (2,0) {\scriptsize$2$};
          \node (30) at (3,0) {\scriptsize$3$};
          \node (31) at (3,1) {\scriptsize$3$};
          \draw[thick] (10) to (20) to (30) to (31);
        \end{scope}
        \begin{scope}[shift={(-0.5,0)}] 
          \node (00) at (0,0) {\scriptsize$0$};
          \node (10) at (1,0) {\scriptsize$1$};
          \node (20) at (2,0) {\scriptsize$2$};
          \draw[thick] (00) to (10) to (20);
          \draw[thick,dashed] (2.5,-0.25) to (2.5,1.25);
        \end{scope}
        \begin{scope}[shift={(0.5,0)}]
          \node (20) at (2,0) {\scriptsize$2$};
          \node (30) at (3,0) {\scriptsize$3$};
          \node (31) at (3,1) {\scriptsize$3$};
          \draw[thick] (20) to (30) to (31);
        \end{scope}
        \begin{scope}[shift={(-0.5,-2)}] 
          \node (00) at (0,0) {\scriptsize$0$};
          \node (10) at (1,0) {\scriptsize$1$};
          \node (20) at (2,0) {\scriptsize$2$};
          \node (30) at (3,0) {\scriptsize$3$};
          \draw[thick] (00) to (10) to (20) to (30);
          \draw[thick,dashed] (3.5,-0.25) to (3.5,1.25);
        \end{scope}
        \begin{scope}[shift={(0.5,-2)}]
          \node (30) at (3,0) {\scriptsize$3$};
          \node (31) at (3,1) {\scriptsize$3$};
          \draw[thick] (30) to (31);
        \end{scope}
      \end{scope}
    \end{scope} 
    \draw[thick] (-3,-1) to (19,-1);
    \draw[thick] (-1,-7.5) to (-1,1.5);
    \node at (-2,0.5) {\scriptsize$\Delta_m^4$};
    \node at (-2.5,-2.5) {\scriptsize$\Delta_m^4(0,1)(1,4)$};
    \node at (-2.5,-4.5) {\scriptsize$\Delta_m^4(0,2)(2,4)$};
    \node at (-2.5,-6.5) {\scriptsize$\Delta_m^4(0,3)(3,4)$};
    \node at (1.5,2) {\scriptsize$m=0$};
    \node at (6.5,2) {\scriptsize$m=1$};
    \node at (11.5,2) {\scriptsize$m=2$};
    \node at (16.5,2) {\scriptsize$m=3$};
  \end{tikzpicture}

  \bigskip
  
  \begin{tikzpicture}[xscale=3.25,yscale=1.25] 
    \node at (0,3) {$0$};
    \node at (1,3) {$1$};
    \node at (2,3) {$2$};
    \node at (3,3) {$3$};
    \node at (-0.75,2) {$1$};
    \node at (-0.75,1) {$2$};
    \node at (-0.75,0) {$3$};
    \node at (-0.75,3) {${}_j\setminus^m$};
    \draw[thick] (-0.96,2.5) to (3.5,2.5);
    \draw[thick] (-0.5,3.5) to (-0.5,-0.5);
    \node (m0j1) at (0,2) {$\lambda_0\circ F_{0123}$};
    \node (m0j2) at (0,1) {$\Delta_0^2\circ F_{123}$};
    \node (m0j3) at (0,0) {$\Delta_0^3\circ F_{23}$};
    \node (m1j1) at (1,2) {$E_{01}\circ(\Delta_0^3+\simplexpath{1\\0})$};
    \node (m1j2) at (1,1) {$\Delta_1^2\circ F_{123}$};
    \node (m1j3) at (1,0) {$\Delta_1^3\circ F_{23}$};
    \node (m2j1) at (2,2) {$E_{01}\circ(\Delta_1^3+\simplexpath{1\\0})$};
    \node (m2j2) at (2,1) {$E_{012}\circ(\Delta_0^2+\simplexpath{2\\0})$};
    \node (m2j3) at (2,0) {$\Delta_2^3\circ F_{23}$};
    \node (m3j1) at (3,2) {$E_{01}\circ(\Delta_2^3+\simplexpath{1\\0})$};
    \node (m3j2) at (3,1) {$E_{012}\circ(\Delta_1^2+\simplexpath{2\\0})$};
    \node (m3j3) at (3,0) {$E_{0123}\circ \lambda_3$};
    \draw[thick] (m1j1) to (m2j1) to (m3j1);
    \draw[thick] (m0j2) to (m1j2);
    \draw[thick] (m2j2) to (m3j2);
    \draw[thick] (m0j3) to (m1j3) to (m2j3);
  \end{tikzpicture}
  \caption{\emph{Top:} The 12 terms of $(\circledast_\circ)$ in the case $p=3$, where we write $\Delta_m^4(0,j)(j,4)$ as shorthand for $\Delta_m^4(0,j)$ followed by $\Delta_m^4(j,4)$. \emph{Bottom:} The entries are labelled with $f_{\Delta_m^4(0,j)}f_{\Delta_m^4(j,4)}$; those joined with a line are those that add to give a $\lambda_{\alpha_0\ldots\alpha_j}$ (or $\lambda_{\alpha_j\ldots\alpha_p}$) term. Note that the entries above (or on) the diagonal and on the same line all share the same first component, and those below the diagonal on the same line all share the same second component.}
  \label{figure:circ-components-for-delta-m-4}
\end{figure}

\bigskip

Putting this all together, we know that $(\boxtimes_\partial)+(\boxtimes_{\hat{\delta}})+(\boxtimes_\circ)=0$, and so
\[
  \begin{aligned}
    0
    &= -(\boxtimes_{\hat{\delta}}) -(\boxtimes_\partial) -(\boxtimes_\circ)
  \\&= \sum_{j=1}^{p-1} (-1)^j \lambda_{\alpha_0\ldots\widehat{\alpha_j}\ldots\alpha_p}
  \\&- \lambda_{\alpha_0\ldots\alpha_p}F_{\alpha_p} - (-1)^{p+1}E_{\alpha_0}\lambda_{\alpha_0\ldots\alpha_p}
  \\&- \sum_{j=0}^{p-1} (-1)^{(p+1)j} \lambda_{\alpha_0\ldots\alpha_j}F_{\alpha_j\ldots\alpha_p} - \sum_{j=1}^p (-1)^{(p+1)(j-1)} (-1)^j E_{\alpha_0\ldots\alpha_j}\lambda_{\alpha_j\ldots\alpha_p}
  \end{aligned}
\]
but the first sum is exactly $(\hat{\delta}\lambda)_{\alpha_0\ldots\alpha_p}$, so it remains only to show that $-(\boxtimes_\partial)-(\boxtimes_\circ)=E\lambda-\lambda F$, since then $\lambda$ satisfies
\[
  \hat{\delta}\lambda + E\lambda - \lambda F = 0
\]
which is exactly the condition necessary in order for $\lambda$ to be a weak equivalence (since the $\lambda_\alpha$ terms live in the maximal Kan complex of the dg-nerve, and are thus quasi-isomorphisms by \cref{lemma:nerve-inside-dg-nerve}).
We can merge the two terms of $(\boxtimes_\partial)$ into the two sums of $(\boxtimes_\circ)$ to obtain
\[
  \begin{aligned}
    -(\boxtimes_\partial) -(\boxtimes_\circ)
    &= -\sum_{j=0}^p (-1)^{(p+1)(j-1)}(-1)^j E_{\alpha_0\ldots\alpha_j}\lambda_{\alpha_j\ldots\alpha_p} -\sum_{j=0}^{p} (-1)^{(p+1)j} \lambda_{\alpha_0\ldots\alpha_j}F_{\alpha_j\ldots\alpha_p}
  \\&= \sum_{j=0}^p \left[ (-1)^{(p+1)(j-1)+j+1} E_{\alpha_0\ldots\alpha_j}\lambda_{\alpha_j\ldots\alpha_p} - (-1)^{(p+1)j}\lambda_{\alpha_0\ldots\alpha_j}F_{\alpha_j\ldots\alpha_p} \right].
  \end{aligned}
\]
But $(p+1)(j-1)+j+1\equiv(1-j)(p-j)\mod2$ and $(p+1)j\equiv-j(p-j)\mod2$, so we can write the above as
\[
  \sum_{j=0}^p \left[ (-1)^{(1-j)(p-j)} E_{\alpha_0\ldots\alpha_j}\lambda_{\alpha_j\ldots\alpha_p} - (-1)^{-j(p-j)}\lambda_{\alpha_0\ldots\alpha_j}F_{\alpha_j\ldots\alpha_p} \right]
\]
and this is exactly $E\lambda-\lambda F$.


\printbibliography[heading=bibintoc]

\end{document}